
\documentclass[11pt]{amsart}
\usepackage{amsmath,amssymb,amsfonts,url}
\usepackage{mathptmx}
\usepackage{color}
\usepackage{appendix}
\numberwithin{equation}{section}
\newtheorem{Def}{Definition}[section]

\newtheorem{thm}{Theorem}[section]
\newtheorem{lem}{Lemma}[section]
\newtheorem{rem}{Remark}[section]
\newtheorem{prop}{Proposition}[section]
\newtheorem{cor}{Corollary}[section]

\usepackage{palatino,mathpazo}

\newcommand{\e}{\varepsilon}

\usepackage{amsthm,a0size,bm,indentfirst,wasysym}

\usepackage{amsmath}

\allowdisplaybreaks[4]

\begin{document}
	\title[Non degeneracy of blow-up solution]{Non degeneracy of blow-up solutions of non-quantized singular Liouville-type equations and the convexity of the mean field entropy of the Onsager vortex model with singular sources}
	\keywords{Liouville equation, blow up, non quantized singular source, non degeneracy}
	
	
	\author[D. Bartolucci]{Daniele Bartolucci}
	\address{Daniele Bartolucci, Department of Mathematics, University of Rome "Tor Vergata", Via della ricerca scientifica 1, 00133 Roma,
		Italy}
	\email{bartoluc@mat.uniroma2.it}
	
	\author[W. Yang]{Wen Yang}
	\address{Wen Yang, Department of Mathematics, Faculty of Science and Technology, University of Macau, Taipa, Macau}
	\email{wenyang@um.edu.mo}
	
	\author[L. Zhang]{Lei Zhang}
	\address{Lei Zhang, Department of Mathematics, University of Florida, 1400 Stadium Rd, Gainesville, FL 32611, USA}
	\email{leizhang@ufl.edu}

	\date{\today}
	\begin{abstract}
		
		We establish the non-degeneracy of bubbling solutions for singular mean field equations when the blow-up points are either regular or involve non-quantized singular sources. This extends the results from Bartolucci-Jevnikar-Lee-Yang \cite{bart-5}, which focused on regular blow-up points. As a consequence, we establish the strict convexity of the Entropy in the large energy limit for a specific class of two-dimensional domains in the Onsager mean field vortex model with singular sources.
	\end{abstract}
	
	
	
	\maketitle
	
	\section{Introduction}
	In this article, we shall investigate the non degeneracy property of the following mean field equation with singular source defined on the Riemann surface $M$
	\begin{equation}\label{m-equ}
		\Delta_g \nu+\rho\bigg(\frac{he^\nu}{\int_M h e^\nu{\rm d}\mu}-\frac{1}{vol_g(M)}\bigg)=\sum_{j=1}^N 4\pi \alpha_j \left(\delta_{q_j}-\frac{1}{vol_g(M)}\right)~{\rm in}~M,
	\end{equation}
	along with the corresponding Dirichlet problem
	\begin{equation}\label{equ-flat}
		\begin{cases}
			\Delta \nu+\rho \dfrac{he^\nu}{\int_{\Omega} h e^\nu{\rm d}x}=\sum\limits_{j=1}^N 4\pi \alpha_j \delta_{q_j}  &{\rm in} \;\ \Omega,
			\\
			\\
			\nu=0  &{\rm on} \;\ \partial\Omega,
		\end{cases}
	\end{equation}
	where $(M,g)$ in \eqref{m-equ} represents a Riemann surface with metric $g$, and $\Omega$ in \eqref{equ-flat} is an open, bounded domain in $\mathbb{R}^2$ with a $C^2$ boundary $\partial\Omega$, $\Delta_g$ denotes the Laplace-Beltrami operator on $M$, while $\Delta$ stands for the classical Laplacian operator in Euclidean space, the potential function $h$ is a positive $C^5$ function and the parameter $\rho$ is strictly positive in both equations, the points $q_1, \cdots, q_N$ are distinct in the respective ambient space, the coefficients $\alpha_j > -1$, and $\delta_{q_j}$ represents the Dirac measure at $q_j$. The volume of $(M,g)$ is denoted by ${vol}_g(M)$, and we assume throughout the article that ${vol}_g(M) = 1$.
	
	
	Both equations have a rich background in Differential Geometry and Physics, including topics such as conformal geometry \cite{KW,troy}, Electroweak and Self-Dual Chern-Simons vortices \cite{ambjorn, spruck-yang,taran-1,taran-2,y-yang}, the statistical mechanics approach to turbulent Euler flows, plasmas, and self-gravitating systems \cite{bart-5,caglioti-2, kiessling, CCKW,SO,TuY,wolan}, Cosmic Strings \cite{BT-2,pot}, the theory of hyperelliptic curves and modular forms \cite{chai}, and CMC immersions in hyperbolic 3-manifolds \cite{taran-4}. These factors motivated the significant efforts dedicated to studying (\ref{m-equ}), focusing on existence results \cite{BdMM,BLin2,BMal,luca-b,dAWZ,chan-fu-lin,chen-lin-sharp,chen-lin-deg,chen-lin,chen-lin-deg-2,DKM,DJ,EGP,lin-yan-cs,lin-yan-cs,Mal1,Mal2,mal-ruiz,nolasco-taran}, uniqueness \cite{bart-4,bart-4-2,bj-lin,BLin3,BLT,chang-chen-lin,GM1,lee-lin-jfa,Lin1,lin-yan-uniq,suzuki,wu-zhang-ccm}, and concentration-compactness and bubbling behavior \cite{BM3,BT,BM,gu-zhang-1,gu-zhang-2,gluck,kuo-lin,li-cmp,ls,T3,taran-3,wei-zhang-19,wei-zhang-22,wei-zhang-jems, zhang1,zhang2}. Notably, groundbreaking work initiated in \cite{chen-lin-wang,lin-Lwang} has recently advanced the understanding of the flat two-torus with singular sources, see \cite{Zchen-lin-1,lin22} and related references.

	
	By using the classical Green function $G_M(z,p)$, which satisfies
	\begin{equation*}
		\begin{cases}
			-\Delta_g G_M(x,p)=\delta_p-1\quad {\rm in}\ \; M
			\\
			\\
			\int_{M}G_M(x,p){\rm d}\mu=0,
		\end{cases}
	\end{equation*}
	we can rewrite \eqref{m-equ} as follows,
	\begin{equation}\label{r-equ}
		\Delta_g  {\rm w}+\rho\bigg(\frac{He^{\rm w}}{\int_M H e^{\rm w}{\rm d}\mu}-1\bigg)=0 \quad {\rm in}\ \; M,
	\end{equation}
	where
	\begin{equation}\label{r-sol}
		{\rm w}(x)=\nu(x)+4\pi \sum_{j=1}^N \alpha_j G_M(x,q_j)
		\quad\mbox{and}\quad
		H(x)=h(x)\prod_{j=1}^N e^{-4\pi\alpha_j G_M(x,q_j)}.
	\end{equation}
	Using the local isothermal coordinates $z=z(x)$ centered at $p,~0=z(p)$, $G(z,0)=G_M(x(z),p)$ can be decomposed by
	$$G(z,0)=-\frac 1{2\pi }\log |z|+R(z,0),$$
	where $R(z,0)$ denotes the regular part of $G(z,0)$, and $R_M(x(z),p)=R(z,0)$ stands for the regular part of $G_M(x(z),p)$.  As a consequence, near each singular point $q_j,~j=1,\cdots,N$, we can write
	\begin{equation}\label{H2}
		H(z)=h_j(z)|z|^{2\alpha_j},\quad |z|\ll 1,\quad 1\leq j\leq N,\quad \mbox{where}~h_j(z)>0~\mbox{near}~0.
	\end{equation}
	
	The equation \eqref{r-equ} possesses a variational structure, as it can be derived as the Euler-Lagrange equation of the functional:
	$$I_{\rho}({\rm w})=\frac 12 \int_M |\nabla {\rm w}|^2+\rho\int_M {\rm w}-\rho \log \int_M He^{\rm w},\quad {\rm w}\in H^1(M).$$
	Since adding a constant to any solution of (\ref{r-equ}) still yields a solution, we can, without loss of generality, define $I_\rho$ on the  following subspace $$\mathring{H}^1(M):=\left\{f\in H^1(M)|\int_Mfd\mu=0\right\}.$$ A detailed analysis of the variational structure of (\ref{r-equ}) can be found in \cite{mal-ruiz}.  Typically, we say that ${\rm w}$ in \eqref{r-equ} (or $\nu$ in \eqref{m-equ}) is {\it non-degenerate} if the linearized equation corresponding to the solution ${\rm w}$ of \eqref{r-equ} admits only the trivial solution. Specifically, the following equation 
	\begin{equation}\label{r-lin-equ} 
		\Delta_g {\phi} + \rho \frac{H e^{\rm w}}{\int_M H e^{\rm w} {\rm d}\mu} \left( \phi - \int_M \frac{H e^{\rm w} \phi}{\int_M H e^{\rm w}{\rm d}\mu} \right) = 0 \quad {\rm in}\ M 
	\end{equation} 
	has only the solution $\phi \equiv 0$.
	
	Similarly, for the Dirichlet problem \eqref{equ-flat}, we recall the standard Green's function 
	\begin{equation*}
		\begin{cases}
			-\Delta G_{\Omega}(x,p)=\delta_{p}  &{\rm in} \;\ \Omega,
			\\\\
			G_{\Omega}(x,p)=0  &{\rm on} \;\ \partial\Omega.
		\end{cases}
	\end{equation*}
	Using $G_{\Omega}(x,p)$, equation \eqref{equ-flat} can be rewritten as  follows
	\begin{equation}\label{r-equ-flat}
		\begin{cases}
			\Delta  {\rm w}+\rho \dfrac{He^ {\rm w}}{\int_{\Omega} H e^ {\rm w}{\rm d}x}=0  &{\rm in} \;\ \Omega,
			\\\\
			{\rm w}=0  &{\rm on} \;\ \partial\Omega,
		\end{cases}
	\end{equation}
	with
	\begin{equation*}
		{\rm w}(x)=\nu(x)+4\pi \sum_{j=1}^N \alpha_j G_{\Omega}(x,q_j)\quad\mbox{and}\quad
		H(x)=h(x)\prod_{j=1}^N e^{-4\pi\alpha_j G_{\Omega}(x,q_j)}.
	\end{equation*}
	It is well known that $G_{\Omega}(x,p)$ can be decomposed by
	$$G_{\Omega}(x,p)=-\frac{1}{2\pi}\log |x-p|+R_{\Omega}(x,p),$$ 
	where $R_{\Omega}(x,p)$ denotes the regular part. Then near each singular point $q_j$, we have
	\begin{equation*}
		H(z)=h_j(x)|x-q_j|^{2\alpha_j},\quad x\neq q_j
	\end{equation*}
	with $h_j(x)>0$ in a small neighbourhood of $q_j$. The linearized equation associated with a solution ${\rm w}$ of \eqref{r-equ-flat} takes the form
	\begin{equation}\label{r-lin-equ-flat}
		\Delta {\phi}+\rho\dfrac{He^{\rm w}}{\int_{\Omega} H e^{\rm w}{\rm d}x}
		\bigg(\phi-\int_{\Omega}\frac{He^{\rm w}\phi}{\int_{\Omega} H e^{\rm w}{\rm d}x}\bigg)=0 \quad {\rm in}\ \; {\Omega}.
	\end{equation}
	As the equation \eqref{m-equ} defined on the Riemann surface, we say that a solution $\nu$ of \eqref{equ-flat} is {\it non degenerate} if \eqref{r-lin-equ-flat} admits only the trivial solution.

	In two-dimensional space, the loss of compactness phenomenon occurs for both equations \eqref{m-equ} (or \eqref{r-equ}) and \eqref{equ-flat} (or \eqref{r-equ-flat}), primarily due to the exponential nonlinearity is critical in the sense of Sobolev embedding. Consequently, studying the qualitative behavior of blow-up solutions, such as their uniqueness and non-degenerate properties, becomes a natural and important question. For example, consider equation \eqref{m-equ}, we say that $\nu_k$ (or ${\rm w}_k$) is a sequence of bubbling solutions of \eqref{m-equ} (or \eqref{r-equ}) if the $L^{\infty}$ norm of the corresponding ${\rm w}_k$, defined by \eqref{r-sol}, tends to infinity as $k$ approaches infinity. Through a series of seminal works (see \cite{BM3, BT, li-cmp}), it is well-known that once blow-up phenomenon occurs, the function ${\rm w}_k$ blows up at a finite number of points. To say that the set of blow-up points is ${p_1, \cdots, p_m}$, means that there exist $m$ sequences of points $p_{k,1}, \cdots, p_{k,m}$ such that, possibly along a subsequence, ${\rm w}_k(p_{k,j}) \to +\infty$ and $p_{k,j} \to p_j$ as $k \to +\infty$ for $j = 1, \cdots, m$. If none of the blow-up points are singular sources, the local uniqueness of the bubbling solutions has been established in \cite{bart-4}. Here, the local uniqueness means that once the blow-up points of the bubbling solution are determined, the solution is unique. While the non-degeneracy property was proved in \cite{bart-5}. A natural question arises: are the results on uniqueness and non-degeneracy still true when the set of blow-up points contains singular points? In a recent paper, we have proved uniqueness in the case where at least one blow-up point is a singular source \cite{byz-1}; see also \cite{bart-4-2, wu-zhang-ccm} for related results. By assumption, the strength of the singular source at a point $p$ is $4\pi \alpha_p$, where $\alpha_p = 0$ if $p$ is a regular point. We use $\alpha_1, \cdots, \alpha_m$ to denote the strengths at the points $p_1, \cdots, p_m$, respectively, and assume that the first $m_1 \geq 1$ of these points are singular blow-up points:
	\begin{equation}\label{largest-s}
		\alpha_i>-1,\quad \alpha_i\not \in \mathbb N, \quad 1\le i\le m_1,\quad \alpha_{m_1+1}=\cdots=\alpha_m=0.
	\end{equation}
	Here, $\mathbb{N}$ represents the set of natural numbers including $0$, and $\alpha_M$ denotes the maximum of ${\alpha_1, \cdots, \alpha_m}$. We say that the singular source at $p$ is non-quantized if $\alpha_p$ is not a positive integer. Throughout this work, we assume all singular sources, when they occur at blow-up points, are non-quantized. In the first part of this work, we shall prove that the non-degeneracy result holds true in this more general setting.
	
	In order to state our results, we need the following definitions. First of all, let us set: 
	\begin{equation}\label{G_j*}
		G_j^*(x)=8\pi (1+\alpha_j)R_M(x,p_j)+8\pi \sum_{l,l\neq j}(1+\alpha_l)G_M(x,p_l),
	\end{equation}
	where $R_M(\cdot, p_j)$ is the regular part of $G_M(\cdot, p_j)$,
	\begin{equation}\label{L-p}
		L(\mathbf{p})=\sum_{j\in I_1}[\Delta \log h(p_j)+\rho_*-N^*-2K(p_j)]h(p_j)^{\frac{1}{1+\alpha_M}}e^{\frac{G_j^*(p_j)}{1+\alpha_M}},
	\end{equation}
	and
	\begin{equation}
		\label{a-note}
		\begin{cases}
			\alpha_M=\max_{i}\alpha_i,\quad
			I_1=\{i\in \{1,\cdots,m\};\quad \alpha_i=\alpha_M\}, \\ 
			\\
			\rho_*=8\pi\sum_{j=1}^m(1+\alpha_j),\quad  N^*=4\pi\sum_{j=1}^m\alpha_j.
		\end{cases}
	\end{equation}
	As mentioned above, some blow-up points being non-quantized singular sources while others being regular points, we can assume, without loss of generality, that $p_1, \cdots, p_{m_1}$ are singular sources and $p_{m_1+1}, \cdots, p_m$ are regular points. For $(x_{m_1+1}, \cdots, x_m) \in M \times \cdots \times M$, we define: 
	\begin{equation*}
		\begin{aligned}
			f^*(x_{m_1+1},\cdots,x_m)
			=&\sum_{j=m_1+1}^m\big[\log h(x_j)+4\pi R_M(x_j,x_j)\big]+4\pi \sum_{l\neq j}^{m_1+1,\cdots,m}G_M(x_l,x_j)\\
			&+\sum_{j=m_1+1}^m\sum_{i=1}^{m_1}8\pi(1+\alpha_i)G_M(x_j,p_i).
		\end{aligned}
	\end{equation*}
	We remark that for the set of regular blow-up points $(p_{m+1},\cdots,p_m)$, Chen-Lin in \cite{chen-lin-sharp} proved that it is a critical point of $f^*.$ In this article, we will examine all possible combinations of blow-up points. Dividing them into two categories:
	$$\mbox{\bf Class One:}\quad \alpha_M>0,\quad \mbox{\bf Class Two:}\quad \alpha_M\le 0.$$
	In \emph{Class One}, the set of blow-up points includes at least one positive singular source. While in \emph{Class Two}, the blow-up set either consists of regular blow-up points and negative singular sources ($\alpha_M = 0$) or contains only negative singular sources ($\alpha_M < 0$). Our first result addresses \emph{Class One}:

	\begin{thm}\label{main-theorem-2}
		Let $\nu_k$ be a sequence of bubbling solutions of {\rm (\ref{m-equ})} and assume that the blow-up set $\{p_1,\cdots,p_m\}$ satisfies
		$\{p_1,\cdots,p_m\}\cap \{q_1,\cdots,q_N\}\neq \emptyset$. Suppose $(\alpha_1,\cdots,\alpha_N)$ satisfies {\rm (\ref{largest-s})},
		$\alpha_M>0$, $L(\mathbf{p})\neq 0$ and, as far as $m_1<m$,  $$\det \big(D^2f^*(p_{m_1+1},\cdots,p_m)\big)\neq 0.$$ Then there exists $n_0>1$ such that
		$\nu_k$ is non degenerate for all $k\ge n_0$.
	\end{thm}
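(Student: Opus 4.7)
\medskip

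\noindent\textbf{Proof proposal.} The plan is to argue by contradiction. Suppose non-degeneracy fails along a subsequence, so there exist nontrivial solutions $\phi_k$ of the linearized equation \eqref{r-lin-equ} (associated with $\mathrm{w}_k$ from \eqref{r-sol}), which we normalize by $\|\phi_k\|_{L^\infty(M)}=1$. The aim is to show that under the assumptions $\alpha_M>0$, $L(\mathbf{p})\neq 0$, and (when $m_1<m$) $\det D^2 f^*\neq 0$, one must have $\phi_k\to 0$ uniformly, a contradiction.

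The first block of work is local and inner. Near each blow-up point $p_j$, the rescaled $\mathrm{w}_k$ converges to an entire singular (or regular) Liouville profile $U_{\alpha_j}$ solving $\Delta U+|z|^{2\alpha_j}e^U=0$ with finite total mass $8\pi(1+\alpha_j)$. After the corresponding rescaling, $\phi_k$ converges to a bounded solution $\psi_j$ of the linearized entire problem around $U_{\alpha_j}$. Here the non-quantization hypothesis \eqref{largest-s} is essential: for $\alpha_j>-1$ with $\alpha_j\notin\mathbb{N}$ the kernel of this linearized operator is one-dimensional, spanned only by the scaling mode $\partial_{\log\mu}U_{\alpha_j}$, while at the regular points $p_j$ with $j>m_1$ the kernel is three-dimensional (scaling plus the two translation modes). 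Thus $\psi_j$ is determined by a single scalar $\sigma_j$ for $j\le m_1$ and by a triple $(\sigma_j,b_j^{(1)},b_j^{(2)})$ for $j>m_1$.

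The second block is global and outer. By standard potential-theoretic arguments (and the concentration of the nonlinearity at $\{p_j\}$), up to a further subsequence $\phi_k\to\phi^*$ in $C^2_{\mathrm{loc}}(M\setminus\{p_1,\ldots,p_m\})$, where $\phi^*$ is a finite sum of Green's functions,
\begin{equation*}
\phi^*(x)=\sum_{j=1}^{m}c_j\,G_M(x,p_j)+c_0,
\end{equation*}
with the constants $c_j$ linked to the inner parameters $\sigma_j,b_j^{(\ell)}$ through matching of inner and outer expansions. The main step is now to insert the sharp second-order bubble expansions of $\mathrm{w}_k$ near each $p_j$ (from \cite{byz-1} at singular points and \cite{bart-5} at regular points) into Pohozaev-type identities obtained by testing \eqref{r-lin-equ} against a suitable vector field $\nabla\mathrm{w}_k$, respectively $\partial_{x_\ell}\mathrm{w}_k$ in local coordinates. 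At each point $p_j\in I_1$ the leading-order contribution, after the dominant cancellations determined by $\alpha_M>0$, produces the factor
\begin{equation*}
\bigl[\Delta\log h(p_j)+\rho_*-N^*-2K(p_j)\bigr]\,h(p_j)^{1/(1+\alpha_M)}\,e^{G_j^*(p_j)/(1+\alpha_M)},
\end{equation*}
so that the resulting scalar Pohozaev identity is a nonzero multiple of $L(\mathbf{p})\,\sigma$, where $\sigma$ is the common scaling parameter forced by the matching at the maximal-mass points $I_1$. The hypothesis $L(\mathbf{p})\neq 0$ then gives $\sigma=0$, which by matching propagates to $\sigma_j=0$ for every $j\in I_1$, and, using that the other indices carry subdominant mass, also $\sigma_j=0$ for all $j$. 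For the regular blow-up points $p_{m_1+1},\ldots,p_m$ (if any), the translational Pohozaev identities produce a linear system on $(b_j^{(1)},b_j^{(2)})_{j>m_1}$ whose coefficient matrix is, up to a nonzero prefactor, $D^2 f^*(p_{m_1+1},\ldots,p_m)$; its invertibility forces all $b_j^{(\ell)}=0$. Hence $\phi^*$ reduces to a harmonic function on $M$, i.e. a constant, and the normalization $\int\frac{He^{\mathrm{w}_k}\phi_k}{\int He^{\mathrm{w}_k}}=0$ (implicit in \eqref{r-lin-equ}) forces $c_0=0$ too.

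With $\phi^*\equiv 0$ and all inner coefficients zero, a standard bootstrap using pointwise bubble estimates for $\phi_k$ on each concentration scale yields $\|\phi_k\|_{L^\infty(M)}\to 0$, contradicting $\|\phi_k\|_\infty=1$. The main obstacle is the derivation of the Pohozaev identity to sufficient order at a \emph{non-quantized singular} blow-up point: the relevant error terms scale with powers of the bubble parameter depending on $\alpha_M$, and one must track them precisely to see that $L(\mathbf{p})$ is the exact coefficient appearing in the limit identity. This is where the expansions of \cite{byz-1} for non-quantized singular bubbles, combined with the regular-point analysis of \cite{bart-5}, must be applied uniformly and then matched; once this is done, the assumptions $L(\mathbf{p})\neq 0$ and $\det D^2 f^*\neq 0$ close the argument.
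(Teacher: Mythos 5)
Your overall skeleton --- arguing by contradiction with a normalized kernel element, classifying the rescaled limits at non-quantized singular points (one radial mode, by the non-quantization hypothesis) versus regular points (three modes), using translational Pohozaev identities at the regular points to produce a linear system governed by $D^2 f^*$, and closing with a bootstrap --- does match the paper's strategy for the translational part. The genuine gap is in the mechanism you propose for killing the dilation coefficients. You attribute the appearance of $L(\mathbf{p})$ to a Pohozaev-type identity obtained by testing \eqref{r-lin-equ} against $\nabla {\rm w}_k$, i.e. against $\partial_{x_\ell}{\rm w}_k$. That is precisely the translational identity: by parity the radial (dilation) mode drops out of its leading order, and what it detects is the gradient and Hessian of the local weight, not the bracket $\Delta\log h(p_j)+\rho_*-N^*-2K(p_j)$ appearing in \eqref{L-p}. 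So, as written, this step does not produce a relation of the form $L(\mathbf{p})\,\sigma=0$, and the key hypothesis $L(\mathbf{p})\neq 0$ is never actually brought to bear; the claim that the factor in \eqref{L-p} "is produced" is asserted rather than derived, and it is the heart of the theorem.

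In the paper the dilation coefficient $b_0$ is killed by a different, global identity: since $\xi_k$ has zero $He^{{\rm w}_k}$-average by construction, one has $0=\int_M\rho_k He^{{\rm w}_k}\xi_k$, and the real work is the expansion of the local masses $\int_{\Omega(p_j,\tau)}\rho_k He^{{\rm w}_k}\xi_k$ up to the order $\varepsilon_k^2=e^{-\lambda^k/(1+\alpha_M)}$. This requires constructing correctors $w^k_{1,\xi}$ and $z_0^k$ for $\xi_k$ (with $z_0^k$ realized as a $\lambda$-derivative of the radial corrector of $u_k$, see \eqref{important-z0} and \eqref{key-c-2}), and, crucially, improving the oscillation of $\xi_k$ away from the blow-up set to $o(\varepsilon_k^2)$ (Lemma \ref{osci-xi-better} and its refinements); only then does the dominant contribution \eqref{p1-leading}, proportional to $b_0\,\Delta\log\tilde h_k(0)\,\varepsilon_k^2$, emerge at the points of $I_1$, with all other regions contributing $o(\varepsilon_k^2)$ (one must also compare the competing scales $\varepsilon_k^2$ and $e^{-\lambda^k/2}$, which is why $\alpha_M>1$ and $\alpha_M\in(0,1)$ receive separate treatment). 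Your proposal uses the zero-average relation only to normalize the constant $c_0$ at the very end, which discards the information that actually drives the proof. To repair the argument you should either adopt this mass-balance route, or replace your translational test function by a genuine dilation-type identity and back it with the second-order expansions of Theorem \ref{int-pos} and the rate \eqref{small-rho-k}, in both cases supplying the improved oscillation estimates that make every error term subordinate to $b_0\varepsilon_k^2$.
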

	Here $D^2f^*$ denotes the Hessian tensor field on $M$. At each regular blow-up point $p_j$ for $j=m_1+1,\cdots,m$, using conformal normal coordinates where $\chi_j$ is the conformal factor and satisfies the conditions
	$$\Delta_g=e^{-\chi_j}\Delta,\quad \chi_j(0)=|\nabla \chi_j(0)|=0,\quad e^{-\chi_j}\Delta \chi_j=-2K,$$
	we can compute $\det(D^2f^*)$ at these points. Here, $h_j$ is interpreted as $h_j e^{\chi_j}$ in a small neighborhood of each $p_j$ ($j=m_1+1,\cdots,m$). If $m_1=m$, meaning all blow-up points are singular sources, then as long as $\alpha_M>0$ we see that the unique relevant assumption is $L(\mathbf{p})\neq 0$.
	
	To present the result concerning \emph{Class two}, we begin by noting that the set of blow-up points consists only of regular points and negative sources. We introduce some new quantities. Let $B(q,r)$ represent the geodesic ball of radius $r$ centered at $q \in M$, and let $\Omega(q,r)$ denote the pre-image of the Euclidean ball of radius $r$, $B_r(q) \subset \mathbb{R}^2$, in a suitably defined isothermal coordinate system. If $m\geq 2$, we choose a collection of open, mutually disjoint sets $M_j$, with the closure of their union covering the entire manifold $M$.
	
	In this case let us remark that if $q_j$ is a regular blow-up point, then it is  automatically a critical point of 
	\[G_j^*(x)-G_j^*(q_j)+\log h_j(x)-\log h_j(0)\]
	and we make this assumption for  any blow-up point (remark that if $q_s$ is singular blow-up point, since $\alpha_M\le 0$, then the corresponding $\alpha_s\in (-1,0)$). So for all blowup points we assume that,
	\begin{equation}\label{neg-crit}
		\nabla \bigg ( G_j^*(x)
		+\log h_j(x)\bigg )|_{x=q_j}=0.
	\end{equation}

	Then we define, 
	\begin{equation}\label{Dp}
		D(\mathbf{p})=\lim_{r\to 0}\sum_{j=1}^mh_j(0)e^{G_j^*(q_j)}\bigg (
		\int_{M_j\setminus \Omega(q_j,r_j)}
		e^{\Phi_j(x,\mathbf{q})}d\mu(x)-\frac{\pi}{1+\alpha_j}r_j^{-2-2\alpha_j}\bigg ),
	\end{equation}
	where $M_1=M$ if $m=1$, $h_j$ is defined in (\ref{H2}), 
	\[r_j=\begin{cases}
		r\left(8h_j(0)e^{G_j^*(q_j)}\right)^{1/2},\quad  &m_1<j\le m, \\
		\\
		r,\quad &1\le j\le m_1,
	\end{cases}\]
	and
	\begin{align}\label{Phi_j}\Phi_j(x,\mathbf{q})=&\sum_{l=1}^m8\pi(1+\alpha_l)G_M(x,q_l)-G_j^*(q_j)\\ 
		&+\log h_j(x)-\log h_j(0)+4\pi\alpha_j(R_M (x,q_j)-G_M(x,q_j)). \nonumber
	\end{align}	
	\begin{rem}
		In local isothermal coordinates centered at $q_j$, $0=z(q_j)$, we have $4\pi\alpha_j(R(z,0)-G(z,0))=2\alpha_j\log|z|$ and, as far as $\alpha_j\neq 0$, we have $\alpha_j\in(-1,0)$. Although the expansion term is not absolutely integrable, we can handle the leading-order term after integration by using the term $1/r_j^{2+2\alpha_j}$. The remaining non-integrable term in the definition of $D(\mathbf{p})$ cancels out due to \eqref{neg-crit}, ensuring that the limit in \eqref{Dp} is well-defined. 
	\end{rem}
	
	Our second main result concerns \emph{Class Two}, i.e. $\alpha_M\le 0$.

	\begin{thm}\label{mainly-case-2}
		Under the same hypothesis of Theorem \ref{main-theorem-2} but assuming  $\alpha_M\le 0$ and (\ref{neg-crit}), then in any one of the following situations:
		\begin{enumerate}
			\item $\alpha_M=0$,\, $L(\mathbf{p})\neq 0$,\, $\det \big(D^2f^*(p_{m_1+1},\cdots,p_m)\big)\neq 0,$
			\item $\alpha_M=0$, $L(\mathbf{p})=0$,  $D(\mathbf{p})\neq 0$, $\det \big(D^2f^*(p_{m_1+1},\cdots,p_m)\big)\neq 0,$
			\item $\alpha_M<0$, $D(\mathbf{p})\neq 0$,
		\end{enumerate} the same non degeneracy property as in Theorem \ref{main-theorem-2} holds true.
	\end{thm}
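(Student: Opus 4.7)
The plan is to argue by contradiction along the lines of Theorem~\ref{main-theorem-2}, the main new ingredient being a renormalized asymptotic analysis that compensates for the absence of a leading positive singular source. Suppose $\nu_k$ is degenerate along a subsequence and let $\phi_k$ solve \eqref{r-lin-equ} with $\|\phi_k\|_\infty = 1$. First, as in \cite{bart-5,byz-1}, I would extract the two characteristic limit profiles of $\phi_k$: inside a shrinking neighborhood of each blow-up point $p_j$, after a rescaling adapted to the local blow-up rate (different for regular points and for singular points with $\alpha_j\in(-1,0)$), $\phi_k$ converges to a linear combination of kernel elements of the linearized Liouville equation on $\mathbb{R}^2$ --- the scaling mode with coefficient $c_{j,0}$ always, plus translation modes $c_{j,1},c_{j,2}$ at regular points only, since non-integer $\alpha_j$ breaks translation invariance; on compact subsets of $M\setminus\{p_1,\ldots,p_m\}$, a suitably normalized $\phi_k$ converges to a combination of Green's functions whose coefficients are tied to the $c_{j,0}$.

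The next step is to derive, via local Pohozaev identities around each $p_j$, a linear system that the $\{c_{j,0},c_{j,1},c_{j,2}\}$ must satisfy, and to show that under each set of hypotheses the only solution is trivial, contradicting $\|\phi_k\|_\infty=1$. In case~(1) ($\alpha_M=0$, $L(\mathbf{p})\neq 0$) the analysis runs parallel to Class One: the scaling Pohozaev at the regular points produces the obstruction $L(\mathbf{p})$, which kills the $c_{j,0}$, and the first-order Pohozaev identities combined with $\det D^2 f^*(\mathbf{p})\neq 0$ kill the translation modes; the negative singular points contribute only subleading terms, since their bubbles are wider and carry smaller mass than $8\pi$. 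In case~(3) ($\alpha_M<0$) the roles are essentially swapped: there are no translation modes at all, and the scaling Pohozaev identity must be renormalized --- the non-integrable $|z|^{2\alpha_j}$ contributions cancel exactly because of \eqref{neg-crit}, producing the finite quantity $D(\mathbf{p})$ which now plays the role previously played by $L(\mathbf{p})$.

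Case~(2) is the most delicate: $\alpha_M=0$ with $L(\mathbf{p})=0$, so the leading-order identity degenerates and one must expand to the next order in the blow-up parameter. The strategy is to subtract off the correct singular bubble at each negative singular point before integrating, using precisely the prescription appearing in \eqref{Dp}; assumption \eqref{neg-crit} at the singular blow-up points with $1+2\alpha_j\geq 0$ again ensures that the divergent $|z|^{2\alpha_j}$ contributions cancel, leaving a finite remainder whose total coefficient is $D(\mathbf{p})$. The hypothesis $D(\mathbf{p})\neq 0$ then forces the scaling coefficients to vanish, after which $\det D^2 f^*(\mathbf{p})\neq 0$ eliminates the translation modes at the regular blow-up points, exactly as in case~(1).

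The main obstacle is the bookkeeping in case~(2): one must simultaneously track the leading Liouville bubble at regular blow-up points and the wider subleading bubbles at the negative singular points, and renormalize divergent integrals in the Pohozaev identity so that the finite remainders combine into a multiple of $D(\mathbf{p})$ rather than producing spurious terms. A clean way to organize this is to perform the Pohozaev testing on the domains $M_j\setminus \Omega(q_j,r_j)$ appearing in \eqref{Dp} and to let $r\to 0$ along the scale dictated by the respective blow-up rates, so that the cancellation of the non-integrable contributions built into the very definition of $D(\mathbf{p})$ is automatically accounted for. Once all coefficients are forced to zero, one obtains $\phi_k\to 0$ uniformly on $M$, contradicting the normalization and establishing non-degeneracy.
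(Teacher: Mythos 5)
Your overall contradiction scheme and the roles you assign to $L(\mathbf{p})$, $D(\mathbf{p})$, $\det D^2f^*$ and \eqref{neg-crit} are the right ones, but the mechanism you propose for killing the scaling modes does not work as stated. The quantity $D(\mathbf{p})$ is not local: by \eqref{Dp} it is built from the integral of $e^{\Phi_j(\cdot,\mathbf{q})}$ over the \emph{outer} regions $M_j\setminus \Omega(q_j,r_j)$, i.e.\ it records how much mass $He^{{\rm w}_k}$ carries away from the blow-up set, renormalized against the tails of the local bubbles. A scaling Pohozaev identity on a small ball around $p_j$ only sees the local bubble and boundary terms on $\partial\Omega(q_j,r_j)$, so it cannot produce $D(\mathbf{p})$, and ``Pohozaev testing'' on the exterior domains with a dilation field is not the relevant identity either. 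What actually brings $D(\mathbf{p})$ in is the global mass identity $\int_M \rho_k He^{{\rm w}_k}\xi_k=0$ (valid since $\int_M He^{{\rm w}_k}\xi_k=0$), in which the local integrals are expanded via Theorems \ref{int-pos}, \ref{integral-neg}, \ref{reg-int}: at regular points they give the $d_{1,k}$-type term responsible for the contribution $L(\mathbf{p})\lambda_k e^{-\lambda_k}$, at the negative sources corrections of order $e^{-\lambda_k}$, while the outer integral, where $\xi_k$ is nearly constant, contributes the remaining $O(e^{-\lambda_k})$ piece; under \eqref{neg-crit} the non-integrable parts cancel exactly as in the proof of Theorem \ref{negative-b}, and the coefficient at order $e^{-\lambda_k}$ is precisely $D(\mathbf{p})$. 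The Pohozaev identity is needed only for the translation modes at the regular blow-up points, as in Step four of Section 4.

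A second, related gap: you treat the scaling coefficients $c_{j,0}$ at the different blow-up points as independent unknowns of a linear system, and you describe the outer limit of the normalized kernel element as a combination of Green's functions. In this non-degeneracy setting the weighted local integrals $\int_{\Omega(p_j,\tau)}\rho_k He^{{\rm w}_k}\xi_k$ are $o(1)$, so the Green-function coefficients in the representation formula vanish in the limit and $\xi_k$ converges to one constant away from the blow-up set; matching this constant with the boundary values of the local radial modes forces all scaling coefficients to coincide (this is the content of Lemma \ref{osci-xi-better}, Proposition \ref{small-osc-psi} and Lemma \ref{same-b} in the Class One proof, and the analogous statements are indispensable here). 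Without this reduction to a single coefficient $b_0$, the hypotheses of cases (1)--(3) cannot close the argument: $L(\mathbf{p})$ and $D(\mathbf{p})$ are single scalars obtained by summing over the blow-up points, so an identity of the form $0=\sum_j c_{j,0}(\cdots)_j+o(e^{-\lambda_k})$ does not force each $c_{j,0}$ to vanish unless the $c_{j,0}$ are already known to be equal. Finally, in case (2) one must also verify that the translation modes at the regular points enter the mass identity only at order $o(e^{-\lambda_k})$, which relies on the location estimate $\nabla(\log h+G_j^*)(p_{k,j})=O(\lambda_k e^{-\lambda_k})$ of \eqref{first-deriv-est}; this point is absent from your bookkeeping discussion.
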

	The proof of the above two theorems is highly dependent on the refined estimates recently obtained in \cite{byz-1}, which notably allow us for \emph{Class One} problems to avoid the requirement that singular blow-up points must be critical points of specific Kirchoff-Routh type functionals. 
	In the second part of this paper we apply the results about \emph{Class Two} problems to the Onsager mean field vortex model with singular sources. Since long-lived vortex structures are generally expected to concentrate at the critical points of Kirchoff-Routh type functionals (\cite{caglioti-2},\cite{ma-wei}), we will focus on the physically more interesting situation where a negative source (co-rotating vortex) satisfies this property.
	However we need the counterpart conclusions of the above results (in particular Theorem  \ref{mainly-case-2}) for the Dirichlet problem \eqref{r-equ-flat} (or \eqref{equ-flat}). To present these results, we first introduce some local and global quantities that are appropriate for describing the combinations of blow-up points. We define the following:
	\begin{equation*}
		G_{j,\Omega}^*(x)=8\pi (1+\alpha_j)R_{\Omega}(x,p_j)+8\pi \sum_{l\neq j}^{1,\cdots,m}(1+\alpha_l)G_{\Omega}(x,p_l),
	\end{equation*}
	and, similar to notations for the first part, assume (\ref{largest-s}) for $(\alpha_1,\cdots,\alpha_m)$, where we keep the same conventions
	about $I_1$ and $\alpha_M$. Next let us define,
	$$L_{\Omega}(\mathbf{p})=\sum_{j\in I_1}\Delta \log h(p_j)h(p_j)^{\frac{1}{1+\alpha_M}}e^{\frac{G_{j,\Omega}^*(p_j)}{1+\alpha_M}},$$
	\begin{align*}
		f_{\Omega}^*(x_{m_1+1},\cdots,x_m)&=\sum_{j=m_1+1}^m\big[\log h(x_j)+4\pi R(x_j,x_j)\big]+4\pi \sum_{l\neq j}^{m_1 +1,\cdots,m}G(x_l,x_j),\\
		&\quad+\sum_{j=m_1+1}^m\sum_{i=1}^{m_1}8\pi(1+\alpha_i)G(x_j,p_i),
	\end{align*}
	and let $D^2f_{\Omega}^*$ be the Hessian on $\Omega$. Of course, in this case $(p_{m_1+1},\cdots,p_m)$ is a critical point of $f_{\Omega}^*$ (\cite{ma-wei}).
	Concerning {\em Class one} we have,
	
	\begin{thm}\label{main-theorem-4}
		Let $\nu_k$ be a sequence of bubbling solutions of {\rm (\ref{equ-flat})} and assume that the blow-up set $\{p_1,\cdots,p_m\}$ satisfies
		$\{p_1,\cdots,p_m\}\cap \{q_1,\cdots,q_N\}\neq \emptyset$. Suppose $(\alpha_1,\cdots,\alpha_N)$ satisfies {\rm (\ref{largest-s})},
		$\alpha_M>0$, $L_{\Omega}(\mathbf{p})\neq 0$ and, as far as $m_1<m$,  $\det \big(D^2f^*(p_{m_1+1},\cdots,p_m)\big)\neq 0$. Then there exists $n_0>1$ such that
		$\nu_k$ is non degenerate for all $k\ge n_0$.
	\end{thm}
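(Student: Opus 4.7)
The plan is to argue by contradiction, in parallel with the Riemann-surface proof of Theorem \ref{main-theorem-2}, substituting the Dirichlet Green's function $G_{\Omega}$ for $G_M$ throughout. Suppose no such $n_0$ exists; then along a subsequence there exist non-trivial $\phi_k$ solving (\ref{r-lin-equ-flat}), normalized by $\|\phi_k\|_{L^\infty(\Omega)}=1$. The boundary condition together with elliptic regularity gives the Green's representation
\[
\phi_k(x)=\rho\int_{\Omega} G_{\Omega}(x,y)\,\frac{He^{{\rm w}_k(y)}}{\int_{\Omega}He^{{\rm w}_k}dx}\bigl(\phi_k(y)-\bar\phi_k\bigr)\,dy,
\]
where $\bar\phi_k$ denotes the weighted mean appearing in (\ref{r-lin-equ-flat}). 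This splits the analysis into an inner part near each bubble and an outer part that reconstructs $\phi_k$ from its values on shrinking disks.

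First, around each blow-up sequence $p_{k,j}\to p_j$ I would rescale $\phi_k$ at the natural scale of the $j$-th bubble. Using the refined pointwise estimates of \cite{byz-1} for ${\rm w}_k$, the rescaled $\phi_k$ converges to an element of the kernel of the linearized Liouville bubble on $\R^2$, regular or singular according to $\alpha_j$. At a regular blow-up point the kernel is three-dimensional, spanned by the scaling mode and the two translation modes; at a non-quantized singular source with $\alpha_j>-1$, $\alpha_j\notin\mathbb{N}$, the kernel is one-dimensional, generated by the scaling mode alone (translations are obstructed by the Dirac source). Each local limit is thus encoded by a triple $(c_{j,0},c_{j,1},c_{j,2})$ at regular points and by a single scalar $c_{j,0}$ at singular points.

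Next, I would pin down these coefficients by matching with the outer limit. Away from shrinking disks around the $p_j$'s, $\phi_k$ converges to a linear combination of $G_{\Omega}(\cdot,p_j)$ (coming from the scaling modes) and of $\partial_{x_\ell}G_{\Omega}(\cdot,p_j)$ at the regular $p_j$'s (from the translation modes). Substituting these outer limits into the sharp expansion of $He^{{\rm w}_k}$ and applying local Pohozaev identities on small disks around each $p_{k,j}$ yields, at leading order, a linear system on the $c_{j,*}$'s. The assumption $\alpha_M>0$ selects, as the dominant scaling block, only the singular blow-up points $p_j$ with $\alpha_j=\alpha_M$, i.e.\ the indices in $I_1$; the determinant of this block turns out to be proportional to $L_{\Omega}(\mathbf{p})$. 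The block encoding translations at the regular blow-up points is governed by $D^2f_{\Omega}^*(p_{m_1+1},\dots,p_m)$. The hypotheses $L_{\Omega}(\mathbf{p})\neq 0$ and, when $m_1<m$, $\det D^2f_{\Omega}^*\neq 0$ thus force every $c_{j,*}=0$, so $\phi_k\to 0$ on compact subsets away from the blow-up set. A final bootstrap through the Green's representation displayed above propagates vanishing into the bubbles and yields $\|\phi_k\|_\infty\to 0$, contradicting the normalization.

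The principal obstacle is the derivation of the Pohozaev identity at the singular blow-up points in $I_1$, which is the place where the quantity $L_{\Omega}(\mathbf{p})$ actually emerges. One must extract this coefficient from a combination of (i) the next-to-leading profile of ${\rm w}_k$ near a non-quantized singular source with $\alpha_j=\alpha_M$ and (ii) the cross-interactions with the other bubbles (which produce the $G_{j,\Omega}^{*}$ terms). This is precisely where the refined estimates of \cite{byz-1} are indispensable: the classical second-order estimates would leave residual errors of the same order as $L_{\Omega}(\mathbf{p})$ and prevent a clean identification of the leading coefficient. Relative to the Riemann-surface case, the Dirichlet argument simplifies in two ways: no Gaussian curvature term $-2K(p_j)$ appears and no $\rho_*-N^*$ correction is present in $L_{\Omega}(\mathbf{p})$, since $G_{\Omega}$ already encodes the boundary condition and there is no volume normalization to subtract.
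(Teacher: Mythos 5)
Your overall architecture---argue by contradiction, identify the one-dimensional (scaling) kernel at the non-quantized singular sources versus the three-dimensional kernel at regular points, kill the translation modes by a local Pohozaev identity together with $\det D^2f^*_{\Omega}\neq 0$, and conclude by a final bootstrap---is the same as the paper's (the proof is carried out in detail for Theorem \ref{main-theorem-2} and transposed to the Dirichlet setting). The genuine gap is in the step where you claim that ``local Pohozaev identities on small disks'' produce a linear system whose scaling block has determinant proportional to $L_{\Omega}(\mathbf{p})$. In the paper $L_{\Omega}(\mathbf{p})$ does not emerge from any local identity at the singular points: it emerges from the single global identity
\begin{equation*}
\int_{\Omega}\rho_k\,\frac{He^{{\rm w}_k}}{\int_{\Omega}He^{{\rm w}_k}}\bigl(\phi_k-\bar\phi_k\bigr)\,dx=0,
\end{equation*}
which is just the definition of the weighted mean $\bar\phi_k$, combined with two global facts: (i) the scaling coefficients of the local limits at the different blow-up points asymptotically coincide, which follows from the smallness of the oscillation of the normalized kernel element outside the bubbling disks and requires the delicate bootstrap of Lemma \ref{osci-xi-better}, Proposition \ref{small-osc-psi} and Lemmas \ref{b0-va}, \ref{same-b}; and (ii) the relation between the bubble heights $\lambda_{k,j}$ at the various points (Proposition \ref{est-muk} in the Dirichlet case), which is precisely where the weights $h(p_j)^{\frac{1}{1+\alpha_M}}e^{G^*_{j,\Omega}(p_j)/(1+\alpha_M)}$ and the sum over $I_1$ in $L_{\Omega}(\mathbf{p})$ come from. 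A dilation-type identity on a single disk cannot produce this sum, and at the order $\varepsilon_k^2$ at which $\Delta\log h(p_j)$ becomes visible its boundary terms are contaminated by the outer behavior of $\phi_k$, which at that stage of your argument is only known to be $o(1)$; without the orthogonality identity above the contradiction does not close.

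Two further points. Your outer-limit statement is incorrect as a limit: since $\|\phi_k\|_{\infty}=1$ and $\phi_k=0$ on $\partial\Omega$, the locally uniform limit of $\phi_k$ away from the blow-up set is a bounded harmonic function vanishing on $\partial\Omega$, hence identically zero; the functions $G_{\Omega}(\cdot,p_j)$ and $\partial_\ell G_{\Omega}(\cdot,p_j)$ enter only with coefficients that are $o(1)$, and the entire difficulty is to pin down their exact rates. This cannot be done with the estimates of \cite{byz-1} alone, which concern ${\rm w}_k$ and not the kernel element: one must construct the higher-order correctors for $\phi_k$ itself inside each bubble (the analogues of $w^k_{1,\xi}$, $w^k_{2,\xi}$, $z_0^k$ in the paper) to reach the $\varepsilon_k^2$ precision at which $L_{\Omega}(\mathbf{p})$ appears, and, in the Dirichlet case, to keep track of the constant $\bar\phi_k$ which replaces the role of the far-field constant on the closed surface. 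By contrast, your treatment of the translation modes at the regular points (Pohozaev identity plus the non-degeneracy of the Hessian of $f^*_{\Omega}$) is indeed the paper's route and is fine as a sketch, as is the final step quoting the argument of \cite{bart-5}.
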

	
	Consider the case of {\em Class two} we set
	\begin{align*}\Phi_{j,\Omega}(\mathbf{q}):=~&\sum_{l=1}^m 8\pi(1+\alpha_l) G_{\Omega}(x,q_l)-G_{j,\Omega}^*(q_j)+\log h_j(x)-\log h_j(q_j)\\
		&+2\alpha_j\log |x-q_j|,
	\end{align*}
and
$$
D_{\Omega}(\mathbf{p})
=\lim_{r\to 0}\sum_{j=1}^m h(p_j)e^{G_{j,\Omega}^*(p_j)}\bigg (\int_{\Omega_j\setminus B_{r_j}(p_j)}e^{\Phi_{j,\Omega}(\mathbf{q})}dx-\frac{\pi}{1+\alpha_j}r_j^{-2\alpha_j-2}\bigg ),
$$
where $\displaystyle{r_j=r\left( 8 h(p_j)e^{G_{j,\Omega}^*(p_j)}\right)^{1/2}}$ if $j>m_1$ and $r_j=r$ for $1\le j\le m_1$; $\Omega_1=\Omega$ if $m=1$, otherwise we have $\Omega_l\cap \Omega_s=\emptyset$ for $l\neq s$ and $\cup_{j=m_1+1}^m\overline\Omega_j=\overline\Omega$. The counterpart result of Theorem \ref{main-theorem-4} in the case of Class two reads as follows

Similar to the previous case we require 
\begin{equation}\label{neg-crit-2}
	\nabla \bigg (G_{j,\Omega}^*(x)+\log h_j(x)\bigg)\bigg |_{x=q_j}=0, \quad 1\le j\le m.
\end{equation}
Then we have,
\begin{thm}\label{main-theorem-1}
	Under the same hypothesis of Theorem \ref{main-theorem-4} but assuming $\alpha_M\le 0$ and (\ref{neg-crit-2}), then for any one of the following situations:
	\begin{enumerate}
		\item $\alpha_M=0$, $L_{\Omega}(\mathbf{p})\neq 0$, $\det \big(D^2f_{\Omega}^*\big )(p_{m_1+1},\cdots,p_m)\neq 0$,
		\item $\alpha_M=0$, $L_{\Omega}(\mathbf{p})=0$, $D_{\Omega}(\mathbf{p})\neq 0$, $\det \big(D^2f_{\Omega}^*\big )(p_{m_1+1},\cdots,p_m)\neq 0$,
		\item $\alpha_M<0$, $D_{\Omega}(\mathbf{p})\neq 0$,
	\end{enumerate} the same non degeneracy property as in Theorem \ref{main-theorem-4} holds true.
\end{thm}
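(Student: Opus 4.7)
The plan is to follow the blueprint of Theorem \ref{mainly-case-2} with modifications adapted to the Dirichlet setting: the ambient Green's function $G_M$ is replaced by $G_\Omega$, the regular part $R_M$ by $R_\Omega$, and curvature terms drop out since $\Omega\subset\mathbb{R}^2$ is flat. As in the Riemann surface case, I would argue by contradiction, assuming the existence of a sequence $\phi_k$ of nontrivial solutions to the linearized equation \eqref{r-lin-equ-flat} associated with bubbling solutions ${\rm w}_k$ of \eqref{r-equ-flat}, normalized so that $\|\phi_k\|_{L^\infty(\Omega)}=1$. The boundary condition $\phi_k|_{\partial\Omega}=0$ is handled uniformly using the standard Green representation via $G_\Omega$, and it forces the far-field expansion of $\phi_k$ to vanish on $\partial\Omega$, which plays the role of the zero-average condition enforced by $\mathring{H}^1(M)$ in the Riemann surface setting.

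Next I would run a local blow-up analysis around each $p_j$ using the refined pointwise expansions of ${\rm w}_k$ from \cite{byz-1}. Rescaling $\phi_k$ near $p_j$ at the natural scale $\varepsilon_k^{(j)}$ of the bubble, one obtains in the limit a bounded solution of the linearized singular Liouville equation on $\mathbb{R}^2$ with weight $|z|^{2\alpha_j}$. The classification of its bounded kernel (trivial when $\alpha_j\notin\mathbb{N}$, and containing only the scaling mode when $\alpha_j=0$) shows that the local limit is a linear combination of the radial (dilation) solution plus, at regular points, the two translation solutions; this yields constants $c_0^{(j)}$ and, for $j>m_1$, $\mathbf{c}^{(j)}\in\mathbb{R}^2$. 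Outside the blow-up set, $\phi_k$ converges in $C^2_{\rm loc}$ to a function $\phi^*$ which is a superposition of Green's functions $G_\Omega(\cdot,p_j)$ and, at regular points, their gradients, with coefficients determined by the local data through matching.

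The third and decisive step is to test the equation \eqref{r-lin-equ-flat} against carefully chosen functions (the solutions ${\rm w}_k$ themselves, their tangential derivatives near regular points, and the Pohozaev multiplier $(x-p_j)\cdot\nabla{\rm w}_k$) on small disks $B_{r_j}(p_j)$ and on $\Omega_j\setminus B_{r_j}(p_j)$, then pass to the limit $k\to\infty$ followed by $r\to 0$. The Pohozaev identity at the singular blow-up points produces, after cancellation of the non-integrable leading singularities made legitimate by assumption \eqref{neg-crit-2}, the quantity $D_\Omega(\mathbf{p})$ as the coefficient of the dilation constants $c_0^{(j)}$; testing against ${\rm w}_k$ yields $L_\Omega(\mathbf{p})$ multiplied by a linear combination of the $c_0^{(j)}$'s; and testing against tangential derivatives at regular blow-up points yields the Hessian $D^2f_\Omega^*(p_{m_1+1},\ldots,p_m)$ acting on the translation coefficients $\mathbf{c}^{(j)}$. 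The three alternatives in the theorem correspond exactly to the nondegeneracy of the resulting linear system: in (1) $L_\Omega$ kills the common dilation mode and the Hessian kills the translations; in (2) the vanishing of $L_\Omega$ is compensated by the next-order term $D_\Omega$; in (3) there are no regular points, so only the dilation system survives and $D_\Omega(\mathbf{p})\ne 0$ is enough. All $c_0^{(j)}$ and $\mathbf{c}^{(j)}$ must then vanish, which combined with the classification of the local limit implies $\phi_k\to 0$ uniformly, contradicting $\|\phi_k\|_\infty=1$.

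The hard part will be the Pohozaev-type computation at singular blow-up points with $\alpha_j\in(-1,0)$: for $1+2\alpha_j\geq 0$ the boundary term involves integrals with borderline integrability against $|x-q_j|^{2\alpha_j}$, and the assumption \eqref{neg-crit-2} is precisely what makes the divergent parts cancel so that the finite remainder $D_\Omega(\mathbf{p})$ is well defined; tracking the constants through this cancellation, and verifying that the same refined estimates of \cite{byz-1} give error terms $o(1)$ uniformly in $r$ small and $k$ large, is the delicate technical core. Once this is in place, the Dirichlet case differs from the Riemann surface case only by the absence of curvature contributions and of the $-N^*$ term in $L_\Omega$, so the bookkeeping carries over verbatim from the proof of Theorem \ref{mainly-case-2}.
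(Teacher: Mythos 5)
Your global architecture---contradiction, blow-up of the normalized kernel element, classification of the local limits into a dilation coefficient and (at regular points) translation coefficients, and a final obstruction expressed through $L_{\Omega}$, $D_{\Omega}$ and $\det D^2f^*_{\Omega}$---is the same as the paper's. The genuine gap is in your decisive step. You propose to extract $L_{\Omega}$ by pairing the linearized equation with ${\rm w}_k$ and to extract $D_{\Omega}$ from a Pohozaev identity with multiplier $(x-p_j)\cdot\nabla {\rm w}_k$, treating them as separate equations of a linear system; neither identity is derived, and this is not how these quantities actually enter. In the paper's scheme both come from a single exact relation, $\int_{\Omega}He^{{\rm w}_k}\xi_k=0$, which holds because $\xi_k$ is normalized by subtracting the weighted average $\langle\phi_k\rangle_k$ (with your normalization $\phi_k/\|\phi_k\|_{\infty}$, $\phi_k=0$ on $\partial\Omega$, this orthogonality is not automatic and the constant $\langle\phi_k\rangle_k$ must be tracked). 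The expansion of that integral, local pieces plus the outer region, reproduces the $\lambda$-derivative of the total-mass expansion, i.e.\ of Theorem \ref{negative-b} and of Proposition \ref{est-muk} together with \eqref{im-ck}: $L_{\Omega}$ is the coefficient of $\lambda_k e^{-\lambda_k}$ carried by the regular blow-up points, while $D_{\Omega}$ is the coefficient of $e^{-\lambda_k}$ carried by the outer integral after the divergent part cancels thanks to \eqref{neg-crit-2}. This is one scalar obstruction whose leading coefficient changes with the case, not three independent test-function identities, and it is exactly what explains the trichotomy (1)--(3). Your proposal never invokes Theorem \ref{negative-b} (or its Dirichlet counterpart), which is the paper's key new input for Class Two.

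Second, in cases (2) and (3) the surviving obstruction is only of size $e^{-\lambda_k}$, so the argument closes only if every other contribution to the identity---the oscillation of $\xi_k$ away from the blow-up set, the harmonic corrections $\psi^k_{\xi}$, the correctors of type $w^k_{1,\xi}$, $w^k_{2,\xi}$, $z^k_0$, and the locking together of the dilation coefficients at different blow-up points---is shown to be $o(e^{-\lambda_k})$. This is precisely the bootstrap of Section 4 (Lemma \ref{osci-xi-better}, Proposition \ref{small-osc-psi}, Lemmas \ref{b0-va} and \ref{same-b}), which must be rerun at the Class-Two scales; you defer all of it as ``the delicate technical core'', but without it the assertion that the Pohozaev computation ``produces $D_{\Omega}$ as the coefficient of the dilation constants'' is unsupported, especially in case (2) where $L_{\Omega}=0$ makes the nominally leading $\lambda_k e^{-\lambda_k}$ term vanish and subleading errors of that very size must be excluded. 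By contrast, your translation step (Pohozaev identity with $\partial_j$ at the regular points yielding the Hessian of $f^*_{\Omega}$) does match the paper's argument.
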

\medskip

\medskip

In the second part of this paper, we apply our non-degeneracy results to the analysis of mean field theory within the Onsager (\cite{On} and \cite{caglioti-1},\cite{caglioti-2})  statistical mechanics description of the vortex model with singular sources. We consider the case where the mean field vorticity $\omega$ interacts with one fixed co-rotating vortex, whose total vorticity (proportional to $|\beta|$) has the same sign as that of $\omega$, alongside $N$ counter-rotating vortices. These singularities act as a fixed external potential in the model (see Remark \ref{rem.1} for further details). Compared to earlier findings in \cite{caglioti-2}, \cite{chang-chen-lin}, \cite{BLin2} and \cite{BLin3}, a key nuance is that the critical threshold for the existence of solutions in the canonical mean field model is no longer $8\pi$, but rather $8\pi(1+\beta)$. Physically, this suggests that entropy maximizers (i.e., thermodynamic equilibrium states satisfying the microcanonical variational principle, as discussed in section \ref{sec5.2}) are expected to concentrate at the vortex centered at the $\beta$-sink in the large energy limit. Consequently, we need to extend first the results from \cite{caglioti-2} and \cite{chang-chen-lin} regarding the existence of negative temperature thermodynamic equilibrium states and the equivalence of statistical ensembles for a specific class of domains (domains of the first kind) see Definition \ref{def:first} and sections \ref{sec:meansink} and 
\ref{section:firstsec}.
This extension relies on adaptations of known (\cite{wolan2}) dual variational methods, a sharp singular version (\cite{ads}) of the Moser-Trudinger inequality \cite{moser}, recent findings in \cite{bj-lin} and \cite{Bons}, as well as arguments from \cite{caglioti-1} and \cite{caglioti-2}, which 
is why we will be rather sketchy about some of these proof.\\
However, as in \cite{caglioti-2} and \cite{bart-5}, the situation for domains of second kind (again see Definition \ref{def:first}) is much more subtle as it requires, among other things, the understanding of the existence/non existence of solutions of the mean field equation (which is \eqref{r-equ-flat-lm} below) exactly at the critical (with respect to the singular Moser-Trudinger inequality \cite{ads}) threshold $8\pi(1+\beta)$ and a full description of the set of solutions in a small enough supercritical region behind $8\pi(1+\beta)$, where uniqueness of solutions fails. It is worth to remark that a full description of the thermodynamic equilibrium states is very involved and still open in general for domains of second kind also in the regular (without $\beta$-sources) case,  see \cite{Bons} for partial results concerning this point and the very interesting counterexamples recently found in \cite{BCN}. Here  we are able to generalize various results in \cite{caglioti-2}, \cite{chang-chen-lin} and \cite{bart-5}, \cite{BLin2}, including the existence and asymptotic behavior of entropy maximizers for large energies (see Lemma \ref{lem:entropyasymp}) and  the characterization of domains of second kind in terms of $D_\Omega({\bf p})$ (see Theorem \ref{thm:5.2}) which is in turn crucial to the existence of a full unbounded interval of strict convexity of the Entropy (negative specific heat states, see Theorem \ref{convex:entropy}). Few  rigorous proof of the existence of negative specific heat states are at hand for these models (\cite{caglioti-2},\cite{bart-5}) which in our case require the description of the monotonicity of the energy as a function of the inverse temperature in a  supercritical (behind $8\pi(1+\beta)$) interval. This is not at all trivial and we succeed here
by using the local uniqueness result in \cite{byz-1}, Theorem \ref{main-theorem-1} above and the local uniqueness and nondegeneracy results in \cite{wyang} about the naturally associated Gel'fand-type problem. These results are neither enough on their own which is why we purse a generalization (for $\beta$-sources) of independent interest of former estimates obtained in the regular case in \cite{chen-lin-sharp}, see Theorem \ref{negative-b}, Proposition \ref{est-muk} and \eqref{im-ck} below. 

The proof of Theorem \ref{thm:5.2} requires careful consideration, especially in cases where $D_{\Omega}({\bf p})$ might vanish. Theorem \ref{thm:5.2} itself is of independent interest as it generalizes results previously established for semilinear elliptic equations with critical nonlinearity in three dimensions (\cite{druet}) and for mean field type equations when $\beta=0$ (\cite{chang-chen-lin}, \cite{BLin2}). Additionally, we derive an intriguing sufficient condition for the non-existence of solutions at the critical parameter, which extends a similar result found in \cite{BLin2}, as detailed in Corollary \ref{last:thm.1}.


At last, interestingly enough, as an application of these refined analysis, we come up with the exact counting of the number of solutions for the mean field equation \eqref{r-equ-flat-lm} in a small supercritical region behind $8\pi(1+\beta)$, see Theorem \ref{thm:count1}. 
\medskip


Throughout the article, $B_\tau = B_\tau(0)$ will always represent a ball centered at the origin in some local isothermal coordinates $y \in B_\tau$. Whenever $B_\tau$ refers to such a ball centered at $0 = y(p),~p \in M$, we will denote by $\Omega(p,\tau) \subset M$ the pre-image of $B_\tau$. On the other hand, $B(p,\tau) \subset M$ will always refer to a geodesic ball. Additionally, many estimates will involve a small positive number $\epsilon_0 > 0$ and generic constant $C$, which may vary from line to line. Note that while the letter $\rho$ is commonly used in physics to denote density, we will use $\omega$ to represent the density starting from section \ref{sec:meansink}, where we discuss the statistical mechanics of the Onsager model with singular sources. This choice is made because $\rho$ is already used as a parameter in equations \eqref{r-equ} and \eqref{r-equ-flat}.
\medskip

This paper is organized as follows: Section 2 presents preliminary results essential for proving non-degeneracy, including a precise estimate for the difference between the parameter $\rho_k$ and the critical value $\rho_*$, as well as a uniqueness lemma. Sections 3 and 4 revisit the asymptotic analysis from \cite{byz-1} and provide a proof of the non-degeneracy result. The final two sections apply this non-degeneracy result to demonstrate the strict convexity of the entropy in the large energy limit of the Onsager mean field vortex model with sinks.


\medskip

\noindent{\bf Acknowledgement} L. Zhang acknowledges support from Simon's foundation grant 584918. W. Yang acknowledges support from National
Key R\&D Program of China 2022YFA1006800, NSFC No.
12171456 and No. 12271369, FDCT No. 0070/2024/RIA1, University of Macau Development Foundation
No. TISF/2025/006/FST, No. MYRG-GRG2024-00082-FST and Startup
Research Grant No. SRG2023-00067-FST.
D. Bartolucci's Research is partially supported by the MIUR Excellence Department Project MatMod@TOV awarded to the
Department of Mathematics, University of Rome Tor Vergata and by PRIN project 2022, ERC PE1\_11,
"{\em Variational and Analytical aspects of Geometric PDEs}". D. Bartolucci is member of the INDAM Research Group  "Gruppo Nazionale per l'Analisi Matematica,
la Probabilit\`a e le loro Applicazioni".
\medskip

\section{Preliminary Estimates}\label{preliminary}

Since the proof of the main theorems requires delicate analysis,
in this section we list some estimates established in \cite{BCLT,BM3,BT,BT-2,chen-lin-sharp,chen-lin,li-cmp,zhang1,zhang2}.

Let ${\rm w}_k$ be a sequence of solutions of (\ref{r-equ}) with $\rho =\rho_k$
and assume that ${\rm w}_k$ blows up at $m$ points $\{p_1, \cdots,p_m\}$. Since \eqref{r-equ} is invariant after adding a constant to the solution. Without loss of generality, we may assume that 
\begin{equation}\label{norm}
	\int_{M}He^{{\rm w}_k}{\rm d}\mu=1.
\end{equation}
Then we can rewrite the equation for ${\rm w}_k$ as,
\begin{equation}\label{equ-uk}
	\Delta_g {\rm w}_k+\rho_k(He^{{\rm w}_k}-1)=0\quad {\rm in} \ \; M.
\end{equation}
From well known results about Liouville-type equations (\cite{BT-2,li-cmp}),
\begin{equation*}
	{\rm w}_k-\overline{{\rm w}_k} \ \to \sum_{j=1}^m 8\pi(1+\alpha_j)G(x,p_j) \quad {\rm in} \quad {\rm C}_{\rm loc}^2(M\backslash \{p_1,\cdots,p_m\}),
\end{equation*}
where $\overline{{\rm w}_k}$ is the average of ${\rm w}_k$ on $M$:
$\overline{{\rm w}_k}=\int_{M}{\rm w}_k{\rm d}\mu$. For later convenience we fix $r_0>0$ small enough and $M_j\subset M, 1\leq j\leq m$ such that
\begin{equation*}
	M=\bigcup_{j=1}^m \overline{M}_j;\quad M_j\cap M_l=\varnothing,\  {\rm if}\ j\neq l;\quad B(p_j,3r_0)\subset M_j, \quad j=1,\cdots,m.
\end{equation*}
According to this definition $M_1=M$, if $m=1$.

The notation about local maximum is of particular relevance in this context. If $p_j$ is a regular blow-up point (i.e. $\alpha_j=0$) we define $p_{k,j}$ and $\lambda_{k,j}$  as follows,
\begin{equation*}
	\lambda_{k,j}=u_k(p_{k,j}) \mathrel{\mathop:}=\max_{B(p_j,r_0)}u_k,
\end{equation*}
while if  $p_j$ is a singular blow-up point (i.e. $\alpha_j\neq 0$), then we define,
\begin{equation*}
	p_{k,j}:=p_j\quad \mbox{ and }\quad \lambda_{k,j}:=u_k(p_{k,j}).
\end{equation*}
Next, let us define the so called "standard bubble" $U_{k,j}$ to be the solution of
\begin{equation}\nonumber
	\Delta U_{k,j}+\rho_kh_j(p_{k,j})|x-p_{k,j}|^{2\alpha_j}e^{U_{k,j}}=0 \quad  {\rm in} \ \; \mathbb{R}^2
\end{equation}
which takes the form (\cite{CL1,CL2,Parjapat-Tarantello}),
\begin{equation}\nonumber
	U_{k,j}(x)=\lambda_{k,j}-2\log\Big(1+\frac{\rho_k h_j(p_{k,j})}{8(1+\alpha_j)^2}e^{\lambda_{k,j}}|x-p_{k,j}|^{2(1+\alpha_j)}\Big).
\end{equation}

It is well-known (\cite{BCLT,BT-2,li-cmp}) that $u_k$ can be approximated by the standard bubbles $U_{k,j}$ near $p_j$ up to a uniformly bounded error term:
\begin{equation*}
	\big|u_k(x)-U_{k,j}(x)\big| \leq C, \quad x\in B_r(p_j,r_0).
\end{equation*}	
As a consequence, in particular we have,
\begin{equation}\nonumber
	|\lambda_{k,i}-\lambda_{k,j}|\leq C, \quad 1\leq i,j \leq m,
\end{equation}
for some $C$ independent of $k$.

\medskip
In case $m_1<m$, it has been shown in \cite{chen-lin-sharp} that,
\begin{equation}\label{first-deriv-est}
	\nabla(\log h+G_j^*)(p_{k,j})=O(\lambda_{k,j}e^{-\lambda_{k,j}}),\quad m_1+1\leq j\leq m,
\end{equation}
which, in view of the non-degeneracy condition
$$\det\big(D^2f^*(p_{m_1+1},\cdots,p_m)\big)\neq0,$$
readily implies that,
\begin{equation}\label{p_kj-location}
	|p_{k,j}-p_j|=O(\lambda_{k,j}e^{-\lambda_{k,j}}),\quad m_1+1\leq j\leq m.
\end{equation}
Later, sharper estimates were obtained in \cite{chen-lin,zhang2} for $1\leq j \leq m_1$ and
in \cite{chen-lin-sharp,gluck,zhang1} for $m_1+1\leq j\leq m$.

By using $\lambda_i^k$ to denote the maximum of $u_i^k$, $i=1,2$ which share the same value of the parameter $\rho_k$, then it is a simple consequence of $L(\mathbf p)\neq 0$ (see \cite{bart-4,wu-zhang-ccm}) that
\begin{equation}\label{initial-small}
	|\lambda_1^k-\lambda_2^k|\leq Ce^{-\epsilon_0 \lambda_1^k}\quad \mbox{ \rm for some }\epsilon_0>0.
\end{equation}

Let us also recall that it has been established in \cite{BM3,BT} that (see \eqref{a-note})  $\rho_*=\lim\limits_{k\to +\infty}\rho_k$.
Concerning the difference between $\rho_k$ and $\rho_*$,
we set
$$\rho_{k,j}=\rho_k\int_{\Omega(p_{j},\tau)}He^{{\rm w}_{k}}.$$ The following estimates hold (see  \cite{chen-lin-sharp}, \cite{chen-lin},\cite{byz-1}):
\begin{thm}\label{thm:2.1}
	There exists $\epsilon_0>0$ and $d_j>0$ such that,
	\begin{align*}
		\begin{cases}
			\rho_{k,j}-8\pi(1+\alpha_j)=2\pi d_je^{-\frac{\lambda_{k,j}}{1+\alpha_j}}+
			O(e^{-\frac{1+\epsilon_0}{1+\alpha_j}\lambda_{k,j}}),  &\alpha_j >0, \\
			\\
			\rho_{k,j}-8\pi(1+\alpha_j)=O(e^{-\lambda_{k,j}}) &\alpha_j <0, \\
			\\
			\rho_{k,j}-8\pi=O\big(\lambda_{k,j}e^{-\lambda_{k,j}}\big), &\tau+1\leq j\leq m.
		\end{cases}
	\end{align*} 
	
\end{thm}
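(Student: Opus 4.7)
My plan is to directly compute the local mass $\rho_{k,j} = \rho_k \int_{\Omega(p_j,\tau)} H e^{w_k}$ by substituting the ansatz $w_k = U_{k,j} + \eta_k^{(j)}$ near $p_{k,j}$ and carefully tracking all terms. An exact polar-coordinates calculation of the standard-bubble integral yields
$$\int_{B_\tau(p_{k,j})} \rho_k h_j(p_{k,j}) |x-p_{k,j}|^{2\alpha_j} e^{U_{k,j}} \, dx = 8\pi(1+\alpha_j)\left(1 - \frac{1}{1 + C_k\tau^{2(1+\alpha_j)}}\right),$$
with $C_k = \frac{\rho_k h_j(p_{k,j})}{8(1+\alpha_j)^2}e^{\lambda_{k,j}}$, so the standard-bubble tail alone contributes $O(e^{-\lambda_{k,j}})$. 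All slower-decaying corrections to $\rho_{k,j} - 8\pi(1+\alpha_j)$ must therefore originate from (i) the Taylor discrepancy $H(x) - h_j(p_{k,j})|x-p_{k,j}|^{2\alpha_j}$ integrated against $e^{U_{k,j}}$, and (ii) the bubble-correction $\eta_k^{(j)} := w_k - U_{k,j}$. Since $e^{U_{k,j}}$ is radial about $p_{k,j}$, linear Taylor terms drop out, so the leading correction comes from the quadratic Taylor part together with the corresponding second-order content of $\eta_k^{(j)}$.

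Rescaling $y = C_k^{1/(2(1+\alpha_j))}(x-p_{k,j})$, the quadratic Taylor contribution becomes
$$A_j \, C_k^{-1/(1+\alpha_j)} \int_0^{C_k^{1/(2(1+\alpha_j))}\tau} \frac{r^{2\alpha_j+3}}{(1+r^{2(1+\alpha_j)})^2} \, dr,$$
where $A_j$ is an explicit constant involving $D^2 \log h_j$ at $p_j$; the three rates are dictated by the behavior of this integral. For $\alpha_j > 0$, the integrand at infinity is $r^{-1-2\alpha_j}$ and the integral converges, giving a contribution of order $C_k^{-1/(1+\alpha_j)} \sim e^{-\lambda_{k,j}/(1+\alpha_j)}$, and the coefficient $d_j$ is computed explicitly from $A_j$. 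For $\alpha_j = 0$, the integrand at infinity is $r^{-1}$, which produces a logarithmic factor $\log C_k \sim \lambda_{k,j}$, giving the rate $\lambda_{k,j}e^{-\lambda_{k,j}}$. For $\alpha_j < 0$, the integrand grows like $r^{-1-2\alpha_j}$ at infinity and is cut off at $r = C_k^{1/(2(1+\alpha_j))}\tau$; the cutoff produces a factor $\sim C_k^{-\alpha_j/(1+\alpha_j)}$ which combined with the prefactor yields $C_k^{-1} \sim e^{-\lambda_{k,j}}$, which is absorbed into the bubble-tail rate.

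The main obstacle is obtaining sharp pointwise control on $\eta_k^{(j)}$ in an annular neighborhood of $p_{k,j}$ to verify that its contribution does not spoil the above rates. The corrector satisfies a linearized Liouville-type equation whose kernel (for non-quantized $\alpha_j$) is known explicitly; see \cite{chen-lin-sharp, chen-lin, byz-1}. The strategy is an iterative bootstrap: a rough first-order pointwise estimate is fed back into the equation and refined by projecting onto the known kernel of the linearized operator; the non-quantization hypothesis $\alpha_j \notin \mathbb{N}$ is essential here since it rules out resonances and ensures a complete kernel description. Once these pointwise estimates are established, the three decay rates, together with the explicit coefficient $d_j$ for the case $\alpha_j > 0$, follow by combining the computations above with the asymptotics of $\eta_k^{(j)}$.
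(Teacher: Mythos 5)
Your scaling computation does isolate the correct mechanism for the three rates (a convergent integral of size $\varepsilon_k^2=e^{-\lambda_{k,j}/(1+\alpha_j)}$ when $\alpha_j>0$, a logarithmic divergence giving $\lambda_{k,j}e^{-\lambda_{k,j}}$ when $\alpha_j=0$, and a cutoff-dominated integral of size $e^{-\lambda_{k,j}}$ when $\alpha_j<0$), and this is indeed how the cited sources proceed. But note that the paper does not prove Theorem \ref{thm:2.1} at all: it imports it from \cite{chen-lin-sharp}, \cite{chen-lin}, \cite{byz-1}, and the expansions that constitute its proof are exactly the ones restated later as Theorems \ref{int-pos}, \ref{integral-neg} and \ref{reg-int}. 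This is where your proposal has a genuine gap: the entire analytic content of the statement is the sharp control of $\eta_k^{(j)}=w_k-U_{k,j}$, and you defer it to an ``iterative bootstrap'' pointing to the same references. A rough bound such as $|\eta_k^{(j)}|\le C$ or $O(\varepsilon_k(1+|y|)^{\delta})$ is not sufficient, because the first-order part of $\eta_k^{(j)}$ has size $\varepsilon_k$, which is \emph{larger} than every correction you are trying to compute; one must identify it explicitly as the odd kernel-type function $c_{1,k}$ of \eqref{nov10e6} (so that it drops out of the mass by parity), and then extract the genuinely second-order radial part $c_{0,k}$, whose flux through $\partial B_{\tau/\varepsilon_k}$ is what actually produces the leading term (the $d_{1,k}\,\Delta\log$-contribution in Theorem \ref{int-pos} and the $b_{0,k}$-term in Theorem \ref{reg-int}). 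None of this is carried out in the proposal, so what you have is a correct prediction of the rates, not a proof.

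There are also two concrete bookkeeping problems even granting such pointwise control. First, on the closed surface the equation is $\Delta_g {\rm w}_k+\rho_k(He^{{\rm w}_k}-1)=0$, so before matching with the entire bubble you must remove the constant $-\rho_k$ and the conformal factor via $f_k$ and $\chi$ as in \eqref{eq-fk}, \eqref{eq-phi-k}; the effective local weight is $\rho_k h_je^{\chi+f_k}$, whose log-Laplacian at $p_j$ is $\Delta\log h(p_j)+\rho_k-N^*-2K(p_j)$ by \eqref{In-Lp}. Hence your constant $A_j$ ``involving $D^2\log h_j$'' is not the right coefficient: the curvature and $\rho_*-N^*$ terms enter at exactly the same order (this is why $L(\mathbf{p})$ in \eqref{L-p} has that form), and the $|\nabla\log|^2$-terms generated by the interaction of the odd Taylor term with $c_{1,k}$ contribute at order $e^{-\lambda_{k,j}}$, which is precisely the claimed order when $\alpha_j<0$; so ``linear Taylor terms drop out by radial symmetry'' is only a first-order statement and cannot be used to dismiss them. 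Second, the statement asserts $d_j>0$ (and an error $O(e^{-(1+\epsilon_0)\lambda_{k,j}/(1+\alpha_j)})$), and nothing in your argument addresses the structure or sign of $d_j$; as proposed, the argument could at best deliver the rates, not the precise leading term for $\alpha_j>0$.
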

\medskip

It has been proved in \cite{chen-lin} that
\begin{equation}\label{small-rho-k}
	\rho_k-\rho_*= L(\mathbf{p})e^{-\frac{\lambda_1^k}{1+\alpha_M}}+O(e^{-\frac{1+\epsilon_0}{1+\alpha_M}\lambda_{k,j}})
\end{equation}
for some $\epsilon_0>0$ and $L(\mathbf{p})$ as defined in (\ref{L-p}).

For later study, we need a general theorem describing $\rho_k-\rho_*$ when the set of blow-up points is a mixture of regular points and singular sources with negative strength. Here we let $p_1,\cdots,p_{m_1}$ be singular sources with negative strength: $\alpha_l\in (-1,0)$ ($l=1,\cdots,m_1$) and let $\alpha_l=0$ for $l=m_1+1,\cdots,m$. That is, $p_{m_1+1}^k,\cdots,p_m^k$ are regular blow-up points. If $m>m_1$, we recall that 
$$L(\mathbf{p})=\sum_{j=m_1+1}^m[\Delta \log h(p_j)+\rho_*-N^*-2K(p_j)]h(p_j)e^{G_j^*(p_j)},$$
where $$\rho_*=8\pi\sum_{j=1}^{m_1}(1+\alpha_j)+8\pi (m-m_1),\quad N^*=\sum_{j=1}^{m_1}4\pi \alpha_j.$$

Now we define two terms which involves global integration. 
Let 
\[D_R:=\lim_{r\to 0}\sum_{j=m_1+1}^mh_j(0)e^{G_j^*(q_j)}\bigg (\int_{M_j\setminus \Omega(q_j,r_k)}e^{\Phi_j(x,q)}d\mu-\pi r_j^{-2}\bigg ),\]
where 
\[r_j=r\left(8h_j(0)e^{G_j^*(q_j)}\right)^{\frac 12},\quad j=m_1+1,\cdots,m,\]
and
\[D_s:=\lim_{r\to 0}\sum_{j=1}^{m_1}h_j(0)e^{G_j^*(p_j)}\left(\int_{M_j\setminus \Omega (q_j,r)}e^{\Phi_j(x,q)}d\mu-\frac{\pi}{1+\alpha_j}r^{-2-2\alpha_j}\right).\]

\begin{thm}\label{negative-b}
	Suppose $\alpha_l\in (-1,0)$ for $l=1,\cdots,m_1$, $\alpha_l=0$ for $l=m_1+1,\cdots,m$ and that \eqref{neg-crit} holds true. Then we have, 
	\begin{align*}
		\rho_k-\rho_*=~&\frac{16\pi e^{-\lambda_m^k}}{\rho_*h_m^2(0)e^{G_m^*(p_m)}}L(\mathbf{p})[\lambda_m^k+\log (h_m^2(0)e^{G_m^*(p_m)}r^2\rho_*)-2]\\
		&+\frac{64 e^{-\lambda_m^k}}{\rho_*h_m^2(0)e^{G_m^*(p_m)}}(D_R+O(r^{\epsilon_0}))\\
		&+e^{-\lambda_1^k}\rho_*e^{-G_1^*(p_1)}\left(\frac{8(1+\alpha_1)^2}{\rho_* h_1(0)}\right)^2(D_s+O(r^{\epsilon_0}))
		+O(e^{-(1+\epsilon_0)\lambda_1^k}),
	\end{align*}
	where $\epsilon_0$ is a positive small number. If both regular and singular blow-up points exist, then $\lambda_1^k$ and $\lambda_m^k$ satisfy,
	\[-\lambda_m^k=-\lambda_1^k+2\log\frac{h_m(0)(1+\alpha_1)^2}{h_1(0)}+G_m^*(p_m)-G_1^*(p_1)+O(e^{-\epsilon_0\lambda_1^k}).\]
\end{thm}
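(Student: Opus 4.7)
The plan is to expand $\rho_k - \rho_*$ by splitting the normalization $\int_M He^{{\rm w}_k}d\mu = 1$ into bubble cores plus a complement:
\[
\rho_k = \sum_{j=1}^m \rho_{k,j} + \rho_k\int_{M\setminus\bigcup_j\Omega(p_j,r)} He^{{\rm w}_k}d\mu,
\]
and to refine the local estimates of Theorem \ref{thm:2.1} up to $o(e^{-\lambda_{k,j}})$-accuracy using the sharp bubble-profile computations of \cite{chen-lin-sharp, chen-lin, byz-1}. For regular blow-up points ($m_1 < j \le m$), this refined local expansion produces, after summation, the coefficient of $\lambda_{k,m} e^{-\lambda_{k,m}}$ that carries $L(\mathbf{p})$: the factor $\Delta \log h + \rho_* - N^* - 2K$ arises from the second-order Taylor expansion of $H$ at $p_j$ against the bubble profile, together with the shift $|p_{k,j}-p_j|=O(\lambda_{k,j}e^{-\lambda_{k,j}})$ guaranteed by \eqref{first-deriv-est}--\eqref{p_kj-location}. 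For singular indices with $\alpha_j<0$, only an $e^{-\lambda_{k,j}}$ contribution survives, and assumption \eqref{neg-crit} (when $1+2\alpha_j\geq 0$) is what makes the $|x-q_j|^{2\alpha_j}$ piece controllable through the renormalized integral defining $D_s$.

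For the global part I would use the $C^2_{\rm loc}$ convergence ${\rm w}_k-\overline{{\rm w}_k}\to 8\pi\sum_l(1+\alpha_l)G_M(\cdot,p_l)$, sharpened to an exponentially small error away from the blow-up set, to rewrite each complementary integral as
\[
\rho_k\int_{M_j\setminus\Omega(p_j,r)} He^{{\rm w}_k}d\mu = \rho_k e^{\overline{{\rm w}_k}}\int_{M_j\setminus\Omega(p_j,r)} He^{8\pi\sum_l(1+\alpha_l)G_M(x,p_l)}d\mu + o(e^{-\lambda_1^k}).
\]
Because $H$ carries $|x-p_l|^{-4(1+\alpha_l)}$ singularities, this integral diverges as $r\to 0$; however, the divergence matches exactly the polar piece $\frac{\pi}{1+\alpha_j}r_j^{-2-2\alpha_j}$ subtracted in the definitions of $D_R$ and $D_s$. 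The cancellation between this global divergence and the local bubble expansion on $\Omega(p_j,r)$ is the mechanism producing finite renormalized coefficients, and it is precisely \eqref{neg-crit} that kills the dipole-type terms that would otherwise obstruct the limit.

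The scale relation between $\lambda_m^k$ and $\lambda_1^k$ comes from matching the far-field of each standard bubble $U_{k,j}$ with the Green-function representation of ${\rm w}_k$ near $p_j$. At each blow-up one gets
\[
-\lambda_{k,j} - 2\log\!\frac{\rho_k h_j(p_j)}{8(1+\alpha_j)^2} = \overline{{\rm w}_k} + G_j^*(p_j) + O(e^{-\epsilon_0\lambda_{k,j}}).
\]
Subtracting this identity for $j=1$ (singular) and $j=m$ (regular, $\alpha_m=0$), and using $h_m(p_m)=h_m(0)$, $h_1(p_1)=h_1(0)$, produces the stated formula for $-\lambda_m^k+\lambda_1^k$. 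The same identity resolves $\rho_k e^{\overline{{\rm w}_k}}$ in terms of any single $\lambda_{k,j}$, and is what converts the prefactors in the global part into the explicit constants $\frac{16\pi}{\rho_* h_m^2(0)e^{G_m^*(p_m)}}$ and $\rho_* e^{-G_1^*(p_1)}\bigl(\tfrac{8(1+\alpha_1)^2}{\rho_* h_1(0)}\bigr)^2$ displayed in the conclusion.

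The main obstacle will be the simultaneous bookkeeping of the three scales $\lambda_m^k e^{-\lambda_m^k}$, $e^{-\lambda_m^k}$, $e^{-\lambda_1^k}$, and particularly the isolation of the $\lambda_m^k$ prefactor of $L(\mathbf{p})$: it arises from integrating the second-order Taylor correction of $\log h+G_j^*$ against $|x-p_j|^{-2}$ over the transition annulus between the bubble scale and $r$, so all logarithmic contributions (including the explicit $\log(h_m^2(0)e^{G_m^*(p_m)}r^2\rho_*)-2$) must be tracked exactly. A secondary difficulty is verifying that every $r^{-2-2\alpha_j}$-divergence generated by the bubble expansion on $\Omega(p_j,r)$ cancels against its counterpart from $M_j\setminus\Omega(p_j,r)$, and that the leftover pieces recombine into $r$-independent quantities $D_R$ and $D_s$ up to $O(r^{\epsilon_0})$, with overall remainders controlled in $O(e^{-(1+\epsilon_0)\lambda_1^k})$.
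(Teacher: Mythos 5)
Your plan is correct and follows essentially the same route as the paper: split $\rho_k=\rho_k\int_M He^{{\rm w}_k}$ into bubble neighborhoods plus complements, use the refined local mass expansions of \cite{byz-1} (Theorems 3.2 and 3.3 there) at singular and regular points, match the bubble far-field with the Green representation to get $\overline{{\rm w}_k}$ and hence the height relation between $\lambda_1^k$ and $\lambda_m^k$, and renormalize the divergent outer integrals against the subtracted polar terms to produce $D_s$ and $D_R$, with \eqref{neg-crit} removing the non-integrable first-order terms. The only cosmetic difference is that you attribute the factor $\Delta\log h+\rho_*-N^*-2K$ to a Taylor expansion of $H$ alone, whereas the $\rho_*$ and $-2K$ pieces actually enter through the auxiliary function $f_k$ and the conformal factor $\chi$ (cf. \eqref{In-Lp}), a bookkeeping point the cited sharp estimates already handle.
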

\begin{rem}
	The sum of the last two terms in the expansion of $\rho_k-\rho_*$ can be written either as follows,
	\[e^{-\lambda_1^k}\rho_*e^{-G_1^*(p_1)}\left(\frac{8(1+\alpha_1)^2}{\rho_*h_1(0)}\right)^2(D(\mathbf{p})+O(r^{\epsilon_0})),\]
	whenever there exists at least one singular source or  as follows,
	\[\frac{64 e^{-\lambda_m^k}}{\rho_*h_m^2(0)e^{G_m^*(p_m)}}(D(\mathbf{p})+O(r^{\epsilon_0})),\]
	if at least one regular blow-up point exists.
\end{rem}

\begin{proof}[Proof of Theorem \ref{negative-b}]
	Here for convenience we define 
	\begin{equation}\label{big-phi}
		\Phi(x)=\sum_{j=1}^m 8\pi(1+\alpha_j)G(x,p_j),
	\end{equation}
	and recall the equation for ${\rm w}_k$ in (\ref{equ-uk}) and the normalization (\ref{norm}). We carry out the analysis around $p_l$. Let $f_k$ be defined in a neighborhood of $p_l$ by 
	\begin{equation}\label{eq-fk}
		\Delta_g f_k=\rho_k~\mbox{in} \,\, B(p_l,\tau),\quad f_k(p_l)=0,\quad f_k=\mbox{constant on } \partial B(p_l,\tau).
	\end{equation}
	Then we set
	\[u_k={\rm w}_k-f_k\]
	and have
	\[\Delta_gu_k+\rho_kHe^{f_k}e^{u_k}=0,\quad \mbox{in}\quad B(p_l,{\tau}).\]
	Using the conformal factor function $\chi$, which in local coordinates centered at $p_l$ satisfies, 
	\begin{equation}\label{eq-phi-k}
		\chi(0)=|\nabla \chi(0)|=0,\quad \Delta_g=e^{-\chi}\Delta,\quad \Delta \chi(0)=-2K(p_l),
	\end{equation}
	we have
	\begin{equation}\label{ufk-e}
		\Delta u_k+\rho_kHe^{\chi+f_k}e^{u_k}=0 \quad \mbox{in}\quad B_{\tau}.\footnote{Here we use the same notation for $u_k$ and $f_k$ in both $B(p_l,\tau)$ (the geodesic ball on the Riemann manifold) and $B_\tau(0)$ (in the local isothermal coordinates).}
	\end{equation}
	To avoid cumbersome notations, for the time being we use the same notation for $p_l$ in local and global coordinates. Recall the definition \eqref{H2} of $h_l$, then in local coordinates around $p_l$ we have, if $1\le l\le m_1$, 
	\begin{equation}\label{hl-small}
		h_l(x)=h(p_l+x)\exp \bigg (\sum_{j=1,j\neq l}^{m_1}(-4\pi\alpha_j)G(x,p_j)-4\pi\alpha_l R(x,p_l)\bigg ),
	\end{equation}
	and 
	\[
	h_l(0)=h(p_l)\exp \bigg (\sum_{j=1,j\neq l}^{m_1}(-4\pi\alpha_jG(p_l,p_j))-4\pi\alpha_l R(p_l,p_l)\bigg ),
	\]
	while if $l>m_1$ we have,
	\begin{equation}\label{hl-large}
		h_l(x)=h(p_l+x)\exp \bigg (\sum_{j=1}^{m_1}(-4\pi \alpha_j G(x,p_j)\bigg ),\quad m_1<l\le m,
	\end{equation}
	and 
	\[
	h_l(0)=h(p_l)\exp \bigg (\sum_{j=1}^{m_1}(-4\pi\alpha_jG(p_l,p_j))\bigg ),\quad m_1<l\le m.
	\]
	Here we also note that around $p_l$ for $l>m_1$, by (\ref{a-note}),
	\begin{equation}\label{In-Lp}
		\Delta (\log H+\chi+f_k)(0)=\Delta \log h(p_l)+\rho_k-N^*-2K(p_l). 
	\end{equation}
	Let $\psi_k$ be the harmonic function that eliminates the oscillation of $u_k$ on $\partial B_\tau$, that is $\Delta \psi_k=0$ in $B_\tau$ 
	and 
	\[
	\psi_k=u_k-\frac{1}{2\pi \tau}\int_{\partial B_{\tau}}u_kdS
	\quad \hbox {on } \partial B_{\tau}. 
	\]
	Then, in local coordinates centered at $p_l$, a rough estimate for $u_k$ in $B_\tau$ is,
	$$u_k(x)=U_k+\psi_k+O(\epsilon_{l,k}^{\delta}),\quad x\in B_{\tau}$$
	where 
	\[U_k(x)=\lambda_l^k-2\log\left(1+\frac{\rho_k h_l(0)}{8(1+\alpha_l)^2}e^{\lambda_l^k}|x|^{2+2\alpha_l}\right),\]
	$\epsilon_{l,k}=e^{-\frac{\lambda_l^k}{2(1+\alpha_l)}}$. In particular for $x\in B_{\tau}\setminus B_{\tau/2}$, we have
	\begin{equation}\label{uk-neg-d}
		\begin{aligned}
			u_k(x)=&-\lambda_l^k-2\log \frac{\rho_kh_l(0)}{8(1+\alpha_l)^2}-4(1+\alpha_l)\log |x|\\
			&+\psi_k(x)+O(\epsilon_{l,k}^{\delta}),  
		\end{aligned}
	\end{equation}
	for some $\delta>0$. On the other hand, the Green
	representation of ${\rm w}_k$ gives
	\begin{equation}\label{green-u-away}
		{\rm w}_k(x)=\overline{{\rm w}_k}+\Phi(x)+O(\epsilon_{l,k}^{\delta}),
	\end{equation}
	for $x$ away from singular sources, where $\Phi$ is defined in (\ref{big-phi}). By using the notation $G_l^*(x)$ in (\ref{G_j*}), this expression can be written as follows, 
	\[{\rm w}_k(x)=\overline{{\rm w}_k}-4(1+\alpha_l)\log |x-p_l|+G_l^*(x)+O(\epsilon_{l,k}^{\delta}).\]
	By the definition of $f_k$, in local coordinates around $p_l$ we have,
	\begin{equation}\label{uk-det}
		\begin{aligned}
			u_k(x)=~&\overline{{\rm w}_k}-4(1+\alpha_l)\log |x|+G_l^*(p_l)\\
			&+G_l^*(x)-G_l^*(p_l)-f_k+O(\epsilon_{l,k}^{\delta}).  
		\end{aligned}
	\end{equation}
	Comparing (\ref{uk-det}) and (\ref{uk-neg-d}), by the definition of $G_l^*$, and recalling that $\psi_k(0)=0$, we have that, 
	\[\psi_k(x)=G_l^*(x)-G_l^*(p_l)-f_k+O(\epsilon_{l,k}^{\delta})\]
	and 
	\begin{equation}\label{uk-bar-l}
		\overline{{\rm w}_k}=-\lambda_l^k-2\log \frac{\rho_k h_l(0)}{8(1+\alpha_l)^2}-G_l^*(p_l)+O(e^{-\epsilon_0\lambda_1^k}),\quad l=1,\cdots,m.
	\end{equation}
	In particular 
	\begin{equation}\label{uk-bar-1}
		\overline{{\rm w}_k}=-\lambda_1^k-2\log \frac{\rho_kh_1(0)}{8(1+\alpha_1)^2}-G_1^*(p_1)+O(e^{-\epsilon_0\lambda_1^k}).
	\end{equation}
	By using (\ref{uk-bar-1}) and (\ref{uk-bar-l}),  for $l=1,\cdots,m$, we have that
	\[
	-\lambda_l^k=-\lambda_1^k+2\log\frac{\rho_kh_l(0)}{8(1+\alpha_l)^2}-2\log \frac{\rho_kh_1(0)}{8(1+\alpha_1)^2}+G_l^*(p_l)-G_1^*(p_1)+O(e^{-\epsilon_0\lambda_1^k}),
	\]
	and consequently,
	\begin{equation}\label{height-d}
		e^{-\lambda_l^k}=e^{-\lambda_1^k}\frac{(1+\alpha_1)^4h_l^2(0)}{(1+\alpha_l)^4h_1^2(0)}e^{G_l^*(p_l)-G_1^*(p_1)}+O(e^{-(1+\epsilon_0)\lambda_1^k}),~ l=1,\cdots,m,
	\end{equation}
	for some $\epsilon_0>0$. Remark that the estimate (\ref{height-d}) holds for regular points as well. Next we evaluate 
	$\rho_k=\int_M\rho_kHe^{{\rm w}_k}d\mu$ as follows,
	\begin{align*}
		\rho_k=\sum_{l=1}^m\int_{B(p_l,\tau_l)}\rho_kHe^{{\rm w}_k}d\mu+\int_E\rho_kHe^{{\rm w}_k}d\mu=\sum_{l=1}^m\rho_{k,l}+\int_E\rho_kHe^{{\rm w}_k}d\mu,
	\end{align*}
	where $E=M\setminus (\bigcup_l B(p_l,\tau_l))$. First we evaluate $\rho_{k,j}$ for $j=1,\cdots,m_1$. For each $j$ let $\Omega_j$ be a neighborhood of $p_j$ such that the $\Omega_j$ are mutually disjoint and their union is $M$. In each $B(p_l,\tau_l)$, $l=1,\cdots,m_1$, we use Theorem 3.2 of \cite{byz-1} as follows, 
	\begin{equation}
		\label{inter-ne-1}
		\begin{aligned}
			\int_{B(p_l,\tau)}\rho_kHe^{{\rm w}_k}d\mu
			&=\int_{B_{\tau}}\rho_k|x|^{2\alpha_l}h_le^{f_k+\chi}e^{u_k}d\mu \\
			&=8\pi(1+\alpha_l)\left(1-\frac{8(1+\alpha_l)^2}{\rho_kh_l(0)}\tau^{-2-2\alpha_l}e^{-\lambda_l^k}\right)+l.o.t.
		\end{aligned}
	\end{equation}
	Remark that the lower order terms in the r.h.s. of \eqref{inter-ne-1} include factors of order $e^{-\lambda_l^k}$ which are $o(1)$ in $\tau$, as $\tau\to 0$. On the other side, by (\ref{green-u-away}) and (\ref{uk-bar-l}) we have that,
	\begin{align*}
		\int_{\Omega_l\setminus B(p_l,\tau)}\rho_kHe^{{\rm w}_k}d\mu
		&=\int_{\Omega_l\setminus B(p_l,\tau)}\rho_kHe^{\overline{{\rm w}_k}}e^{\Phi}d\mu+O(e^{-(1+\epsilon_0)\lambda_1^k})\\
		&= e^{-\lambda_l^k}\left(\frac{8(1+\alpha_l)^2}{\rho_kh_l(0)}\right)^2e^{-G_l^*(p_l)}\int_{\Omega_l\setminus B(p_l,\tau)}\rho_kHe^{\Phi}d\mu\\
		&\quad+O(e^{-(1+\epsilon_0)\lambda_1^k}).
	\end{align*}
	By using local coordinates around $p_l$ to evaluate the last integral, we see that, \begin{align*}\int_{\Omega_l\setminus B(p_l,\tau)}\rho_kHe^{\Phi}d\mu 
		=\int_{\Omega_l\setminus B(p_l,\delta)}\rho_kH e^{\Phi}d\mu+\int_{B(p_l,\delta)\setminus B(p_l,\tau)}\rho_kh_l|x|^{-4-2\alpha_l}e^{G_l^*(x)}dx\\
		=\int_{\Omega_l\setminus B(p_l,\delta)}\rho_kH e^{\Phi}d\mu+
		\rho_kh_l(0)e^{G_l^*(p_l)}\left(\frac{\pi\tau^{-2-2\alpha_l}}{1+\alpha_l}+O(\delta^{-2-2\alpha_l})\right),
	\end{align*}
	where, in view of \eqref{neg-crit}, the remaining non integrable term in the expansion vanishes, while the rest can be safely included in the $O(\delta^{-2-2\alpha_l})$.   Therefore we have, 
	\begin{equation*}
		\int_{\Omega_l}\rho_kHe^{{\rm w}_k}d\mu
		=8\pi(1+\alpha_l)+e^{-\lambda_l^k}\rho^*e^{-G_l^*(p_l)}\left(\frac{8(1+\alpha_l)^2}{\rho_*h_l(0)}\right)^2\left(D_l+O(\delta^{-2\alpha_l})\right)
	\end{equation*}
	where we used once more \eqref{neg-crit} and 
	$$D_l=\lim_{\tau\to 0+}\int_{\Omega_l \setminus B(p_l,\tau)} He^{\Phi}d\mu-\frac{h_l(0)e^{G_l^*(p_l)}\pi}{1+\alpha_l}\tau^{-2-2\alpha_l},\quad l=1,\cdots,m_1,$$	
	and we can replace $\lambda_l^k$ by $\lambda_1^k$ to deduce that,
	\begin{equation}\label{ball-l-1}
		\int_{\Omega_l}\rho_kHe^{{\rm w}_k}d\mu
		=8\pi(1+\alpha_l)+e^{-\lambda_1^k}\rho^*e^{-G_1^*(p_1)}\left(\frac{8(1+\alpha_1)^2}{\rho_*h_1(0)}\right)^2\left(D_l+O(\delta^{-2\alpha_l})\right).\nonumber
	\end{equation}
	
	At this point, for $l=m_1+1,\cdots,m$,  we set $\tau_l=\sqrt{8 e^{G_l^*(p_l)}h_l(0)}\tau,$ and invoke Theorem 3.3 and Remark 3.2 of \cite{byz-1} as follows, 
	\begin{align*}
		\int_{B(p_l,\tau_l)}\rho_kHe^{{\rm w}_k}d\mu&=\int_{B_{\tau_l}}\rho_kh_le^{f_k+\chi}e^{u_k}dx\\
		&=8\pi-8\pi \frac{e^{-\lambda_l^k}}{e^{-\lambda_l^k}+a_k\tau_l^2}
		+O(e^{-(2-\epsilon_0)\lambda_l^k})\\
		&\quad-\frac{\pi e^{-\lambda_l^k}}{2}(\Delta (\log h_l)(0)+\rho_k-2K(p_l))\rho_kh_l(0)b_{0,k},
	\end{align*}
	where $\epsilon_0>0$ is a small constant and 
	\[b_{0,k}=\int_0^{\tau_l \bar \epsilon_k^{-1}}\frac{r^3(1-a_kr^2)}{(1+a_kr^2)^3}dr,\quad a_k=\frac{\rho_k h_l(0)}8.\]
	Elementary arguments show that,
	\[b_{0,k}=\frac{1}{2a_k^2}(-\lambda_l^k-\log (a_k\tau_l^{2})+2)+O(e^{-\lambda_l^k}).\]
	For $m_1+1\le l\le m$, using this expression of $b_{0,k}$ we have, 
	\begin{equation}
		\label{ener-reg-2}
		\begin{aligned}
			&\int_{B(p_l,\tau_l)}\rho_kHe^{{\rm w}_k}d\mu\\
			&=8\pi -\frac{16\pi}{\rho_kh_l(0)}(\Delta \log h_l(0)+\rho_k-2K(p_l))e^{-\lambda_l^k}\\
			&\quad\times\left(-\lambda_l^k-\log\frac{\rho_kh_l(0)\tau_l^2}{8}+2\right)-\frac{64\pi}{\rho_kh_l(0)}\tau_l^{-2}e^{-\lambda_l^k}
			+O(e^{-(2-\epsilon_0)\lambda_l^k})\\
			&=8\pi-\frac{16\pi}{\rho_kh_m^2(0)e^{G_m^*(p_m)}}e^{-\lambda_m^k}h_l(0)e^{G_l^*(p_l)}(\Delta \log h_l(0)+\rho_k-2K(p_l)) \\
			&\quad\times \left(-\lambda_m^k-2\log h_m(0)-G_m^*(p_m)+\log \frac{8 h_l(0) e^{G_l^*(p_l)}}{\rho_k \tau_l^2}+2\right) \\
			&\quad-\frac{64\pi }{\rho_k h_m^2(0)e^{G_m^*(p_m)}}e^{-\lambda_m^k}h_l(0)e^{G_l^*(p_l)}\tau_l^{-2}
			+O(e^{-(2-\epsilon_0)\lambda_m^k}). 
		\end{aligned}
	\end{equation}
	It is well known that (\cite{chen-lin-sharp}) that the non-degeneracy assumption ${\rm det}(D^2f^*)\neq 0$ implies \eqref{p_kj-location}, that is,\[|p_l^k-p_l|=O(\lambda_l^ke^{-\lambda_l^k})\] so $p_l^k$ can be replaced by $p_l$. Next we evaluate $\int_{\Omega_l\setminus B(p_l,\tau_l)}\rho_kHe^{{\rm w}_k}d\mu$ as follows,
	\begin{equation}
		\label{outside-int}
		\begin{aligned}
			\int_{\Omega_l\setminus B(p_l,\tau_l)}\rho_kHe^{{\rm w}_k}d\mu&=\int_{\Omega_l\setminus B(p_l,\tau_l)}\rho_kh_le^{\overline{{\rm w}_k}+\Phi}d\mu\\
			&= \rho_k\int_{\Omega_l\setminus B(p_l,\tau_l)}h_le^{-\lambda_l^k}\frac{64}{(\rho_kh_l(0))^2}e^{-G_l^*(p_l)+\Phi}d\mu\\
			&=\frac{64e^{-\lambda_l^k}}{\rho_kh_l(0)}\int_{\Omega_l\setminus B(p_l,\tau_l)}e^{\Phi_l(x,p_l)}d\mu ,
		\end{aligned}
	\end{equation}
	where $\Phi_l(x,p_l)$ has been defined in \eqref{Phi_j}. Concerning the last integral we have, 
	\begin{align*}
		&\int_{\Omega_l\setminus B(p_l,\tau_l)}e^{\Phi_l(x,p_l)}d\mu\\
		&=\int_{\Omega_l\setminus B(p_l,\delta)}e^{\Phi_l(x,p_l)}d\mu+\int_{B(p_l,\delta)\setminus B(p_l,\tau_l)}e^{\Phi_l(x,p_l)}d\mu\\
		&=\int_{\Omega_l\setminus B(p_l,\delta)}e^{\Phi_l(x,p_l)}d\mu+
		\int_{B(p_l,\delta)\setminus B(p_l,\tau_l)}\frac{h_l}{h_l(0)}|x|^{-4}e^{G_l^*(x)-G_l^*(p_l)}dx\\
		&=\int_{\Omega_l\setminus B(p_l,\delta)}e^{\Phi_l(x,p_l)}d\mu+\pi(\tau_l^{-2}-\delta^{-2})\\
		&\quad+\frac{\pi}2(\Delta \log h_l(0)+\rho_k-2K(p_l))\log \frac{\delta}{\tau_l}+O(\tau_l).
	\end{align*}
	Then we see that 
	\begin{equation*}
		\label{local-energy}
		\begin{aligned}
			\int_{\Omega_l}\rho_kHe^{{\rm w}_k}d\mu
			&=8\pi-\frac{16\pi}{\rho_kh_m^2(0)e^{G_m^*(p_m)}}e^{-\lambda_m^k}h_l(0)e^{G_l^*(p_l)}(\Delta \log h_l(0)+\rho_k-2K(p_l)) \\
			&\quad\times \left(-\lambda_m^k-2\log h_m(0)-G_m^*(p_m)+\log \left(\frac{\tau_l^{-2}}{\rho_k}\right)+2\right) \\
			&\quad+\frac{64e^{-\lambda_m^k}h_l(0)e^{G_l^*(p_l)}}{\rho_kh_m^2(0)e^{G_m^*(p_m)}}\left (\int_{\Omega_l\setminus B(p_l,\tau_l)}e^{\Phi_l(x,p_l)}d\mu-\pi \tau_l^{-2}+O(\tau_l)\right). 
		\end{aligned}
	\end{equation*}
	Putting the estimates on regular points and singular sources together, we have
	\begin{align*}
		&\rho_k-\rho_*\\
		&=\frac{16\pi}{\rho_*h_m^2(0)e^{G_m^*(p_m)}}e^{-\lambda_m^k}L(\mathbf{p})
		\cdot (\lambda_m^k+2\log h_m(0)+G_m^*(p_m)+\log (\tau^2 \rho_k)-2)\\
		&\quad+\frac{64 e^{-\lambda_m^k}}{\rho_*h_m^2(0)e^{G_*(p_m)}}\sum_{l=m_1+1}^mh_l(0)e^{G_l^*(p_l)}\left(\int_{\Omega_l\setminus B(p_l,\tau_l)}e^{\Phi_l(x,p_l)}d\mu-\frac{\pi}{\tau_l^2}+O(\tau_l)\right )  \\
		&\quad+64e^{-\lambda_1^k}\rho^*e^{-G_1^*(p_1)}\frac{(1+\alpha_1)^4}{\rho^2_*h_1^2(0)}\bigg (\sum_{s=1}^{m_1}\left(\int_{\Omega_s\setminus B(p_s,\tau_s)}He^{\Phi}d\mu-\frac{\pi h_s(0)e^{G_s^*(p_s)}}{(1+\alpha_s)\tau_s^{2+2\alpha_s}}\right )\\
		&\quad+O(\tau^{\sigma})e^{-\lambda_1^k}), 
	\end{align*}
	where $\sigma>0$ is a small positive number. By using the relation between $\lambda_1^k$ and $\lambda_m^k$, we can rewrite the above as follows,
	\begin{equation*}\label{combine-2}
		\begin{aligned}
			&\rho_k-\rho_*\\
			&=\frac{16\pi}{\rho_*h_m^2(0)e^{G_m^*(p_m)}}e^{-\lambda_m^k}L(\mathbf{p})
			\cdot (\lambda_m^k+\log (\rho_k h_m^2(0)e^{G_m^*(p_m)}\tau^2)-2)\\
			&\quad+\frac{64 e^{-\lambda_m^k}}{\rho_*h_m^2(0)e^{G_*(p_m)}}\sum_{l=1}^mh_l(0)e^{G_l^*(p_l)}\left (\int_{\Omega_l\setminus B(p_l,\tau_l)}e^{\Phi_l(x,p_l)}d\mu-\frac{\pi}{(1+\alpha_l)\tau_l^{2+2\alpha_l}}\right ) \\
			&\quad+o(\tau^{\sigma})e^{-\lambda_m^k}+O(e^{-(1+\epsilon_0)\lambda_1^k}). 
		\end{aligned}
	\end{equation*}
	Thus, Theorem \ref{negative-b} is established.
\end{proof}

\subsection{An estimate about a Dirichlet problem}

In this subsection we estimate an integral in the following Dirichlet problem:
\begin{equation}\label{dirichlet-lin}
	\begin{cases}
		\Delta {\rm w}_k+\rho_k\frac{h(x)e^{{\rm w}_k(x)}}{\int_{\Omega}h(x)e^{{\rm w}_k(x)}}=0,\quad &\mbox{in}\quad \Omega,\\
		\\
		{\rm w}_k(x)=0,\quad &\mbox{on}\quad \partial \Omega.
	\end{cases}
\end{equation}
where $\Omega$ is a bounded smooth domain in $\mathbb R^2$ and $h(x)$ is a positive $C^5$ function in $\Omega$, $\rho_k$ tends to a constant $\rho$ and $\max\limits_{\overline\Omega}{\rm w}_k(x)\to \infty$.  Set 
\[I_{{\rm w}_k}=\log \int_{\Omega_k}h(x)e^{{\rm w}_k(x)}dx,\]
and we assume that $h(x)\ge 0$ may be zero at some blow-up points. Suppose $p_1,\cdots,p_m$ are blow-up points, $p_1,\cdots,p_{m_1}$ are singular sources and around $p_l$ ($l=1,\cdots,m_1$), $h(x)=|x-p_l|^{2\alpha_l}h_l$, while the remaining blow-up points, $p_{m_1+1},\cdots,p_m$, are regular blow-up points in $\Omega$. So if we use $\alpha_l$ to denote the strength of singularity at each blow-up point, we have 
\[\alpha_l\neq 0,\quad 1\le l\le m_1, \quad \alpha_l=0,\quad m_1+1\le l\le m. \]
We set 
\[v_k={\rm w}_k-I_{{\rm w}_k},\] and $\lambda_l^k$ to be the maximum of $v_k$ around $p_l$, and $\alpha_M$ to be the largest index. Then we have 
\begin{prop}\label{est-muk}
	\[I_{{\rm w}_k}=\lambda_l^k+2\log \frac{\rho_k h_l(0)}{8(1+\alpha_l)^2}+G_l^*(p_l)+O(e^{-\epsilon_0\lambda_1^k}),\quad l=1,\cdots,m\]
	for some small $\epsilon_0>0$. 
\end{prop}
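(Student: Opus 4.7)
The plan is to adapt, to the Dirichlet setting, the derivation of identity \eqref{uk-bar-l} that was carried out inside the proof of Theorem \ref{negative-b}. Introduce $v_k = {\rm w}_k - I_{{\rm w}_k}$, so that $v_k$ satisfies $\Delta v_k + \rho_k h e^{v_k} = 0$ in $\Omega$ with Dirichlet data $v_k|_{\partial\Omega} = -I_{{\rm w}_k}$, and $\lambda_l^k$ is by construction the local maximum of $v_k$ near $p_l$ (or $v_k(p_l)$ if $p_l$ is a singular source). The strategy is to represent $v_k$ in two different ways inside a fixed small annulus around each blow-up point $p_l$, and then match the two expressions.

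First, using the refined bubble approximation of \cite{byz-1, chen-lin-sharp, zhang1, zhang2} (the Dirichlet counterpart of \eqref{uk-neg-d}, with the conformal factor $\chi$ and the corrector $f_k$ both trivial here), I will write, for $x \in B_\tau(p_l)\setminus B_{\tau/2}(p_l)$,
\begin{equation*}
v_k(x) = -\lambda_l^k - 2\log\frac{\rho_k h_l(0)}{8(1+\alpha_l)^2} - 4(1+\alpha_l)\log|x-p_l| + \psi_{k,l}(x) + O(e^{-\epsilon_0 \lambda_l^k}),
\end{equation*}
where $\psi_{k,l}$ is the harmonic correction encoding the boundary effect and the interaction with the other blow-up points, normalized by $\psi_{k,l}(p_l) = 0$. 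Second, applying the Dirichlet Green's function representation
\begin{equation*}
{\rm w}_k(x) = \rho_k \int_\Omega G_\Omega(x, y)\, \frac{h(y) e^{{\rm w}_k(y)}}{\int_\Omega h e^{{\rm w}_k}}\, dy,
\end{equation*}
together with the concentration of $\rho_k h e^{{\rm w}_k}/\int h e^{{\rm w}_k}$ at $p_j$ with masses $8\pi(1+\alpha_j)$ (quantified by Theorem \ref{thm:2.1}) and the decomposition $G_\Omega(x,p_l) = -\frac{1}{2\pi}\log|x-p_l| + R_\Omega(x,p_l)$, I obtain in the same annulus
\begin{equation*}
v_k(x) = -I_{{\rm w}_k} - 4(1+\alpha_l)\log|x-p_l| + G_{l,\Omega}^*(x) + O(e^{-\epsilon_0 \lambda_1^k}).
\end{equation*}

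Third, equating the two expressions, the logarithmic singularities match identically, so the remaining harmonic parts coincide up to the error. Sending $x\to p_l$ and using $\psi_{k,l}(p_l) = 0$ yields
\begin{equation*}
-\lambda_l^k - 2\log\frac{\rho_k h_l(0)}{8(1+\alpha_l)^2} = -I_{{\rm w}_k} + G_l^*(p_l) + O(e^{-\epsilon_0 \lambda_1^k}),
\end{equation*}
which, after rearrangement, gives the claimed identity. The possibly different error exponents for different $l$ are absorbed into a single $O(e^{-\epsilon_0 \lambda_1^k})$ via the uniform comparison $|\lambda_l^k - \lambda_1^k| = O(1)$, which follows by applying the same matching argument simultaneously at $p_1$ and at $p_l$ (compare \eqref{height-d} in the proof of Theorem \ref{negative-b}).

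The main technical burden lies in the first step: establishing the bubble-annulus expansion with exponentially small error $O(e^{-\epsilon_0 \lambda_l^k})$, uniformly across both positive and negative singular strengths $\alpha_l\in(-1,\infty)\setminus \mathbb{N}$ and in the presence of the $k$-dependent boundary datum $-I_{{\rm w}_k}$. This is precisely where the refined pointwise asymptotics of \cite{byz-1} are essential; the choice of $\psi_{k,l}$ as the harmonic function simultaneously absorbing the Dirichlet boundary oscillation and the cross-bubble interaction is what makes the subsequent matching at $p_l$ a purely algebraic computation, parallel to the passage from \eqref{uk-neg-d}--\eqref{uk-det} to \eqref{uk-bar-l} in the manifold case.
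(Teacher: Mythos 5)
Your proposal is correct and follows essentially the same route as the paper: the paper also sets $v_k={\rm w}_k-I_{{\rm w}_k}$, writes $v_k$ away from the blow-up set via the Green representation as $\sum_j 8\pi(1+\alpha_j)G(x,p_j)-I_{{\rm w}_k}+O(e^{-\epsilon_0\lambda_1^k})$, writes the local bubble expansion near each $p_l$ with the harmonic correction $G_l^*(x)-G_l^*(p_l)$ (your $\psi_{k,l}$), and matches the two expressions at $p_l$ to obtain the identity, with the error $\Pi_k$ depending on the sign of $\alpha_M$ but always of order $O(e^{-\epsilon_0\lambda_1^k})$.
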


\begin{proof} 
	Clearly $v_k=w_k-I_{{\rm w}_k}$
	satisfies,
	\[\Delta v_k+\rho_kh(x)e^{v_k}=0,\quad \mbox{in}\quad \Omega,\]
	and $v_k=-I_{{\rm w}_k}$ on $\partial \Omega_k$. 
	By the Green representation formula for $v_k$,
	for $x$ away from the singular sources we have, 
	$$v_k(x)=\sum_{j=1}^m8\pi(1+\alpha_j)G_\Omega(x,p_j)-I_{{\rm w}_k}+O(e^{-\epsilon_0\lambda_1^k}),\quad 
	x\in \Omega\setminus \bigcup_{j=1}^m B_\epsilon(p_j)$$
	for some small $\epsilon>0$. Around each $p_j$, the harmonic function that eliminates the oscillation of $v_k$ on $\partial B_\epsilon(p_j)$ takes the form, 
	\[\phi_j^k(x)=G_j^*(x)-G_j^*(p_j)+O(e^{-\epsilon_0\lambda_1^k}).\]
	On the other hand the expansion around $p_j$ reads as follows, 
	\[v_k(x)=\log \frac{e^{\lambda_j^k}}{\left(1+e^{\lambda_j^k}\frac{\rho_k h_j(0)}{8(1+\alpha_j)^2}|x-p_j|^{2\alpha_j+2}\right)^2}+\phi_j^k(x)+\Pi_k\]
	where (see either section \ref{sec3:byz2} below or \cite{gluck,zhang1,byz-1}),
	\[\Pi_k=\begin{cases}O(e^{-\lambda_l^k/{(1+\alpha_M)}}),\quad &\mbox{if}\quad \alpha_M>0,\\
		O(e^{-\lambda_l^k}\lambda_k^2),\quad &\mbox{if}\quad \alpha_M=0,\\
		O(e^{-\lambda_l^k}),\quad &\mbox{if}\quad \alpha_M<0.
	\end{cases}\]
	The comparison between the two expressions shows that,
	\begin{equation}\label{ck-id}
		I_{{\rm w}_k}=\lambda_l^k+2\log \frac{\rho_k h_l(0)}{8(1+\alpha_l)^2}+G_l^*(p_l)+\Pi_k+O(e^{-\epsilon_0\lambda_1^k}),\quad l=1,\cdots,m.
	\end{equation}
	Proposition \ref{est-muk} follows from (\ref{ck-id}). \end{proof}

As particularly relevant case for applications to be discussed in sections \ref{sec:meansink} and \ref{section:firstsec} is when we have just one blow-up point placed on a negative source $p$ with strength $\beta\in (-1,0)$. Around the blow-up point $p$ we write $h(x)=|x-p|^{2\beta}\bar h(x)$. We shall need an accurate estimate about $\rho_k-8\pi(1+\beta)$ under the assumption \eqref{neg-crit-2}. Let $$c_k=I_{{\rm w}_k},$$ we recall that, for $x$ far away from $p$, we  have that,
\[v_k(x)=\int_{\Omega}G(x,\eta)\rho_khe^{v_k}d\eta-c_k,\]
and a rough evaluation shows that,
\begin{equation}\label{rough-ck}
	v_k(x)=8\pi(1+\beta)G_\Omega(x,p)-c_k+O(e^{-\lambda_k}),\quad x\in \Omega\setminus B_\tau(p)
\end{equation}
On the other hand, since $\int_{\Omega}he^{v_k}=1$ and setting $$
G^*(x)=8\pi(1+\beta)R_\Omega(x,p),
$$ 
we can use Theorem 3.2 of \cite{byz-1} as follows,  
\begin{equation}\label{about-ck}
	\begin{aligned}
		\rho_k=&\int_{B_\tau(p)}\rho_k|x-p|^{2\beta}\bar h(x)e^{v_k}dx+\int_{\Omega\setminus B_\tau(p)}\rho_k|x-p|^{2\beta}\bar h(x)e^{v_k}\\
		=&~8\pi(1+\beta)-\frac{64\pi(1+\beta)^3}{\rho_k \bar h(0)}\tau^{-2-2\beta}e^{-\lambda_k}\\
		&+ e^{-c_k}\int_{\Omega\setminus B_\tau(p)}|x-p|^{-4-2\beta}e^{G^*(x)}\rho_k \bar h(x)dx+o_\tau(1)(e^{-\lambda_k})\\
		=&~8\pi(1+\beta)-\frac{\pi\rho_k\bar h(0)}{1+\beta}e^{G^*(p)}\tau^{-2-2\beta}e^{-c_k}\\
		&+e^{-c_k}\int_{\Omega\setminus B_\tau(p)}|x-p|^{-4-2\beta}e^{G^*(x)}\rho_k\bar h(x)dx+o_\tau(1)e^{-\lambda_k},
	\end{aligned}
\end{equation}
where we have used (\ref{ck-id})-(\ref{rough-ck}) and 
$o_\tau(1)$ is a uniformly bounded quantity such that $o_\tau(1)\to 0$ as $\tau\to 0$.
Thus, we get
\begin{equation}\label{im-ck}
	\rho_k-8\pi(1+\beta)=e^{-c_k}\rho_k(D_{\beta}+o_\tau(1)),
\end{equation}
where $D_{\beta}$ is defined as follows,
\begin{equation}\label{d-beta}
	D_{\beta}:=\lim_{\tau\to 0^+}\bar h(0)e^{G^*(p)}\left(\int_{\Omega\setminus B_\tau(p)}e^{\Phi_{\Omega,p}}dx-\frac{\pi}{1+\beta}\tau^{-2-2\beta}\right)
\end{equation}
and 
\begin{equation}\label{Phi-beta}\Phi_{\Omega,p}(x)=8\pi(1+\beta)G_\Omega(x,p)-G^*(p)+\log \frac{\bar h(x)}{\bar h(0)}+2\beta \log |x-p|.
\end{equation}
Remark that the limit defining $D_\beta$ is well defined due to 
\eqref{neg-crit-2}, see also Lemma \ref{Iexp} in section \ref{sec:meansink}. 


\subsection{A uniqueness lemma}
The proof of the following Lemma can be found in \cite{chen-lin,wu-zhang-ccm}, for $\alpha>0$ and in \cite{byz-1} for
$\alpha<0$.

\begin{lem}
	\label{lem1}
	Let $\alpha>-1$, $\alpha\not \in \mathbb N\cup \{0\}$ and $\phi$ be a $C^2$ solution of
	$$\begin{cases}
		\Delta \phi+8(1+\alpha )^2|x|^{2\alpha}e^{U_{\alpha}}\phi=0\;\;
		\hbox{ {\rm in} }\;\; \mathbb R^2,\\
		\\
		|\phi(x)|\le C(1+|x|)^{\tau},
		\quad x\in \mathbb R^2,
	\end{cases}
	$$
	where
	$$
	U_{\alpha}(x)=\log \frac{1}{(1+|x|^{2\alpha+2})^2}
	\quad \hbox{ and } \quad
	\tau\in [0,1).
	$$
	Then there exists some constant $b_0$ such that
	\begin{equation*}
		\phi(x)= b_0\frac{1-|x|^{2(1+\alpha)}}{1+|x|^{2(1+\alpha)}}.
	\end{equation*}
\end{lem}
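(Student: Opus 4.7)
The plan is to decompose $\phi$ into angular Fourier modes,
$$
\phi(r,\theta)=\sum_{n\ge 0}\bigl(a_n(r)\cos n\theta+b_n(r)\sin n\theta\bigr),
$$
so that each coefficient, generically denoted $\phi_n(r)$, satisfies the radial ODE
$$
\phi_n''+\frac{1}{r}\phi_n'-\frac{n^2}{r^2}\phi_n+\frac{8(1+\alpha)^2 r^{2\alpha}}{(1+r^{2+2\alpha})^2}\phi_n=0,\quad r>0,
$$
together with the growth bound $|\phi_n(r)|\le C(1+r)^\tau$, $\tau<1$, and the regularity inherited from $\phi\in C^2(\mathbb{R}^2)$. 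The goal is to show $\phi_n\equiv 0$ for every $n\ge 1$ and that the admissible $\phi_0$ are precisely the scalar multiples of $(1-r^{2(1+\alpha)})/(1+r^{2(1+\alpha)})$.

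For $n=0$, the function $\phi_0^{\rm sc}(r):=(1-r^{2(1+\alpha)})/(1+r^{2(1+\alpha)})$ is an explicit radial solution, obtained (up to a multiplicative constant) as the infinitesimal generator at $\lambda=1$ of the scaling family $U_\alpha(\lambda x)+2(1+\alpha)\log\lambda$ preserving $\Delta u+8(1+\alpha)^2|x|^{2\alpha}e^u=0$. A second linearly independent radial solution, produced by Abel's formula and reduction of order, behaves like $\log r$ near $r=0$ and is therefore ruled out by the regularity/growth assumption, so $\phi_0=b_0\phi_0^{\rm sc}$.

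For $n\ge 1$ I use the successive substitutions $s:=r^{1+\alpha}$, $\psi_n(s):=\phi_n(r)$ and then $t:=(s^2-1)/(s^2+1)\in(-1,1)$. The first substitution transforms the mode equation into
$$
\psi_n''+\frac{1}{s}\psi_n'-\frac{\mu_n^2}{s^2}\psi_n+\frac{8}{(1+s^2)^2}\psi_n=0,\qquad \mu_n:=\frac{n}{1+\alpha}>0,
$$
and the second reduces this to the associated Legendre equation with degree $\ell=1$ and order $\mu_n$:
$$
(1-t^2)\psi_{tt}-2t\psi_t+\Bigl[2-\frac{\mu_n^2}{1-t^2}\Bigr]\psi=0.
$$
A Frobenius analysis at the regular singular points $t=\pm 1$ (corresponding to $r=0$ and $r=\infty$) yields indicial exponents $\pm\mu_n/2$; the assumption $\tau<1\le n$ together with $C^2$-regularity at the origin forces $\psi$ to take the $+\mu_n/2$ exponent at both endpoints, so $\psi$ must coincide with the regular associated Legendre function $P_\ell^{\mu_n}(t)$. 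This is a nontrivial bounded solution only when $\mu_n$ is a positive integer not exceeding $\ell=1$, i.e.\ when $\mu_n=1$ and $n=1+\alpha$. Since $\alpha\notin\mathbb{N}\cup\{0\}$ by hypothesis, no integer $n\ge 1$ satisfies this, hence $\phi_n\equiv 0$.

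The main technical obstacle will be the careful translation of the global bound $|\phi|\le C(1+|x|)^\tau$ into the correct Frobenius boundary conditions at $r=0$ and $r=\infty$, particularly for $\alpha\in(-1,0)$ where the coefficient $r^{2\alpha}$ is singular at the origin. The key observation in both regimes is that $r^{2\alpha}$ remains strictly less singular than $r^{-2}$ for $\alpha>-1$, so the indicial roots at $r=0$ remain $\pm n$ independent of $\alpha$, and the potential decays strictly faster than $r^{-2}$ at infinity so the same roots arise there. Once this is in hand, the Legendre non-existence for non-integer order, keyed to the hypothesis $\alpha\notin\mathbb{N}\cup\{0\}$, closes the argument.
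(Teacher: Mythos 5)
Your proposal is essentially correct, and it is worth noting that the paper itself gives no proof of this lemma: it simply cites \cite{chen-lin,wu-zhang-ccm} for $\alpha>0$ and \cite{byz-1} for $\alpha<0$, where the argument is exactly the route you take — Fourier decomposition in $\theta$, the substitution $s=r^{1+\alpha}$ turning the mode-$n$ equation into the mode-$\mu_n$ equation for the regular bubble with $\mu_n=n/(1+\alpha)$, and exclusion of all $n\ge 1$ from the growth bound plus the non-integrality of $1+\alpha$, while the radial mode is handled by reduction of order (the second radial solution has a $\log r$ singularity at the origin, excluded by continuity of $\phi$ at $0$; the growth bound at infinity is not what kills it). The only point to tighten is the final Legendre step: the function recessive at $t=1$ is the Ferrers function $P_1^{-\mu_n}$, not $P_1^{\mu_n}$ (which behaves like $(1-t)^{-\mu_n/2}$ there), and the fact you need is that a solution recessive at both endpoints exists only when $\ell-\mu_n\in\mathbb{N}\cup\{0\}$, which for $\ell=1$, $\mu_n>0$ means $\mu_n=1$, i.e. $n=1+\alpha$, impossible since $\alpha\notin\mathbb{N}\cup\{0\}$. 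Since the degree is $1$ you can in fact bypass Legendre connection theory altogether: the equation $\psi''+\frac1s\psi'-\frac{\mu^2}{s^2}\psi+\frac{8}{(1+s^2)^2}\psi=0$ has the explicit elementary solutions $\psi_+(s)=s^{\mu}\frac{(1+\mu)+(\mu-1)s^{2}}{1+s^{2}}$ and $\psi_-(s)=\psi_+(1/s)$, linearly independent for $\mu\neq1$; boundedness at $s=0$ forces $\phi_n$ to be a multiple of $\psi_+$, which grows like $s^{\mu}=r^{n}$ at infinity unless $\mu=1$, so the bound $|\phi_n|\le C(1+r)^{\tau}$ with $\tau<1\le n$ forces $\phi_n\equiv0$. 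With that substitution your argument is complete and matches the proofs in the cited references.
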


\bigskip

The following lemma has been proved in \cite{chen-lin-sharp}.
\begin{lem}
	Let  $ \varphi $ be a $ C^2 $ solution of
	\begin{equation*}
		\begin{cases}
			\Delta \varphi+e^U\varphi=0\quad & {\rm in} \ \;\mathbb{R}^2,
			\\
			\\
			|\varphi| \leq c\big(1+|x|\big)^{\kappa} \quad & {\rm in} \ \;\mathbb{R}^2,
		\end{cases}
	\end{equation*}
	where $ U(x)=\log\frac{8}{(1+|x|^2)^2} $ and $ \kappa \in[0,1) $. Then there exist constants $b_0$, $b_1$, $b_2$ such that
	\begin{equation*}
		\varphi= b_0\varphi_0+b_1\varphi_1+b_2\varphi_2,
	\end{equation*}
	where
	\begin{equation*}
		\varphi_0(x)= \frac{1-|x|^2}{1+|x|^2},\quad \varphi_1(x)= \frac{x_1}{1+|x|^2},\quad \varphi_2(x)= \frac{x_2}{1+|x|^2}.
	\end{equation*}
	
\end{lem}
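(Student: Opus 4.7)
The plan is to separate variables in polar coordinates and reduce the problem to a family of radial ODEs, exploiting the rotational invariance of $e^U$. Writing
$$
\varphi(r,\theta)=\sum_{k\ge 0}\bigl(a_k(r)\cos(k\theta)+b_k(r)\sin(k\theta)\bigr),
$$
the equation $\Delta\varphi+e^U\varphi=0$ reduces to the family of radial ODEs
$$
a_k''+\frac{1}{r}a_k'+\Bigl(\frac{8}{(1+r^2)^2}-\frac{k^2}{r^2}\Bigr)a_k=0
$$
(and analogously for $b_k$), while Parseval's identity together with the hypothesis $|\varphi|\le c(1+|x|)^{\kappa}$ yields $|a_k(r)|+|b_k(r)|\le C(1+r)^{\kappa}$ for every $k$. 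Frobenius analysis at $r=0$ produces two linearly independent solutions behaving like $r^k$ and $r^{-k}$ (respectively $1$ and $\log r$ when $k=0$), so $C^2$ regularity at the origin selects a one-dimensional family for each mode.

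For the low modes $k=0,1$ the kernel elements are explicit. The scale invariance of $\Delta U+e^U=0$ yields the regular $k=0$ solution $\varphi_0(r)=\frac{1-r^2}{1+r^2}$, and reduction of order shows every other $k=0$ solution carries a $\log r$ singularity at the origin, hence is ruled out by $C^2$ regularity. Analogously, the translation invariance of the Liouville equation produces $\frac{r}{1+r^2}$ as the regular $k=1$ solution, which through the $\cos\theta$ and $\sin\theta$ factors recovers $\varphi_1$ and $\varphi_2$. Thus the modes $k=0,1$ contribute exactly $b_0\varphi_0+b_1\varphi_1+b_2\varphi_2$.

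The crux of the argument, and the main obstacle, is to exclude every mode $k\ge 2$. Since the potential $\frac{8}{(1+r^2)^2}$ decays like $r^{-4}$ at infinity, a variation of parameters estimate comparing with the Euler equation $a''+r^{-1}a'-k^2r^{-2}a=0$ shows that every solution of the full radial ODE admits the expansion $a_k(r)=c_+ r^k+c_- r^{-k}+o(r^{-k})$ as $r\to\infty$. If the regular-at-zero solution had $c_+\neq 0$, its $r^k$ growth with $k\ge 2$ would immediately violate the bound $|a_k|\le C(1+r)^{\kappa}$ with $\kappa<1$; if instead $c_+=0$, then the two-sided behavior $a_k\sim c\,r^k$ near $r=0$ and $a_k=O(r^{-k})$ at infinity would make the stereographic lift $\tilde\varphi$ smooth at both poles of $\mathbb{S}^2$. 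Under stereographic projection the linearized equation transforms into $(\Delta_{\mathbb{S}^2}+2)\tilde\varphi=0$, so $\tilde\varphi$ would extend to a smooth $\Delta_{\mathbb{S}^2}$-eigenfunction of eigenvalue $-2$ with azimuthal mode $k$, namely a spherical harmonic $Y_l^k$ with $l(l+1)=2$ and $l\ge k\ge 2$; this is impossible since $l\ge 2$ forces $l(l+1)\ge 6$. Consequently $a_k\equiv b_k\equiv 0$ for every $k\ge 2$, and $\varphi=b_0\varphi_0+b_1\varphi_1+b_2\varphi_2$ as claimed. The delicate point is the control of the remainder $o(r^{-k})$ in the asymptotic expansion, which has to be made precise enough (e.g.\ by an integral formulation of the ODE) to guarantee that the stereographic lift is genuinely smooth across the north pole.
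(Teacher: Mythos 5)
The paper itself gives no proof of this lemma; it simply quotes it from Chen--Lin \cite{chen-lin-sharp}, where the argument is the classical separation-of-variables one. Your proposal follows that same overall strategy (Fourier decomposition in $\theta$, identification of the regular solutions of the $k=0$ and $k=1$ radial equations with $\varphi_0,\varphi_1,\varphi_2$, exclusion of all modes $k\ge 2$), and it is correct in outline; the only genuinely different ingredient is your mechanism for killing the modes $k\ge 2$. The standard route is purely ODE-theoretic: a Wronskian/comparison argument against the positive $k=1$ solution $r/(1+r^2)$ (using $(rW)'=(k^2-1)a_k\,\psi_1/r$) shows that the solution regular at the origin of the $k$-th mode equation must eventually grow at least linearly, contradicting the sublinear bound, with no need for asymptotics at infinity. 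Your variant instead uses the dichotomy $a_k=c_+r^k+c_-r^{-k}+o(r^{-k})$ --- legitimate, since the potential $V(r)=8/(1+r^2)^2$ satisfies $\int^\infty r\,|V(r)|\,dr<\infty$, so Levinson-type asymptotics for the perturbed Euler equation apply --- and then, when $c_+=0$, lifts to $\mathbb{S}^2$, where $(\Delta_{\mathbb{S}^2}+2)\tilde\varphi=0$ admits only degree-one spherical harmonics, whose azimuthal modes are $|m|\le 1$. This works, and the ``delicate point'' you flag (smoothness of the lift across the north pole) does not in fact require sharpening the remainder: once $c_+=0$ the lifted mode is bounded near the pole and solves the equation in a punctured neighborhood, so the singularity is removable and smoothness follows from elliptic regularity; alternatively the comparison argument above bypasses the sphere entirely. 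Two small points to tidy: the bound $|a_k(r)|+|b_k(r)|\le C(1+r)^{\kappa}$ follows directly from $|a_k(r)|\le \frac1\pi\int_0^{2\pi}|\varphi(r,\theta)|\,d\theta$ rather than from Parseval, and for $k=0$ the reduction-of-order formula should be applied locally near $r=0$ (where $\varphi_0>0$) to exhibit the $\log r$ behavior, since the global formula degenerates at the zero $r=1$ of $\varphi_0$; neither affects the argument.
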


\medskip

\section{Some results in \cite{byz-1} about the local asymptotic analysis near a blow-up point}
\label{sec3:byz2}

In this section we summarize the asymptotic expansions of a sequence of blow-up solutions near a blow-up point as recently
derived in \cite{byz-1}. We assume that $u_k$ denotes a sequence of blow-up solutions of
\begin{equation*}
	\Delta u_k+|x|^{2\alpha}h_k(x)e^{u_k}=0 \quad \mbox{in}\quad B_{2\tau},
\end{equation*}
where $B_{\tau}\subset \mathbb{R}^2$ for $\tau>0$ is the ball centered at the origin with radius $\tau>0$,
\begin{equation}\label{assum-H}
	\frac{1}{C}\le h_k(x)\le C, \quad \|D^{m}h_k\|_{L^{\infty}(B_{2\tau})}\le C
	\quad \forall x\in B_{2\tau}, \quad \forall m\le 5,
\end{equation}
and
\begin{equation}\label{assum-H.1}
	\int_{B_{2\tau}}|x|^{2\alpha}h_k(x)e^{u_k}<C,
\end{equation}
for some uniform $C>0$. The sequence $\{u_k\}$ has its only blow-up point at the origin:
\begin{equation}\label{only-bu}
	\mbox{there exists}\quad  z_k\to 0, \mbox{ such that} \quad \lim_{k\to \infty} u_k(z_k)\to \infty
\end{equation}
and for any fixed $K\Subset B_{2\tau}\setminus \{0\}$, there exists $C(K)>0$ such that
\begin{equation}\label{only-bu-2}
	u_k(x)\le C(K),\quad x\in K.
\end{equation}
It is also standard to assume that $u_k$ has bounded oscillation on $\partial B_{\tau}$
\begin{equation}\label{u-k-boc}
	|u_k(x)-u_k(y)|\le C,\quad \forall x,y\in \partial B_{\tau},
\end{equation}
for some uniform  $C>0$. As before, we set $\psi_k$ to be the harmonic function which  encodes the boundary oscillation of $u_k$:
\begin{equation}\label{psik.1}
	\begin{cases}
		\Delta \psi_k=0  \quad &\hbox{in }\quad B_{\tau},\\
		\\
		\psi_k=u_k-\frac{1}{2\pi\tau}\int_{\partial B_{\tau}}u_kdS
		\quad &\hbox {on }\quad \partial B_{\tau}.
	\end{cases}
\end{equation}
Since $u_k$ has a bounded oscillation on $\partial B_{\tau}$ (see \eqref{u-k-boc}), $u_k-\psi_k$ is constant and uniformly bounded on $\partial B_1$ and $\|D^m\psi_k\|_{B_{\tau/2}}\le C(m)$ for any $m\in \mathbb N$.
The mean value property of harmonic functions also gives $\psi_k(0)=0$.
For the sake of simplicity let us define,
$$
V_k(x)=h_k(x)e^{\psi_k(x)}, \quad \varepsilon_k=e^{-\frac{u_k(0)}{2(1+\alpha)}}
$$
and
\begin{equation} \label{defvk-1}
	v_k(y)=u_k(\varepsilon_ky)+2(1+\alpha)\log \varepsilon_k-\psi_k(\varepsilon_ky),\quad y\in \Omega_k:=B_{\tau/\varepsilon_k}.
\end{equation}
Obviously we have $V_k(0)=h_k(0)$, $\Delta \log V_k(0)=\Delta \log h_k(0)$.
Let
\begin{equation}\label{st-bub}
	U_k(y)=
	-2\log
	\left(1+\frac{h_k(0)}{8(\alpha+1)^2}|y|^{2\alpha+2} \right),
\end{equation}
be the standard bubble which satisfies
\begin{equation}\nonumber \Delta U_k+|y|^{2\alpha}h_k(0)e^{U_k(y)}=0 \quad
	\hbox{ in } \quad \mathbb R^2.
\end{equation}

In the remaining part of this section we split the results into three subsections
according to the sign of $\alpha$ ($\alpha>0$, $\alpha<0$ and $\alpha=0$).

\subsection{Asymptotic analysis for $\alpha>0$}\label{sec:alfap}

We say that a function is separable if it is the product of a polynomial of two variables $P(x_1,x_2)$ times a radial function with certain decay at infinity,
where $P$ is the sum of monomials $x_1^nx_2^m$ where $n,m$ are non-negative integers and at least one among them is odd.
Let us define:
\begin{equation}
	\label{nov10e6}
	c_{1,k}(y)=\varepsilon_kg_k(r)\nabla (\log V_k)(0)\cdot \theta, \quad \theta=(\theta_1,\theta_2),\; \theta_j=y_j/r,~ j=1,2,
\end{equation}
where
\begin{equation}
	\label{mar27e1} g_k(r)=-\frac{2(1+\alpha)}{\alpha }
	\frac{r}{1+\frac{V_k(0)}{8(1+\alpha)^2}r^{2\alpha+2}},
\end{equation}
$c_{0,k}$ to be the unique solution of
\begin{equation*}
	\label{c0k}
	\begin{cases}
		\frac{d^2}{dr^2}c_{0,k}+\frac{1}{r}\frac{d}{dr}c_{0,k}+V_k(0) r^{2\alpha}e^{U_k}c_{0,k}\\
		=-\frac{\varepsilon_k^2}4V_k(0)r^{2\alpha}e^{U_k}\bigg ((g_k+r)^2|\nabla \log V_k(0)|^2+\Delta \log V_k(0)r^2\bigg ),\\
		\\
		c_{0,k}(0)=c'_{0,k}(0)=0, \quad 0<r<\tau/\varepsilon_k,
	\end{cases}
\end{equation*}
$c_{2,k}$ a separable function which satisfies,
\begin{equation*}\label{c2k-e}\Delta c_{2,k}+V_k(0) r^{2\alpha}e^{U_k}c_{2,k}=
	-\varepsilon_k^2r^{2\alpha}e^{U_k}\hat \Theta_2,
\end{equation*}
\noindent
where
\begin{equation*}
	\label{E212}
	\begin{aligned}
		\varepsilon_k^2\hat \Theta_2&=\varepsilon_k^2( E_{2,1}(r)\cos 2\theta+E_{2,2}(r)\sin 2\theta)\\
		&=\varepsilon_k^2\bigg (\bigg(\frac 14\left(\partial_{11}(\log V_k)(0)-\partial_{22}(\log V_k)(0)\right)r^2 \\
		&\quad+\frac 14\left((\partial_1\log V_k(0))^2-(\partial_2\log V_k(0))^2\right)(g_k+r)^2\bigg)\cos 2\theta \\
		&\quad+\left(\partial_{12}(\log V_k)(0)\frac{r^2}{2}+\frac{\partial_1V_k(0)}{V_k(0)}\frac{\partial_2V_k(0)}{V_k(0)}\frac{(g_k+r)^2}{2}\right)\sin 2\theta \bigg ).
	\end{aligned}
\end{equation*}
and
\begin{equation}
	\label{c2k}
	c_{2,k}=\varepsilon_k^{2}f_{2a}(r)\cos (2\theta)+\varepsilon_k^{2}f_{2b}(r)\sin (2\theta),
\end{equation}
where
\begin{equation}
	\label{4.two-fun1}
	|f_{2a}(r)|+|f_{2b}(r)|=O(1+r)^{-2\alpha},\quad 0<r<\varepsilon_k^{-1}.
\end{equation}

In the statement below $\epsilon_0>0$ is a suitable positive number allowed to change even line to line.
\begin{thm}{(\cite{byz-1})}\label{int-pos}
	\begin{align*}
		\int_{B_{\tau}}h_k(x)|x|^{2\alpha}e^{u_k}
		=~&8\pi(1+\alpha)\left(1-\frac{e^{-u_k(0)}}{e^{-u_k(0)}+\frac{h_k(0)}{8(1+\alpha)^2}\tau^{2\alpha+2}}\right)\\
		&+d_{1,k} \Delta \log h_k(0)\varepsilon_k^2
		+T_1(V_k)\varepsilon_k^4+O(\varepsilon_k^{4+\epsilon_0}),
	\end{align*}
	for some $\epsilon_0>0$, where,
	\begin{align*}
		d_{1,k}=&-\frac{2\pi^2}{(1+\alpha ) \sin \frac{\pi}{1+\alpha}}\left(\frac{8(1+\alpha)^2}{V_k(0)}\right)^{\frac 1{1+\alpha}}\\
		&+\left(\frac{16\pi(1+\alpha)^2}{\alpha }\frac{|\nabla \log V_k(0)|^2}{V_k(0)\Delta\log V_k(0)}+\frac{16\pi(1+\alpha)^4}{\alpha V_k(0)}\right)\tau^{-2\alpha}\varepsilon_k^{2\alpha},
	\end{align*}
	and $T_1(V_k)$ is a bounded function of $D^{\beta}V_k(0)$ for $|\beta |=0,1,2,3,4$.\\
	Moreover for $|y|\leq \tau/\varepsilon_k$, we have,
	\begin{align*}
		v_k(y)&=U_k(y)+c_{1,k}(y)+O(\varepsilon_k^{2}(1+|y|)^{\epsilon_0}),\\
		v_k(y)&=U_k(y)+c_{1,k}(y)+c_{0,k}(y)+c_{2,k}(y)+O(\varepsilon_k^{2+\epsilon_0}(1+|y|)^{\epsilon_0}),\\
		v_k(y)&=U_k(y)+c_{1,k}(y)+c_{0,k}(y)+c_{2,k}(y)+c_{*,k}(y)+O(\varepsilon_k^{4+\epsilon_0}(1+|y|)^{\epsilon_0}),
	\end{align*}
	for some $\epsilon_0>0$.
\end{thm}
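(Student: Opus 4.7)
The plan is to proceed by an iterative improvement scheme in which $v_k$ is successively approximated by $U_k$ plus corrections chosen, mode by mode in the angular variable, to cancel the leading Taylor terms of $V_k(\varepsilon_k y)$. After the rescaling (\ref{defvk-1}), $v_k$ solves $\Delta v_k + V_k(\varepsilon_k y)|y|^{2\alpha} e^{v_k} = 0$ in $\Omega_k = B_{\tau/\varepsilon_k}$ with $v_k(0)=0$, and standard blow-up analysis applied to (\ref{assum-H})--(\ref{only-bu-2}) together with the classification of finite-mass solutions yields the crude bound $v_k = U_k + o(1)$ on compact sets. To upgrade this, I would write $v_k = U_k + \eta_k$ and Taylor-expand $V_k(\varepsilon_k y) = V_k(0) + \varepsilon_k\nabla V_k(0)\cdot y + \tfrac{\varepsilon_k^2}{2}\langle D^2 V_k(0)y,y\rangle + O(\varepsilon_k^3|y|^3)$, reducing the problem to inverting the linearised operator $L := \Delta + V_k(0)|y|^{2\alpha} e^{U_k}$ against explicit inhomogeneities arising from both the Taylor remainder and the nonlinear residual $e^{v_k} - e^{U_k}(1+\eta_k+\tfrac{1}{2}\eta_k^2+\cdots)$.

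The key tool is the Fourier decomposition of $L$ into angular modes, together with Lemma \ref{lem1}: the only mode supporting a kernel of sub-linear growth is the radial one, spanned by $\frac{1-|y|^{2+2\alpha}}{1+|y|^{2+2\alpha}}$, and---crucially because $\alpha\notin\mathbb{N}$---the first angular mode has trivial kernel in the appropriate growth class, which is what permits solving for $c_{1,k}$ uniquely with the explicit formula (\ref{nov10e6})--(\ref{mar27e1}) cancelling the linear-in-$y$ term $\varepsilon_k\nabla\log V_k(0)\cdot y$. Likewise, $c_{0,k}$ is the unique radial solution with vanishing Cauchy data at the origin cancelling the radial part of the quadratic error (which includes the $c_{1,k}^2$ backreaction), and $c_{2,k}$ is the separable solution (\ref{c2k}) in modes $\cos 2\theta,\sin 2\theta$ whose decay (\ref{4.two-fun1}) follows from integrating against the two linearly independent homogeneous solutions of the associated radial ODE. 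A bootstrap then upgrades the remainder: combining $L$-invertibility modulo the radial kernel with a Green's-function/barrier argument to propagate decay from $\partial\Omega_k$ inward, one gets first $v_k - U_k - c_{1,k} = O(\varepsilon_k^2(1+|y|)^{\epsilon_0})$, then $v_k - U_k - c_{1,k} - c_{0,k} - c_{2,k} = O(\varepsilon_k^{2+\epsilon_0}(1+|y|)^{\epsilon_0})$, and finally the full expansion after introducing the fourth-order correction $c_{*,k}$ analogously at order $\varepsilon_k^4$.

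For the integral expansion I would substitute the refined expansion of $v_k$ into $\int_{B_\tau}h_k|x|^{2\alpha}e^{u_k}\,dx$, change variables $x=\varepsilon_k y$, and integrate against $V_k(0)|y|^{2\alpha}e^{U_k}$ term by term. The leading contribution is the explicit integral of the standard bubble up to radius $\tau/\varepsilon_k$, which produces $8\pi(1+\alpha)$ minus the boundary defect displayed in the statement; the $c_{1,k}$ contribution vanishes by angular symmetry; the $c_{0,k}$ contribution yields the $d_{1,k}\Delta\log h_k(0)\varepsilon_k^2$ term, with the bulk part of $d_{1,k}$ arising from a Mellin/beta-function integral that produces the $\pi/\sin(\pi/(1+\alpha))$ factor (well-defined precisely because $\alpha\notin\mathbb{N}$) and the boundary part from the integration-by-parts remainder at $r=\tau/\varepsilon_k$ (producing the $\tau^{-2\alpha}\varepsilon_k^{2\alpha}$ contribution); the $\varepsilon_k^4$ order from $c_{2,k}$ and $c_{*,k}$ is packaged into $T_1(V_k)$.

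The main technical obstacle is twofold. First, keeping the ODE analysis for each correction sharp enough near the non-resonance condition, i.e.\ obtaining the decay (\ref{4.two-fun1}) and the analogous decay for $c_{*,k}$ uniformly away from integer values of $\alpha$, since the Wronskian of the homogeneous radial ODE in each angular mode has zeros exactly at such resonant values. Second, tracking the interplay between the two distinct scales $\varepsilon_k^2$ and $\varepsilon_k^{2\alpha}$ that both appear in $d_{1,k}$---their relative size depends on whether $\alpha\lessgtr 1$ and in particular demands that the boundary contribution from the bulk integral be extracted with enough precision so that it combines cleanly with the boundary correction from $c_{0,k}$ at $r=\tau/\varepsilon_k$, which is where the bootstrap remainder estimates are needed uniformly up to the boundary of $\Omega_k$.
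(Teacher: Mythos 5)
Your outline follows essentially the same route as the construction this paper imports from \cite{byz-1} (Theorem \ref{int-pos} is stated here without proof, but Section 3.1 records exactly this scheme): angular-mode corrections $c_{1,k},c_{0,k},c_{2,k},c_{*,k}$ built on the kernel classification of Lemma \ref{lem1} and the non-resonance $\alpha\notin\mathbb{N}$, a Green's-representation/potential-estimate bootstrap for the successive remainders, and term-by-term integration in which the mode-one terms vanish by parity, the radial correction yields $d_{1,k}\Delta\log h_k(0)\varepsilon_k^2$ through the boundary flux of $c_{0,k}$, and the boundary contributions at $r\sim\tau/\varepsilon_k$ produce the $\tau^{-2\alpha}\varepsilon_k^{2\alpha}$ piece alongside the explicit bubble integral. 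One small correction: the factor $\pi/\sin\frac{\pi}{1+\alpha}$ coming from the Beta-type integral is finite for every $\alpha>0$, integer or not, so the hypothesis $\alpha\notin\mathbb{N}$ is not what makes that constant well defined; it is needed only for the triviality of the non-radial kernel and hence for the unique solvability and decay of the angular corrections and the bootstrap.
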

We refer to \cite{byz-1} for the definition of $c_{*,k}$.

\subsection{Asymptotic analysis around a negative pole}
In this case we use $\beta\in (-1,0)$ to denote the strength of a negative pole and define,
$$\displaystyle\tilde \varepsilon_k=e^{-\frac{\lambda^k}{2(1+\beta)}}.$$
We use the same notations to approximate $v_k$, where
$c_{1,k}$ is defined as in \eqref{nov10e6} with $\alpha$ replaced by $\beta$ in the definition of $g_k$ in \eqref{mar27e1}.
In the statement below $\delta>0$ is a suitable small positive number allowed to change even line to line.
\begin{thm}{\mbox{\rm (\cite{byz-1})}}\label{integral-neg}
	\begin{align*}
		\int_{B_{\tau}}V_k(x)|x|^{2\beta}e^{u_k}
		=~&8\pi(1+\beta)\left(1-\frac{e^{-u_k(0)}}{e^{-u_k(0)}+\frac{h_k(0)}{8(1+\beta)^2}\tau^{2\beta+2}}\right)\\
		&+b_k(\tau,\beta)+\ell_k(\tau,\beta)+O(e^{(-2-\epsilon_0)u_k(0)})
	\end{align*}
	for some $\epsilon_0>0$, where both $b_k(\tau,\beta)$ and $\ell_k(\tau,\beta)$ depend on $|\nabla \log V_k(0)|$ and $\Delta \log V_k(0)$, $b_k(\tau,\beta)\sim e^{-u_k(0)}$, $\ell_k(\tau,\beta)\sim e^{-2u_k(0)}$ for $\tau>0$, and
	\begin{equation*}\label{neg-tau}\lim_{\tau\to 0}\frac{b_k(\tau,\beta)}{e^{-u_k(0)}}=0,\quad
		\lim_{\tau\to 0}\frac{\ell_k(\tau,\beta)}{e^{-2u_k(0)}}=0.
	\end{equation*}
	Moreover for $|y|\leq \tau/\varepsilon_k$, we have,
	\begin{align*}
		v_k(y)&=U_k(y)+c_{1,k}(y)+O(\tilde \varepsilon_k^{2+2\beta-\delta}(1+|y|)^{\delta}),\\
		v_k(y)&=U_k(y)+c_{1,k}(y)+c_{0,k}(y)+c_{2,k}(y)+c_{*,k}(y)+O(\tilde \varepsilon_k^{4+4\beta-\delta}(1+|y|)^{\delta}),
	\end{align*}
	for some small $\delta>0$.
\end{thm}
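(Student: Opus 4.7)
The plan is to mimic the iterative scheme that yields Theorem \ref{int-pos}, replacing $\alpha>0$ by $\beta\in(-1,0)$ and working with the rescaling $\tilde\varepsilon_k = e^{-\lambda^k/(2(1+\beta))}$. The key observation is that $\tilde\varepsilon_k^{2+2\beta} = e^{-\lambda^k}$, so that the leading correction relative to the standard bubble $U_k$ in \eqref{st-bub} appears at the scale $e^{-u_k(0)}$, which is exactly the rate displayed in the estimates for $b_k(\tau,\beta)$ and $\ell_k(\tau,\beta)$. The iterative scheme constructs $v_k - U_k$ as a sum of explicit terms $c_{1,k}+c_{0,k}+c_{2,k}+c_{*,k}$, where each summand is designed to annihilate a specific contribution of the Taylor expansion of $V_k(\tilde\varepsilon_k y) = h_k(0) + \tilde\varepsilon_k\nabla V_k(0)\cdot y + \tfrac12 \tilde\varepsilon_k^2 D^2V_k(0)[y,y] + \cdots$ appearing on the right-hand side of the rescaled equation for $v_k$.

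Concretely, the first step is to define $c_{1,k}$ by \eqref{nov10e6}--\eqref{mar27e1} with $\alpha$ replaced by $\beta$, a separable function whose radial profile $g_k(r)$ is chosen precisely so that $\Delta c_{1,k} + V_k(0)|y|^{2\beta}e^{U_k}c_{1,k}$ equals the negative of the first-order angular source $\tilde\varepsilon_k V_k(0)|y|^{2\beta}e^{U_k}\nabla\log V_k(0)\cdot y$. Solvability requires orthogonality to the kernel of the linearised operator; Lemma \ref{lem1} with $\alpha=\beta$ tells us this kernel is spanned by $\frac{1-|y|^{2(1+\beta)}}{1+|y|^{2(1+\beta)}}$, so the orthogonality holds by parity since the source has odd angular dependence. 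The radial correction $c_{0,k}$ is then defined by solving its explicit ODE with vanishing initial data, $c_{2,k}$ by the $\cos 2\theta, \sin 2\theta$ ansatz \eqref{c2k} with radial profiles satisfying the decay \eqref{4.two-fun1}, and $c_{*,k}$ is constructed analogously to absorb the fourth-order contributions.

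To derive the integral formula I would integrate $\Delta v_k + V_k(\tilde\varepsilon_k y)|y|^{2\beta}e^{v_k} = 0$ over the rescaled domain and expand $e^{v_k} = e^{U_k}\bigl(1 + \textstyle\sum c_{j,k} + \tfrac12(\sum c_{j,k})^2 + \cdots\bigr)$. The leading contribution $\int_{B_{\tau/\tilde\varepsilon_k}} V_k(0)|y|^{2\beta}e^{U_k}dy$ produces the explicit main term together with the tail correction $\frac{e^{-u_k(0)}}{e^{-u_k(0)} + \frac{h_k(0)}{8(1+\beta)^2}\tau^{2\beta+2}}$ by direct calculation, analogous to the positive case. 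The separable corrections $c_{1,k}$ and $c_{2,k}$ integrate to zero at linear order against the rotationally symmetric weight $|y|^{2\beta}e^{U_k}$ by angular integration; what survives is the $c_{0,k}$ contribution together with $\tfrac12 c_{1,k}^2$, which combine into $b_k(\tau,\beta)\sim e^{-u_k(0)}$, while the next-order products contribute to $\ell_k(\tau,\beta)\sim e^{-2u_k(0)}$. The vanishing of the normalised limits as $\tau\to 0$ follows from the explicit decay of $g_k(r)$ and of the radial profiles of $c_{2,k}$ encoded in \eqref{4.two-fun1}.

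The main obstacle is establishing the pointwise error bounds $O(\tilde\varepsilon_k^{2+2\beta-\delta}(1+|y|)^\delta)$ and $O(\tilde\varepsilon_k^{4+4\beta-\delta}(1+|y|)^\delta)$. I would argue by blow-up contradiction: if the normalised error failed to satisfy the claimed bound, a rescaled subsequence would converge on $\mathbb{R}^2$ to a nontrivial bounded solution of $\Delta\Phi + 8(1+\beta)^2|y|^{2\beta}e^{U}\Phi = 0$ with growth at most $(1+|y|)^\delta$, and Lemma \ref{lem1} would force it to equal $b_0\frac{1-|y|^{2(1+\beta)}}{1+|y|^{2(1+\beta)}}$. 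The construction of the $c_{j,k}$ is arranged so that this spectral direction is killed by orthogonality at each iteration, yielding $b_0=0$ and the desired contradiction. The delicate point peculiar to the negative pole case is the singular weight $|y|^{2\beta}$ near the origin, whose integrability (ensured by $\beta>-1$) and matching with the harmonic boundary data encoded by $\psi_k$ in \eqref{psik.1} must be tracked carefully through each stage of the iteration so that the induced errors propagate with the stated rate.
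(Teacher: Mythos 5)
Note first that the paper does not prove Theorem \ref{integral-neg} at all: it is quoted verbatim from \cite{byz-1}, so there is no in-paper argument to compare with. Your strategy — the iterative scheme behind Theorem \ref{int-pos} transplanted to $\beta\in(-1,0)$, with the rescaling $\tilde\varepsilon_k=e^{-u_k(0)/(2(1+\beta))}$, the explicit corrections $c_{1,k}$, $c_{0,k}$, $c_{2,k}$, $c_{*,k}$, the flux/expansion computation of the local mass, and Lemma \ref{lem1} to control the error — is indeed the scheme the cited work uses, and the observation $\tilde\varepsilon_k^{2+2\beta}=e^{-u_k(0)}$ correctly identifies the natural small parameter of the negative-pole case.

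Two points in your sketch would not survive as written. First, the radial kernel element $b_0\frac{1-|y|^{2(1+\beta)}}{1+|y|^{2(1+\beta)}}$ of Lemma \ref{lem1} is even and equals $b_0\neq 0$ at the origin, so it cannot be ``killed by orthogonality/parity at each iteration''; in the actual argument it is excluded because the error is normalized to vanish at the singular point (here $v_k(0)=U_k(0)=0$ and all corrections vanish at $0$) and is constant on $\partial B_{\tau/\tilde\varepsilon_k}$ after subtracting the harmonic part $\psi_k$ of \eqref{psik.1}; your blow-up limit $\Phi$ then satisfies $\Phi(0)=0$, which is what forces $b_0=0$. The same remark applies to $c_{1,k}$: it is not obtained by a solvability/orthogonality argument but is written down explicitly via \eqref{nov10e6}--\eqref{mar27e1}, and the role of $\beta\notin\mathbb{N}$ is to guarantee the decay of $g_k$, not orthogonality. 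Second, the quantitative heart of the theorem — the exponents $\tilde\varepsilon_k^{2+2\beta-\delta}$ and $\tilde\varepsilon_k^{4+4\beta-\delta}$ in the pointwise expansions and the limits $b_k(\tau,\beta)/e^{-u_k(0)}\to 0$, $\ell_k(\tau,\beta)/e^{-2u_k(0)}\to 0$ as $\tau\to 0$ — is not a formal transcription of the $\alpha>0$ case: for $\beta<0$ the source terms such as $\tilde\varepsilon_k^{2}\,|y|^{2+2\beta}e^{U_k}$ are no longer integrable up to $|y|\sim\tau/\tilde\varepsilon_k$ with a uniform tail, and one must track that
\begin{equation*}
\tilde\varepsilon_k^{2}\int_{1}^{\tau/\tilde\varepsilon_k} r\cdot r^{-2-2\beta}\,dr \;\sim\; \tilde\varepsilon_k^{2+2\beta}\,\tau^{-2\beta}\;=\;e^{-u_k(0)}\tau^{-2\beta},
\end{equation*}
which is precisely why $b_k(\tau,\beta)$ is of size $e^{-u_k(0)}$ with a coefficient carrying a positive power of $\tau$ (hence vanishing after normalization as $\tau\to 0$), in place of the $\log$-type terms of the regular case. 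Your proposal asserts these facts but does not carry out this bookkeeping, nor the construction of $c_{*,k}$ needed for the second expansion; this is exactly where the negative-pole analysis of \cite{byz-1} departs from Theorem \ref{int-pos}, so the sketch, while pointed in the right direction, leaves the decisive estimates unproved.
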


\subsection{Asymptotic analysis around a regular blow-up point}
In this subsection we analyze the solution near a regular blow-up point and consider the equation,
$$\Delta u_k+\bar h_k(x)e^{u_k}=0\quad \mbox{in}\quad B_{2\tau}. $$
We use $\bar \lambda_k=\max_{B_{2\tau}}u_k$ to denote the height of the bubble
and set  $\bar \varepsilon_k=e^{-\bar \lambda_k/2}$.
We assume in particular \eqref{only-bu}, \eqref{only-bu-2},
(i.e. $0$ is the only blow-up point in $B_{2\tau}$), that the standard uniform bound
$\int_{B_{2\tau}}\bar h_ke^{u_k}\le C$ holds and finally that \eqref{u-k-boc} holds, implying
by well known results (\cite{li-cmp}) that the blow-up point is simple.
We assume that \eqref{assum-H}, \eqref{only-bu}, \eqref{only-bu-2}, \eqref{u-k-boc} and $\int_{B_{2\tau}}\bar h_k(x)e^{u_k}\leq C$ are satisfied and define
$\psi_k$ as in \eqref{psik.1}. Let $V_k(x)=h_k(x)e^{\psi_k(x)}$ and we have the following estimates, see \cite[Theorem 3.3 and Remarks 3.2, 3.5]{byz-1}

\begin{thm}\label{reg-int}
	\begin{equation}\label{total-q-1}
		\int_{B_{\tau}}\bar h_k(x)e^{u_k}dx=8\pi-\frac{8\pi\bar \varepsilon_k^2}{\bar\varepsilon_k^2+a_k\tau^2}-\frac{\pi\varepsilon_k^2}{2}\Delta (\log \bar h_k)(q_k)\bar h_k(q_k)b_{0,k}+O(\bar\varepsilon_k^{4-2\epsilon_0}).
	\end{equation}
	for some $\epsilon_0>0$, where
	\begin{equation*}
		b_{0,k}=\int_0^{\tau/\bar\varepsilon_k}\frac{r^3(1-a_kr^2)}{(1+a_kr^2)^3}dr,\quad a_k=\bar h_k(q_k)/8.
	\end{equation*}
	Moreover we have,
	\begin{equation*}\label{grad-v2-new-0}
		|\bar v_k(y)-(U_k+c_{0,k}+c_{2,k})(y)|\le C\bar \varepsilon_k^{2+\epsilon_0}(1+|y|)^{2\epsilon_0}\quad \mbox{{\rm in}}\quad \Omega_k,
	\end{equation*}
	and
	\begin{equation*}\label{grad-v2-new}
		|\nabla (\bar v_k-U_k-c_{0,k}-c_{2,k})(y)|\le C\tau\bar \varepsilon_k^3,\quad y\in \Omega_k, \quad |y|\sim \bar\varepsilon_k^{-1}.
	\end{equation*}
\end{thm}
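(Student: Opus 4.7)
The plan is to adapt the iteration scheme used for Theorems \ref{int-pos} and \ref{integral-neg} to the case $\alpha=0$, but centred at the local maximum $q_k$ rather than at a singular source. First I would perform the rescaling $\bar v_k(y)=u_k(q_k+\bar\varepsilon_k y)+2\log\bar\varepsilon_k-\psi_k(q_k+\bar\varepsilon_k y)$, so that $\Delta \bar v_k+V_k(q_k+\bar\varepsilon_k y)e^{\bar v_k}=0$ with $\bar v_k(0)=0$ and, crucially, $\nabla \bar v_k(0)=0$ since $q_k$ is a local maximum. The limiting profile is the standard bubble $U_k(y)=-2\log(1+a_k|y|^2)$ with $a_k=\bar h_k(q_k)/8$. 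The vanishing of $\nabla \bar v_k(0)$ is precisely what forces the dipole correction $c_{1,k}$ of Subsection \ref{sec:alfap} to be absent here, leaving only the radial $c_{0,k}$ and the mode-two $c_{2,k}$ in the expansion.

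Writing $\bar v_k=U_k+\phi_k$ and Taylor-expanding $V_k(q_k+\bar\varepsilon_k y)$, the equation for $\phi_k$ reads at leading order $\Delta\phi_k+V_k(q_k)e^{U_k}\phi_k=-\bar\varepsilon_k^{2} e^{U_k}\hat\Theta+\text{h.o.t.}$, where $\hat\Theta$ splits by Fourier decomposition in $\theta$ into a radial part (driving $c_{0,k}$, governed by an explicit ODE whose kernel is $(1-a_kr^2)/(1+a_kr^2)$) and an angular mode-two part (driving $c_{2,k}$, satisfying the decay of \eqref{4.two-fun1} specialised to $\alpha=0$). The mode-one component that would otherwise arise through $\nabla V_k(q_k)$ is of higher order thanks to the constraint $\nabla \bar v_k(0)=0$ combined with a Pohozaev-type balance on an annulus, which forces the drift of $q_k$ away from the true critical point to be of order $\bar\lambda_k\bar\varepsilon_k^{2}$. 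A contraction argument in weighted $L^\infty$ norms then yields $|\bar v_k-(U_k+c_{0,k}+c_{2,k})|(y)\lesssim \bar\varepsilon_k^{2+\epsilon_0}(1+|y|)^{2\epsilon_0}$, and standard interior gradient estimates on $|y|\sim\bar\varepsilon_k^{-1}$ upgrade this to the claimed $O(\tau\bar\varepsilon_k^{3})$ gradient bound.

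For the integral identity \eqref{total-q-1} I would plug the expansion into
\begin{equation*}
\int_{B_\tau}\bar h_k(x)e^{u_k}\,dx=\int_{B_{\tau/\bar\varepsilon_k}}V_k(q_k+\bar\varepsilon_k y)\,e^{\bar v_k}\,dy
\end{equation*}
and sum the contributions. The bare bubble gives $8\pi-8\pi\bar\varepsilon_k^{2}/(\bar\varepsilon_k^{2}+a_k\tau^{2})$ by a direct computation; the $c_{2,k}$ contribution vanishes after angular integration by orthogonality of $\cos 2\theta,\sin 2\theta$ with constants; and the $c_{0,k}$ contribution, using that $\psi_k$ is harmonic so $\Delta\log V_k(q_k)=\Delta\log\bar h_k(q_k)$, produces exactly $-\tfrac{\pi}{2}\bar\varepsilon_k^{2}\Delta\log\bar h_k(q_k)\,\bar h_k(q_k)\,b_{0,k}$ through the explicit primitive defining $b_{0,k}$. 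Summing these three pieces, and controlling the remainder from the pointwise bound, yields the stated error $O(\bar\varepsilon_k^{4-2\epsilon_0})$.

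The main obstacle, entirely parallel to the singular cases in \cite{byz-1}, is the careful cancellation of the mode-one term and the rigorous control of the error across the iteration. Even though $\nabla \bar v_k(0)=0$, the coefficient $V_k$ generally has nonzero gradient at $q_k$, and one must combine the critical-point condition with a Pohozaev identity on $B_{\tau/\bar\varepsilon_k}$ to ensure that no dipole correction survives at the $\bar\varepsilon_k^{2}$ order, any such contribution being either absorbed by a shift of the centre or pushed into the final remainder. Once this mode-one balance is secured, the remainder of the argument is a direct specialisation of the linearised and iterative analysis of Theorem \ref{int-pos} to the regular (unweighted) case.
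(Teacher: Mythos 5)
You should first note that this paper never proves Theorem \ref{reg-int}: Section 3 is an explicit summary of results imported from \cite{byz-1} (it is invoked later as ``Theorem 3.3 of \cite{byz-1}''), so there is no in-paper proof to match. Your sketch follows the same strategy as that source and as the earlier Chen--Lin type analysis: rescale at the maximum point, approximate by $U_k+c_{0,k}+c_{2,k}$ with no dipole term, and convert the $c_{0,k}$ contribution to a flux to produce the $b_{0,k}$ term in \eqref{total-q-1}, with the mode-two part dropping out by angular orthogonality. At that level the plan is sound, but two steps are stated in a way that would not survive scrutiny. First, $\nabla\bar v_k(0)$ is not zero: $\bar v_k$ has $\psi_k$ subtracted, so maximality of $q_k$ only gives $\nabla\bar v_k(0)=O(\bar\varepsilon_k)$, and the suppression of the mode-one correction at order $\bar\varepsilon_k$ does not come from this but from the smallness of $\nabla\log V_k$ at the blow-up point, i.e.\ from the first-derivative estimate \eqref{first-deriv-est} (itself a Pohozaev-type statement, which you gesture at but should make the actual pivot of the argument rather than a repair of it).

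Second, and more seriously, the final gradient bound $|\nabla(\bar v_k-U_k-c_{0,k}-c_{2,k})|\le C\tau\bar\varepsilon_k^{3}$ at $|y|\sim\bar\varepsilon_k^{-1}$ does not follow from ``standard interior gradient estimates'' applied to the sup bound $C\bar\varepsilon_k^{2+\epsilon_0}(1+|y|)^{2\epsilon_0}$. At $|y|\sim\bar\varepsilon_k^{-1}$ that sup bound is of size $\bar\varepsilon_k^{2-\epsilon_0}$, the potential $V_ke^{\bar v_k}$ is of size $\bar\varepsilon_k^{4}/\tau^{4}$, and elliptic/scaling estimates on unit balls (or on the annulus, where the error is nearly harmonic) only give a gradient of order $\bar\varepsilon_k^{3-C\epsilon_0}$, which is strictly weaker than the claimed $\tau\bar\varepsilon_k^{3}$. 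Closing that gap requires the finer machinery of \cite{byz-1} (an explicit kernel/Green's-function representation of the error with the precise decay of the right-hand side, together with the normalization of the error at the origin and its constancy on $\partial B_{\tau/\bar\varepsilon_k}$), not a generic interior estimate; as written, this step fails. Likewise, the exact coefficient $-\tfrac{\pi}{2}\bar\varepsilon_k^{2}\Delta(\log\bar h_k)(q_k)\bar h_k(q_k)b_{0,k}$ needs the bookkeeping that combines the quadratic Taylor term of $\log V_k$ with the flux of $c_{0,k}$; your sketch identifies the mechanism but does not verify the constant.
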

\medskip

\section{Proof of the non degeneracy result}

In this section, we prove Theorems \ref{main-theorem-2}-\ref{main-theorem-1}. Since the proof of these theorems are almost the same, we shall only focus on Theorem \ref{main-theorem-2}. To simplify our discussion, we assume there are only three blow-up points: 
$q$, $p_1$, $p_2$, where $q$ is a regular blow-up point,$p_1$ is a positive singular source with $\alpha\equiv \alpha_M>0$, $p_2$ is a
negative singular source with $\beta\in (-1,0)$. Let $\nu_k$ be a blow-up sequence of \eqref{m-equ},
$$
{\rm w}_k=\nu_k+4\pi\alpha G(x,p_1)+4\pi \beta G(x,p_2),
$$
which satisfies
\begin{equation*}\label{uik-tem}
	\Delta_g {\rm w}_k+\rho_k\left(\frac{He^{{\rm w}_k}}{\int_MHe^{{\rm w}_k}}-1\right)=0,
\end{equation*}
where $\alpha=\alpha_M>0$ and
$$H(x)=h(x) e^{-4\pi\alpha G(x,p_1)-4\pi\beta G(x,p_2)}.$$
Since by adding any constant to ${\rm w}_k$ in \eqref{uik-tem} we still come up with a solution, without loss of generality we may assume that
${\rm w}_k$ satisfy
\begin{equation*}
	\int_M He^{{\rm w}_k}dV_g=1.
\end{equation*}
We argue by contradiction and assume that $\phi^k$ is a non trivial sequence of solutions to
\begin{equation*}
	\Delta_g  {\phi}^k+\rho{He^{{\rm w}_k}}\bigg(\phi^k-\int_M{He^{{\rm w}_k}\phi^k}\bigg)=0 \quad {\rm in}\ \; M.
\end{equation*}

Let us set $\lambda^k={\rm w}_k(p_1)$, $\hat \lambda^k={\rm w}_k(p_2)$ and $\bar \lambda^k=\max\limits_{B(q,\tau)}{\rm w}_k$, then
it is well known that
\begin{equation}\label{same-ord}
	\hat \lambda^k-\lambda^k=O(1),\quad \bar \lambda^k-\lambda^k=O(1).
\end{equation}

We work in local isothermal coordinates around $p_1,p_2,q$, where the metric locally takes the form,
$$ds^2=e^{\chi}((dx_1)^2+(dx_2)^2),$$ for some $\chi$ that satisfies,
\begin{equation}\label{def-phi}
	\Delta \chi+2Ke^{\chi}=0 \quad{\rm in }\quad B_{\tau}, \quad \chi(0)=|\nabla \chi(0)|=0.
\end{equation}
Then we define $f_k$ to be any solution of,
\begin{equation}
	\label{def-fk}
	\Delta f_k=e^\chi \rho_k\quad {\rm in}\quad B_\tau,\qquad f_k(0)=0.
\end{equation}
In particular we choose local coordinates so that $p_i$, $q$ are locally the origin of a ball $x\in B_\tau=B_\tau(0)$.
Remark that in our setting we have,
\begin{equation}\label{rho-3}
	\begin{cases}
		\rho_*=24\pi+8\pi\alpha+8\pi\beta,\quad N^*=4\pi(\alpha+\beta),\\
		\\
		G^*_1(x)=8\pi(1+\alpha)R(x,p_1)+8\pi(1+\beta)G(x,p_2)+8\pi G(x,q).
	\end{cases}
\end{equation}

\begin{rem}
	Throughout this section, $B_\tau=B_\tau(0)$ (or $B_\tau(\bar q)$) will represent the ball centered at the origin (or $\bar q$) of some local isothermal coordinates $x\in B_\tau$, while $\Omega(p,\tau)\subset M$
	denotes a geodesic ball. Additionally, if $B_\tau$ is the ball in local isothermal coordinates centered at some point $0=x(p), p\in M$,
	we will denote by $\Omega(p,\tau)\subset M$
	the pre-image of $B_\tau$. We can always choose $\tau$ small enough to ensure that
	$\Omega(p_i,\tau)$, $i=1,2$ and $\Omega(q,\tau)$ are simply connected and at
	positive distance one from each other. After scaling,
	$x=\varepsilon y$ for some $\varepsilon>0$, we will denote
	$B_{\tau/\varepsilon}=B_{\tau/\varepsilon}(0)$. By a slightly abuse of notations, we will use the same symbols,
	say ${\rm w}_k$, $\xi_k$, to denote functions when expressed in different local coordinate  systems. However, the context will clarify the meaning of the symbols as needed.
\end{rem}
\noindent
Thus, working in these local coordinates centered at $p_1$, $p_2$, $q$ respectively,
we have the local variables $x\in B_\tau$ and we define
\begin{equation}
	\label{around-p1-h}
	\tilde h_k=\rho_k h(x)e^{-4\pi \alpha R(x,p_1)-4\pi \beta G(x,p_2)+\chi(x)+f_k},
\end{equation}
\begin{equation}
	\label{around-p2-h}
	\hat h_k=\rho_k h(x)e^{-4\pi \alpha G(x,p_1)-4\pi \beta R(x,p_2)+\chi(x)+f_k},
\end{equation}
\begin{equation}
	\label{around-q}
	\bar h_k=\rho_k h(x)e^{-4\pi \alpha G(x,p_1)-4\pi \beta G(x,p_2)+\chi(x)+f_k}.
\end{equation}

Obviously $\tilde h_k,\hat h_k, \bar h_k$ satisfy \eqref{assum-H} and clearly (\ref{small-rho-k}) holds as well. Indeed from
$\rho^k=\rho^k\int_MHe^{u_k}$ we first evaluate the integrals around blow-up points and then check that the integrals outside
the bubbling disks are of order $O(e^{-\lambda^k})$.

\begin{rem}\label{center-q}
	Concerning \eqref{around-q}, for technical reasons, it will be useful to define $\bar q^k\to 0$ to be the maximum points of $u_k$ in $B_\tau$ and work
	in the local coordinate system centered at $\bar q^k$. In particular $q^k\in M$ will denote the pre-images of these points
	via the local isothermal map and we will work sometime, possibly taking a smaller $\tau$, with \eqref{around-q} where $0=x(q^k)$.
\end{rem}

By using also \eqref{def-phi} we see that the assumption $L(\mathbf{p})\neq 0$ takes the form
$$\Delta_g \log h(p_1)+\rho_*-N^*-2K(p_1)\neq 0, $$
which, for any $k$ large enough, is equivalent to (see (\ref{rho-3}))
$$\Delta \log \tilde h_k(0)\neq 0.$$
At this point let us define,
$$<\phi^k>_k=\int_M\frac{He^{{\rm w}_k}\phi^k}{\int_M H e^{{\rm w}_k}{\rm d}\mu},\quad\sigma_k=\|\phi^k-<\phi^k>_k\|_{L^\infty(M)},$$
and
\begin{equation*}
	\xi_k:= (\phi^k-<\phi^k>_k)/\sigma_k.
\end{equation*}
Then in local coordinates around $p_1$, so that $0=x(p_1)$, $x\in B_\tau=B_\tau(0)$, $\xi_k$ satisfies,
\begin{equation*}
	\Delta\xi_k(x)+|x|^{2\alpha}\tilde h_k(x) \tilde c_k(x)\xi_k(x)=0\quad {{\rm in}}\quad B_{\tau},
\end{equation*}
where $\tilde {h}_k$ is defined in \eqref{around-p1-h} and $\tilde c_k(x)$ denotes the local coordinates expression of
$e^{u_k(x)}$, that is,
$$
\tilde c_k(x)= e^{u_k(x)}.
$$
In local coordinates around $p_2$, so that $0=x(p_2)$, $x\in B_\tau=B_\tau(0)$, $\xi_k$ satisfies,
\begin{equation*}
	\Delta\xi_k(x)+|x|^{2\beta}\hat h_k(x) \hat  c_k(x)\xi_k(x)=0\quad {{\rm in}}\quad B_{\tau},
\end{equation*}
where $\hat {h}_k$ is defined in \eqref{around-p2-h} and $\hat c_k(x)$ denotes the local coordinates expression of
$e^{u_k(x)}$, that is,
$$
\hat c_k(x)= e^{u_k(x)}.
$$
In local coordinates around $q$, so that $0=x(q)$, $x\in B_\tau=B_\tau(0)$, $\xi_k$ satisfies,
\begin{equation*}
	\Delta\xi_k(x)+\bar h_k(x) \bar  c_k(x)\xi_k(x)=0\quad {{\rm in}}\quad B_{\tau},
\end{equation*}
where $\bar {h}_k$ is defined in \eqref{around-p2-h} and $\bar c_k(x)$ denotes the local coordinates expression of
$e^{u_k(x)}$, that is,
$$
\bar c_k(x)= e^{u_k(x)}.
$$
As in \cite{byz-1}, after a suitable scaling in local coordinates, we see that the limits of $\xi_k$ takes the form
\begin{equation}\label{linim-1}
	b_0\frac{1-A|y|^{2\alpha+2}}{1+A|y|^{2\alpha+2}}~\mbox{\rm near } p_1,\quad  A=\lim_{k\to \infty}\frac{\tilde h_k(0)}{8(1+\alpha)^2};
\end{equation}
\begin{equation}\label{linim-2}
	b_0\frac{1-B|y|^{2\beta+2}}{1+B|y|^{2\beta+2}}~ \mbox{\rm near } p_2,\quad B=\lim_{k\to \infty}\frac{\hat h_k(0)}{8(1+\beta)^2};
\end{equation}
\begin{equation}\label{linim-3}
	b_0\frac{1-C|y|^2}{1+C|y|^2}+b_1\frac{y_1}{1+C|y|^2}+b_2\frac{y_2}{1+C|y|^2}~ \mbox{\rm near } q,  \quad C=\lim_{k\to \infty}\frac{\bar h_k(0)}8.
\end{equation}
In the following two subsections, we shall show that all these coefficients are zero.
\subsection{Proof of $b_0=0$.}
First of all, locally around $p_1$ the rescaled function $\xi_k(\varepsilon_ky)$ converges on compact subsets of $\mathbb{R}^2$  to a solution of the following linearized equation
$$\Delta \xi+|y|^{2\alpha}\tilde h(0)e^{U}\xi=0,\quad \tilde h(0)=\lim_{k\to \infty}\tilde h_k(0),$$
where  $U$ is the limit of the standard bubble $U_k$, then we have that
\begin{equation}\label{lim-1.0}
	\xi(y)=b_0\frac{1-c|y|^{2\alpha+2}}{1+c|y|^{2\alpha+2}},\quad c=\frac{\tilde h(0)}{8(1+\alpha)^2}.
\end{equation}
As a first approximation, we choose
\begin{equation}
	\label{w0kxik}
	w_{0,\xi}^k=b_0^k\frac{1-a_k|y|^{2\alpha+2}}{1+a_k|y|^{2\alpha+2}},\quad a_k=\tilde h_k(0)/8(1+\alpha)^2.
\end{equation}
It is well known that $w_{0,\xi}^k$ is a solution of
\begin{equation}\label{e-w0kxi}
	\Delta w_{0,\xi}^k +|y|^{2\alpha}\tilde h_k(0)e^{U_k}w_{0,\xi}^k=0,
\end{equation}
where $b_0^k$ is chosen such that $w_{0,\xi}^k(0)=\xi_k(0)$. Let $\psi_\xi^k$ be the harmonic function which encodes the oscillation of $\xi_k$ on $\partial B_\tau$,
\begin{align*}
	\Delta \psi_{\xi}^k=0~ \mbox{in}~ B_\tau,\quad
	\psi_{\xi}^k(x)=\xi_k(x)-\frac{1}{2\pi\tau}\int_{\partial B_{\tau}}\xi_k(s)ds~\mathrm{for}~ x\in \partial B_\tau,
\end{align*}
then we see that $\xi_k-\psi_\xi^k$ satisfies,
\begin{equation}\label{comp-1}
	\Delta (\xi_k-\psi_{\xi}^k)+|x|^{2\alpha}\tilde h_k\tilde c_k(\xi_k-\psi_{\xi}^k)=-|x|^{2\alpha}\tilde h_k\tilde c_k\psi_{\xi}^k.
\end{equation}
In order to show that the leading term of $(\xi_k-\psi_{\xi}^k)(\varepsilon_ky)$ is $w_{0,\xi}^k$, we write \eqref{e-w0kxi} as follows, 
\begin{equation}
	\label{e-hat-xi}
	\begin{aligned}
		&\Delta w_{0,\xi}^k+|y|^{2\alpha}\tilde h_k(\varepsilon_ky)\varepsilon_k^{2+2\alpha}\tilde c_k(\varepsilon_ky)w_{0,\xi}^k\\
		&=|y|^{2\alpha}\tilde h_k(0)\bigg (\frac{\tilde h_k(\varepsilon_ky)}{\tilde h_k(0)}\varepsilon_k^{2+2\alpha}\tilde c_k(\varepsilon_ky)-e^{U_k}\bigg )w_{0,\xi}^k(y).
	\end{aligned}
\end{equation}
Next, we set
\begin{equation}\label{tildewk}
	\tilde w_k(y):=\xi_k(\varepsilon_ky)-\psi_{\xi}^k(\varepsilon_ky)-w_{0,\xi}^k(y).
\end{equation}
From (\ref{comp-1}) and (\ref{e-hat-xi}) we have
\begin{equation}
	\label{crucial-1}
	\begin{aligned}
		&\Delta \tilde w_k+|y|^{2\alpha}\tilde h_k(\varepsilon_k y)\varepsilon_k^{2+2\alpha}
		\tilde c_k(\varepsilon_ky)\tilde w_k\\
		&=\tilde h_k(0) |y|^{2\alpha}
		\bigg (e^{U_k}-\frac{\tilde h_k(\varepsilon_ky)}{\tilde h_k(0)}\varepsilon_k^{2+2\alpha}
		\tilde c_k(\varepsilon_ky)\bigg)w_{0,\xi}^k(y)\\
		&\quad-|y|^{2\alpha}\tilde h_k(\varepsilon_k y)\varepsilon_k^{2+2\alpha}
		\tilde c_k(\varepsilon_ky)\psi_{\xi}^k(\varepsilon_ky).
	\end{aligned}
\end{equation}
By the construction of $\tilde w_k$, it is readily seen that,
$$\tilde w_k(0)=0,\quad \tilde w_k=\mbox{constant} \quad \mbox{on}\quad \partial B_{\tau/\varepsilon_k}.$$
Remark that the last term of (\ref{crucial-1}) is of order $O(\varepsilon_k)(1+r)^{-3-2\alpha}$ whence we obtain a first estimate about $\tilde w_k$:
\begin{equation}\label{first-e}
	|\tilde w_k(y)|\le C(\delta)\varepsilon_k (1+|y|)^{\delta}\quad \mbox{in}\quad B_{\tau/\varepsilon_k}.
\end{equation}
The analysis around $p_2$ is similar, we define (recall that now we have local coordinates where $0=x(p_2)$),
\begin{equation}\label{hat-wk}
	\hat w_k(y)=\xi_k(\hat \varepsilon_ky)-\hat \psi^k_{\xi}(\hat \varepsilon_ky)-\hat w^k_{0,\xi}(y),
\end{equation}
where $\hat \varepsilon_k=e^{-\frac{\hat \lambda_k}{2(1+\beta)}}$, $\hat \psi^k_{\xi}$ is the harmonic
function that encodes the oscillation of $\xi_k$ on $\partial B_\tau$, $\hat w^k_{0,\xi}$ is the same as
$w^k_{0,\xi}$ just with $\beta$ replacing $\alpha$ and the local limit of
$\xi_k(\hat \varepsilon_ky)$ is now \eqref{linim-2} which replaces \eqref{lim-1.0}.
By using the same argument adopted above for $\tilde w_k$, we have that,
\begin{equation}\label{sec-e}
	|\hat w_k(y)|\le C(\delta)\hat \varepsilon_k^{1+\beta}(1+|y|)^{\delta}=C(\delta)e^{-\lambda^k/2}(1+|y|)^{\delta} \quad
	\mbox{in}\quad B_{\tau/\varepsilon_k}.
\end{equation}

Next, we describe the expansion of $\xi_k$ near $q$. In this case \eqref{lim-1.0} is replaced by \eqref{linim-3}. As before, we denote $\bar q^k$ the maximum points of $u^k$ in $B_\tau$ and work in the local coordinate system centered at $\bar q^k$. In particular $q^k\in M$ will denote the pre-image of these points via the local isothermal map and we will work with the equation with $0=x(q^k)$. As usual the kernel functions are the first terms in the approximation of
$\xi_k(\bar q^k+\bar \varepsilon_ky)$,
$$\bar w_{0,\xi}^k(y)=\bar b_0^k\frac{1-\frac{\bar h_2^k(0)}8|y|^2}
{1+\frac{\bar h_k(0)}8|y|^2}+
\bar b_1^k\frac{y_1}{1+\frac{\bar  h_k(0)}8|y|^2}+
\bar b_2^k\frac{y_2}{1+\frac{\bar  h_k(0)}8|y|^2}.$$
Then we have
\begin{equation*}
	\xi_k(\bar q^k)-\bar w_{0,\xi}^k(0)=0\quad\mbox{and}\quad \nabla(\xi_k(\bar q^k+\bar\varepsilon_ky)-\bar w_{0,\xi}^k(\bar\varepsilon_ky))|_{y=0}=0.
\end{equation*}
Next we set $\bar\psi_\xi^k$ to be the harmonic function which encodes the oscillation of $\xi_k(\bar q^k+\bar\varepsilon_ky)-\bar w_{0,\xi}^k(\bar\varepsilon_ky)$ on $\partial B_{\tau/\bar\varepsilon_k}$. Let us define,
\begin{equation}\label{bar-w2k}
	\bar w_k(y)=\xi_k(\bar q^k+\bar \varepsilon_ky)-\bar w_{0,\xi}^k(y)-\bar \psi_{\xi}^k(\bar\varepsilon_ky),
\end{equation}
then we have,
\begin{equation}\label{eqwbar1}
	\begin{cases}
		\Delta \bar w_k(y)+\bar \varepsilon_k^2(\bar h_k \bar c_k)(\bar q^k+\bar \varepsilon_ky)\bar w_k(y)
		=\frac{O(\bar \varepsilon_k)}{(1+|y|)^3} \quad\mbox{in}\quad B_{\tau /\bar\varepsilon_k},\\
		\\
		\bar w_k(0)=0\quad \mbox{and}\quad \bar w_k={\rm constant}~\mbox{on}~\partial B_{\tau/\bar \varepsilon_k}.
	\end{cases}
\end{equation}
By using the fact that $\bar \psi_\xi^k$ is a harmonic, it is not difficult to check that $\bar w_k(0)=0$ and $\nabla \bar w_k(0)=-\nabla\bar\psi_\xi^k(0)=O(\bar\varepsilon_k)$. Then, using the Green representation formula and standard potential estimates, we have that,
\begin{equation}\label{pre-xi}
	|\bar w_k(y)|\le C(\delta)\bar \varepsilon_k(1+|y|)^{\delta} \quad {\rm in}\quad B_{\tau/\bar \varepsilon_k}.
\end{equation}
Next we present the following improved estimate on the oscillation of $\xi_k$

\begin{lem}\label{osci-xi-better}
	\begin{equation}\label{osi-vars}
		\xi_k(x_1)-\xi_k(x_2)=O(\varepsilon_k^{2}+e^{-\lambda_1^k/2})=O(\varepsilon_k^2+\varepsilon_k^{1+\alpha}),
	\end{equation}
	$\forall\, x_1,x_2\in
	M\setminus \{\Omega({p_1},\tau)\cup \Omega({p_2},\tau)\cup \Omega({q},\tau)\}$.
\end{lem}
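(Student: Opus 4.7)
The proof is based on the Green's representation formula on $M$. Since by construction $\int_M H e^{{\rm w}_k}\xi_k\,d\mu=0$, for any $x_1,x_2\in M$ we have
$$
\xi_k(x_1)-\xi_k(x_2)=\int_M[G_M(x_1,y)-G_M(x_2,y)]\,\rho_k H(y)e^{{\rm w}_k(y)}\xi_k(y)\,d\mu(y).
$$
Set $g_{12}(y):=G_M(x_1,y)-G_M(x_2,y)$. If $x_1,x_2\in M\setminus\{\Omega(p_1,\tau)\cup\Omega(p_2,\tau)\cup\Omega(q,\tau)\}$, then $g_{12}$ is smooth in a neighborhood of each blow-up point, and $\rho_kHe^{{\rm w}_k}=O(e^{-\lambda^k})$ away from the three bubbling disks, so the complement contributes only an exponentially small error. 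It thus suffices to estimate each of the three local integrals $\int_{\Omega(p_i,\tau)}g_{12}\,\rho_kHe^{{\rm w}_k}\xi_k\,d\mu$ separately.

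In each bubbling region we rescale by the appropriate bubble scale ($\varepsilon_k$ at $p_1$, $\hat\varepsilon_k$ at $p_2$, $\bar\varepsilon_k$ at $q$) and substitute the decomposition $\xi_k=\psi_\xi^k+w_{0,\xi}^k+\tilde w_k$ of \eqref{tildewk} and the analogous \eqref{hat-wk}, \eqref{bar-w2k} near $p_2$ and $q$. Using Theorems \ref{int-pos}, \ref{integral-neg}, \ref{reg-int} to expand $\varepsilon_k^{2+2\alpha}\tilde c_k(\varepsilon_ky)$ around $e^{U_k(y)}$ and Taylor expanding $g_{12}$ around each bubble center, three mechanisms kill the leading contributions: \emph{(a)} the orthogonality
$$
\int_{\mathbb R^2}|y|^{2\alpha}e^{U_k(y)}\,w_{0,\xi}^k(y)\,dy=0,
$$
verified by the substitution $u=a_k|y|^{2+2\alpha}$, together with the analogous identities at the $\beta$-pole and for the radial kernel $\varphi_0$ at the regular point, which reduces the zeroth-order Taylor term to a truncation error of order $O(\varepsilon_k^{2+2\alpha})$; \emph{(b)} odd symmetry $\int|y|^{2\alpha}e^{U_k}y_j\,dy=0$, which annihilates the first-order Taylor term paired with radial kernels; \emph{(c)} the mean-value identity $\psi_\xi^k(0)=0$, giving $\psi_\xi^k(\varepsilon_ky)=O(\varepsilon_k|y|)$ with symmetric linear part that integrates to zero, leaving an $O(\varepsilon_k^2)$ remainder. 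The contribution responsible for the $\varepsilon_k^{1+\alpha}$ term in the bound originates exclusively at the regular blow-up $q$: the non-radial kernel components $b_1^k\varphi_1+b_2^k\varphi_2$ in $\bar w_{0,\xi}^k$, paired with the first-order Taylor term $\bar\varepsilon_k\nabla g_{12}(\bar q^k)\cdot y$, produce non-vanishing moments $\int e^{U_k}y_j\varphi_l\,dy\neq 0$, hence a net contribution of order $\bar\varepsilon_k=e^{-\lambda^k/2}=\varepsilon_k^{1+\alpha}$. The analogous contribution at $p_2$ is smaller since the kernel there is purely radial.

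The main obstacle is to improve the crude pointwise bounds \eqref{first-e}, \eqref{sec-e}, \eqref{pre-xi} on $\tilde w_k,\hat w_k,\bar w_k$, which would only yield an $O(\varepsilon_k)$ remainder, to the required $O(\varepsilon_k^2)$. For this one exploits the explicit inhomogeneous equation \eqref{crucial-1} and its analogs: the right-hand side is of order $\varepsilon_k$ with a clean decomposition into radial and angular modes mirroring the splitting $c_{0,k}+c_{1,k}+c_{2,k}$ from Section~3. Testing \eqref{crucial-1} against $w_{0,\xi}^k$ and integrating by parts expresses the projection of $\tilde w_k$ onto the weighted kernel $|y|^{2\alpha}e^{U_k}$ as a combination of boundary terms (controlled since $\tilde w_k$ is constant on $\partial B_{\tau/\varepsilon_k}$, because both $\xi_k-\psi_\xi^k$ and the radial $w_{0,\xi}^k$ are constant there) and right-hand-side integrals, both of which are $O(\varepsilon_k^2)$ once the refined expansions of $u_k$ from Theorem \ref{int-pos} are inserted. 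Analogous arguments at $p_2$ and $q$, combined with the three bubble contributions estimated above, then yield the bound $|\xi_k(x_1)-\xi_k(x_2)|=O(\varepsilon_k^2+\varepsilon_k^{1+\alpha})$.
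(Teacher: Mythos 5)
Your overall skeleton reproduces Step 1 of the paper's argument: Green's representation for $\xi_k(x_1)-\xi_k(x_2)$, the exponentially small contribution outside the three bubbling disks, the splitting of each local integral into the "mass" term (Green's function frozen at the bubble center) plus a Taylor remainder, and the cancellations by the vanishing of $\int_{\mathbb R^2}|y|^{2\alpha}e^{U_k}w_{0,\xi}^k$, by oddness/separability, and by $\psi_\xi^k(0)=0$. You also correctly identify that the only obstruction to the $O(\varepsilon_k^2)$ bound is the order-$\varepsilon_k$ remainder $\tilde w_k$ of \eqref{tildewk} at $p_1$ (the $O(e^{-\lambda^k/2})$ room in the statement absorbs everything at $p_2$ and $q$; incidentally, that term comes not only from the non-radial kernel at $q$ but also from the crude remainders \eqref{sec-e} and \eqref{pre-xi}, which is harmless).

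The gap is in the mechanism you propose for the crucial improvement. The quantity that must be shown to be $O(\varepsilon_k^2)$ is the local mass $\int_{B_{\tau/\varepsilon_k}}|y|^{2\alpha}\tilde h_k(\varepsilon_ky)\varepsilon_k^{2+2\alpha}\tilde c_k(\varepsilon_ky)\,\tilde w_k\,dy$, i.e.\ the pairing of $\tilde w_k$ with the constant function against the full weight. Testing \eqref{crucial-1} against $w_{0,\xi}^k$ and integrating by parts does not produce this: after using the two equations it yields $\int\bigl(|y|^{2\alpha}\tilde h_k(0)e^{U_k}-|y|^{2\alpha}\tilde h_k(\varepsilon_ky)\varepsilon_k^{2+2\alpha}\tilde c_k\bigr)\tilde w_k\,w_{0,\xi}^k$ plus boundary fluxes; the bulk term is already $O(\varepsilon_k^2)$ by \eqref{first-e} and carries no information on the mass, while the boundary term $\int_{\partial B_{\tau/\varepsilon_k}}\partial_\nu\tilde w_k\,w_{0,\xi}^k$ is essentially the unknown total flux of $\tilde w_k$ — constancy of $\tilde w_k$ on $\partial B_{\tau/\varepsilon_k}$ controls its value, not its normal derivative. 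If instead you test against $1$, you do get the mass, but then you need the flux $\int_{\partial B_{\tau/\varepsilon_k}}\partial_\nu\tilde w_k$ to be $O(\varepsilon_k^2)$, and \eqref{first-e} together with elliptic estimates only gives $O(\varepsilon_k^{1-\delta})$, so a duality argument based solely on the crude bound cannot close. What is actually required — and what the paper supplies in its Step 2 — is a second-order pointwise expansion of $\xi_k$ itself near $p_1$: the explicit mode-one corrector $w_{1,\xi}^k$ of \eqref{w1-explicit}, solving \eqref{xi-2nd} (obtained as a $\lambda$-derivative of the shifted $c_{1,k}$), the estimate $|z_k|\le C\varepsilon_k^2(1+|y|)^{\delta}$ of \eqref{est-z} for $z_k=\tilde w_k-w_{1,\xi}^k$, and then the separability of $w_{1,\xi}^k$, which annihilates its pairing with the radial leading weight so that its contribution to $I_{1a}$ is $O(\varepsilon_k^2)$. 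Inserting the refined expansion of $u_k$ from Theorem \ref{int-pos} alone cannot substitute for this: the refinement must be carried out on $\xi_k$, not only on $u_k$.
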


\begin{proof}
	{We shall divide our proof into two steps, in the first step we derive the following estimate
		\begin{equation}
			\label{4.est-step1}
			\xi_k(x_1)-\xi_k(x_2)=O(\varepsilon_k),
		\end{equation}
		for any $x_1,x_2\in
		M\setminus \left\{\Omega({p_1},\tau)\cup \Omega({p_2},\tau)\cup \Omega({q},\tau)\right\}.$ Then in the second step we shall improve \eqref{4.est-step1} and obtain \eqref{osi-vars}.}
	\smallskip
	
	\noindent {\bf Step 1.} Using the Green representation formula for
	$\xi_k$, we have, for $x\in M\setminus \{\Omega({p_1},\tau)\cup \Omega({p_2},\tau)\cup \Omega({q},\tau)\},$
	\begin{align*}
		\xi_k(x)&=\xi_{k,a}+\int_M G(x,\eta)\rho_kH(\eta) e^{{\rm w}_k(\eta)}\xi_k(\eta)d\eta\\
		&=\xi_{k,a}+\sum_{i=1}^2\int_{\Omega({p_i},\tau/2)}G(x,\eta)\rho_kH(\eta) e^{{\rm w}_k(\eta)}\xi_k(\eta)d\eta\\
		&\quad+\int_{\Omega({q},\tau/2)}G(x,\eta)\rho_kH(\eta) e^{{\rm w}_k(\eta)}\xi_k(\eta)d\eta\\
		&\quad+\int_{M\setminus \{\Omega({p_1},\tau/2)\cup \Omega({p_2},\tau/2)\cup \Omega(q,\tau/2)\}}G(x,\eta)\rho_kH(\eta) e^{{\rm w}_k(\eta)}\xi_k(\eta)d\eta\\
		&=\xi_{k,a}+I_1+I_2+I_3+I_4,
	\end{align*}
	with obvious meaning and 
	$$\xi_{k,a}=\int_M\left(\int_MG(x,\eta)\rho_kH(\eta) e^{{\rm w}_k(\eta)}\xi_k(\eta)d\eta\right)H(x)e^{{\rm w}(x)}dx.$$
	Clearly $I_4$ is of order $O(e^{-\lambda^k})$. For the other terms we write,
	\begin{align*} I_1&=G(x,p_1)\int_{\Omega({p_1},\tau/2)}\rho_kHe^{{\rm w}_k}\xi_k+
		\int_{\Omega({p_1},\tau/2)}(G(x,\eta)-G(x,p_1))\rho_kHe^{{\rm w}_k}\xi_k\\
		&=I_{11}+I_{12}.
	\end{align*}
	Concerning $I_{12}$, by the Mean Value Theorem, in local coordinates after scaling we have $$G(x,\eta)-G(x,p_1)=\varepsilon_k(a_0\cdot y)+O(\varepsilon_k^2|y|^2),$$
	then in view of  (\ref{first-e}), we see that,
	\begin{align*}
		I_{12}
		=~&\varepsilon_k\int_{B_{\frac{\tau}{2 \varepsilon_k}}}\tilde h_k(0)(a_0 \cdot y)|y|^{2\alpha}e^{U_k}
		(\psi_{\xi}^k(\varepsilon_ky)+\psi_k(\varepsilon_ky)+w_{0,\xi}^k(y)\\
		&+O(\varepsilon_k(1+|y|)^{\delta}))+O(\varepsilon_k^2)\\
		=~&\varepsilon_k\int_{B_{\frac{\tau}{2 \varepsilon_k}}}\tilde h_k(0)(a_0 \cdot y)|y|^{2\alpha}e^{U_k}w_{0,\xi}^k(y)+O(\varepsilon_k^2)=O(\varepsilon_k^2),
	\end{align*}
	where we used $\psi^k_\xi(\varepsilon_ky)=O(\varepsilon_k|y|)$ and $\psi_k(\varepsilon_ky)=O(\varepsilon_k|y|)$. We notice that the first term on the right vanishes due to the fact that $w^k_{0,\xi}$ is a 
	radial function and the integrand is separable. Consider $I_{11}$ we have
	\begin{align*}
		I_{11}=~&G(x,p_1)\int_{\Omega({p_1},\tau/2)}\rho_kHe^{{\rm w}_k}(\xi_k-\psi_{\xi}^k)+G(x,p_1)\int_{\Omega({p_1},\tau/2)}\rho_kHe^{{\rm w}_k}\psi_{\xi}^k\\
		=~&I_{1a}+I_{1b},
	\end{align*}
	where, writing $\psi_{\xi}^k(\varepsilon_ky)=\varepsilon_k a_k \cdot y+O(\varepsilon_k^2|y|^2)$, we have
	$$
	I_{1b}=O(1)\int_{B_{\frac{\tau}{2 \varepsilon_k}}} \tilde h_k(0)|y|^{2\alpha}e^{U_k} \psi_{\xi}^k(\varepsilon_ky)=O(\varepsilon_k^2),
	$$
	again because the term of order $O({\varepsilon_k})$ is the integral of a
	separable function. On the other side, $I_{1a}=O(\varepsilon_k)$ since we can write,
	\begin{equation}
		\label{i1a}
		\begin{aligned}
			\int_{\Omega({p_1},\tau/2)}\rho_kHe^{{\rm w}_k}(\xi_k-\psi_{\xi}^k)
			=&\int_{B_{\frac{\tau}{2 \varepsilon_k}}}|y|^{2\alpha}\tilde h_k(0)e^{U_k }w_{0,\xi}^kdy+O(\varepsilon_k)\\
			=&-\int_{\partial B_{\frac{\tau}{2 \varepsilon_k}}}\partial_{\nu}w_{0,\xi}^k+O(\varepsilon_k)\\
			=&~O(\varepsilon_k^{2+\epsilon_0})+O(\varepsilon_k)=O(\varepsilon_k),
		\end{aligned}
	\end{equation}
	where in the first equality we used \eqref{first-e} and at last
	the equation for $w_{0,\xi}^k$ and the fact that $\partial_{\nu}w_{0,\xi}^k=O(r^{-3-2\alpha})$
	on the boundary. It is important to remark that
	$I_{1a}$ is the only term of order $\varepsilon_k$,  and all the other terms are either of order $\varepsilon_k^2$ or $e^{-\lambda^k/2}=\varepsilon_k^{1+\alpha}.$ While for the integral $I_2$, by \eqref{sec-e} and a similar argument we deduce that $I_2=O(e^{-\lambda^k/2})$. Concerning $I_3$ we have,
	\begin{align*}
		I_3&=\int_{\Omega(q^k,\tau/2)}G(x,\eta)\rho_kH e^{{\rm w}_k}\xi_k+O(\e_k^2)\\
		&=\int_{\Omega(q^k,\tau/2)}(G(x,\eta)-G(x,q^k))\rho_kHe^{{\rm w}_k}\xi_k
		+G(x,q^k)\int_{\Omega(q^k,\tau/2)}\rho_kHe^{{\rm w}_k}\xi_k\\
		&\quad+O(\e_k^2)\\
		&=I_{31}+I_{32}+O(\varepsilon_k^{2}).
	\end{align*}
	where we also used \eqref{same-ord} and $\bar q^k-\bar q=\bar q^k=
	O(e^{-\bar \lambda^k}\bar \lambda^k)$, see \eqref{p_kj-location}. By arguing as for $I_{12}$ above we find that $I_{32}=O(e^{-\lambda^k/2})$. Also, by using  (\ref{pre-xi}) instead of (\ref{first-e}),
	the same argument adopted for $I_{11}$ shows that  $I_{31}=O(e^{-\lambda^k/2})$. We skip these details to avoid repetitions.
	As a consequence, one can show that
	$$|\xi_k(x_1)-\xi_k(x_2)|=O(\varepsilon_k)+O(e^{-\lambda^k/2})=O(\varepsilon_k+\varepsilon^{1+\alpha}_k),$$
	for any $x_1,x_2\in M\setminus \{\Omega({p_1},\tau)\cup \Omega({p_2},\tau)\cup \Omega({q},\tau)\},$ that is exactly the estimation \eqref{4.est-step1}. Therefore, we see immediately that $\psi^k_{\xi}$ satisfies,
	\begin{equation}\label{impxi:1}
		|\psi^k_{\xi}(\varepsilon_k y)|\le C\varepsilon_k^2(1+|y|)\quad {\rm in} \quad B_{\tau/\varepsilon_k}.
	\end{equation}
	We will use these facts in a sort of bootstrap argument to improve the estimates about the integration of $I_{1a}$ and the
	oscillation of $\xi_k$ far away from blow-up points later.
	\medskip
	
	\noindent {\bf Step 2.} We invoke the intermediate estimate in Theorem \ref{int-pos} about the expansion of $v_k$ near $p_1$:
	\begin{equation}\label{p-rough-2}
		v_k=U_k+c_{0,k}+c_{1,k}+c_{2,k}+O(\varepsilon_k^{2+\delta})(1+|y|)^{\delta}.
	\end{equation}
	Around $0=x(p_1)$ we make the following expansion
	\begin{equation}
		\label{fullexp:1}
		\begin{aligned}
			&\tilde h_k(\varepsilon_ky)\varepsilon_k^{2+2\alpha}\tilde c_k(\varepsilon_ky)\\
			&=\tilde h_k(0)\exp \left\{v_k+\log \frac{V_k(\varepsilon_ky)}{V_k(0)}\right\}(1+O(\varepsilon_k^{2+\epsilon_0}))\\
			&=\tilde h_k(0)e^{U_k}\left(1+c_{0,k}+\tilde c_{1,k}+\tilde c_{2,k}+\frac {\Delta (\log V_k)(0)|y|^2\varepsilon_k^2}{4}+\frac{\tilde c_{1,k}^2}{2}\right)\\
			&\quad+E_k,
		\end{aligned}
	\end{equation}
	where $c_{0,k}$ is a radial function corresponding to the $\varepsilon_k^2$ order term in the expansion of $v_k$ (see section \ref{sec:alfap}), $V_k=\tilde h_ke^{\psi_k}$ and $\psi_k$ is the harmonic function that encodes the boundary oscillation of $u_k$ on $\partial B_\tau$, $\tilde c_{1,k}$ and $\tilde c_{2,k}$ are given below:
	\begin{align*}
		&\tilde c_{1,k}=\varepsilon_k\nabla\log \tilde h_k(0)\cdot \theta\left(r-\frac{2(1+\alpha)}{\alpha}\frac{r}{1+a_kr^{2\alpha+2}}\right),\quad \tilde c_{2,k}=c_{2,k}+\Theta_2\varepsilon_k^2r^2,
	\end{align*}
	where $a_k=\frac{\tilde h_k(0)}{8(1+\alpha)^2}$ and $\Theta_2$ are the collection of projections of $\log\frac{V_k(\e_ky)}{V_k(0)}$ onto non-radial modes. The left error term $E_k$ in \eqref{fullexp:1} satisfies,
	$$|E_k|\le C\varepsilon_k^{2+\epsilon_0}(1+|y|)^{-2+\epsilon_0-2\alpha}.$$

	\noindent At this point, around $p_1$ we define $w_{1,\xi}^k$,
	the next term in the approximation of $\xi_k$, to be a suitable solution of,
	\begin{equation}\label{xi-2nd}
		\Delta w_{1,\xi}^k+|y|^{2\alpha}\tilde h_k(0)e^{U_k}w_{1,\xi}^k=-|y|^{2\alpha}\tilde h_k(0)e^{U_k}\tilde c_{1,k} w_{0,\xi}^k.
	\end{equation}
	It turns out that \eqref{xi-2nd} can be explicitly solved in terms of a function. Indeed, let $c_{1,k}$ be defined as in \eqref{nov10e6} and we set,
	$$\hat c_1^k(x)=c_{1,k}\left(x/{\varepsilon_k}\right),$$
	that is,
	$$\hat c_1^k(x)=-\frac{2(1+\alpha )}{\alpha}(\nabla \log \tilde h_k(0)\cdot \theta)\frac{e^{-\lambda^k}|x|}{\left(e^{-\lambda^k}+\frac{\tilde{h}_k(0)}{8(1+\alpha)^2}|x|^{2\alpha+2}\right)}.$$
	Putting $\lambda=\lambda^k$ and differentiating with respect to $\lambda$ we have that,
	$$\frac{d}{d\lambda}\hat c_1^k(x)
	=\frac{\tilde{h}_k(0)}{4\alpha(1+\alpha)}(\nabla \log \tilde h_k(0)\cdot \theta) \frac{e^{-\lambda^k}|x|^{2\alpha +3}}{\left(e^{-\lambda^k}+\frac{\tilde{h}_k(0)}{8(1+\alpha)^2}|x|^{2\alpha+2}\right)^2}.$$
	Setting
	$$w_{1,\xi}^k(y)=b_0^k\left(\frac{d}{d\lambda}\hat c_1^k\right)(\varepsilon_ky)\quad \mbox{with}
	\quad \varepsilon_k=e^{-\frac{\lambda_1^k}{2(1+\alpha )}}, $$ we have
	\begin{equation}\label{w1-explicit}
		w_{1,\xi}^k(y)=b_0^k\frac{\tilde{h}_k(0)}{4\alpha(1+\alpha)}(\nabla \log \tilde h_k(0)\cdot \theta)
		\frac{\varepsilon_k r^{2\alpha+3}}{\left(1+\frac{\tilde{h}_k(0)}{8(1+\alpha)^2}r^{2+2\alpha }\right)^2}.
	\end{equation}
	It is not difficult to check that $w_{1,\xi}^k$ satisfies \eqref{xi-2nd},
	$$w_{1,\xi}^k(0)=|\nabla w_{1,\xi}^k(0)|=0,\quad\mbox{and}\quad
	|w_{1,\xi}^k(y)|\le C\varepsilon_k (1+|y|)^{-1}~{\rm in}~B_{\tau/\varepsilon_k}.$$
	Next, let us we write the equation for $w_{1,\xi}^k$ in the following form:
	\begin{equation}
		\label{e-w1k}
		\begin{aligned}
			&\Delta w_{1,\xi}^k+|y|^{2\alpha}\tilde h_k(\varepsilon_k y)
			\varepsilon_k^{2+2\alpha}\tilde c_k(\varepsilon_k y)w_{1,\xi}^k \\
			&=|y|^{2\alpha}\tilde h_k(0)\left[\left(\frac{\tilde h_k(\varepsilon_ky)}{\tilde h_k(0)}
			\varepsilon_k^{2+2\alpha}\tilde c_k(\varepsilon_k y)-e^{U_k}\right)w_{1,\xi}^k-e^{U_k}\tilde c_{1,k}w_{0,\xi}^k\right],
		\end{aligned}
	\end{equation}
	and then set
	$$z_k(y)=\tilde w_k(y)-w_{1,\xi}^k(y)=\xi_k(\varepsilon_ky)-\psi_{\xi}^k(\varepsilon_ky)-w_{0,\xi}^k(y)-w_{1,\xi}^k(y).$$
	Therefore, in view of \eqref{crucial-1}, \eqref{impxi:1} and \eqref{e-w1k}, the equation for $z_k$ reads:
	\begin{align*}
		&\Delta z_k+|y|^{2\alpha}\tilde h_k(\varepsilon_k y)\varepsilon_k^{2+2\alpha}\tilde c_k(\varepsilon_k y)z_k\\
		&=\tilde h_k(0)|y|^{2\alpha}\left(e^{U_k}- \frac{\tilde h_k(\varepsilon_ky)}{\tilde h_k(0)}
		\varepsilon_k^{2+2\alpha}\tilde c_k(\varepsilon_k y)\right )(w_{0,\xi}^k+w_{1,\xi}^k)\\
		&\quad+\tilde h_k(0) |y|^{2\alpha}e^{U_k}\tilde c_{1,k} w_{0,\xi}^k+O(\varepsilon_k^{2+\epsilon_0})(1+|y|)^{-2-2\alpha+\epsilon_0}\\
		&=O\left(\varepsilon_k^2\right)(1+|y|)^{-2-2\alpha}.
	\end{align*}
	By using the usual argument based on Green's representation formula, we have
	\begin{equation}\label{est-z}
		|z_k(y)|\le C(\delta)\varepsilon_k^2(1+|y|)^{\delta},
	\end{equation}
	it implies that the oscillation of $z_k$ on
	$\partial B_{\tau/\varepsilon_k}$ is of order $O(\varepsilon_k^{2-\delta})$. In view of \eqref{est-z}, $I_{1a}$ can be improved as follows
	\begin{align*}
		I_{1a}&=G(x,p_1)\int_{B(p_1,\frac{\tau}2)}\rho_k He^{{\rm w}_k}(\xi_k-\psi_{\xi}^k)\\
		&=G(x,p_1)\int_{B_{\frac{\tau}{2 \varepsilon_k}}}\tilde h_k(\varepsilon_ky)e^{U_k}
		(w_{0,\xi}^k+w_{1,\xi}^k)dy+O(\varepsilon_k^2).
	\end{align*}
	Then we make a Taylor expansion for $\tilde h_k(\varepsilon_ky)$, its
	zero-th order term relative to $w_{0,\xi}^k$ has already been shown above
	to be of order $O(\varepsilon_k^{2+\epsilon_0})$ (see \eqref{i1a}), the one relative to $w_{1,\xi}^k$ vanishes since the
	integrand is separable, the term proportional to $\varepsilon_k w_{0,\xi}^k$ vanishes
	again because the integrand is separable, while the one proportional to
	$\varepsilon_k w_{1,\xi}^k$ is already of order $O(\varepsilon_k^2)$.
	{On the other hand, we have already mentioned in Step 1 that all the terms except for $I_{1a}$ are of order either $\varepsilon_k^2$ or $\varepsilon_k^{1+\alpha}$}. Therefore, we derive that
	\begin{equation*}
		|\xi_k(x_1)-\xi_k(x_2)|=O(\varepsilon_k^2)+O(e^{-\lambda^k/2}),
	\end{equation*}
	$\forall x_1,x_2\in M\setminus \{\Omega({p_1},\tau)\cup \Omega({p_2},\tau)\cup \Omega({q},\tau)\}$,
	which is (\ref{osi-vars}). Hence, we finish the proof.
\end{proof}

Based on \eqref{osi-vars}, we can get a better estimate for $\psi_{\xi}^k$:
\begin{equation}\label{im-ph-xi}
	|\psi_{\xi}^k(\varepsilon_ky)|\le C\varepsilon_k^{2+\epsilon_0}(1+|y|).
\end{equation}
We will see that this estimate implies that all the terms in the expansions involving
$\psi_{\xi}^k$ are in fact negligible.


By using \eqref{fullexp:1}, the equation of $z_k$ can be written as follows,
\begin{align*}
	&\Delta z_k+|y|^{2\alpha}\tilde h_k(\varepsilon_k y)\varepsilon_k^{2+2\alpha}\tilde c_k(\varepsilon_k y)z_k\\
	&=-\tilde h_k(0)|y|^{2\alpha}e^{U_k}\bigg (w_{0,\xi}^k\big (c_{0,k}+\tilde c_{2,k}+\frac 14\Delta (\log \tilde h_k)(0)|y|^2\varepsilon_k^2+\frac  12(\tilde c_{1,k})^2\big )\nonumber\\
	&\quad+w_{1,\xi}^k\tilde c_{1,k}+O(\varepsilon_k^{2+\epsilon_0})(1+|y|)^{-2-2\alpha+\epsilon_0} \bigg ). \nonumber
\end{align*}
By using separable functions to remove separable terms, we can write the equation of the radial part of $z_k$, which we denote $\tilde z_k$,
\begin{align*}
	&\Delta \tilde z_k+|y|^{2\alpha}\tilde h_k(0)e^{U_k}\tilde z_k\\
	&=-\tilde h_k(0)|y|^{2\alpha}e^{U_k}\left[w_{0,\xi}^k\left(c_{0,k}+\frac{\Delta (\log \tilde h_k)(0)|y|^2\varepsilon_k^2}{4}+\frac  {(\tilde c_{1,k})^2_r}{2}\right)+(w_{1,\xi}^k\tilde c_{1,k})_r\right],
\end{align*}
where we let $(A)_{\theta},(A)_r$ denote the angular part and radial part of $A$ respectively.
Corresponding to these terms we construct $z_0^k$ to solve
\begin{equation}\label{eq-z0}
	\begin{cases}\dfrac{d^2}{dr^2}z_0^k+\frac 1r\dfrac{d}{dr}z_0^k+r^{2\alpha}\tilde h_k(0)e^{U_k}z_0^k=F_0^k,
		\quad 0<r<\tau/\varepsilon_k,\\
		\\
		z_0^k(0)=\frac{d}{dr}z_0^k(0)=0,
	\end{cases}
\end{equation}
where
\begin{align*}
	F_0^k(r)
	=&-\tilde h_k(0)|y|^{2\alpha}e^{U_k}\bigg (w_{0,\xi}^k\big (c_{0,k}+\frac 14\Delta (\log \tilde h_k)(0)|y|^2\varepsilon_k^2\\
	&+\frac  14\varepsilon_k^2|\nabla \log \tilde h_k(0)|^2(g_k+r)^2\big )
	+(w_{1,\xi}^k\tilde c_{1,k})_r\bigg )\nonumber
\end{align*}
and we used
$$\left(\frac 12(\tilde c_{1,k})^2\right)_r=\frac 14\varepsilon_k^2|\nabla \log \tilde h_k(0)|^2(g_k+r)^2.$$
By standard potential estimates it is not difficult to see that
$$|z_0^k|\le C\varepsilon_k^2(1+r)^{-2\alpha}\log (1+r).$$
Next, we introduce $w_{2,\xi}^k$, which is used for removing the separable terms of the order $O(\varepsilon_k^2)$
in $\tilde w_k$,
\begin{equation}
	\label{first:estimate}
	\begin{aligned}
		&\Delta w_{2,\xi}^k+|y|^{2\alpha}h_k(0)e^{U_k}w_{2,\xi}^k\\
		&=-|y|^{2\alpha} h_k(0)e^{U_k}\bigg (w_{0,\xi}^k\big (\tilde c_{2,k}+\frac 12(\tilde c_{1,k}^2)_{\theta}\big )+(\tilde c_{1,k}w_{1,\xi}^k)_{\theta}\bigg ).
	\end{aligned}
\end{equation}
Using \eqref{4.two-fun1} we see that
\begin{equation}\label{1st-w2k}
	|w_{2,\xi}^k|\le C\varepsilon_k^2(1+|y|)^{-2\alpha} \quad {\rm in} \quad B_{\tau/\varepsilon_k}.
\end{equation}
By standard potential estimates as usual we have
$$
\left|\tilde w_k-\sum_{i=1}^2w_{i,\xi}^k-z_0^k(y)\right|\le C\varepsilon_k^{2+\epsilon_0}(1+|y|)^{-2\alpha+\epsilon_0}.
$$
Next, following a similar argument about $w_{0,\xi}^k$ in \eqref{i1a}, we evaluate the integral around $p_1$ as follows,
\begin{align*}
	&\int_{B_{\tau/\varepsilon_k}}|y|^{2\alpha}\tilde h_k(\varepsilon_ky)\varepsilon_k^{2+2\alpha}
	\tilde c_k(\varepsilon_ky)\xi_k(\varepsilon_k y)dy\\
	&=\tilde h_k(0)\int_{B_{\tau/\varepsilon_k}}|y|^{2\alpha}\exp\{U_k+c_{0,k}+\tilde c_{1,k}+\tilde c_{2,k}+
	\frac 14\Delta (\log \tilde h_k(0))\varepsilon_k^2|y|^2  \\
	&\quad+\frac 12 (\tilde c_{1,k})^2\}
	(w_{0,\xi}^k+w_{1,\xi}^k+z_0^k)+O(\varepsilon_k^{2+\epsilon_0}) \\
	&=\int_{B_{\tau/\varepsilon_k}}\tilde h_k(0)|y|^{2\alpha}e^{U_k}\bigg (\big (1+ c_{0,k}+\frac 12 (\tilde c_{1,k})^2+\frac 14\varepsilon_k^2\Delta (\log \tilde h_k)(0)|y|^2\big )w_{0,\xi}^k \\
	&\quad+(\tilde c_{1,k} w_{1,\xi}^k)_r+z_0^k \bigg )
	+O(\varepsilon_k^{2+\epsilon_0}).
\end{align*}
\noindent
Remark that, by using the equation for $z_0^k$ in \eqref{eq-z0}, we have,
\begin{equation}\label{key-c-2}
	\int_{B_{\tau/\varepsilon_k}}\varepsilon_k^{2+2\alpha}|y|^{2\alpha}
	\tilde h_k(\varepsilon_ky)\tilde c_k(\varepsilon_ky)\xi_k(\varepsilon_ky)dy
	=-\int_{\partial B_{\tau/\varepsilon_k}}\frac{\partial z_0^k}{\partial \nu}+O(\varepsilon_k^{2+\epsilon_0}).
\end{equation}

\noindent We set $A_0^k(x)=b_0^kc_{0,k}(|x|/\varepsilon_k)$, then
\begin{align*}
	&\Delta A_0^k+\tilde h_k(0)\frac{|x|^{2\alpha}e^{-\lambda^k}}{\left(e^{-\lambda^k}+\frac{\tilde{h}_k(0)}{8(1+\alpha)^2}|x|^{2\alpha+2}\right)^2}A_0^k\\
	&=-b_0^k\frac {\tilde h_k(0)}{4}\frac{e^{-\lambda^k}|x|^{2\alpha+2}}{\left(e^{-\lambda^k}+\frac{\tilde{h}_k(0)}{8(1+\alpha)^2}|x|^{2\alpha+2}\right)^2} \\
	&\quad\times\left(\left(1-\frac{2(1+\alpha)}{\alpha}\frac{e^{-\lambda^k}}{e^{-\lambda^k}+\frac{\tilde{h}_k(0)}{8(1+\alpha)^2}|x|^{2\alpha+2}}\right)^2|\nabla \log \tilde h_k(0)|^2+\Delta \log \tilde h_k(0)\right).
\end{align*}
Now we define $A_{\lambda}^k(x)=\frac{d}{d\lambda}A_0^k(x)$ and after a lengthy calculation
we see that
\begin{equation}\label{important-z0}
	z_0^k(y)=A_{\lambda}^k(\varepsilon_k y).
\end{equation}
By using (\ref{important-z0}) we deduce that,
\begin{align*}
	\int_{B_{\tau/\varepsilon_k}}\Delta z_0^k=~&\varepsilon_k^2\int_{B_{\tau/\varepsilon_k}}
	\Delta A_{\lambda}^k(\varepsilon_ky)dy\\
	=~&\int_{B_{\tau}}\Delta A_{\lambda}^k(x)dx=\frac{d}{d\lambda}\int_{B_{\tau}}\Delta A_0^k(x)dx.
\end{align*}
Since
$$\Delta A_0^k(x)=b_0^k\Delta c_{0,k}\left(\frac{|x|}{\varepsilon_k}\right)\varepsilon_k^{-2},$$
we have
\begin{align*}
	\frac{1}{b_0^k}\int_{B_{\tau}}\Delta A_0^k(x)dx=&\int_{B_{\tau/\varepsilon_k}}\Delta c_{0,k}(y)dy\\
	=&
	\int_{\partial B_{\tau/\varepsilon_k}}\frac{\partial c_{0,k}}{\partial \nu}=
	d_{1,k} \Delta \log \tilde h_k(0)\varepsilon_k^2+O(\varepsilon_k^{2+\epsilon_0}),
\end{align*}
and 
\begin{align*}
	\int_{\partial B_{\tau/\varepsilon_k}}\frac{\partial z_0^k}{\partial \nu}=
	\int_{B_{\tau/\varepsilon_k}}\Delta z_0^k=-\frac{1}{1+\alpha}b_0^kd_{1,k}
	\Delta (\log \tilde h_k)(0)\varepsilon_k^2+O(b_0^k\varepsilon_k^{2+\epsilon_0}),
\end{align*}
which, in view of \eqref{key-c-2} eventually implies that 
\begin{equation}\label{p1-leading}
	\int_{\Omega(p_1,\tau)}\rho_kH e^{{\rm w}_k}\xi_k=-\frac{1}{1+\alpha}b_0^kd_{1,k}
	\Delta (\log \tilde h_k)(0)\varepsilon_k^2+O(b_0^k\varepsilon_k^{2+\epsilon_0}).
\end{equation}
\medskip

On the other side, the contribution of the integral around $p_2$ is very small,
\begin{equation}\label{p2-minor}
	\int_{\Omega(p_2,\tau)}\rho_kHe^{{\rm w}_k}\xi_k=O(\varepsilon_k^{2+\epsilon_0}).
\end{equation}
The process is almost the same as what we did for $p_1$. Indeed, we make an expansion of $\xi_k$ around $p_2$, $w_{1,\xi}^k$ is set for $c_{1,k}$ in the expansion of $u_k$, $z_0^k$ is used for treating the second order radial term in the expansion of $\xi_k$ and $c_{0,k}$ of $u_k$. The only difference is that the scaling is now with respect to $\hat \varepsilon_k=\varepsilon_k^{\frac{1+\alpha}{1+\beta}}$, then
$O(\hat \varepsilon_k^2)=O(\varepsilon_k^{2+\epsilon_0})$. We skip these details to avoid repetitions.

%
%

Concerning the integral on $\Omega(q,\tau)$, we first improve the estimate of
$\bar w_k$ (see \eqref{pre-xi}). At first, we can show that 
\begin{align*}
	\mbox{The oscillation of}~\bar\psi_\xi^k(y)~\mbox{on}~{\partial B_{\tau/\bar\varepsilon_k}}~\mbox{is of order}~O(\e_k^2+\bar\varepsilon_k),
\end{align*}
where we used Lemma \ref{osci-xi-better} and the explicit formula of $\bar w_{0,\xi}^k(y)$. As a consequence, by a standard argument we could derive an estimate like
(\ref{im-ph-xi}) holds for $\bar \psi^k_\xi(\varepsilon_ky)$ as well, i.e.,
\begin{equation}
	\label{4.equ-barpsi}
	|\bar\psi_\xi^k(\varepsilon_ky)|\leq C(\varepsilon_k^2\bar\varepsilon_k+\bar\varepsilon_k^2)(1+|y|),\quad y\in B_{\tau/\bar\varepsilon_k}.
\end{equation}
While at $\bar q^k$,  by standard elliptic estimate for harmonic function we have $|\nabla\bar\psi^k_\xi(0)|=O(\varepsilon_k^{2+\epsilon_0})$. Therefore, in local
coordinates such that $0=x(q)$ and after scaling $x=\bar q^k+\bar \varepsilon_k y$,
where $\bar \varepsilon_k=e^{-\bar\lambda^k/2}$,
we can write
the equation for $\bar w_k$ as follows,
$$\Delta \bar w_k+\bar h_k(\bar q^k+\bar \varepsilon_k y) \bar \varepsilon_k^{2}
\bar c_k(\bar q^k+\bar \varepsilon_k y)\bar w_k=
O(\varepsilon_k^{2+\epsilon_0})(1+|y|)^{-3}
\quad {\rm in} \quad B_{\tau/\bar \varepsilon_k}.$$
Here we remark that, compared with \eqref{eqwbar1}, the improvement in the estimates of the
right hand side is obtained because of the vanishing rate of the gradient of the coefficient function
for regular blow-up points (see \eqref{first-deriv-est}) and
the improved estimate \eqref{4.equ-barpsi} for $\bar\psi_{\xi}^k$. On the other hand, we have
$$\bar w_k(0)=0,\quad \nabla\bar w_k(0)=O(\varepsilon_k^{2+\epsilon_0})\quad {\rm and}\quad \bar w_k \mbox{ is a constant on } \partial B_{\tau/\bar \varepsilon_k}.$$
As a consequence, by the usual potential estimates, we conclude that,
\begin{equation*}
	|\bar w_k(y)|\le C(\delta)\varepsilon_k^{2+\epsilon_0}(1+|y|)^{\delta},\quad |y|\le \tau/\bar \varepsilon_k.
\end{equation*}
Next, by using the expansion of $\bar c_k$ in we have,
$$
\int_{\Omega(q,\tau)}\rho_kH (x)e^{{\rm w}_k(x)}\xi_k
=\int_{B_{\tau/\bar \varepsilon_k}}\bar h_k(\bar q^k+\bar \varepsilon_k y)e^{U_k}
\xi_k(\bar q^k+\bar \varepsilon_k y)
+O(\varepsilon_k^{2+\epsilon_0}).
$$
Remark that $e^{-\lambda^k/2}=O(\varepsilon_k^{1+\epsilon_0})$, then by \eqref{bar-w2k}-\eqref{pre-xi} we see that
all the terms including $\bar \psi^k_\xi(\bar \varepsilon_ky)$ are of order
$O(\varepsilon_k^{2+\epsilon_0})$.
Also, neglecting terms which vanish due to the integrand is separable, we have
\begin{align*}
	&\int_{B_{\tau/\bar \varepsilon_k}}\bar h_k(\bar q^k +\bar \varepsilon_k y)e^{U_k}
	\xi_k(\bar q^k+\bar \varepsilon_k y)\\
	&=\int_{B_{\tau/\bar \varepsilon_k}}\bar h_k(\bar q^k)e^{U_k}(\bar w^k_{0,\xi}+\bar w_k)+\bar \varepsilon_k\int_{B_{\tau/\bar \varepsilon_k}}(\nabla \log\bar h_k(\bar q^k) \cdot y)
	\frac{e^{U_k}(\bar b_k \cdot y)}{1+\frac{\bar h_k(\bar q^k)}{8}|y|^2}\\
	&\quad+\bar \varepsilon_k\int_{B_{\tau/\bar \varepsilon_k}}(\nabla \log\bar h_k(\bar q^k) \cdot y)e^{U_k}\bar w_k+O(\varepsilon_k^{2+\epsilon_0})\\
	&=\int_{B_{\tau/\bar \varepsilon_k}}\bar h_k(\bar q^k)e^{U_k}\bar w_k+
	\bar \varepsilon_k\int_{B_{\tau/\bar \varepsilon_k}}(\nabla \log\bar h_k(\bar q^k) \cdot y)
	\frac{e^{U_k}(\bar b_k \cdot y)}{1+\frac{\bar h_k(\bar q^k)}{8}|y|^2}+O(\varepsilon_k^{2+\epsilon_0})\\
	&=O(\varepsilon_k^{2+\epsilon_0}),
\end{align*}
where $\bar b_k=(\bar b^k_1, \bar b^k_2)$ and we used the same argument as in \eqref{i1a} to show that
$\int_{B_{\tau/\bar \varepsilon_k}}\bar h_k(\bar q^k)e^{U_k}\bar w^k_{0,\xi}$ is of order
$O(\varepsilon_k^{2+\epsilon_0})$.
Therefore we eventually deduce that,
\begin{equation}\label{small-q}
	\int_{\Omega(q,\tau)}\rho_kH c_k(x)\xi_k=O(\varepsilon_k^{2+\epsilon_0}).
\end{equation}
In view of \eqref{p1-leading}, \eqref{p2-minor}, \eqref{small-q}, we come up with
a contradiction as follows,
\begin{align*}
	0=\int_M\rho_k H c_k\xi_k
	&=\int_{\Omega(p_1,\tau)}H c_k\xi_k+\int_{\Omega(p_2,\tau)}H c_k\xi_k+\int_{\Omega(q,\tau)}H c_k\xi_k\\
	&\quad+\int_{M\setminus \{\Omega(p_1,\tau)\cup \Omega(p_2,\tau)\cup \Omega(q,\tau)\}}H c_k\xi_k\\
	&=Cb_0\Delta \log h_k(p_1)\varepsilon_k^2+o(\varepsilon_k^2),
\end{align*}
for some constant $C\neq 0$, since in particular the integrals on $\Omega(p_2,\tau)$, $\Omega(q,\tau)$
and $M\setminus \{\Omega(p_1,\tau)\cup \Omega(p_2,\tau)\cup \Omega(q,\tau)\}$ are all of order
$o(\varepsilon_k^2)$.
Since for $k$ large $L(\mathbf{p})\neq 0$ is the same as $\Delta \log h_k(p_1)\neq 0$,
we obtain a contradiction as far as $b_0\neq 0$.

\subsection{Proof of $b_1=b_2=0$.}
In this subsection, we shall prove that $b_1=b_2=0$. At the beginning of this part we recall that in local coordinates around $q,~0=x(q)$, $u_{k}$ satisfy
\begin{equation*}
	\Delta u_{k}+\bar h_k(x)e^{u_{k}}=0\quad \mbox{in}\quad B_\tau,
\end{equation*}
and $\xi_k$ satisfies
\begin{equation*}
	\Delta\xi_k+\bar h_k\bar c_k\xi_k=0\quad \mbox{in}\quad B_\tau.	
\end{equation*}
We shall divide our argument into several steps as below, first, we would like to get some estimation on $\bar v_k:$
\medskip

\noindent{\bf Step one: Intermediate estimates for $\bar v_k$}
\medskip

We work in local coordinates centered at $q$, $0=x(q)$, with $\bar \varepsilon_k=e^{-\bar \lambda^k/2}$. As before, by a slightly abuse of notation we set $\psi_k$ to be the function which encodes the boundary oscillation of $u_k$ in $B_\tau(q)$. We set
\begin{equation}\label{bar-v-2-q}
	\bar v_k(y)=u_{k}(\bar q^k+\bar \varepsilon_ky)+2\log \bar \varepsilon_k-\psi_k(\bar\e_ky),\quad  y\in  \Omega_k:=B_{\tau/\bar\e_k},
\end{equation}
and
\begin{equation}\label{xi-k-q}
	\bar \xi_k(y)=\xi_k(\bar q^k+\bar \varepsilon_ky),\quad y \in \Omega_k.
\end{equation}
Obviously, $\bar v_k$ is constant on $\partial \Omega_k$. By Theorem \ref{reg-int} we have
\begin{equation}\label{v2-exp-new2}
	\bar v_k(y)=U_k+c_{0,k}+c_{2,k}+O(\bar \varepsilon_k^{2+\delta})(1+|y|)^{\delta},
\end{equation}
then we can write
\begin{equation*}
	\bar \varepsilon_k^2 \rho_kH \bar c_k(\bar q^k+\bar \varepsilon_ky)\xi_k=\bar h_{k,0}(\bar q^k+\bar \varepsilon_ky)e^{\bar v_k}\bar\xi_k,
\end{equation*}
where $\bar h_{k,0}=\bar h_ke^{\psi_k}$.  According to our setting, $\bar \lambda^k=u_{k}(\bar q^k)$ and $\bar \varepsilon_k=e^{-\bar \lambda^k/2}$. Now we set
$$\bar w^k(y)=\bar v_k(y)-c_{0,k}(y)-c_{2,k}(y).$$
where $c_{i,k}$ $i=0,1,2$, are defined as in the proof of Theorem \ref{reg-int}. Different from the expansion of $v_k$ in the neighborhood of $p_1$, the term $c_{1,k}$ can be regarded as error term, this is due to that $\nabla \bar h_{k,0}(\bar q^k)=O(\bar \lambda^ke^{-\bar \lambda^k})$. 
After direct calculation one can find that $\bar w^k$ satisfies,
\begin{align*}
	\begin{cases}
		\Delta \bar w^k+\bar h_{k,0}(\bar q^k)e^{\bar v_k}\bar w^k=O(\bar \varepsilon_k^{3-\epsilon_0})(1+|y|)^{-1},\\
		\\
		\bar w^k(0)=|\nabla \bar w^k(0)|=0,
	\end{cases}
\end{align*}
and the oscillation of $\bar w^k$ on $\partial \Omega_k$ is of order $O(\bar \varepsilon_k^2)$. By Theorem \ref{reg-int} we get
$$|\bar w^k(y)|\le C\bar\varepsilon_k^{2+\epsilon_0}(1+|y|)^{2\epsilon_0},$$
which implies
\begin{equation}\label{for-v2-q}
	|\bar v_k-U_k-c_{0,k}-c_{2,k}|\le C\bar \varepsilon_k^{2+\epsilon_0}(1+|y|)^{2\epsilon_0},\quad y\in \Omega_k.
\end{equation}
As a consequence,
\begin{equation}\label{grad-v2-new-2}\nabla \bar v_k=\nabla U_k+\nabla c_{0,k}+\nabla c_{2,k}+O(\tau)\bar\varepsilon_k^3,\quad y\in \Omega_k, \quad |y|\sim \bar\varepsilon_k^{-1}.
\end{equation}
Based on (\ref{v2-exp-new2}), we also have
\begin{equation}
	\label{v2-exp-new}
	\begin{aligned}
		\bar v_k(y)+\log \frac{\bar h_{k,0}(\bar q^k+\bar \varepsilon_ky)}{\bar h_{k,0}(\bar q^k)}
		=~&U_k+c_{0,k}+\tilde c_{2,k}+\frac{\Delta (\log \bar h_{k,0})(\bar q^k)\bar \varepsilon_k^2r^2}{4}\\
		&+O(\bar \varepsilon_k^3)r^3+O(\bar \varepsilon_k^{2+\delta})(1+r)^{\delta}.
	\end{aligned}
\end{equation}
\medskip

\noindent{\bf Step two: Improved estimate on the oscillation of $\xi_k$ away from blow-up points and vanishing rates of $b_0^k$, $\hat b_0^k$ and $\bar b_0^k$.}
From Lemma \ref{osci-xi-better}  we have
\begin{equation}\label{new-osci}
	|\xi_k(x_1)-\xi_k(x_2)|\le C(\varepsilon_k^2+\bar \varepsilon_k).
\end{equation}
The new estimate (\ref{new-osci}) implies that the harmonic function $\psi_{\xi}^k$ that encodes the oscillation of
$\xi_k$ on $\partial \Omega(p_1,\tau)$ satisfies
\begin{equation}\label{new-psi-osc}
	|\psi_{\xi}^k(\varepsilon_ky)|\le C(\varepsilon_k^3+\varepsilon_k\bar \varepsilon_k)|y|,\quad |y|<\tau/\varepsilon_k.
\end{equation}
In the following lemma, we shall prove that the oscillation of $\xi_k$ far away from blow-up points is of order $O(\bar\varepsilon_k)$.

\begin{prop}\label{small-osc-psi}
	For any $x_1,x_2\in M\setminus \{\Omega(p_1,\tau)\cup \Omega(p_2,\tau)\cup\Omega(q,\tau)\}$,
	$$|\xi_k(x_1)-\xi_k(x_2)|\le C\bar \varepsilon_k$$
	for some $C>0$ independent by $k$.
\end{prop}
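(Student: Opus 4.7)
The strategy is to combine the Green's representation for $\xi_k$ with the already-proved identity $b_0=0$ to extract a quantitative decay rate on $b_0^k$ strong enough to suppress the $O(\varepsilon_k^2)$ contribution that survived in Lemma~\ref{osci-xi-better}. Starting from
$$\xi_k(x_1)-\xi_k(x_2)=\int_M[G(x_1,\eta)-G(x_2,\eta)]\rho_kH(\eta)e^{w_k(\eta)}\xi_k(\eta)\,d\eta,$$
we split the integration into the three bubbling neighborhoods $\Omega(p_1,\tau/2),\Omega(p_2,\tau/2),\Omega(q,\tau/2)$ and the exterior. The exterior contribution is $O(e^{-\lambda^k})$, while the $\Omega(p_2,\tau/2)$ contribution, analyzed by the direct analogue of the argument at $p_1$ but at the negative-pole scale $\hat\varepsilon_k=\varepsilon_k^{(1+\alpha)/(1+\beta)}$, is bounded by $O(\hat\varepsilon_k^{2-\epsilon_0})=o(\bar\varepsilon_k)$ since $\beta<0$ forces $(1+\alpha)/(1+\beta)>1+\alpha$.

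The dominant $O(\bar\varepsilon_k)$ term comes from the $q$-neighborhood. Taylor expanding $G(x_j,\eta)-G(x_j,\bar q^k)$ to first order in $\eta-\bar q^k$ and rescaling $\eta=\bar q^k+\bar\varepsilon_ky$, the zeroth-order term $[G(x_1,\bar q^k)-G(x_2,\bar q^k)]\int\rho_kHe^{w_k}\xi_k$ produces $O(\bar\varepsilon_k^{2+\epsilon_0})$: the angular modes $\bar b_j^ky_j/(1+C|y|^2)$ of $\bar w_{0,\xi}^k$ vanish against $e^{U_k}$ by parity, while the radial mode $\bar b_0^k(1-C|y|^2)/(1+C|y|^2)$ contributes at this order through the boundary-integral cancellation illustrated by \eqref{i1a}, together with the smallness of $\nabla\bar h_{k,0}(\bar q^k)=O(\bar\varepsilon_k^2\bar\lambda^k)$ from \eqref{first-deriv-est}. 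The first-order Taylor term is where the angular modes survive: pairing $\bar\varepsilon_k\nabla_\eta G\cdot y$ with $\bar b_j^ky_j/(1+C|y|^2)$ yields a genuinely nonvanishing integral of order one, producing a contribution of exactly order $\bar\varepsilon_k$.

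The delicate step is to control the $p_1$-neighborhood contribution by $o(\bar\varepsilon_k)$. Its zeroth-order Taylor term equals $[G(x_1,p_1)-G(x_2,p_1)]\cdot\int_{\Omega(p_1,\tau/2)}\rho_kHe^{w_k}\xi_k$, which by \eqref{p1-leading} is $O(b_0^k\varepsilon_k^2)$, and a parallel analysis shows the higher Taylor corrections are of the same order after parity cancellations against the radial $w_{0,\xi}^k$ and the explicit angular form \eqref{w1-explicit} of $w_{1,\xi}^k$. The required decay rate $b_0^k=o(\bar\varepsilon_k/\varepsilon_k^2)$ is extracted from the normalization $\int_M\rho_kHe^{w_k}\xi_k=0$ by combining it with the $O(\bar\varepsilon_k^{2+\epsilon_0})$ bound on the $q$-contribution and the $O(\hat\varepsilon_k^{2+\epsilon_0})$ bound on the $p_2$-contribution (both obtained unconditionally, using only boundedness of $\bar b_j^k,\hat b_0^k$): this yields
$$|b_0^k|\varepsilon_k^2=O\bigl(\bar\varepsilon_k^{2+\epsilon_0}+\hat\varepsilon_k^{2+\epsilon_0}+e^{-\lambda^k}\bigr)=o(\bar\varepsilon_k),$$
and plugging back closes the bound $|\xi_k(x_1)-\xi_k(x_2)|\leq C\bar\varepsilon_k$.

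The main obstacle is precisely sharpening the loose bounds $O(\varepsilon_k^{2+\epsilon_0})$ appearing in \eqref{small-q}, \eqref{p2-minor} to the sharper $O(\bar\varepsilon_k^{2+\epsilon_0})$ and $O(\hat\varepsilon_k^{2+\epsilon_0})$ native to each blow-up scale, which matters whenever $\alpha>1$ makes $\bar\varepsilon_k\ll\varepsilon_k^2$; this is achieved by carefully matching Theorem~\ref{reg-int} and Theorem~\ref{integral-neg} against the kernel expansions of $\xi_k$ at the correct scale. The apparent circularity (the rate on $b_0^k$ seems to need the oscillation bound we are trying to prove) is broken because these $q$- and $p_2$-contributions to $\int_M\rho_kHe^{w_k}\xi_k$ are controlled without any quantitative information on $b_0^k$ beyond the qualitative $b_0^k\to 0$ inherited from the previous subsection.
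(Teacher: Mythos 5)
Your overall skeleton is the same as the paper's: bound the oscillation by $C(b_0^k\varepsilon_k^2+\bar\varepsilon_k)$ via the Green representation, and then use the orthogonality $\int_M\rho_k He^{{\rm w}_k}\xi_k=0$ together with the $p_1$-leading term (which requires $\Delta\log\tilde h_k(0)\neq 0$, i.e. $L(\mathbf{p})\neq 0$ — you should invoke this explicitly) to force $|b_0^k|\varepsilon_k^2=O(\bar\varepsilon_k)$; this is exactly Lemma \ref{b0-va}. The gap is in your claim that the needed input bounds can be obtained in one shot, "unconditionally". First, the sharp bounds $O(\bar\varepsilon_k^{2+\epsilon_0})$ and $O(\hat\varepsilon_k^{2+\epsilon_0})$ you assign to the $q$- and $p_2$-contributions are not available at this stage: the local remainders are only controlled by \eqref{pre-xi} and \eqref{sec-e}, i.e. at the level $O(\bar\varepsilon_k)$, and any improvement feeds through the harmonic corrections $\bar\psi^k_\xi$, $\hat\psi^k_\xi$, whose size is governed by the very oscillation of $\xi_k$ you are trying to estimate, known so far only at the level $O(\varepsilon_k^2+\bar\varepsilon_k)$ from Lemma \ref{osci-xi-better}; when $\alpha>1$ this is far weaker than $\bar\varepsilon_k^{2+\epsilon_0}$. (This over-sharpness alone would not be fatal, since $O(\bar\varepsilon_k)$ there suffices for $|b_0^k|\varepsilon_k^2=O(\bar\varepsilon_k)$, which is all the proposition needs.)

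The genuinely missing piece is at $p_1$, where the circularity you dismiss actually lives. The contribution of $\Omega(p_1,\tau)$ to the orthogonality identity is not $c\,b_0^k\varepsilon_k^2\Delta\log\tilde h_k(0)+O(b_0^k\varepsilon_k^{2+\epsilon_0})$ as a literal reading of \eqref{p1-leading} suggests: it contains error terms \emph{not} proportional to $b_0^k$, coming from $\psi^k_\xi$ and the remainder $\underline{w}_1^k$, and with only Lemma \ref{osci-xi-better} in hand these are of size $O(\varepsilon_k^3+\varepsilon_k\bar\varepsilon_k)$ (cf. \eqref{new-psi-osc}, \eqref{int-w1}). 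For $\alpha>2$ one has $\varepsilon_k^3\gg\bar\varepsilon_k$, so your extraction only yields $|b_0^k|\varepsilon_k^2=O(\varepsilon_k^3+\bar\varepsilon_k)$ and hence an oscillation bound $O(\varepsilon_k^3+\bar\varepsilon_k)$, which does not prove the statement; no fixed power of $\varepsilon_k$ beats $\bar\varepsilon_k$ once $\alpha$ is large. This is precisely why the paper runs a finite bootstrap — improved oscillation $\Rightarrow$ improved $\psi^k_\xi$ and $\underline{w}_1^k$ $\Rightarrow$ improved oscillation, leading to \eqref{more-xi-o}, \eqref{best-osci-2}, \eqref{osc-p-1} — before carrying out the $b_0^k$-extraction of Lemma \ref{b0-va} with the non-$b_0^k$ errors reduced to $O(\bar\varepsilon_k)$. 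Your proposal needs either this iteration or a genuinely new argument replacing it; as written, the step from Lemma \ref{osci-xi-better} to $|b_0^k|\varepsilon_k^2=o(\bar\varepsilon_k)$ does not close.
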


\begin{rem}
	Since the conclusion holds automatically by Lemma \ref{osci-xi-better}, it suffices to study the case for $\alpha>1$. 
\end{rem}

\begin{proof} As we can see that, Proposition \eqref{small-osc-psi} can be seen as an improvement of Lemma \ref{osci-xi-better}. The crucial point is to get a better estimate for $\psi_\xi^k$ around $p_1$. By \eqref{new-psi-osc} and the fact that $\psi_\xi^k(0)=0$ we have 
	$$\psi_{\xi}^k(\varepsilon_ky)=\sum_ja_j(\varepsilon_k^3+\varepsilon_k\bar\varepsilon_k)y_j+O(\varepsilon_k^4+\varepsilon_k^2\bar\varepsilon_k)|y|^2,\quad |y|<\tau/\varepsilon_k,$$
	for suitable $a_j,j=1,2$.
	In local coordinates around $p_1$, $0=x(p_1)$, the first term of the approximation of $\xi_k$ around $p_1$ is still $w_{0,\xi}^k$ defined as in (\ref{w0kxik}) that satisfies (\ref{e-w0kxi}).
	Using the expansion of $v_k$ around $p_1$ (\ref{p-rough-2}), we write
	\begin{align*}
		&|y|^{2\alpha} \tilde h_k(0)\mbox{exp} \bigg
		(v_k+\log\frac{\tilde h_k(\varepsilon_ky)}{\tilde h_k(0)}\bigg )\\
		&=|y|^{2\alpha}\tilde h_k(0)\mbox{exp}\{U_k+c_{0,k}+\tilde c_{1,k}+\tilde c_{2,k} \\
		&\quad+\frac 14\Delta (\log \tilde h_k)(0)|y|^2\varepsilon_k^2+O(\varepsilon_k^{2+\epsilon_0})(1+|y|)^{2+\epsilon_0})\} \\
		&=|y|^{2\alpha} \tilde h_k(0)e^{U_k}(1+c_{0,k}+\tilde c_{1,k}+\tilde c_{2,k}
		+\frac 14\Delta (\log \tilde h_k)(0)|y|^2\varepsilon_k^2 \\
		&\quad+\frac 12(\tilde c_{1,k})^2)+O(\varepsilon_k^{2+\epsilon_0})(1+|y|)^{-2-\epsilon_0-2\alpha},
	\end{align*}
	for some small $\epsilon_0>0$ depending by $\alpha>0$. Then we can write the equation of $w_{0,\xi}^k$ as follows,
	\begin{equation}
		\label{e-hat-xi-new}
		\begin{aligned}
			&\Delta w_{0,\xi}^k+\tilde h_k(0)|y|^{2\alpha}\exp\left (v_k+\log \frac{\tilde h_k(\varepsilon_ky)}{\tilde h_k(0)}\right )w_{0,\xi}^k\\
			&=|y|^{2\alpha} \tilde h_k(0)e^{U_k}w_{0,\xi}^k
			\left(c_{0,k}+\tilde c_{1,k}+\tilde c_{2,k}+\frac{\Delta (\log  \tilde h_k)(0)|y|^2\varepsilon_k^2}{4}+\frac{\tilde c_{1,k}^2}{2}\right) \\
			&\quad+O(\varepsilon_k^{2+\epsilon_0})b_0^k(1+|y|)^{-2-\epsilon_0-2\alpha}.
		\end{aligned}
	\end{equation}
	The next term in the expansion is $w_{1,\xi}^k$ defined as in (\ref{xi-2nd}). Now we need to include $b_0^k$ in the estimate of $w_{1,\xi}^k$,
	$$|w_{1,\xi}^k(y)|\le Cb_0^k \varepsilon_k (1+|y|)^{-1}, $$
	where we used $w_{0,\xi}^k(0)=b_0^k$.
	At this point we write the equation for $w_{1,\xi}^k$ in the following form:\begin{align*}
		&\Delta w_{1,\xi}^k+\tilde h_k(0)|y|^{2\alpha}\exp \bigg (v_k+\log \frac{\tilde h_k(\varepsilon_ky)}{\tilde h_k(0)}\bigg ) w_{1,\xi}^k \\
		&=- \tilde h_k(0)|y|^{2\alpha}e^{U_k}\tilde c_{1,k}w_{0,\xi}^k+|y|^{2\alpha}\tilde h_k(0)e^{U_k}\tilde c_{1,k}w_{1,\xi}^k
		+\frac{O(b_0^k\varepsilon_k^{3})}{(1+|y|)^{3+2\alpha}}.\nonumber
	\end{align*}
	Writing the last two term as $\frac{O(b_0^k\varepsilon_k^2)}{(1+|y|)^{3+2\alpha}}$, then the above equation can be rewritten as
	\begin{equation}
		\label{crude-w1}
		\begin{aligned}
			&\Delta w_{1,\xi}^k+\tilde h_k(0)|y|^{2\alpha}\exp\left({v_k+\log \frac{\tilde h_k(\varepsilon_ky)}{\tilde h_k(0)}}\right)w_{1,\xi}^k \\
			&=- \tilde h_k(0)|y|^{2\alpha}e^{U_k}\tilde c_{1,k}w_{0,\xi}^k+\frac{O(b_0^k\varepsilon_k^2)}{(1+|y|)^{3+2\alpha}}.
		\end{aligned}
	\end{equation}
	Let
	$$\underline{w}_1^k=\tilde \xi_k(y)-w_{0,\xi}^k-w_{1,\xi}^k-\psi_{\xi}^k(\varepsilon_ky),$$
	then from (\ref{e-hat-xi-new}) and (\ref{crude-w1}) we see that the equation for $\underline{w}_1^k$ takes the form,
	\begin{equation}
		\label{f-W1k}
		\begin{aligned}
			&\left(\Delta+|y|^{2\alpha}\tilde h_k(0)\exp\left(v_k+\log \frac{\tilde h_k(\varepsilon_ky)}{\tilde h_k(0)}\right)\right)\underline{w}_1^k \\
			&=\frac{O(\bar \varepsilon_k)}{(1+|y|)^{4+2\alpha}}+\frac{O(b_0^k\varepsilon_k^2)}{(1+|y|)^{3+2\alpha}}+\frac{O(\varepsilon_k^3+\varepsilon_k\bar\varepsilon_k)}{(1+|y|)^{3+2\alpha}}.
		\end{aligned}
	\end{equation}
	The three terms on the right hand side of \eqref{f-W1k} come from the expansion of $c_k$, $w_{1,\xi}^k$ and $\psi_{\xi}^k$ respectively. On the other hand, $w_{0,\xi}^k$ is constant on $\partial\Omega_k$ and $w_{1,\xi}^k$ has an oscillation of order $O(b_0^k\varepsilon_k^2)$. To eliminate this oscillation we use another harmonic function of order
	$O(b_0^k\varepsilon_k^3|y|)$. For simplicity and with an abuse of notations we include this function in $\psi_{\xi}^k$.
	As a consequence standard potential estimates show that,
	\begin{equation}\label{int-w1}
		|\underline{w}_1^k(y)|\le C(\delta)(b_0^k\varepsilon_k^2+\varepsilon_k^3+\bar \varepsilon_k)(1+|y|)^{\delta}.
	\end{equation}
	In order to further improve the estimate about $\psi_{\xi}^k$ we point out that, in view of (\ref{new-osci}), then around $p_2$ and $q$ we have almost the same improved estimates.
	First of all, around $p_2$, we recall that $\hat w_k$ is defined in (\ref{hat-wk}) and satisfies (\ref{sec-e}). Then we have,
	$$
	\int_{B(p_2,\tau)}\rho_kHe^{{\rm w}_k}\xi_k=O(\bar \varepsilon_k).
	$$
	Moreover, around $q^k$, we recall that $\bar w_k$ is defined in (\ref{bar-w2k}) and satisfies (\ref{pre-xi}). By using (\ref{pre-xi}) and the expansion of $\bar w_k$ we have
	\begin{equation*}
		\int_{B(q^k,\tau)}\rho_k He^{{\rm w}_k}\xi_k=O(\bar \varepsilon_k).
	\end{equation*}
	At this point we move back to the evaluation of the oscillation of $\xi_k$ away from blow-up points. From the Green's representation formula of $\xi_k$, as in the proof of (\ref{osi-vars}), by using (\ref{new-psi-osc}) for $\psi_{\xi}^k$, (\ref{int-w1}) for $\underline{w}_1^k$, (\ref{sec-e}) for $\hat w_k$ and (\ref{pre-xi}) for $\bar w_k$ we can further improve the estimate of $\xi_k$ as follows,
	\begin{equation}\label{more-xi-o}
		|\xi_k(x)-\xi_k(y)|\leq C(b_0^k\varepsilon_k^2+\varepsilon_k^4+\bar \varepsilon_k),
	\end{equation}
	for $x,y\in M\setminus\{\Omega(p_1,\tau)\cup \Omega(p_2,\tau)\cup \Omega(q,\tau)\}$.
	Then (\ref{more-xi-o}) further improves the estimate of $\underline{w}_1^k$, so that, by a bootstrap argument which does not involve the leading term proportional to $b_0^k\varepsilon_k^2$ (which in fact comes just from the integration over $\Omega(p_1,\tau)$), after a finite number of iterations, we deduce that,
	\begin{equation*}
		|\underline{w}_1^k(y)|\le C(\delta)(b_0^k\varepsilon_k^2+\bar \varepsilon_k)(1+|y|)^{\delta},
	\end{equation*}
	and the estimate about the oscillation of $\xi_k$ takes the form,
	\begin{equation}\label{best-osci-2}
		|\xi_k(x_1)-\xi_k(x_2)|\le C(b_0^k\varepsilon_k^2+\bar \varepsilon_k),
	\end{equation}
	for all $x_1,x_2\in M\setminus \{ \Omega(p_1,\tau),\cup  \Omega(p_2,\tau)\cup
	\Omega(q,\tau)\}$.
	
	As an immediately consequence of (\ref{best-osci-2}) for $\psi_{\xi}^k$ we have
	\begin{equation}\label{osc-p-1}
		|\psi_{\xi}^k(\varepsilon_ky)|\leq C(b_0^k\varepsilon_k^3+\varepsilon_k\bar \varepsilon_k)|y|.
	\end{equation}
	In the following lemma, we shall prove that
	$$|b_0^k|\le C e^{-\frac{\lambda^k}2+\frac{\lambda^k}{1+\alpha}}\quad \mbox{if}\quad \alpha>1.$$
	As a consequence, we deduce that Proposition \ref{small-osc-psi} holds. \end{proof}

\begin{lem}\label{b0-va}
	There exists $C>0$ independent of $k$ such that
	\begin{equation}\label{small-d}
		|b_0^k|\le C e^{-\frac{\lambda^k}2+\frac{\lambda^k}{1+\alpha}}\quad \mbox{if}\quad \alpha>1.
	\end{equation}
\end{lem}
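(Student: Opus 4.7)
The plan is to use the global orthogonality
$$0 = \int_M \rho_k H e^{{\rm w}_k}\xi_k\,d\mu,$$
which is immediate from the definitions of $\xi_k$ and $\langle\phi^k\rangle_k$, and to extract the desired bound on $b_0^k$ by isolating the leading contribution from $\Omega(p_1,\tau)$ and comparing it with the much smaller contributions from the other two bubbling neighborhoods and their complement. Concretely, I would first split the integral as
$$0 = \int_{\Omega(p_1,\tau)} + \int_{\Omega(p_2,\tau)} + \int_{\Omega(q,\tau)} + \int_{M\setminus(\Omega(p_1,\tau)\cup\Omega(p_2,\tau)\cup\Omega(q,\tau))}\rho_k H e^{{\rm w}_k}\xi_k\,d\mu,$$
and then invoke the estimates already proved in the preceding step two analysis: the integrals over $\Omega(p_2,\tau)$ and $\Omega(q^k,\tau)$ are both of order $O(\bar\varepsilon_k)$, by~\eqref{sec-e}, \eqref{pre-xi} applied to $\hat w_k$, $\bar w_k$ and the separability cancellations that kill the formal leading orders in $\hat b_0^k\hat\varepsilon_k$ and $\bar b_0^k\bar\varepsilon_k$; the exterior integral is $O(e^{-\lambda^k})$ by the usual Green's representation decay.

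The leading contribution is that from $\Omega(p_1,\tau)$, which I would take straight from~\eqref{p1-leading}:
$$\int_{\Omega(p_1,\tau)}\rho_k H e^{{\rm w}_k}\xi_k\,d\mu = -\frac{b_0^k}{1+\alpha}\,d_{1,k}\,\Delta(\log\tilde h_k)(0)\,\varepsilon_k^2 + O(b_0^k\varepsilon_k^{2+\epsilon_0}).$$
Under the standing hypothesis $\alpha>1$ with $\alpha\notin\mathbb{N}$, the expression for $d_{1,k}$ in Theorem~\ref{int-pos} has dominant part equal to the nonzero constant $-\frac{2\pi^2}{(1+\alpha)\sin(\pi/(1+\alpha))}\bigl(8(1+\alpha)^2/V_k(0)\bigr)^{1/(1+\alpha)}$, while the remaining term carries the vanishing factor $\tau^{-2\alpha}\varepsilon_k^{2\alpha}$; hence $|d_{1,k}|\ge c_1>0$ uniformly in $k$. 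Moreover $L(\mathbf{p})\neq 0$ translates, for $k$ large, to $|\Delta\log\tilde h_k(0)|\ge c_2>0$.

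Putting the four pieces together gives
$$\frac{c_1c_2}{1+\alpha}\,|b_0^k|\,\varepsilon_k^2 \le C\bar\varepsilon_k + C|b_0^k|\varepsilon_k^{2+\epsilon_0} + O(e^{-\lambda^k}),$$
and since $C\varepsilon_k^{\epsilon_0}\to 0$ and $e^{-\lambda^k}=O(\bar\varepsilon_k^2)\ll\bar\varepsilon_k$, absorbing the error terms into the left-hand side for $k$ large yields $|b_0^k|\,\varepsilon_k^2\le C\bar\varepsilon_k$, that is,
$$|b_0^k|\le C\bar\varepsilon_k\,\varepsilon_k^{-2} = C\,e^{-\bar\lambda^k/2+\lambda^k/(1+\alpha)} = C\,e^{-\lambda^k/2+\lambda^k/(1+\alpha)},$$
where the last identity uses $\bar\lambda^k=\lambda^k+O(1)$ from~\eqref{same-ord}. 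This is exactly~\eqref{small-d}, and for $\alpha>1$ the exponent $\lambda^k\bigl(1/(1+\alpha)-1/2\bigr)$ is strictly negative, so the bound is a genuine decay estimate. The only delicate point is to be sure that no hidden term of size comparable to $|b_0^k|\varepsilon_k^2$ survives in the $p_2$ and $q$ neighborhood integrals that could cancel the $p_1$ leading term; but this is already ruled out by the separability cancellations established in step two, so the plan closes.
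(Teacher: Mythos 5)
Your overall strategy (the identity $\int_M\rho_kHe^{{\rm w}_k}\xi_k=0$, splitting into the three bubbling regions plus the complement, and comparing the $p_1$ contribution against $O(\bar\varepsilon_k)$ contributions elsewhere) is exactly the paper's strategy, and your treatment of the $p_2$, $q$ and exterior pieces matches what is done there. The gap is in the $p_1$ piece: you take \eqref{p1-leading} ``straight'', including its error $O(b_0^k\varepsilon_k^{2+\epsilon_0})$, but that expansion was derived in the contradiction argument for $b_0\neq 0$, where the standing assumption is $b_0^k\to b_0\neq 0$. The error terms entering its derivation --- e.g.\ the $O(\varepsilon_k^{2+\epsilon_0})$ in \eqref{key-c-2}, and the contributions of $\psi_\xi^k$ via \eqref{im-ph-xi} and of $z_k$ via \eqref{est-z} --- were established \emph{without} any factor of $b_0^k$; writing them as $O(b_0^k\varepsilon_k^{2+\epsilon_0})$ is harmless only when $b_0^k$ is bounded away from zero, which is precisely what you cannot assume when the goal is to show $b_0^k$ is small. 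The paper flags this explicitly (``in that case the underlying assumption was $b_0^k\to b_0\neq 0$'') and the bulk of its proof of Lemma \ref{b0-va} consists of redoing the expansion near $p_1$ keeping the $b_0^k$-dependence in every term: the re-estimated $w_{2,\xi}^k$, the radial corrector $z_{0,k}$, and the remainder $\underline{z}_k$ with bound $C(b_0^k\varepsilon_k^{2+\epsilon_0}+\bar\varepsilon_k)(1+|y|)^{\delta}$, all of which rests on the bootstrapped oscillation estimates \eqref{best-osci-2} and \eqref{osc-p-1} showing that the non-$b_0^k$ errors are only $O(\bar\varepsilon_k)$, not $O(\varepsilon_k^{2+\epsilon_0})$.

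This matters quantitatively. If the honest error in the $p_1$ integral were $O(\varepsilon_k^{2+\epsilon_0})$ without a $b_0^k$ factor, your final inequality would read $c|b_0^k|\varepsilon_k^2\le C\varepsilon_k^{2+\epsilon_0}+C\bar\varepsilon_k$, i.e.\ $|b_0^k|\le C\varepsilon_k^{\epsilon_0}+C\bar\varepsilon_k\varepsilon_k^{-2}$. Since $\bar\varepsilon_k\varepsilon_k^{-2}=e^{-\lambda^k\frac{\alpha-1}{2(1+\alpha)}}$ while $\varepsilon_k^{\epsilon_0}=e^{-\frac{\epsilon_0\lambda^k}{2(1+\alpha)}}$ with $\epsilon_0$ a small constant, the term $\varepsilon_k^{\epsilon_0}$ dominates whenever $\epsilon_0<\alpha-1$, so you would only obtain $b_0^k\to 0$ at a rate weaker than \eqref{small-d}. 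In other words, the inequality you write down is correct, but obtaining it requires the $b_0^k$-tracked expansion and the improved oscillation estimate of Proposition \ref{small-osc-psi}; citing \eqref{p1-leading} as is does not supply it, and this is exactly the content of the lemma rather than a step that can be imported.
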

\begin{proof}  Recall that $w_{2,\xi}^k$ was defined in \eqref{first:estimate} and the estimate of this term is given in (\ref{1st-w2k}), but now we include the dependence of $b_0^k$. So (\ref{1st-w2k}) takes the form,
	\begin{equation*}
		|w_{2,\xi}^k(y)|\le Cb_0^k\varepsilon_k^2(1+|y|)^{-2\alpha}.
	\end{equation*}
	In order to take care of the radial part of order $O(\varepsilon_k^2)b_0^k$ in the expansion of $\xi_k$, we set $z_{0,k}$ to be the radial function satisfying
	\begin{align*}
		\begin{cases}
			\frac{d^2}{dr^2}z_{0,k}+\frac 1r\frac{d}{dr}z_{0,k}+r^{2\alpha} \tilde h_k(0)e^{U_k}z_{0,k}=F_{0,k}, \quad 0<r<\tau/\varepsilon_k,\\
			\\
			z_{0,k}(0)=\frac{d}{dr}z_{0,k}(0)=0,
		\end{cases}
	\end{align*} where
	\begin{align*}
		F_{0,k}(r)
		=~&- \tilde h_k(0)|y|^{2\alpha}e^{U_k}\bigg (w_{0,\xi}^k\big (c_{0,k}+\frac 14\Delta (\log  \tilde h_k)(0)|y|^2\varepsilon_k^2\\
		&+\frac  14\varepsilon_k^2|\nabla \log  \tilde h_k(0)|^2(g_k+r)^2\big )
		+(w_{\xi,1}^k\tilde c_{1,k})_r\bigg ),
	\end{align*}
	and  
	$$\frac 14\varepsilon_k^2|\nabla  \log \tilde h_k(0)|^2(g_k+r)^2=\left(\frac 12(\tilde c_{1,k})^2\right)_r.$$ Observe carefully that we already considered this function,
	see \eqref{eq-z0}, but in that case the underlying assumption was $b_0^k\to b_0\neq 0$. Concerning $z_{0,k}$ we have,
	$$|z_{0,k}(r)|\le C(\delta)b_0^k \varepsilon_k^2(1+r)^{\delta},$$
	and then, by defining $\underline{z}_k$ as follows,
	$$\underline{z}_k(y)=\xi_k(\varepsilon_ky)-\psi_{\xi}^k(\varepsilon_k y)-w_{0,\xi}^k(y)-w_{1,\xi}^k(y)-w_{2,\xi}^k(y)-z_{0,k}(y),$$
	we have
	$$\Delta \underline{z}_k+|y|^{2\alpha}h_k(\varepsilon_ky)\varepsilon_k^{2+2\alpha}\tilde c_k(\varepsilon_ky)\underline{z}_k=E_{k,1}$$
	with $z_k(0)=0$ and
	$$|E_{k,1}|\le Cb_0^k(\varepsilon_k^{2+\epsilon_0}+\bar \varepsilon_k)(1+|y|)^{-2-\frac{\epsilon_0}2},$$
	for some $\epsilon_0>0$. By standard potential estimates we have,
	\begin{equation*}
		|\underline{z}_k(y)|\le C(\delta)(b_0^k\varepsilon_k^{2+\epsilon_0}+\bar \varepsilon_k)(1+|y|)^{\delta}.
	\end{equation*}
	At this point, by using the estimate about $\underline{z}_k$, we deduce that,
	\begin{align*}
		\int_{\Omega(p_1,\tau)}\rho_kHc_k\xi_k
		&=\int_{\Omega_k}\tilde h_k(\varepsilon_ky)|y|^{2\alpha}e^{v_k}(\xi_k(\varepsilon_ky)-\psi_{\xi}^k(\varepsilon_ky))dy\\
		&\quad+\int_{\Omega_k}\tilde h_k(\varepsilon_ky)|y|^{2\alpha}e^{U_k}\psi_{\xi}^k(\varepsilon_ky)\\
		&=cb_0^k(\Delta \log h_k(0)+O(\varepsilon_k^{\epsilon_0}))\varepsilon_k^2+O(\bar \varepsilon_k),
	\end{align*}
	for some suitable constant $c$. Here the derivation of the above is similar to the derivation of (\ref{p1-leading}) except that we keep track of $b_0^k$ in this estimate.
	Therefore the integral around $p_1$ reads,
	$$\int_{\Omega(p_1,\tau)}\rho_kH c_k\xi_k=c b_0^k\varepsilon_k^2(\Delta (\log \tilde h_k)(0)+
	O(\varepsilon_k^{\epsilon_0}))+O(\bar \varepsilon_k),\quad c\neq 0.$$
	From $\int_M\rho_kHc_k\xi_k=0$ we readily obtain (\ref{small-d}) by splitting the domain of integration in the three regions near the blow-up points
	plus the corresponding complement in $M$. Lemma \ref{b0-va} is established. \end{proof}
\smallskip

With Proposition \ref{small-osc-psi}, we are able to get an estimate on $\bar b_0^k$ and $\hat b_0^k$ as well.
\begin{lem}\label{same-b} There exists $C>0$ independent by $k$, such that
	$$|\bar b_0^k|+|\hat b_0^k|\le C\bar \varepsilon_k\varepsilon_k^{-2}=Ce^{-\frac{\lambda^k}2+\frac{\lambda^k}{1+\alpha}}.$$
\end{lem}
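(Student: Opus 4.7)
The plan is to deduce the stated bounds on $\bar b_0^k$ and $\hat b_0^k$ from the bound on $b_0^k$ in Lemma \ref{b0-va} via a matching argument across the three bubbles, exploiting the improved global oscillation estimate in Proposition \ref{small-osc-psi}. First observe that by the normalization $\|\xi_k\|_{L^\infty(M)}\le 1$, each of $b_0^k$, $\bar b_0^k$, $\hat b_0^k$ is automatically $O(1)$, so the desired inequality has nontrivial content only when $\bar\varepsilon_k\varepsilon_k^{-2}=e^{\lambda^k/(1+\alpha)-\lambda^k/2}$ is small, that is, when $\alpha>1$. We may therefore assume $\alpha>1$ throughout, in which case $\bar\varepsilon_k\ll\bar\varepsilon_k\varepsilon_k^{-2}$.

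Next I would compute the spherical average of $\xi_k$ on each of the three boundaries $\partial\Omega(p_1,\tau)$, $\partial\Omega(p_2,\tau)$, $\partial\Omega(q,\tau)$, starting from the inner expansions
\[
\xi_k(\varepsilon_ky)=w_{0,\xi}^k(y)+w_{1,\xi}^k(y)+\psi_\xi^k(\varepsilon_ky)+\underline z_k(y),
\]
\[
\xi_k(\hat\varepsilon_ky)=\hat w_{0,\xi}^k(y)+\hat\psi_\xi^k(\hat\varepsilon_ky)+\hat w_k(y),
\]
\[
\xi_k(\bar q^k+\bar\varepsilon_ky)=\bar w_{0,\xi}^k(y)+\bar\psi_\xi^k(\bar\varepsilon_ky)+\bar w_k(y).
\]
The dilation kernels converge at the outer bubble radii to the opposite of their center values: for example $w_{0,\xi}^k\big|_{|y|=\tau/\varepsilon_k}=-b_0^k+O(|b_0^k|\bar\varepsilon_k^2)$, with analogous identities for $\hat w_{0,\xi}^k$ and $\bar w_{0,\xi}^k$. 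The two translation modes inside $\bar w_{0,\xi}^k$ average to zero on any sphere centered at the origin by parity, and the separable non-radial correction $w_{1,\xi}^k$ has zero spherical average for the same reason. The harmonic correctors $\psi_\xi^k,\hat\psi_\xi^k,\bar\psi_\xi^k$ have vanishing spherical average on their respective outer spheres by construction (they encode the boundary oscillation modulo a constant). Finally, the pointwise remainder bounds on $\underline z_k$, $\hat w_k$, $\bar w_k$ derived in Step two take the form $\le C(\delta)\bar\varepsilon_k(1+|y|)^\delta$ with $\delta>0$ arbitrarily small, so on the outer boundaries they contribute at most $C\bar\varepsilon_k\varepsilon_k^{-\delta}=o(\bar\varepsilon_k\varepsilon_k^{-2})$ since $\alpha>1$. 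Putting these four observations together yields
\[
\langle\xi_k\rangle_{\partial\Omega(p_1,\tau)}=-b_0^k+o(\bar\varepsilon_k\varepsilon_k^{-2}),\quad \langle\xi_k\rangle_{\partial\Omega(p_2,\tau)}=-\hat b_0^k+o(\bar\varepsilon_k\varepsilon_k^{-2}),\quad \langle\xi_k\rangle_{\partial\Omega(q,\tau)}=-\bar b_0^k+o(\bar\varepsilon_k\varepsilon_k^{-2}).
\]

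The conclusion follows by matching across the complement of the bubbles: since all three boundaries lie outside the bubbling regions, Proposition \ref{small-osc-psi} forces any two of the above averages to differ by at most $O(\bar\varepsilon_k)=o(\bar\varepsilon_k\varepsilon_k^{-2})$. Therefore $\bar b_0^k-b_0^k=o(\bar\varepsilon_k\varepsilon_k^{-2})$ and $\hat b_0^k-b_0^k=o(\bar\varepsilon_k\varepsilon_k^{-2})$, and combining with $|b_0^k|\le C\bar\varepsilon_k\varepsilon_k^{-2}$ from Lemma \ref{b0-va} we conclude $|\bar b_0^k|+|\hat b_0^k|\le C\bar\varepsilon_k\varepsilon_k^{-2}$, as desired.

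The main technical obstacle will be the careful bookkeeping of the sub-leading contributions to the three boundary averages: verifying the mean value property for the harmonic correctors, the parity/separability of the non-radial modes, and above all checking that the mild polynomial growth factor $(1+|y|)^\delta$ in the remainder bounds produces only a harmless factor $\varepsilon_k^{-\delta}$ at the outer radii, which is still $o(\varepsilon_k^{-2})$ for $\delta\in(0,2)$. These are routine once one uses the pointwise estimates already derived in Step two of the previous subsection.
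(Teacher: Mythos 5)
Your argument is correct and follows essentially the same route as the paper: read off the (nearly constant) values of $\xi_k$ on the three bubble boundaries from the inner expansions, match them across the exterior region via the oscillation bound of Proposition \ref{small-osc-psi}, and then invoke the bound on $b_0^k$ from Lemma \ref{b0-va}. The only harmless imprecision is that the remainder near $p_1$ is controlled by $C(b_0^k\varepsilon_k^2+\bar\varepsilon_k)(1+|y|)^{\delta}$ rather than $C\bar\varepsilon_k(1+|y|)^{\delta}$ (and near $q$ the relevant scale is $\tau/\bar\varepsilon_k$, forcing $\delta$ small depending on $\alpha$), but since you use Lemma \ref{b0-va} anyway these contributions are $O(\bar\varepsilon_k\varepsilon_k^{-2})$ and the conclusion is unaffected.
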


\begin{proof} In the proof of Proposition \ref{small-osc-psi} we already established
	\eqref{small-d} for $b_0^k$. Also the oscillation of $\xi_k$ far away from blow-up points is of order $O(\bar \varepsilon_k)$ (Proposition \ref{small-osc-psi}). In the expansion of $\xi_k$ around $q^k$ we have
	$$\xi_k(x)=-\bar b_0^k+O(\bar \varepsilon_k),\quad x\in \partial B(q^k,\tau).$$
	On $\partial B(p_1^k,\tau)$ we have
	$$\xi_k=-b_0^k+O(b_0^k\varepsilon_k^2)=-b_0^k+O(\bar \varepsilon_k),$$
	and similarly on $\partial B(p_2,\tau)$,
	$$\xi_k(x)=\hat b_0^k+O(\bar \varepsilon_k).$$
	Thus $|b_0^k-\hat b_0^k|+|b_0^k-\bar b_0^k|\le C\bar \varepsilon_k.$
	Lemma \ref{same-b} is established. \end{proof}

\medskip

\noindent{\bf Step three: an improved expansion for $\bar \xi_k$.}

\medskip

As a consequence of Proposition \ref{small-osc-psi}, the fact that  $\nabla \log \bar h_{k,0}(\bar q^k)=O(\bar \lambda^ke^{-\bar\lambda^k})$ and the expansion of $\bar v_k$ in (\ref{v2-exp-new2}), we have
\begin{equation*}
	\Delta \bar \xi_k(y)+\bar h_{k,0}(\bar q^k)e^{U_k}\bar \xi_k=O(\bar \varepsilon_k^{3/2})(1+|y|)^{-5/2}.
\end{equation*}
Then by standard potential estimates, we get
\begin{equation}\label{q-rough}
	|\bar \xi_k(y)-\bar w_{0,\xi}^k|\leq C(\delta)\bar \varepsilon_k (1+|y|)^{\delta}.
\end{equation}
As before, we set $\bar\psi_\xi^k$ is set to be a harmonic function which encodes the bounded oscillation of $\bar\xi_k(y)-\bar w_{0,\xi}^k$ in $B_{\tau/\bar\e_k}$. Using Proposition \ref{small-osc-psi}, we have
$$|\bar\psi_\xi^k(\bar\e_k y)|\leq C\bar\e_k^2|y|.$$
Let
$$z_{0,\xi}^k=\bar \xi_k-\bar w_{0,\xi}^k,$$
we have
\begin{equation}\label{e-z0xi}
	(\Delta +\bar h_{k,0}(\bar q^k)e^{U_k})(z_{0,\xi}^k- \bar\psi^k_{\xi}(\bar\varepsilon_ky))=O(\bar \varepsilon_k^{1+\epsilon_0})(1+|y|)^{-\frac52}\quad\mbox{in}\quad\Omega_k
\end{equation}
for some $\epsilon_0>0$. Note that $z_{0,\xi}^k$ has the same value on the boundary, and
$$(z_{0,\xi}^k(y)-\bar\psi_\xi^k(\bar\e_ky))\mid_{y=0}=0\quad\mbox{and}\quad
\nabla(z_{0,\xi}^k(y)-\bar\psi_\xi^k(\bar\e_ky))\mid_{y=0}=O(\bar\e_k^2).$$
By the standard argument (see \cite[Proposition 3.1]{byz-1}) for equation \eqref{e-z0xi} we deduce that 
\begin{equation}\label{bar-xi-e}
	|z_{0,\xi}^k-\bar \psi_{\xi}^k(\bar \varepsilon_ky)|\leq C(\delta)\bar \varepsilon_k^{1+\epsilon_0}(1+|y|)^{\delta}.
	\quad |y|<\tau/\bar \varepsilon_k,
\end{equation}
for some $\epsilon_0>0$. Then we apply standard elliptic estimates based on (\ref{e-z0xi}) and (\ref{bar-xi-e}) to obtain,
\begin{equation}\label{small-z0xi}
	|\nabla (z_{0,\xi}^k-\bar \psi_{\xi}^k(\bar\varepsilon_k\cdot))|\le C\bar \varepsilon_k^{2+\epsilon_0}\quad \mbox{on}\quad \partial B_{\frac{\tau}{2\bar\varepsilon_k}}
\end{equation}
for some $\varepsilon_0>0$. From (\ref{small-z0xi}) we see that for $r\sim \bar\varepsilon_k^{-1}$, the leading term in $\nabla \xi_k$ is $\bar w_{0,\xi}^k+c_{0,\xi}^k$:
\begin{align}\label{xi-k-large}
	\begin{cases}
		\partial_1\bar \xi_k=\frac{b_1^k(1+a_kr^2)-2a_ky_1(b_1^ky_1+b_2^ky_2)}{(1+a_kr^2)^2}+O(\bar \varepsilon_k^{2+\delta_0}),\\
		\\
		\partial_2\bar \xi_k=\frac{b_2^k(1+a_kr^2)-2a_ky_2(b_2^ky_2+b_1^ky_1)}{(1+a_kr^2)^2}+O(\bar \varepsilon_k^{2+\delta_0}),
	\end{cases}
\end{align}
where $a_k=\frac{\bar h_{k,0}(\bar q^k)}8$ and $\delta_0$ is a small positive constant.

\medskip

\noindent{\bf Step four: Evaluation of a simplified Pohozaev identity}

\medskip

In a local coordinates system around $q$, $0=x(q)$, we have
$$\Delta u_{k}+\bar h_ke^{u_{k}}=0,$$
where
$$\bar h_k=\rho_kh(x)e^{-4\pi\alpha G(x,p_1)-4\pi\beta G(x,p_2)+\chi+f_k}$$
and $f_k$ is any solution satisfying that
$$\Delta f_k=\rho_ke^{\chi}\quad {\rm in}\quad B_\tau,\qquad f_k(0)=0.$$
Here we can choose,
\begin{equation}
	\label{exp-f-3}
	\begin{aligned}
		f_k(x)=~&\frac{\rho_k}{3+\alpha+\beta}\left(R(x,\bar q^k)-R(\bar q^k,\bar q^k)\right)\\
		&+\frac{\rho_k}{3+\alpha+\beta}\left( (1+\alpha)(G(x,p_1)-G(\bar q^k,p_1))\right)\\
		&+\frac{\rho_k}{3+\alpha+\beta}\left((1+\beta)(G(x,p_2)-G(\bar q^k,p_2))\right).
	\end{aligned}
\end{equation}
\begin{rem} {\it
		In the general case the corresponding definitions around a regular blow-up point $q_j$ would be,
		\begin{align*}
			f_{k,j}(x)=~&\frac{\rho_k}{m+\sum\limits_{j=1}^{m_1}\alpha_j}
			\sum\limits_{\ell\neq j, \ell=m_1+1}^m (G(x,\bar q_{\ell}^k)-G(\bar q_{j}^k,\bar q_{\ell}^k))\\
			&+\frac{\rho_k}{m+\sum\limits_{j=1}^{m_1}\alpha_j}\sum\limits_{i=1}^{m_1}(1+\alpha_i)(G(x,p_i)-G(\bar q_{j}^k,p_i))\\
			&+\frac{\rho_k}{m+\sum\limits_{j=1}^{m_1}\alpha_j}\left(R(x,\bar q_{j}^k)-R(\bar q_{j}^k,\bar q_{j}^k)\right)
	\end{align*}}
\end{rem}
In the following we shall apply an easier version of Pohozaev identity to verify that the coefficients $b_1$ and $b_2$ are zero. We first recall that, for the harmonic function $\psi_k$ which was used to cancel out the boundary oscillation of $u_{k}$, by the expression of $f_k$ as in \eqref{exp-f-3}, we have
$$|\psi_k(\bar \varepsilon_ky)|\le C\bar \varepsilon_k\varepsilon_k^2|y|.$$
Then for $j=1,2,$ we use the standard Pohozaev identity for
$$\Delta\bar\xi_k+\bar h_{k,0}e^{\bar v_k}\bar\xi_k=0\quad\mbox{in}\quad \Omega_{k,1}=B_{\frac{\tau}{2\bar\varepsilon_k}}.$$
Multiplying both sides by $\partial_j\bar v_k$ we get
\begin{equation}
	\label{step4.eq-1}
	\begin{aligned}
		&-\int_{\partial\Omega_{k,1}}\partial_j\bar v_k\partial_{\nu}\bar\xi_k+\int_{\Omega_{k,1}}\partial_j(\nabla\bar v_k)\nabla\xi_k\\
		&=\int_{\partial\Omega_{k,1}}\bar h_{k,0}(\bar q^k+\bar \varepsilon_ky)e^{\bar v_k}\nu_jdS-\bar \varepsilon_k\int_{\Omega_{k,1}}\partial_j\bar h_{k,0}(\bar q^k+\bar \varepsilon_ky)e^{\bar v_k}\\
		&\quad-\int_{\Omega_{k,1}}\bar h_{k,0}(\bar q^k+\bar \varepsilon_ky)e^{\bar v_k}\partial_j\bar\xi_k,
	\end{aligned}
\end{equation}
where $\nu_j$ is the $j$-th direction of outer normal vector on $\partial\Omega_{k,1}$. By the equation
$$\Delta\bar v_k+\bar h_{k,0}(\bar q^k+\bar \varepsilon_ky)e^{\bar v_k}=0,$$
we can rewrite the last term on the right hand side of \eqref{step4.eq-1} as $\int_{\Omega_{k,1}}\Delta\bar v_k\partial_j\bar\xi_k$. Together with the integration by parts we get that
\begin{equation}
	\label{impor-2}
	\begin{aligned}
		&\frac{1}{\bar \varepsilon_k}\int_{\partial \Omega_{k,1}}(-\partial_{\nu}\bar v_k\partial_j\bar \xi_k-\partial_{\nu}\bar \xi_k\partial_j\bar v_k+(\nabla \bar v_k\cdot \nabla \bar \xi_k)\nu_j)dS\\
		&=-\int_{\Omega_{k,1}}\partial_j\bar h_{k,0}(\bar q^k+\bar\varepsilon_ky)e^{\bar v_k}\bar \xi_k+\frac{1}{\bar \varepsilon_k}\int_{\partial\Omega_{k,1}}\bar h_{k,0}(\bar q^k+\bar \varepsilon_ky)e^{\bar v_k}\nu_jdS.
	\end{aligned}
\end{equation}
It is not difficult to see that the last term can be bounded by $\bar\varepsilon_k^{1+\delta_0}$. Concerning $\nabla \bar v_k$ we use
\begin{equation*}
	\nabla \bar v_k(y)=-(4+O(\lambda^ke^{-\lambda^k}))\left(\frac{y_1}{r^2},\frac{y_2}{r^2}\right),\quad |y|= \tau/(2\bar \varepsilon_k).
\end{equation*}
Then (\ref{impor-2}) is reduced to
\begin{equation}\label{pi-final}
	\frac{1}{\bar \varepsilon_k}\int_{\partial \Omega_{k,1}}\frac{4}{r}\partial_j\bar \xi_k=-\int_{\Omega_{k,1}}\partial_j\bar h_{k,0}(\bar q^k+\bar\varepsilon_ky)e^{\bar v_k}\bar \xi_k+O(\bar \varepsilon_k^{1+\epsilon_0}).
\end{equation}
To evaluate the left hand side of (\ref{pi-final})  we use
(\ref{xi-k-large}) to deduce that,
$$\frac{1}{\bar \varepsilon_k}\int_{\partial \Omega_{k,1}}\frac{4}{r}\partial_j\bar \xi_k=O(\bar \varepsilon_k^{1+\epsilon_0}).$$
To evaluate the right hand side of (\ref{pi-final}), we use,
$$\partial_j\bar h_{k,0}(\bar q^k+\bar \varepsilon_ky)=\partial_j\bar h_{k,0}(\bar q^k)+\sum_{s=1}^2\partial_{js}\bar h_{k,0}(\bar q^k)\bar \varepsilon_ky_s+O(\bar \varepsilon_k^2)|y|^2,$$
$$e^{\bar v_k}=\frac{1}{(1+a_k|y|^2)^2}(1+O((\log \bar \varepsilon_k)^2\bar \varepsilon_k^2))$$
and (\ref{bar-xi-e}) for $\bar \xi_k$. Then by a straightforward evaluation we have,
$$\bar \varepsilon_k\left(\begin{matrix}
	\partial_{11}\bar h_{k,0}(\bar q_2^k) & \partial_{12}\bar h_{k,0}(\bar q_2^k) \\
	\partial_{12}\bar  h_{k,0}(\bar q_2^k) & \partial_{22}\bar h_{k,0}(\bar q_2^k)
\end{matrix}
\right)\left(\begin{matrix}
	b_1^k\\
	b_2^k
\end{matrix}
\right)=O(\bar \varepsilon_k^{1+\epsilon_0}).$$
Since the underlying assumption is $(b_1^k,b_2^k)\to (b_1,b_2)\neq (0,0)$
and since the non-degeneracy condition is ${\rm det}(D^2\bar h_k(0))\neq 0$, then we readily obtain a contradiction. Up to now, we have proved Theorem \ref{main-theorem-2} under the assumption that $\alpha>1$ and three different type of blow-up points consisting of one positive singular source, one negative singular source and one regular point. When $m_1=m$, the situation is easier and  we omit the details to avoid repetitions. The more general case can be addressed with minor adjustments, primarily involving changes in notation, which completes the proof of Theorem \ref{main-theorem-2} as far as $\alpha>1$.	While for $\alpha\in(0,1)$, we observe that
$$\varepsilon_k^2=e^{-\lambda^k/{(1+\alpha)}}=o(e^{-\lambda^k/2}), $$
then the proof of $b_1=b_2=0$ around each regular blow-up point follows as in \cite{bart-5}. While the general case can be argued similarly. Hence we finish the whole proof. $\hfill\Box$

\begin{proof}[Proof of Theorem \ref{main-theorem-2}.]
	Once we have shown that all the coefficients $b_0$, $b_1$ and $b_2$ are zero, then we can follow the arguments in \cite{bart-5} to show that $\xi_k\equiv0$. Hence we finish the proof.	
\end{proof}

\begin{rem}
	
	The proof of Theorem \ref{mainly-case-2} follows by a similar approach. The key difference is that if $L({\bf p}) = 0$, we use $D({\bf p}) \neq 0 $ to show that the projection on the radial kernel vanishes. If all blow-up points are negative singular points, there is no projection on the translation kernels, allowing us to omit the assumption ${\rm det}(D^2f^*(p_{m_1+1}, \cdots, p_m)) \neq 0 $. The proof of the non-degeneracy result for the Dirichlet problem \eqref{r-equ-flat} (Theorems \ref{main-theorem-4} and \ref{main-theorem-1}) closely mirrors the process used for the case on Riemann surfaces, so we omit the details.
\end{rem}

\medskip

\section{Mean field theory of the vortex model with singular sources}\label{sec:meansink}
Let $\Omega\subset \mathbb{R}^2$ be a bounded domain of class $C^2$, we define the mean field theory (\cite{caglioti-2})
of the vortex model with singular sources. Let $\mathcal{P}(\Omega)$ be the space of vorticity densities,
\begin{align}
	\mathcal{P}(\Omega) =
	\left\{\omega \in L^1(\Omega)\mid \,\omega \geq 0\;  a.e., \;  \int_\Omega \omega {\rm d}x =1, \; \int_{\Omega}\omega \log(\omega)<+\infty\right\},
\end{align}
by definition the energy of a fixed $\omega\in\mathcal{P}(\Omega)$ is,
\begin{align*}
	\mathcal{E}(\omega)=
	\frac{1}{2}\int_\Omega \omega(x) G [\omega](x) {\rm d}x,\quad\mbox{where}\quad
	G[\omega](x) = \int_\Omega G(x,y) \omega(y) {\rm d}y.
\end{align*}
Let us denote by $\psi=G[\omega]$ the stream function, then $\psi\in W^{1,q}_0(\Omega)$, $q\in[1,2)$ is the unique solution (\cite{stam}) of
\begin{align}
	\begin{cases}
		-\Delta \psi = \omega & \mbox{in} \;\  \Omega, \\
		\psi=0 & \mbox{on} \;\  \partial \Omega.
	\end{cases}
\end{align}
Actually, since $\int_{\Omega}\omega \log(\omega)$ is bounded, it is well known (\cite{stam2})
that $\psi\in W^{1,2}_0(\Omega)\cap L^{\infty}(\Omega)$ and the energy of the density $\omega$ is readily seen to be the Dirichlet energy of~$\psi$:
\begin{align*}
	\frac{1}{2}\int_\Omega |\nabla \psi|^2 {\rm d}x
	=\frac{1}{2}\int_{\Omega} (-\Delta \psi) \psi {\rm d}x
	=\frac{1}{2}\int_\Omega \omega G[\omega] {\rm d}x
	=\mathcal{E}(\omega).
\end{align*}
Let us define
$$
H(x)=e^{-4\pi\beta G_{\Omega}(x,0)}\prod_{j=1}^N e^{-4\pi\alpha_j G_{\Omega}(x,q_j)}=|x|^{2\beta}h_0(x),
$$
where,
$$
h_0(x)=e^{-4\pi\beta R_{\Omega}(x,0)}\prod_{j=1}^N e^{-4\pi\alpha_j G_{\Omega}(x,q_j)},
$$
and we assume that,
\begin{equation}\label{sinks}
	x=0\in \Omega,\quad \beta\in (-1,0),\quad \alpha_j\in (0,+\infty),\; j\in \{1,\cdots,N\}.
\end{equation}
Closely related to the mean field theory we have the so called Mean Field Equation,
\begin{equation}\label{r-equ-flat-lm}
	\begin{cases}
		\Delta  {\rm w}+\rho \dfrac{He^{\rm w}}{\int_{\Omega} H e^{\rm w}}=0  &{\rm in} \;\ \Omega
		\\ \\
		{\rm w}=0  &{\rm on} \;\ \partial\Omega,
	\end{cases}
\end{equation}
which is just the Euler-Lagrange equation relative to the natural variational principles for the vortex model, see
\cite{caglioti-1}, \cite{caglioti-2} for the regular case and the discussion below for the case with sinks. Here
$$
\omega_{\rho}=\dfrac{He^{{\rm w}_{\rho}}}{\int_{\Omega} H e^{{\rm w}_{\rho}}},
$$
is the vorticity density corresponding to a solution ${\rm w}_{\rho}=\rho G_{\Omega}[\omega_{\rho}]$. The singularity at the origin describes a vortex of total vorticity $4\pi\rho^{-1}|\beta|$, whence co-rotating with the flow, i.e. of the same sign of $\omega_\rho$. The remaining $N$ singularities describe counter-rotating vortices of total vorticity $-4\pi\rho^{-1}\alpha_j$ each. See Remark \ref{rem.1} below for more details about this point.\\

A subtle problem in the analysis of \eqref{r-equ-flat-lm} is the existence/non existence of solutions for $\rho=8\pi(1+\beta)$, which
is critical with respect to the singular (\cite{ads}) Moser-Trudinger (\cite{moser}) inequality, see Theorem \ref{thm.1} and Section
\ref{section:firstsec} below. Interestingly enough, this problem is strictly related to that of
the description of the thermodynamic equilibrium in the mean field theory of the vortex model and it has been solved in the regular case,
see \cite{caglioti-1, caglioti-2}, \cite{chang-chen-lin},
\cite{BLin3}, \cite{BMal2} showing that the geometry of the domain plays a crucial role. See \cite{BLin2} for the case of positive singular sources.
In the regular case or either if the singular sources are positive,
a crucial point for the characterization of the existence/non existence of solutions in terms of the geometry of $\Omega$
is the fact that the critical threshold is $8\pi$ and that solutions blowing-up as $\rho\to 8\pi$
make 1-point blow-up where the concentration point is a maximum point of the Robin function of $\Omega$,
see \cite{caglioti-1}, \cite{caglioti-2},\cite{chang-chen-lin}, \cite{BLin3}, \cite{BLin2}. Both these features are drastically modified when adding negative singular sources.
In particular, in the model case which we pursue here, where we have only one negative singular source with strength $\beta\in (-1,0)$, the
critical threshold is $8\pi(1+\beta)$ and solutions blowing-up as $\rho\to 8\pi(1+\beta)$
make one point blow-up, but the concentration point is the singular source, {which in principle need not be located at a maximum point of
	the Robin function}. Therefore our results about mean field theory with sinks are two-fold. Indeed, on
one side we extend the results about equivalence of the statistical ensembles and about the non concavity of the entropy
(\cite{caglioti-2}, \cite{bart-5}) on the other side we extend the characterization of the existence/non existence of solutions
(\cite{chang-chen-lin}, \cite{BLin3}, \cite{BLin2})
of \eqref{r-equ-flat-lm} to the case $\rho=8\pi(1+\beta)$. Compared to \cite{chang-chen-lin}, \cite{BLin3}, \cite{BLin2}, among other things we have to prove that entropy maximizers have to concentrate on the singular source in the limit of large energies. We will not go into details of those proof which can be worked out by a standard adaptation of known arguments in \cite{caglioti-2}, \cite{chang-chen-lin} and \cite{bart-5}. On the other side, the characterization (Theorem \ref{thm:5.2}) of the so called pairs $(\Omega, \beta)$ of first/second kind (see Definition \ref{def:first}) is more subtle. Another point which requires some care arises in the proof of the existence of negative specific heat states (Theorem \ref{convex:entropy}) where we also use some recent results in \cite{wyang}.

\subsection{The Canonical Variational Principle}
To simplify the exposition, with an abuse of terminology (see Remark \ref{rem.1}) we define the Free Energy functional as follows,
\begin{align*}
	\mathcal{F}_{\rho}(\omega)=\int_\Omega \omega(x)(\log\omega(x)-\log H(x)){\rm d}x
	-\frac{\rho}{2}\int_\Omega \omega(x)  G [\omega](x){\rm d}x, \quad 
\end{align*}
where $\omega\in  \mathcal{P}(\Omega)$,
and since we are interested in negative temperature states (\cite{caglioti-2,On}), modulo some minor necessary exceptions, we will consider only the case $\rho>0$. Indeed, in the statistical mechanics
formulation (\cite{caglioti-1, caglioti-2}) we would have $-\frac{1}{\kappa T_{\rm stat}}=\rho$ where $T_{\rm stat}$
is the statistical Temperature and
$\kappa$ the Boltzmann constant.\\
Due to the inequality $ab\leq e^a+b(\log(b))$, $a\geq 0,b\geq 1$, it is readily seen that
$\mathcal{F}_{\rho}(\omega)$ is well defined on $\mathcal{P}(\Omega)$. The thermodynamic equilibrium states are by
definition those vorticity densities $\omega$ which
solve the Canonical Variational Principle (({\rm \bf CVP}) for short),
$$
f(\rho)=\inf\{\mathcal{F}_{\rho}(\omega)\mid \omega \in \mathcal{P}(\Omega)\}\eqno {\rm \bf (CVP)}
$$

\begin{rem}\label{rem.1} According to standard conventions {\rm(}\cite{caglioti-2}{\rm)} the physical
	free energy functional should be $-\rho^{-1}\mathcal{F}_{\rho}(\omega)$ and the free energy
	thus would take the form $-\rho^{-1}f(\rho)$. As mentioned above, $\rho=-\frac{1}{\kappa T_{\rm stat}}$ where $\kappa$ is the Boltzmann
	constant and $T_{\rm stat}$
	the statistical temperature.
	
	Actually, in this mean field model, the flow of positive vorticity $\omega$
	is interacting with $N+1$ fixed vortices whose total vorticities are $4\pi|\beta|/\rho$, $-4\pi\alpha_j/\rho$ respectively. This particular
	formulation has the advantage
	that the Euler-Lagrange equations take the form \eqref{r-equ-flat}, whose solutions are widely used to describe
	vortex equilibria of stationary Euler equations in vorticity form, see \cite{CCKW,TuY} and references quoted therein.
	
	However, the more realistic model where the vorticities of the singular sources are \underline{fixed} (i.e. they do not depend by $\rho$) should be defined
	via the standard entropy $-\int_\Omega \omega(x)\log\omega(x){\rm d}x$, while the energy should contain the contribution due to the fixed vortices.
	In other words, we should use the same $\mathcal{F}_{\rho}(\omega)$ as above where $\int_\Omega \omega\log(\omega/ H)$
	should be replaced by $\int_\Omega \omega\log \omega$ while the energy part would be
	$\frac{\rho}{2}\int_{\Omega} \omega G[\omega]-\rho \int_{\Omega}\omega \log(H)$.
	This formulation in turn would yield an Euler-Lagrange equation as
	\eqref{r-equ-flat} where $\beta$ and $\alpha_j$ would be replaced by $\rho \beta$ and $\rho \alpha_j$.
	On the other side, as far as the
	blow-up at singular sources is concerned, some care is needed in the generalization to these equations of known results,
	such as concentration-compactness-quantization {\rm(}\cite{BM3},\cite{BT}{\rm)} and in particular pointwise estimates {\rm(}\cite{BT-2}{\rm)},
	{\rm(}\cite{zhang2}{\rm)}, local uniqueness {\rm(}\cite{bart-4-2},\cite{byz-1},\cite{wu-zhang-ccm}{\rm)} and asymptotic non degeneracy (see Theorem \ref{main-theorem-1} above)
	which will be discussed elsewhere {\rm(}\cite{BCWYZ}{\rm)}.
\end{rem}

We will often use the following consequences of recent results in \cite{bj-lin} and \cite{Bons}.

\begin{lem}\label{lem:ift} 
	Let ${\rm w}_0$ be a solution of \eqref{r-equ-flat-lm} for some $\rho=\rho_0$ and assume that the corresponding linearized equation admits
	only the trivial solution. For any $p\in (1,|\beta|^{-1})$ there exist open neighborhoods $I_0\subset \mathbb{R}$ of $\rho_0$ and
	$B_0 \subset W^{2,p}_0(\Omega)$ of ${\rm w}_0$ such that solutions of \eqref{r-equ-flat-lm} in $I_0\times B_0$ describe a real analytic curve $\mathcal{G}_0$
	of the form $I_0\ni \rho \mapsto {\rm w}_{\rho}\in B_0$.
\end{lem}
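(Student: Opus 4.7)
The plan is to apply the analytic implicit function theorem in Banach spaces. First I would fix $p\in(1,|\beta|^{-1})$ and define the nonlinear map $F:\mathbb{R}\times W^{2,p}_0(\Omega)\to L^p(\Omega)$ by
$$F(\rho,{\rm w})=\Delta{\rm w}+\rho\,\frac{He^{\rm w}}{\int_\Omega He^{\rm w}\,dx}.$$
The restriction $p<|\beta|^{-1}$ is exactly what guarantees $H=|x|^{2\beta}h_0\in L^p(\Omega)$, since $\int_\Omega |x|^{2\beta p}\,dx<+\infty$ iff $2\beta p>-2$. Combined with the two-dimensional Sobolev embedding $W^{2,p}_0(\Omega)\hookrightarrow C^0(\overline\Omega)$ (valid for any $p>1$), this makes $He^{\rm w}\in L^p(\Omega)$ for every ${\rm w}\in W^{2,p}_0(\Omega)$, and the denominator stays bounded away from zero in a neighborhood of ${\rm w}_0$. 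Standard estimates on compositions of analytic functions then give that $F$ is real analytic on $\mathbb{R}\times W^{2,p}_0(\Omega)$.

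Next I would compute the partial Fréchet derivative $D_{\rm w}F(\rho_0,{\rm w}_0)$, which is precisely the linear operator
$$L_0\phi:=\Delta\phi+\rho_0\,\frac{He^{{\rm w}_0}}{\int_\Omega He^{{\rm w}_0}\,dx}\Bigl(\phi-\frac{\int_\Omega He^{{\rm w}_0}\phi\,dx}{\int_\Omega He^{{\rm w}_0}\,dx}\Bigr)$$
appearing in the linearized equation \eqref{r-lin-equ-flat}. I would then show that $L_0:W^{2,p}_0(\Omega)\to L^p(\Omega)$ is a linear isomorphism. Write $L_0=\Delta+K$, where $K$ denotes the zero-order (possibly integral) remainder. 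By Calder\'on--Zygmund theory on the $C^2$ domain $\Omega$, $\Delta:W^{2,p}_0(\Omega)\to L^p(\Omega)$ is an isomorphism. The operator $K$ factors through multiplication by $He^{{\rm w}_0}\in L^p(\Omega)$ composed with the inclusion $W^{2,p}_0\hookrightarrow C^0$, hence is compact as a map $W^{2,p}_0\to L^p$. Therefore $L_0$ is Fredholm of index zero, and the non-degeneracy hypothesis $\ker L_0=\{0\}$ promotes injectivity to bijectivity.

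At this point the analytic implicit function theorem applies to $F$ at $(\rho_0,{\rm w}_0)$, yielding open neighborhoods $I_0\subset\mathbb{R}$ of $\rho_0$ and $B_0\subset W^{2,p}_0(\Omega)$ of ${\rm w}_0$ together with a real analytic map $I_0\ni\rho\mapsto{\rm w}_\rho\in B_0$ whose graph $\mathcal{G}_0$ coincides with the full zero set of $F$ in $I_0\times B_0$. The only delicate point, and the step I would check most carefully, is the interplay between the singular weight $|x|^{2\beta}$ and the choice of functional setting: the upper bound $p<|\beta|^{-1}$ is essential both for the range of $F$ to land in $L^p$ and for the compactness of $K$, while the lower bound $p>1$ is needed for the $C^0$ Sobolev embedding which allows the exponential nonlinearity to be handled pointwise.
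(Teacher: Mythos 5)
Your proposal is correct and follows essentially the same route as the paper, which proves the lemma by invoking the real analytic implicit function theorem (\cite{but}) as a straightforward generalization of Lemma 2.4 in \cite{Bons}. Your write-up simply makes explicit the ingredients the cited reference supplies: analyticity of $F$ in the $W^{2,p}_0\times\mathbb{R}$ setting with $H\in L^p$ for $p<|\beta|^{-1}$, the identification of $D_{\rm w}F$ with the operator in \eqref{r-lin-equ-flat}, and the Fredholm-plus-trivial-kernel argument giving invertibility.
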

\begin{proof}
	Based on the real analytic implicit function theorem (\cite{but}), this is a straightforward generalization of Lemma 2.4 in \cite{Bons}.    
\end{proof}

\begin{lem}\label{lem:spectral}$\,$\\
	$({\rm i})$ {\rm(\cite{bj-lin})} For any $\rho\in [0,8\pi(1+\beta))$ there exists a unique solution ${\rm w}_{\rho}$ of
	\eqref{r-equ-flat-lm};\\
	$({\rm ii})${\rm(\cite{bj-lin})} For any $\rho\in [0,8\pi(1+\beta))$, the linearized operator of
	\eqref{r-equ-flat-lm} evaluated at its unique solution has strictly positive first eigenvalue;\\
	$({\rm iii})$ If a solution of \eqref{r-equ-flat-lm} exists for $\rho=8\pi(1+\beta)$ then it is unique and the linearized operator
	evaluated at this solution has strictly positive first eigenvalue;\\
	$({\rm iv})$ The map $[0,8\pi(1+\beta))\ni \rho\mapsto {\rm w}_{\rho}\in W^{2,p}_0(\Omega)$, $p\in (1,|\beta|^{-1})$ is real analytic;\\
	$({\rm v})$ For any $\rho\in [0,8\pi(1+\beta))$ the map $\rho \mapsto E(\rho)$, where
	$$
	\rho \mapsto E(\rho)=\mathcal{E}(\omega_\rho),\quad \omega_{\rho}=\dfrac{He^{{\rm w}_{\rho}}}{\int_{\Omega} H e^{{\rm w}_{\rho}}},
	$$
	is real analytic and $\frac{dE(\rho)}{d\rho}>0$. In particular $\omega_0(x)=\frac{H(x)}{\int_{\Omega} H}$;\\
	$({\rm vi})$ If a solution of \eqref{r-equ-flat-lm} exists for $\rho=8\pi(1+\beta)$ then the curve of unique solutions for $\rho\in [0,8\pi(1+\beta)]$
	can be continued as a real analytic curve in a right neighborhood of $8\pi(1+\beta)$.
\end{lem}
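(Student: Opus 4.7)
Parts (i) and (ii) are direct invocations of \cite{bj-lin}. Given these, part (iv) follows from Lemma \ref{lem:ift} by a bootstrapping argument: at each $\rho_0\in[0,8\pi(1+\beta))$, the strict positivity of the first eigenvalue from (ii) means that the linearized operator at ${\rm w}_{\rho_0}$ is an isomorphism, so Lemma \ref{lem:ift} produces a local real analytic branch through ${\rm w}_{\rho_0}$. The uniqueness in (i) forces this branch to agree with $\rho\mapsto{\rm w}_\rho$ on a neighborhood of $\rho_0$, and since $\rho_0$ was arbitrary the whole map is real analytic on $[0,8\pi(1+\beta))$.

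For (v), the analyticity of $E(\rho)$ is inherited from (iv) through the analytic composition $\rho\mapsto{\rm w}_\rho\mapsto\omega_\rho\mapsto\tfrac12\int\omega_\rho G[\omega_\rho]$. The boundary value $\omega_0=H/\int_\Omega H$ is obtained by setting $\rho=0$ in \eqref{r-equ-flat-lm}, whose unique solution is ${\rm w}_0\equiv 0$. For the monotonicity, the plan is to differentiate \eqref{r-equ-flat-lm} in $\rho$, set $\dot{\rm w}=\partial_\rho{\rm w}_\rho$, derive the linearized identity
\begin{equation*}
-\Delta\dot{\rm w}=\omega_\rho+\rho\,\omega_\rho\bigl(\dot{\rm w}-\langle\dot{\rm w}\rangle_{\omega_\rho}\bigr),
\end{equation*}
and use the representation $E(\rho)=\frac{1}{2\rho^2}\int|\nabla{\rm w}_\rho|^2$. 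After integration by parts and substitution, $\rho\,E'(\rho)$ reduces to a quadratic form in $\dot{\rm w}-\langle\dot{\rm w}\rangle_{\omega_\rho}$ associated to the linearized operator $L_\rho$; its strict positivity follows from $\mu_1(L_\rho)>0$ in (ii).

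The main work lies in (iii). For any solution ${\rm w}^*$ of \eqref{r-equ-flat-lm} at $\rho=8\pi(1+\beta)$, I would first establish $\mu_1({\rm w}^*)>0$. The strategy is a contradiction argument: assuming $\mu_1({\rm w}^*)=0$ yields a nontrivial eigenfunction $\phi$ in the kernel of the linearized operator, and then a second-order Taylor expansion of the free energy $\mathcal F_{8\pi(1+\beta)}$ around ${\rm w}^*$ in the direction $\phi$ would produce a direction of non-strict convexity inconsistent with the sharp singular Moser--Trudinger inequality \cite{ads} and the characterization of critical points in \cite{bj-lin,Bons}; if ${\rm w}^*$ were obtained as a bubbling limit, non-degeneracy would instead follow directly from Theorem \ref{main-theorem-1}. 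With $\mu_1({\rm w}^*)>0$ in hand, Lemma \ref{lem:ift} produces a local analytic branch through $(8\pi(1+\beta),{\rm w}^*)$; matching this branch with the unique curve of (i) for $\rho$ slightly below $8\pi(1+\beta)$ identifies ${\rm w}^*$ with $\lim_{\rho\to 8\pi(1+\beta)^-}{\rm w}_\rho$, whence uniqueness.

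Finally, (vi) follows at once from Lemma \ref{lem:ift} applied at $\rho_0=8\pi(1+\beta)$ using the non-degeneracy from (iii), since the resulting real analytic branch extends into a right neighborhood of $8\pi(1+\beta)$ and is automatically compatible with the unique curve from (i). The main obstacle throughout is exactly the spectral positivity at the critical threshold in (iii); this sits precisely at the edge of the critical singular Moser--Trudinger range, and its proof requires combining the sharp functional inequalities in \cite{ads,bj-lin,Bons} with the blow-up non-degeneracy machinery developed in the earlier sections of the present paper.
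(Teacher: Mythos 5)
Most of your proposal follows the same route as the paper. The paper's proof of this lemma is essentially a citation plus a delegation: parts (i), (ii) \emph{and} (iii) are all taken from \cite{bj-lin}, and (iv)--(vi) are obtained, exactly as you describe, by combining (i)--(iii) with Lemma \ref{lem:ift} and "a rather standard generalization" of the arguments in \cite{Bons}. Your continuation/bootstrap argument for (iv), your differentiation-in-$\rho$ identity and quadratic-form argument for $\frac{dE}{d\rho}>0$ in (v), and your use of Lemma \ref{lem:ift} at the critical parameter for (vi) are all consistent with that scheme (for (iv) note only the small point, flagged in the paper, that analyticity at the endpoint $\rho=0$ means extendability to a neighborhood of $0$, which uses uniqueness for $\rho\le 0$ as in Remark \ref{rem:spectral}).

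The genuine gap is in your treatment of (iii), which you single out as "the main work" and attempt to prove from scratch. The non-degeneracy (strict positivity of the first eigenvalue) of the solution at the critical value $\rho=8\pi(1+\beta)$ is a deep result; in the paper it is simply quoted from \cite{bj-lin}, where it is proved by singular Alexandrov--Bol/spectral comparison arguments, not by the soft convexity considerations you sketch. Your contradiction argument --- that a kernel element $\phi$ would yield "a direction of non-strict convexity inconsistent with" the singular Moser--Trudinger inequality --- is not substantiated: degeneracy of the linearized operator at a critical point of $J_{8\pi(1+\beta)}$ is not, by itself, in conflict with the Moser--Trudinger inequality or with boundedness below of the functional, so no contradiction is actually produced. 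Your fallback, invoking Theorem \ref{main-theorem-1}, does not apply either: that theorem concerns sequences of \emph{bubbling} (blow-up) solutions, whereas a solution at $\rho=8\pi(1+\beta)$, when it exists (second-kind pairs), is a fixed bounded solution obtained as a smooth, non-blow-up limit of the subcritical branch (Lemma \ref{lem:5.1}), so the asymptotic non-degeneracy machinery of the earlier sections says nothing about it. Once non-degeneracy at the critical parameter is granted (from \cite{bj-lin}), your identification of any critical-parameter solution with the limit of the unique subcritical branch via Lemma \ref{lem:ift}, and hence uniqueness and part (vi), is fine; but as written, (iii) is asserted rather than proved, and the correct fix is to cite \cite{bj-lin} for it, as the paper does.
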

\begin{proof} By definition the map in $({\rm iv})$ is said to be real analytic at $\rho=0$ if it can be extended to a real
	analytic map in a neighborhood of $\rho=0$.
	The claim in $({\rm i})$, $({\rm ii})$ and $({\rm iii})$ have been recently proved in \cite{bj-lin}.
	Based on $({\rm i})$, $({\rm ii})$, $({\rm iii})$ and Lemma \ref{lem:ift},
	the proof of $({\rm iv})$, $({\rm v})$ and 
	$({\rm vi})$ follow from a rather standard generalization of recent results in \cite{Bons}.
\end{proof}

\begin{rem}\label{rem:spectral}
	Actually, due to the strict convexity of the corresponding variational functional,
	the uniqueness of the solution of \eqref{r-equ-flat-lm} holds as well for fixed $\rho< 0$, in which case the positivity of the first
	eigenvalue of the linearized problem relative to \eqref{r-equ-flat-lm} is trivial.
	As a consequence, in view of Lemma \ref{lem:spectral}, it is not too difficult to see
	that the map $\rho \mapsto E(\rho)$, is well defined and smooth and $\frac{d E(\rho)}{d\rho}>0$ for any
	$\rho \in (-\infty,8\pi(1+\beta))$.
\end{rem}

Before stating the first theorem of this section, we need the following lemma, which is well known in the regular case, see \cite{csw, wolan2}. Let us define,
$$
J_{\rho}({\rm w})=\frac{1}{2\rho}\int_{\Omega}|\nabla {\rm w}|^2{\rm d}x- \log\left(\int_{\Omega} H e^{\rm w}\right),\quad {\rm w}\in W^{1,2}_0(\Omega),
$$
for $\rho \in(0,+\infty)$.

\begin{lem}\label{lem5.1}$\,$
	For any $\rho >0$ we have
	$$
	f(\rho)=\inf\{\mathcal{F}_{\rho}(\omega)\mid\omega \in \mathcal{P}(\Omega)\}=
	\inf\{{J}_{\rho}({\rm w})\mid {\rm w}\in W^{1,2}_0(\Omega)\}.
	$$
	In particular we have:\\
	$(a)$ For any $\omega\in \mathcal{P}(\Omega)$ let ${\rm w}=\rho G[\omega]$. Then ${\rm w}\in W^{1,2}_0(\Omega)$ and
	$$
	J_\rho({\rm w})\leq \mathcal{F}_{\rho}(\omega),
	$$
	where the equality holds if and only if ${\rm w}$ is a solution of \eqref{r-equ-flat-lm};\\
	$(b)$ For any ${\rm w}\in W^{1,2}_0(\Omega)$ let $\omega=\frac{H e^{\rm w}}{\int_{\Omega} H e^{\rm w}}$. Then $\omega\in \mathcal{P}(\Omega)$ and
	$$
	J_\rho({\rm w})\geq \mathcal{F}_{\rho}(\omega),
	$$
	where the equality holds if and only if ${\rm w}$ is a solution of \eqref{r-equ-flat-lm}.
\end{lem}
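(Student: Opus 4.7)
The plan is the classical Gibbs-duality argument linking an entropy/free-energy functional with its log-partition-function dual, already standard in the regular case (cf.\ \cite{csw,wolan2}), now adapted to the singular weight $H(x)=|x|^{2\beta}h_0(x)$ with $\beta\in(-1,0)$. The algebraic core is Jensen's inequality in the form
\begin{equation*}
\int_\Omega \omega\log\frac{\omega}{H}\,dx\ \geq\ \int_\Omega \omega\,{\rm w}\,dx\ -\ \log\int_\Omega He^{\rm w}\,dx,\qquad \omega\in\mathcal{P}(\Omega),\;{\rm w}\in W^{1,2}_0(\Omega),
\end{equation*}
which is just the nonnegativity of the relative entropy between $\omega$ and the Gibbs density $He^{\rm w}/\int_\Omega He^{\rm w}$, with equality iff the two coincide. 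Everything else is bookkeeping using $\int_\Omega|\nabla{\rm w}|^2=\rho\int_\Omega\omega{\rm w}=\rho^2\int_\Omega\omega\, G[\omega]$ when ${\rm w}=\rho G[\omega]$.

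For (a), given $\omega\in\mathcal{P}(\Omega)$ I will set ${\rm w}:=\rho G[\omega]$; since $\omega\in L^1\log L^1(\Omega)$, the Stampacchia--Brezis--Merle regularity gives ${\rm w}\in W^{1,2}_0(\Omega)\cap L^{\infty}(\Omega)$ and the quadratic identity above is justified. Plugging the Jensen inequality into the definition of $\mathcal{F}_\rho$ and rewriting the quadratic term through $\int|\nabla{\rm w}|^2$ then yields $J_\rho({\rm w})\leq\mathcal{F}_\rho(\omega)$, with equality iff $\omega=He^{\rm w}/\int He^{\rm w}$; combined with $-\Delta{\rm w}=\rho\omega$ this is precisely \eqref{r-equ-flat-lm}. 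For (b), given ${\rm w}\in W^{1,2}_0(\Omega)$, the singular Moser--Trudinger inequality of \cite{ads} guarantees $\int_\Omega He^{\rm w}<+\infty$ (using $\beta>-1$), so that $\omega:=He^{\rm w}/\int_\Omega He^{\rm w}\in L^p(\Omega)$ for some $p>1$ is a probability density; the finiteness of the entropy follows from $\log\omega={\rm w}+\log H-\log\int_\Omega He^{\rm w}$ by combining the higher integrability of $\omega$ with the integrable logarithmic singularity of $\log H$ at $x=0$. Introducing $\tilde{\rm w}:=\rho G[\omega]\in W^{1,2}_0(\Omega)$, which satisfies $-\Delta\tilde{\rm w}=\rho\omega$, and expanding the square,
\begin{equation*}
J_\rho({\rm w})-\mathcal{F}_\rho(\omega)\;=\;\frac{1}{2\rho}\int_\Omega|\nabla({\rm w}-\tilde{\rm w})|^2\,dx\ \geq\ 0,
\end{equation*}
with equality iff ${\rm w}=\tilde{\rm w}$, which together with $\omega=He^{\rm w}/\int He^{\rm w}$ again gives \eqref{r-equ-flat-lm}.

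The identity $\inf \mathcal{F}_\rho=\inf J_\rho$ is then an immediate consequence of (a) and (b). The main obstacle is not algebraic but analytic: one must verify, for the singular weight $H=|x|^{2\beta}h_0(x)$ with $\beta\in(-1,0)$, that $\int_\Omega He^{\rm w}<+\infty$ for every ${\rm w}\in W^{1,2}_0(\Omega)$ and that the corresponding Gibbs density belongs to $\mathcal{P}(\Omega)$ (in particular has finite entropy). Both points follow from the singular Moser--Trudinger inequality of \cite{ads} precisely because $\beta>-1$, so that $1+\beta>0$ is the relevant subcriticality exponent. Once these integrability issues are settled, the computation reduces verbatim to the regular case of \cite{csw,wolan2}.
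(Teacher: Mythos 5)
Your proposal is correct and follows essentially the same route as the paper: part $(a)$ via Jensen's inequality (nonnegativity of the relative entropy between $\omega$ and the Gibbs density, using Stampacchia regularity to justify ${\rm w}=\rho G[\omega]\in W^{1,2}_0(\Omega)\cap L^\infty(\Omega)$), and part $(b)$ via the singular Moser--Trudinger inequality for the integrability of $He^{\rm w}$ and of the entropy, followed by completing the square in the Dirichlet energy to get $J_\rho({\rm w})-\mathcal{F}_\rho(\omega)=\frac{1}{2\rho}\int_\Omega|\nabla({\rm w}-\rho G[\omega])|^2\geq 0$. The equality discussions and the resulting identity of the two infima coincide with the paper's argument, so no changes are needed.
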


\begin{proof} We suppress the $x$ dependence for the sake of simplicity. Obviously the first claim follows from $(a)$ and $(b)$.
	
	\noindent $(a)$ Let ${\rm w}=\rho G[\omega]$, we have (\cite{stam2}) ${\rm w}\in W^{1,2}_0(\Omega)\cap L^{\infty}(\Omega)$
	and consequently
	$$
	\int_{\Omega} H e^{\rm w}<+\infty\quad\mbox{and}\quad\omega_{\rm w}=\frac{H e^{\rm w}}{\int_{\Omega} H e^{\rm w}}\in {\mathcal P}(\Omega).
	$$
	Thus it is easy to check that,
	$$
	J_\rho({\rm w})=\int_\Omega \omega\left(\log\omega_{\rm w}-\log H\right)-\frac{1}{2}\int_\Omega \omega{\rm w},
	$$
	implying that,
	$$
	\mathcal{F}_{\rho}(\omega)-J_\rho({\rm w})=\int_\Omega \omega\left(\log\omega-\log \omega_{\rm w}\right)\geq 0
	$$
	where the inequality follows from the Jensen inequality 
	$$\int_{\Omega}s(f(x))\omega_{\rm w}(x){\rm d}x\geq s\left(\int_{\Omega}f(x)\omega_{\rm w}(x){\rm d}x\right),$$
	where 
	$$s(t)=t\log(t),~ t\geq 0\quad\mbox{and}\quad f(x)=\frac{\omega(x)}{\omega_{\rm w}(x)}.$$ Remark that, since $s(t)$ is strictly convex, the equality holds if and only if
	$f(x)$ is constant a.e., that is, since $\{\omega,\omega_{\rm w}\}\subset \mathcal{P}(\Omega)$, if and only if $\omega=\omega_{\rm w}$ a.e. in $\Omega$.
	However this is the same as to say that ${\rm w}=\rho G[\omega_{\rm w}]$, that is, by standard elliptic estimates, that ${\rm w}$ is a solution of
	\eqref{r-equ-flat-lm}.

	\noindent $(b)$ Let $\omega=\frac{H e^{\rm w}}{\int_{\Omega} H e^{\rm w}}$, by the Moser-Trudinger inequality (\cite{moser}) and the H\"older
	inequality
	$H e^{\rm w}{\rm d}x\in L^{p}(\Omega)$ for any $p\in [1,|\beta|^{-1})$ whence $\omega$ is well defined.
	Observe that, putting $c_{\rm w}=\log(\int_{\Omega} H e^{\rm w})$, we have
	$$
	\int_{\Omega} \omega\log\omega=-c_{\rm w}+\int_{\Omega} \omega({\rm w}+\log H ),
	$$
	where by the Sobolev embedding ${\rm w}+\log H\in L^q(\Omega)$ for any $q\in [1,+\infty)$, implying that $\omega\in \mathcal{P}(\Omega)$. Let
	${\rm w}_{\omega}=\rho G[\omega]$, then we have (\cite{stam2})
	$$
	\int_{\Omega}|\nabla{\rm w}_{\omega}|^2=\rho \int_{\Omega} \omega {\rm w}_{\omega}, \quad
	\int_{\Omega} \nabla {\rm w}_{\omega} \cdot \nabla {\rm w}=\rho \int_{\Omega} \omega {\rm w}.
	$$
	Therefore we deduce that,
	$$
	J_\rho({\rm w})-\mathcal{F}_{\rho}(\omega)=\frac{1}{2\rho}\int_{\Omega}|\nabla({\rm w}-{\rm w}_{\omega})|^2\geq 0
	$$
	where, since $\{{\rm w},{\rm w}_{\omega}\}\in W^{1,2}_0(\Omega)$, the equality holds if and only if ${\rm w}={\rm w}_{\omega}$ a.e.,
	that is, by standard elliptic estimates, if and only if ${\rm w}$ is a solution of
	\eqref{r-equ-flat-lm}.
\end{proof}

Now we are able to state the first result about the ({\rm \bf CVP}), which generalizes the corresponding results in \cite{caglioti-2} and \cite{chang-chen-lin}.
\begin{thm}\label{thm.1} 
	For any $\rho \in (0,8\pi(1+\beta))$ the {\rm (}{\rm \bf CVP}{\rm )} admits a unique minimizer $\omega_\rho$.
	In particular ${\rm w}_\rho=\rho G[\omega_\rho]$ is the unique solution of \eqref{r-equ-flat-lm}.
\end{thm}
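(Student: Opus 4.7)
The plan is to pass to the dual variational formulation provided by Lemma \ref{lem5.1} and to combine the singular Moser--Trudinger inequality of Adimurthi--Sandeep \cite{ads} with the uniqueness result for subcritical solutions of \eqref{r-equ-flat-lm} established in Lemma \ref{lem:spectral}(i). Unfolding the proof of Lemma \ref{lem5.1} one sees that the functional to be minimized on $W^{1,2}_0(\Omega)$ is
$$J_\rho({\rm w}) = \frac{1}{2\rho}\int_\Omega |\nabla {\rm w}|^2\, dx - \log \int_\Omega H e^{\rm w}\, dx,$$
and Lemma \ref{lem5.1} already yields $f(\rho)=\inf_{W^{1,2}_0(\Omega)} J_\rho$, so everything reduces to exhibiting a unique minimizer of $J_\rho$.

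First I would invoke the singular Moser--Trudinger inequality \cite{ads}, which, since $H(x) = |x|^{2\beta} h_0(x)$ with $\beta \in (-1,0)$ and $h_0$ positive and bounded, gives
$$\log \int_\Omega H e^{\rm w}\, dx \leq \frac{1}{16\pi(1+\beta)}\int_\Omega |\nabla {\rm w}|^2\, dx + C_\Omega, \qquad \forall\, {\rm w}\in W^{1,2}_0(\Omega).$$
For $\rho < 8\pi(1+\beta)$ this yields $J_\rho({\rm w}) \geq c\|\nabla {\rm w}\|_2^2 - C$ with $c > 0$, so $J_\rho$ is bounded below and coercive. The direct method then produces a minimizer: pick a minimizing sequence ${\rm w}_n$, extract a weakly convergent subsequence ${\rm w}_n \rightharpoonup {\rm w}_\rho$ in $W^{1,2}_0(\Omega)$, use the weak lower semicontinuity of the Dirichlet term, and apply the Moser--Trudinger inequality to ${\rm w}_n - {\rm w}_\rho$ together with the compact Sobolev embedding to obtain $e^{{\rm w}_n} \to e^{{\rm w}_\rho}$ strongly in $L^p(\Omega, H\, dx)$ for some $p>1$, whence $\log\int H e^{{\rm w}_n} \to \log\int H e^{{\rm w}_\rho}$. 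Thus ${\rm w}_\rho$ is a minimizer, and since it must solve \eqref{r-equ-flat-lm}, Lemma \ref{lem:spectral}(i) forces it to be the unique minimizer of $J_\rho$.

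Setting $\omega_\rho = He^{{\rm w}_\rho}/\int_\Omega He^{{\rm w}_\rho} \in \mathcal{P}(\Omega)$, the equality case of Lemma \ref{lem5.1}(b) gives $\mathcal{F}_\rho(\omega_\rho) = J_\rho({\rm w}_\rho) = f(\rho)$, so $\omega_\rho$ is a minimizer of $\mathcal{F}_\rho$. Conversely, for any other minimizer $\omega^* \in \mathcal{P}(\Omega)$ set ${\rm w}^* = \rho G[\omega^*] \in W^{1,2}_0(\Omega)$: Lemma \ref{lem5.1}(a) yields $J_\rho({\rm w}^*) \leq \mathcal{F}_\rho(\omega^*) = f(\rho)$, hence $J_\rho({\rm w}^*) = f(\rho)$, and the equality case of the same Lemma identifies ${\rm w}^*$ as a solution of \eqref{r-equ-flat-lm}; thus ${\rm w}^* = {\rm w}_\rho$ and $\omega^* = -\Delta {\rm w}^*/\rho = \omega_\rho$. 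The main technical point is the weak continuity of the singular exponential term, which the Adimurthi--Sandeep inequality handles exactly as the classical Moser--Trudinger inequality does in the regular arguments of \cite{caglioti-1, caglioti-2, chang-chen-lin}; the uniqueness in Lemma \ref{lem:spectral}(i) then removes the need for any separate strict convexity argument on $J_\rho$.
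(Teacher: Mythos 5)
Your proposal is correct and follows essentially the same route as the paper: reduce to minimizing $J_\rho$ on $W^{1,2}_0(\Omega)$ via Lemma \ref{lem5.1}, get coercivity and weak lower semicontinuity for $\rho<8\pi(1+\beta)$ from the (singular) Moser--Trudinger inequality, apply the direct method, and conclude uniqueness from Lemma \ref{lem:spectral}-$({\rm i})$. The only cosmetic difference is that the paper obtains the threshold $8\pi(1+\beta)$ from the classical Moser--Trudinger inequality combined with H\"older's inequality rather than quoting the Adimurthi--Sandeep form directly, which is equivalent for this purpose.
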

\begin{proof}
	The uniqueness part is readily seen to be a straightforward consequence of Lemma \ref{lem:spectral}-$({\rm i})$, whence we are just left to prove the existence part of the claim. 
	By the Moser-Trudinger inequality (\cite{moser}) and the H\"older inequality, $J_{\rho}$ is well defined. Furthermore, for any fixed $\rho\in [0,8\pi(1+\beta))$,
	by the singular (\cite{ads}) Moser-Trudinger inequality,  $J_\rho$ is coercive and weakly lower semi-continuous on $W^{1,2}_0(\Omega)$. By the direct method we get that it admits a minimizer.
	The corresponding Euler-Lagrange equation is just \eqref{r-equ-flat-lm} and the conclusion follows by Lemma \ref{lem5.1}.
\end{proof}

\bigskip

To proceed further we restrict our attention to the physically meaningful situation where the singular source $x=0\in\Omega$ satisfies \eqref{neg-crit-2}, that is,  
setting,
$$
\gamma^*(x)=8\pi(1+\beta)R_{\Omega}(x,0) +\log(h_0(x)), \quad x\in \Omega,
$$
we assume that 
\begin{equation}\label{ons-crit}
	\nabla \gamma^*(0)=0.    
\end{equation}
With the same notations as in \eqref{Phi-beta} let us set,
$$
\Phi_{\Omega,0}(x)=8\pi(1+\beta)({G}_{\Omega}(x,0)-{R}_{\Omega}(0,0))+\log(h_0(x)/h_0(0))+2\beta \log|x|,
$$
which satisfies
$$
e^{\Phi_{\Omega,0}(x)}=\frac{|x|^{2\beta}}{|x|^{4+4\beta}}e^{\gamma^*(x)-\gamma^*(0)}.
$$

{Then, in view of \eqref{ons-crit} the following quantity,   is well defined,  
	\begin{equation}\label{D0}
		D_\beta=\lim\limits_{r\to 0}c_*\left(\int_{\Omega\setminus B_{r}}e^{\Phi_{\Omega,0}(x)}-\frac{\pi}{(1+\beta)}\frac{1}{{r}^{2(1+\beta)}}\right),
\end{equation}}
and we remark that $D_\beta$ was already introduced in \eqref{d-beta} above. 
Next, to simplify the evaluations, let us set,
$$c_\beta=\frac{h_0(0)}{8(1+\beta)^2},\quad c_*=e^{\gamma^*(0)}= h_0(0)e^{8\pi(1+\beta){R}_{\Omega}(0,0)},$$
and choose any $\tau>0$ such that $B_\tau\Subset \Omega$. Then we define,
$$
{\rm w}_{\varepsilon}(x)=-4(1+\beta)\log(\tau)+2\log
\left(\dfrac{{\varepsilon}^{2(1+\beta)}+c_\beta}{{\varepsilon}^{2(1+\beta)}+c_\beta(\tau^{-1}|x|)^{2(1+\beta)}}\right), \quad x\in \overline{B_\tau},
$$
and
$$
{\widetilde {\rm w}}_{\varepsilon}(x)=
\begin{cases}
	-4(1+\beta)\log|x| +8\pi(1+\beta){R}_{\Omega}(x,0), \quad  &x\in \Omega\setminus B_\tau, \\ \\
	{\rm w}_{\varepsilon}(x) +8\pi(1+\beta){R}_{\Omega}(x,0), \quad &x\in \overline{B_\tau}, 
\end{cases}
$$
so that in particular ${\widetilde {\rm w}}_{\varepsilon}\equiv 8\pi(1+\beta) G_{\Omega}(x,0)$ in $\Omega\setminus B_\tau$.
At this point, we are ready to deduce a generalization of former results in \cite{caglioti-2}, \cite{chang-chen-lin},
\cite{BLin2}, \cite{BLin3}, which in particular provides an estimate for $f(8\pi(1+\beta))$ in terms of $D_\beta$.  
\begin{lem}\label{Iexp}
	Assume \eqref{ons-crit} and set 
	\[
	\gamma_\Omega(x)=4\pi(1+\beta){R}_{\Omega}(x,x)+\log(h_0(x)),
	\]
	then we have,
	\begin{align*}
		J_{8\pi(1+\beta)}({\widetilde {\rm w}}_{\varepsilon})=&-1-\log\left(\frac{\pi}{1+\beta}\right)-\gamma_\Omega(0)\\
		&-{({\tau} \varepsilon)}^{2(1+\beta)}\frac{1+\beta}{\pi c_*c_\beta}
		\left(D_\beta+o_{{ \varepsilon}}(1)+O(\tau^{-2\beta})\right),
	\end{align*}
	$\mbox{as}\; { \varepsilon}\to 0$, where $o_{{ \varepsilon}}(1)\to 0$ as ${ \varepsilon}\to 0$, uniformly with respect to $\tau$. Then we have
	\begin{equation}\label{Jinf}
		f(8\pi(1+\beta))\leq -1-\log\left(\frac{\pi}{1+\beta}\right)-\gamma_\Omega(0),
	\end{equation}
	where the inequality is strict as far as $D_\beta>0$.
\end{lem}
\begin{proof}
	\begin{align*}
		\frac{1}{2}\int_\Omega |\nabla {\widetilde {\rm w}}_{ \varepsilon}|^2=&~\frac{(1+\beta)^2}{2}\int_{\Omega\setminus
			B_\tau}\nabla\left(\log\frac{1}{|x|^4}+8\pi{R}_{\Omega}(x,0)\right) \cdot \nabla\log\frac{1}{|x|^4}\\
		&+4\pi(1+\beta)^2 \left(\int_{\Omega\setminus B_\tau}\nabla\log{\frac{1}{|x|^4}}\cdot \nabla{R}_{\Omega}(x,0)
		+8\pi\int_{\Omega}|\nabla{R}_{\Omega}(x,0)|^2\right)\\
		&+\frac{1}{2}\int_{B_\tau}\left|\frac{c_\beta\tau^{-2(1+\beta)}4(1+\beta)|x|^{1+2\beta}}{{\varepsilon}^{2(1+\beta)}+c_\beta(\tau^{-1}|x|)^{2(1+\beta)}}\right|^2 +
		8\pi(1+\beta)\int_{B_\tau}\nabla{{\rm w}}_{ \varepsilon} \cdot \nabla{R}_{\Omega}(x,0)\\
		=&\frac{-(1+\beta)^2}{2}\int_{\partial B_\tau}
		\left(\log\frac{1}{|x|^4}+8\pi{R}_{\Omega}(x,0)\right)
		\frac{\partial}{\partial\nu}\left(\log\frac{1}{|x|^4}\right)d\sigma\\
		&-4\pi(1+\beta)^2\int_{\partial B_\tau}\left(\log\frac{1}{|x|^4}\right)\frac{\partial{R}_{\Omega}}
		{\partial\nu}(x,0)d\sigma-16\pi(1+\beta)^2\log( \varepsilon)\\
		&-8\pi(1+\beta)(1-\log(c_\beta))+16\pi(1+\beta)c_\beta^{-1}{\varepsilon}^{2(1+\beta)}
		+O({ \varepsilon}^{4(1+\beta)})\\
		=&-16\pi(1+\beta)^2\log(\tau)+32\pi^2(1+\beta)^2R_{\Omega}(0,0)-16\pi(1+\beta)^2\log( \varepsilon)\\
		&-8\pi(1+\beta)(1-\log(c_\beta))+16\pi(1+\beta)c_\beta^{-1}{ \varepsilon}^{2(1+\beta)}+O({\varepsilon}^{4(1+\beta)}),
	\end{align*}
	as ${ \varepsilon}\to 0$, where we used:
	\begin{align*}
		\left(\int_{\Omega\setminus B_\tau}\nabla\log{\frac{1}{|x|}}\cdot \nabla{R}_{\Omega}(x,0)
		+2\pi\int_{\Omega}|\nabla{R}_{\Omega}(x,0)|^2\right)
		=\int_{\partial B_\tau}\left(\log|x|\right)\frac{\partial{R}_{\Omega}}
		{\partial\nu}(x,0)d\sigma=0,
	\end{align*}
	\begin{align*}
		\frac{1}{2}\int_{B_\tau}\left|\frac{c_\beta\tau^{-2(1+\beta)}4(1+\beta)|x|^{1+2\beta}}{ \varepsilon^{2(1+\beta)}+c_\beta(\tau^{-1}|x|)^{2(1+\beta)}}\right|^2=&
		-16\pi(1+\beta)^2\log(\varepsilon)-8\pi(1+\beta)(1-\log(c_\beta))\\
		&+16\pi(1+\beta)c_\beta^{-1}{ \varepsilon}^{2(1+\beta)}
		+O({ \varepsilon}^{4(1+\beta)}),
	\end{align*}
	as ${ \varepsilon}\to 0$, and
	\begin{align*}
		\int_{B_\tau}\nabla{ {\rm w}}_{\varepsilon} \cdot \nabla{R}_{\Omega}(x,0)=
		\int_{\partial B_\tau}{ {\rm w}}_{\varepsilon}
		\frac{\partial}{\partial\nu}{R}_{\Omega}(x,0)d\sigma=0;
	\end{align*}
	$$
	\int_{\partial B_\tau}8\pi{R}_{\Omega}(x,0)
	\frac{\partial}{\partial\nu}\log\frac{1}{|x|^4}=-\frac{32\pi}{\tau}\int_{\partial B_\tau}{R}_{\Omega}(x,0)d\sigma=-64\pi^2{R}_{\Omega}(0,0).
	$$
	
	\bigskip
	
	\noindent Next observe that, recalling the definitions of $c_*$ and $\Phi_{\Omega,0}$ above, we have 
	{\begin{align*}
			\int_{\Omega\setminus B_\tau}|x|^{2\beta}h_0(x)e^{{\widetilde {\rm w}}_{\varepsilon}}=&
			~c_*\int_{\Omega\setminus B_\tau}e^{\Phi_{\Omega,0}(x)}=
			c_*\int_{\Omega \setminus B_{{ \varepsilon}} }e^{\Phi_{\Omega,0}(x)}-c_*\int_{B_\tau \setminus B_{{ \varepsilon}}}e^{\Phi_{\Omega,0}(x)}\\
			=&~c_*\int_{\Omega \setminus B_{{ \varepsilon}}}e^{\Phi_{\Omega,0}(x)}+
			\frac{c_* \pi}{(1+\beta)}\left(\frac{1}{{\tau}^{2(1+\beta)}}-\frac{1}{{{ \varepsilon}}^{2(1+\beta)}}\right)+O(\tau^{-2\beta})
		\end{align*}
		where we used
		\begin{align*}
			c_*\int_{B_\tau \setminus B_{{ \varepsilon}}}e^{\Phi_{\Omega,0}(x)}=~&
			\int_{B_\tau \setminus B_{{ \varepsilon}}}\dfrac{|x|^{2\beta}}{|x|^{4(1+\beta)}}h_0(x)e^{8\pi(1+\beta)R_{\Omega}(x,0)}\\
			=~&\frac{c_* \pi}{(1+\beta)}\left(\frac{1}{{{ \varepsilon}}^{2(1+\beta)}}-\frac{1}{{\tau}^{2(1+\beta)}}\right)+O(\tau^{-2\beta}).
	\end{align*}}

	{Moreover we have,
		\begin{align*}
			\int_{B_\tau}|x|^{2\beta}h_0(x)e^{{\widetilde {\rm w}}_{ \varepsilon}}=&
			\int_{B_\tau}|x|^{2\beta}e^{\gamma^*(x)}e^{{\rm w}_{ \varepsilon}}=\int_{B_\tau\setminus B_{ \varepsilon}}|x|^{2\beta}e^{\gamma^*(x)}e^{{\rm w}_{ \varepsilon}}+
			\int_{B_{ \varepsilon}}|x|^{2\beta}e^{\gamma^*(x)}e^{{\rm w}_{ \varepsilon}}\\
			=&~\dfrac{\pi c_*}{(1+\beta)}\frac{1}{({ \varepsilon}{\tau})^{2(1+\beta)}}\left(c_\beta+{ \varepsilon}^{2(1+\beta)}+O(\tau^{-2\beta})\right),
		\end{align*}
		where, putting
		$$
		\tau_\beta=(c_\beta)^{-\frac{1}{2(1+\beta)}}\tau,
		$$
		we used
		\begin{align*}
			\int_{B_{\varepsilon}}|x|^{2\beta}e^{\gamma^*(x)}e^{{\rm w}_{\varepsilon}}=~&
			\tau^{-4(1+\beta)}\int_{B_{\varepsilon}}|x|^{2\beta}e^{\gamma^*(x)}
			\left(\dfrac{{ \varepsilon}^{2(1+\beta)}+c_\beta}{{\varepsilon}^{2(1+\beta)}+c_\beta(\tau^{-1}|x|)^{2(1+\beta)}}\right)^2\\
			=~&\dfrac{\left(c_\beta+{ \varepsilon}^{2(1+\beta)}\right)^2}{c_\beta({\varepsilon}\tau)^{2(1+\beta)}}
			\int_{B_{\tau_\beta^{-1}}}\dfrac{|z|^{2\beta}}{(1+|z|^{2(1+\beta)})^2}e^{\gamma^*({\varepsilon}\tau_\beta z)}\\
			=~&\dfrac{\left(c_\beta+{ \varepsilon}^{2(1+\beta)}\right)^2}{c_\beta({ \varepsilon}\tau)^{2(1+\beta)}}
			\dfrac{\pi c_*}{(1+\beta)}\left(\frac{1}{1+\tau_\beta^{2(1+\beta)}}+O(({ \varepsilon}\tau)^{2})\right),
		\end{align*}
		and similarly
		\begin{align*}
			&\int_{B_{\tau}\setminus B_{ \varepsilon}}|x|^{2\beta}e^{\gamma^*(x)}e^{{\rm w}_{ \varepsilon}}\\
			&=\dfrac{\left(c_\beta+{\varepsilon}^{2(1+\beta)}\right)^2}{c_\beta({ \varepsilon}\tau)^{2(1+\beta)}}
			\dfrac{\pi c_*}{(1+\beta)}\left(\frac{c_\beta}{c_\beta+{ \varepsilon}^{2(1+\beta)}}-\frac{1}{1+\tau_\beta^{2(1+\beta)}}+O(\tau^2{ \varepsilon}^{2(1+\beta)})\right).
	\end{align*}}
	
	{Therefore we have
		\begin{align*}
			&\int_\Omega |x|^{2\beta}h_0(x)e^{{\widetilde {\rm w}}_{\varepsilon}}
			=c_*\int_{\Omega \setminus B_{\tau}}e^{\Phi_{\Omega,0}(x)}+\int_{B_\tau}|x|^{2\beta}h_0(x)e^{{{\rm w}}_{\varepsilon}}\\
			&=c_*\left(\int_{\Omega \setminus B_{{ \varepsilon}}}e^{\Phi_{\Omega,0}(x)}-\frac{\pi}{(1+\beta)}\frac{1}{{{ \varepsilon}}^{2(1+\beta)}}\right)
			+\frac{\pi c_*}{(1+\beta)}\frac{1}{{{\tau}}^{2(1+\beta)}}\\
			&\quad+\dfrac{\pi c_*}{(1+\beta)}\frac{1}{{(\tau{ \varepsilon})}^{2(1+\beta)}}
			\left({c_\beta}+{\varepsilon}^{2(1+\beta)}+O(\tau^{-2\beta})\right)
			+O(\tau^{-2\beta})\\
			&=\dfrac{\pi c_*}{(1+\beta)}\frac{1}{{(\tau{\varepsilon})}^{2(1+\beta)}}
			\left(c_\beta+\left(\frac{(1+\beta)}{\pi c_*}\tau^{2(1+\beta)}(D_\beta+o_{{ \varepsilon}}(1))+2+O(\tau^{-2\beta})\right) {{ \varepsilon}^{2(1+\beta)}}\right),
		\end{align*}
		where $o_{{\varepsilon}}(1)\to 0$ as ${\varepsilon}\to 0$ uniformly with respect to $\tau$.}
	
	{At this point, putting together these estimates,
		we see that,
		\begin{align*}
			J_{8\pi(1+\beta)({\tilde {\rm w}}_{\varepsilon})}=&-2(1+\beta)\log(\tau)-2(1+\beta)\log( \varepsilon)+2c_\beta^{-1}{\varepsilon}^{2(1+\beta)}\\
			&+4\pi(1+\beta)R_{\Omega}(0,0)-1+\log(c_\beta)+O({ \varepsilon}^{4(1+\beta)})\\
			&-\log\left(\dfrac{\pi c_*c_\beta}{(1+\beta)}\frac{1}{{(\tau{ \varepsilon})}^{2(1+\beta)}}\right)\\
			&-\log\left(1+\left(\frac{(1+\beta)}{\pi c_*c_\beta}{\tau}^{2(1+\beta)}(D_\beta+o_{{ \varepsilon}}(1))+2c_\beta^{-1}+O(\tau^{-2\beta})\right)
			{{ \varepsilon}^{2(1+\beta)}}\right)\\
			=&-1-4\pi(1+\beta)R_{\Omega}(0,0)-\log\left(\frac{\pi h_0(0)}{1+\beta}\right)\\
			&-{({\tau} \varepsilon)}^{2(1+\beta)}\frac{1+\beta}{\pi c_*c_\beta}
			\left(D_\beta+o_{{\varepsilon}}(1)+O(\tau^{-2\beta})\right),\quad \mbox{as}\; { \varepsilon}\to 0
		\end{align*}
		as claimed.}
	Passing to the limit we obtain the inequality for $f(8\pi(1+\beta))$, where if $D_0>0$ then obviously the inequality must be strict.
\end{proof}

\bigskip

\subsection{Microcanonical Variational Principle}\label{sec5.2}
With the definitions above, for $E>0$ we define the Microcanonical Variational Principle (({\rm \bf MVP}) for short),
$$
S(E)=\sup\{\mathfrak{S}(\omega)\mid \omega \in \mathcal{P}(\Omega),~\mathcal{E}(\omega)=E\}\eqno{\rm \bf (MVP)}
$$
where
$$
\mathfrak{S}(\omega)=-\int_\Omega \omega(x)(\log\omega(x)-\log H(x)){\rm d}x.
$$
As remarked above, due to the inequality $ab\leq e^a+b(\log(b))$, $a\geq 0,b\geq 1$, it is readily seen that
$\mathfrak{S}(\omega)$ is well defined on $\mathcal{P}(\Omega)$. Concerning the ({\rm \bf MVP}) we have
the following generalization of a result in \cite{caglioti-2}.
\begin{thm}\label{thm:5.3} 
	$(j)$ For any $E>0$, $S(E)<+\infty$ and there exists $\omega_{_E}$ which solves the {\rm ({\rm \bf MVP})},
	$S(E)=\mathfrak{S}(\omega_{_E})$. In particular there exists $\rho_{_E}\in \mathbb{R}$ such that ${\rm w}_{_E}=\rho_{_E}G[\omega_{_E}]$ is a solution
	of \eqref{r-equ-flat-lm} for $\rho=\rho_{_E}$ and $\omega_0(x):=\frac{H(x)}{\int_{\Omega} H}$ is the unique entropy maximizer
	for $E=E_0:=\mathcal{E}(\omega_0)$ and $\rho_{E_0}=0$.\\
	$(jj)$ There exists $E_c\in (E_0,+\infty]$ such that for any $E\in [E_0,E_c)$ the map $E\mapsto \rho_{_E}$ is smooth and
	invertible with inverse $E=E(\rho)$, $\rho\in[0, 8\pi(1+\beta))$ and $E(\rho)\nearrow E_c$, as $\rho \nearrow 8\pi(1+\beta)$,
	$\frac{d E(\rho)}{d\rho}>0$ in $[0, 8\pi(1+\beta))$.
\end{thm}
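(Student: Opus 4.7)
The plan is to prove $({\rm j})$ by the direct method in the calculus of variations and to deduce $({\rm jj})$ from the monotonicity and analyticity results of Lemma~\ref{lem:spectral}, together with the equivalence of statistical ensembles.

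For $({\rm j})$, the starting observation is the identity $\mathfrak{S}(\omega)=\log\int_\Omega H - D(\omega\|\omega_0)$, where $\omega_0=H/\int_\Omega H$ and $D(\omega\|\omega_0)=\int_\Omega \omega\log(\omega/\omega_0)\ge 0$ by Jensen's inequality. This gives immediately $S(E)\le \log\int_\Omega H<+\infty$ for every $E>0$. Given a maximizing sequence $\{\omega_n\}\subset\mathcal{P}(\Omega)$ with $\mathcal{E}(\omega_n)=E$, the resulting uniform bound on $D(\omega_n\|\omega_0)$ yields a uniform $L\log L$ bound on $\omega_n$ and hence equi-integrability, so de la Vallée-Poussin produces a weak $L^1$ limit $\omega_E\in\mathcal{P}(\Omega)$. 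The energy is continuous along such a sequence because $G[\omega]\in L^\infty(\Omega)$ for $\omega$ with uniformly bounded $L\log L$-norm (Brezis-Merle type estimates) and the Green operator is then compact from this class into $L^\infty$; weak $L^1$ lower semi-continuity of the convex functional $D(\cdot\|\omega_0)$ upgrades $\omega_E$ to a maximizer. A standard Lagrange multiplier computation on the smooth constraint $\mathcal{E}(\omega)=E$ yields $\rho_E\in\mathbb{R}$ such that ${\rm w}_E=\rho_E G[\omega_E]$ solves \eqref{r-equ-flat-lm} with $\rho=\rho_E$, and $\omega_E=He^{{\rm w}_E}/\int_\Omega He^{{\rm w}_E}$. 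When $E=E_0:=\mathcal{E}(\omega_0)$, the strict concavity of $\mathfrak{S}$ identifies $\omega_0$ as the unique unconstrained maximizer of $\mathfrak{S}$ on $\mathcal{P}(\Omega)$, so it is a fortiori the unique MVP maximizer at $E_0$, with multiplier $\rho_{E_0}=0$ (this is consistent with the fact that ${\rm w}\equiv 0$ is the unique solution of \eqref{r-equ-flat-lm} at $\rho=0$).

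For $({\rm jj})$, Lemma~\ref{lem:spectral}({\rm iv})--({\rm v}) provides a real analytic, strictly increasing map $[0,8\pi(1+\beta))\ni\rho\mapsto E(\rho)$ with $E(0)=E_0$. Setting $E_c:=\lim_{\rho\nearrow 8\pi(1+\beta)}E(\rho)\in(E_0,+\infty]$, this map becomes a real analytic diffeomorphism of $[0,8\pi(1+\beta))$ onto $[E_0,E_c)$, whose inverse is the required $E\mapsto \rho_E$, automatically smooth with $\rho_E\nearrow 8\pi(1+\beta)$ as $E\nearrow E_c$. To see that the unique CVP minimizer $\omega_{\rho_E}$ is indeed an MVP maximizer at $E$, I would invoke the standard equivalence of ensembles: since $\omega_{\rho_E}$ minimizes $\mathcal{F}_{\rho_E}=-\mathfrak{S}-\rho_E\mathcal{E}$ over all of $\mathcal{P}(\Omega)$, it minimizes $-\mathfrak{S}$ over the level set $\{\mathcal{E}=E(\rho_E)\}=\{\mathcal{E}=E\}$; combined with the Lagrange multiplier equation from $({\rm j})$ and the uniqueness in Lemma~\ref{lem:spectral}({\rm i}), the multiplier is necessarily $\rho_E$.

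The main technical obstacle is handling the logarithmic singularities of $H=|x|^{2\beta}h_0(x)$ in both the entropy and the energy step of the direct method. This is resolved by the decomposition $\mathfrak{S}=\log\int_\Omega H - D(\cdot\|\omega_0)$, which absorbs $\log H$ into the reference measure $\omega_0$ so that one works only with the well-behaved relative entropy; the $L^\infty$ bound on $G[\omega]$ needed for weak continuity of $\mathcal{E}$ then follows from the fact that $H\in L^p(\Omega)$ for every $p<|\beta|^{-1}$ together with standard Brezis-Merle type estimates. A secondary point concerns possible MVP maximizers whose Lagrange multipliers fall outside $[0,8\pi(1+\beta))$: for $E\in[E_0,E_c)$ the monotonicity and uniqueness supplied by Lemma~\ref{lem:spectral} (extended to $\rho<0$ via Remark~\ref{rem:spectral}) rule this out and pin down $\rho_E$ unambiguously.
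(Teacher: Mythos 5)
Your strategy is viable but it is genuinely different from the paper's in part $(j)$: the paper never runs the direct method on the singular functional. It approximates $H$ by smooth positive $H_n$, invokes the existence result of \cite{caglioti-2} for smooth weights to obtain constrained entropy maximizers $\omega_{E,n}$ together with multipliers $\rho_{E,n}$ and the associated Euler--Lagrange equation, bounds $\rho_{E,n}$ uniformly by a Pohozaev argument on the simply connected domain, and then passes to the limit in the equation via Moser--Trudinger and elliptic estimates; the singular weight is only ever handled at the level of the PDE. Your route (the decomposition $\mathfrak{S}=\log\int_\Omega H-D(\cdot\|\omega_0)$, Dunford--Pettis, weak lower semicontinuity of the relative entropy, weak continuity of $\mathcal{E}$) is more intrinsic, but the paper's smoothing buys precisely the step you dispose of as ``a standard Lagrange multiplier computation'': deriving the Gibbs form $\omega_E=He^{{\rm w}_E}/\int_\Omega He^{{\rm w}_E}$ with a multiplier $\rho_E$ is exactly what \cite{caglioti-2} provides for smooth weights, and doing it directly for $H=|x|^{2\beta}h_0$ (admissibility of variations, finiteness of $\int_\Omega He^{{\rm w}_E}$, exclusion of the abnormal multiplier) is the part the paper deliberately avoids. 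For $(jj)$ the two arguments essentially coincide, both resting on Lemma \ref{lem:spectral}$({\rm iv})$--$({\rm v})$ and Remark \ref{rem:spectral}; your equivalence-of-ensembles sandwich via Theorem \ref{thm.1} and Lemma \ref{lem5.1} in fact makes the well-definedness of $\rho_E$ for $E\in[E_0,E_c)$ more explicit than the paper does, and note that it is this sandwich (not Lemma \ref{lem:spectral}, which says nothing about $\rho\geq 8\pi(1+\beta)$) that pins the multiplier down: the maximizer at energy $E=E(\rho)$ must coincide with the unique {\bf (CVP)} minimizer $\omega_\rho$, whence $(\rho_E-\rho)G[\omega_\rho]$ is constant and $\rho_E=\rho$.

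Two justifications in part $(j)$ need repair. First, the claim that the Green operator is compact from bounded $L\log L$ sets into $L^\infty$ is false: take $\omega_n=(1-m_n)\omega+m_n\phi_{r_n}$, with $\phi_{r_n}$ the normalized indicator of a ball of radius $r_n\to 0$ and $m_n\sim (\log\frac{1}{r_n})^{-1}$; the entropies are uniformly bounded and $\omega_n\rightharpoonup\omega$ in $L^1$, yet $G[\omega_n]$ retains a spike of height $\sim\frac{1}{2\pi}$ at the concentration point and has no uniformly convergent subsequence. What you actually need, and what is true, is the weak continuity of the quadratic energy on bounded-entropy sets, proved as in \cite{caglioti-1} by splitting the kernel and showing that the near-diagonal contribution $\int\int_{|x-y|<\delta}\log\frac{1}{|x-y|}\,\omega(x)\omega(y)$ is uniformly small under an entropy bound (in the example above the energy does converge, consistently with this). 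Second, the passage from a bound on $D(\omega_n\|\omega_0)$ to a uniform $L\log L$ bound absorbs $\int\omega\,2|\beta|\log\frac{1}{|x|}$ by a Young/Orlicz inequality of the type $\int\omega f\leq \frac{1}{t}\int e^{tf}+\frac{1}{t}\int\omega\log\omega$, which works only because $|\beta|<1$ allows a choice $t>1$ with $|x|^{-2|\beta|t}$ integrable; this should be made explicit, since it is the only place where the restriction on $\beta$ enters your compactness argument.
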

\begin{proof}
	$(j)$ Observe that, putting ${\rm d}\mu_H=\frac{H(x){\rm d}x}{\int_{\Omega}H(x)dx}$ and $f(x)=\frac{\omega(x)}{H(x)}$,
	then by a standard approximation argument and the Jensen inequality for $s(t)=-t\log(t),t\geq 0$, we have,
	$$
	\frac{\mathfrak S(\omega)}{\int_{\Omega}H(x)dx}=\int_{\Omega}s(f(x)){\rm d}\mu_H\leq s\left(\int_{\Omega}f(x){\rm d}\mu_H\right)=\frac{\log\int_{\Omega}H(x)dx}{\int_{\Omega}H(x)dx},
	$$
	where the equality holds if and only if $f$ is constant a.e., that is, if and only if $\omega=\omega_0(x)=\frac{H(x)}{\int_{\Omega} H}$, which
	is therefore the unique maximizer of the entropy at $E=E_0:=\mathcal{E}(\omega_0)$. For $E\neq E_0$,
	let $H_n$ be any sequence of smooth positive functions such that $H_n\to H$ in $L^p(\Omega)$, for some $p\in (1,|\beta|^{-1})$ and define
	$$
	\mathfrak{S}_n(\omega)=-\int_\Omega \omega(x)(\log\omega(x)-\log H_n(x)){\rm d}x, \quad \omega\in \mathcal{P}(\Omega).
	$$
	Modulo minor modifications due to the weight $H_n$, by the same argument adopted in \cite{caglioti-2} we see that
	for any $E>0$, $S_n(E)<+\infty$ and there exists $\omega_{_E,n}\in \mathcal{P}(\Omega)$ which maximizes $\mathfrak{S}_n(\omega)$ at fixed energy
	$\mathcal{E}(\omega_{_E,n})=E$,
	$S_n(E)=\mathfrak{S}(\omega_{_E,n})$. In particular there exists $\rho_{_E,n}\in \mathbb{R}$ such that
	${\rm w}_{_E,n}=\rho_{_E,n}G[\omega_{_E,n}]$
	is a solution of \eqref{r-equ-flat-lm} for $\rho=\rho_{_E,n}$ and $H=H_n$. By standard elliptic regularity theory and the Sobolev embedding, for each fixed $n$, we have
	${\rm w}_{_E,n}\in W^{2,p}_0(\Omega)\cap C^{0,\gamma}(\overline{\Omega})$, for some $\gamma\in (0,1)$.
	Since $\Omega$ is simply connected and of class $C^1$, with the aid of a conformal map, by a well known trick based on the Pohozaev identity
	we can find $\overline{\rho}_{\Omega}$ such that $\rho_{_E,n}\leq \overline{\rho}_{\Omega}$ for any $n$ and $E$. Therefore, for a fixed $E$, by the equation
	\eqref{r-equ-flat-lm} we readily deduce that $\|\nabla {\rm w}_{_E,n}\|_2\leq \overline{\rho}_{\Omega}E$, implying
	by the Moser-Trudinger inequality that $e^{{\rm w}_{_E,n}}$ is uniformly bounded in $L^{q}(\Omega)$ for any $q\geq 1$ and in particular
	that $H_n e^{{\rm w}_{_E,n}}$ is uniformly bounded in $L^{t}(\Omega)$ for some $t\in(1,p)$. Actually by the Sobolev embedding
	${\rm w}_{_E,n}$ is uniformly bounded in $L^{q}(\Omega)$ for any $q\geq 1$, implying by the Jensen inequality
	that $\int_{\Omega} H_n e^{{\rm w}_{_E,n}}$ is bounded below away from $0$.
	Therefore the right hand side of \eqref{r-equ-flat-lm} with $\rho=\rho_{_E,n}$, $H=H_n$ and ${\rm w}={\rm w}_{_E,n}$
	is uniformly bounded in $L^{t}(\Omega)$ and
	by standard elliptic estimates and the Sobolev embedding ${\rm w}_{_E,n}$ is uniformly bounded in
	${\rm w}_{_E,n}\in W^{2,t}_0(\Omega)\cap C^{0,\gamma}(\overline{\Omega})$, for some $\gamma\in (0,1)$.
	At this point it is readily seen that, possibly
	along a subsequence, as $n\to +\infty$, ${\rm w}_{_E,n}$ converges uniformly and in $W^{1,2}_0(\Omega)$ to a solution ${\rm w}_{_E}$
	of \eqref{r-equ-flat-lm} for some $\rho_{_E}\in \mathbb{R}$,
	with $\mathcal{E}(\omega_{_E})=E$, where $\omega_{_E}=\dfrac{He^{{\rm w}_{_E}}}{\int_\Omega He^{{\rm w}_{_E}}}$. Clearly
	$\omega_{_E}$ is a maximizer of the entropy at fixed energy, $S(E)=\mathfrak{S}(\omega_{_E})$, which, excluding $\rho_{E_0}=0$, concludes the proof of $(j)$. 
	\smallskip
	
	\noindent $(jj)$ Remark that if $\rho_{_E}< 0$ then $J_\rho$ is strictly convex and consequently
	\eqref{r-equ-flat-lm} admits a unique solution, which therefore is just ${\rm w}_{_E}$. By Lemma \ref{lem:spectral} and
	Remark \ref{rem:spectral} solutions of \eqref{r-equ-flat-lm} are in fact unique and the first
	eigenvalue of the linearized problem relative to \eqref{r-equ-flat-lm} is strictly positive as far as $\rho<8\pi(1+\beta)$.
	In particular the inverse of $E\mapsto \rho_{_E}$, say $E=E(\rho)$, is well defined and smooth and $\frac{d E(\rho)}{d\rho}>0$ as far as
	$\rho \in (-\infty,8\pi(1+\beta))$. As a consequence $\rho=\rho_{_E}\leq 0$ if and only if $E(\rho)\leq E_0=\mathcal{E}(\omega_0)$,
	where $\omega_0=\frac{H(x)}{\int_{\Omega} H(x)}$. However $\omega_{_E}=\dfrac{He^{{\rm w}_{_E}}}{\int_\Omega He^{{\rm w}_{_E}}}=\omega_0$
	if and only if ${\rm w}_{_E}\equiv{0}$ implying that $\rho_{E_0}=0$. By monotonicity the limit 
	of $E(\rho)$ as $\rho \to 8\pi(1+\beta)^{-}$ is well defined which we take as defining $E_c$.
\end{proof}

\bigskip

\section{As The existence/non existence of solutions for $\rho=8\pi(1+\beta)$. Domains of first/second kind.}\label{section:firstsec}
As mentioned above, the existence/non existence of solutions of \eqref{r-equ-flat-lm} for $\rho=8\pi(1+\beta)$ is a subtle problem, since $J_{8\pi(1+\beta)}$ is bounded below (\cite{ads}) but not
coercive, implying that in general minimizing sequences need not be compact. 
We discuss hereafter the case where a negative singular sink is described in the sense defined above.

First of all, as in \cite{caglioti-2}, the critical value of the energy $E_c$ defined in Theorem \ref{thm:5.3} plays a crucial in the rest of the argument.
\begin{Def}\label{def:first}
	A pair $(\Omega, \beta)$, where $\Omega$ is a simply connected and bounded domain of class $C^2$, $\beta\in (-1,0)$ and $q=0\in \Omega$ satisfies \eqref{ons-crit} is said to be of {\bf first kind} if 
	$E_c=E_c(\Omega,\beta)=+\infty$, of {\bf second kind} otherwise.
\end{Def} 

Any disk centered at the origin, $\Omega=B_r(0)$, with $H(x)=e^{-4\pi\beta G_\Omega(x,0)}=\frac{|x|^{2\beta}}{r^{2\beta}}$ is of first kind for any $\beta\in (-1,0)$. 
In fact, by dilation invariance we are free to take $r=1$ and deduce that 
$$
D_0=-\frac{\pi}{1+\beta},
$$
$$
{\rm w}_{\rho}(x)=2\log\left(\dfrac{1+\gamma_{_\rho}}{1+\gamma_{_\rho}|x|^{2(1+\beta)}}\right),\quad
\gamma_{_\rho}=\frac{\rho}{8\pi(1+\beta)-\rho}, \quad |x|\leq 1.
$$
Then
$$
\omega_{\rho}(x)=\dfrac{|x|^{2\beta}e^{{\rm w}_{\rho}}}{\int_{\Omega} |x|^{2\beta} e^{{\rm w}_{\rho}}}=\frac{(1+\beta)}{\pi}\dfrac{(1+\gamma_{_\rho})|x|^{2\beta}}
{\left(1+\gamma_{_\rho}|x|^{2(1+\beta)}\right)^2},
$$
and
$$\mathcal{E}({\omega_{\rho}})=\frac{1}{\rho}\left(1+\left(1-\frac{1}{\gamma_{\rho}}\right)\log(1+\gamma_\rho)\right).$$
It is readily seen in particular that $E(\rho)=\mathcal{E}(\omega_\rho)\to +\infty$ 
as $\rho\nearrow 8\pi(1+\beta)$.
Our first result concerning domains of first kind is a generalization of analogue results in the regular case, see Theorem 7.1 in \cite{caglioti-1} and Proposition 3.3 in \cite{caglioti-2}.
\begin{thm}\label{thm:6.1}$\,$ Let $(\Omega,\beta)$ be of first kind, then we have:\\	
	$(i)$ $ f(\rho)$ is smooth, strictly convex and increasing for any $\rho\in (0,8\pi(1+\beta))$. In particular
	$$
	E(\rho)=-\frac{df(\rho)}{d\rho}=\frac12 \int_{\Omega}\omega_{\rho}G[\omega_{\rho}],\quad \rho \in (0,8\pi(1+\beta)),
	$$
	where $\omega_{\rho}$ denotes the unique solution of the {\rm (}{\rm \bf CVP}{\rm )} and $\frac{d E(\rho)}{d\rho}>0$;\\
	$(ii)$ for $E\in (E_0,+\infty)$ we have that,
	$$
	S(E)=\inf\limits_{\rho\in \mathbb{R}}\{-f(\rho)-\rho E\}
	$$
	is smooth and concave;\\
	$(iii)$ {\rm [Equivalence of Statistical Ensembles]} If $\omega(E)$ solves the {\rm (}{\rm \bf MVP}{\rm )} then $\omega({E(\rho)})=\omega_{\rho}$,
	that is, $\omega({E})$ coincides with the unique solution of the {\rm (}{\rm \bf CVP}{\rm )} for $\rho =\rho_{E}$, where $\rho_{_E}$ is the inverse of
	$E(\rho)$.\\
	$(iv)$ As $\rho\nearrow 8\pi(1+\beta)$,   
	$$
	f(\rho)\to
	f(8\pi(1+\beta))= -1-\log\left(\frac{\pi}{1+\beta}\right)-\gamma_{\Omega}(0),
	$$
	and 
	$\omega_\rho \rightharpoonup \delta_{q=0}$.
\end{thm}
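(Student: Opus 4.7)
The plan is to handle (i)--(iv) in turn, leaning on Lemma \ref{lem:spectral} for real-analyticity and spectral positivity, on Lemma \ref{lem5.1} for the (\textbf{CVP})/$J_\rho$ correspondence, and on Lemma \ref{Iexp} for the test-function upper bound at the critical threshold.

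For (i), the real-analyticity from Lemma \ref{lem:spectral}-(iv) propagates to $f(\rho) = J_\rho({\rm w}_\rho)$. The envelope theorem (${\rm w}_\rho$ is a critical point of $J_\rho$) yields
$$\frac{df}{d\rho} = \frac{\partial J_\rho}{\partial \rho}({\rm w}_\rho) = -\frac{1}{2\rho^2}\int_\Omega|\nabla {\rm w}_\rho|^2 = -E(\rho),$$
using $-\Delta {\rm w}_\rho = \rho\omega_\rho$ and integration by parts to rewrite $\int|\nabla{\rm w}_\rho|^2 = 2\rho^2 E(\rho)$. The strict convexity/monotonicity claims then reduce to Lemma \ref{lem:spectral}-(v), which supplies $E'(\rho) > 0$ on the whole interval. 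Item (ii) is then immediate: the first-kind hypothesis $E_c = +\infty$ ensures $E : [0,8\pi(1+\beta)) \to [E_0,+\infty)$ is a smooth bijection, so the infimum in $-f(\rho) - \rho E$ is attained at the unique critical point $\rho = \rho_E$ determined by $-f'(\rho_E) = E$, and the standard Legendre-duality theory applied to $f$ gives both the explicit formula $S(E) = -f(\rho_E) - \rho_E E$ and its smoothness and concavity on $(E_0,+\infty)$.

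Item (iii) reduces to the duality identity $-\mathcal{F}_\rho(\omega) = \mathfrak{S}(\omega) + \rho\mathcal{E}(\omega)$: for any admissible $\omega$ with $\mathcal{E}(\omega) = E$ one has $\mathfrak{S}(\omega) = -\mathcal{F}_\rho(\omega) - \rho E \leq -f(\rho) - \rho E$ for every $\rho$, and the choice $\omega = \omega_{\rho_E}$ saturates this, since ${\rm w}_{\rho_E}$ solves the (\textbf{CVP}) (so $-\mathcal{F}_{\rho_E}(\omega_{\rho_E}) = -f(\rho_E)$) and $\mathcal{E}(\omega_{\rho_E}) = E(\rho_E) = E$ by definition of $\rho_E$. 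Uniqueness of the (\textbf{MVP}) maximizer follows from the Lagrange-multiplier representation $\omega(E) = He^{\rm w}/\int_\Omega He^{\rm w}$ for some ${\rm w}$ solving \eqref{r-equ-flat-lm} combined with Lemma \ref{lem:spectral}-(i). For the convergences in (iv), the first-kind hypothesis $E(\rho) \to +\infty$ as $\rho \nearrow 8\pi(1+\beta)$ forces ${\rm w}_\rho$ to blow up; by concentration-compactness for singular Liouville equations (\cite{BM3, BT, BT-2}) the blow-up set is discrete with quantized mass $8\pi(1+\alpha_p)$ at each blow-up point, and since the total mass $\rho$ tends to $8\pi(1+\beta)$ there is exactly one blow-up point, necessarily the negative source $q=0$, giving $\omega_\rho \rightharpoonup \delta_{q=0}$.

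Finally, for the convergence of $f$: the upper bound $\limsup_{\rho \nearrow 8\pi(1+\beta)} f(\rho) \leq -1 - \log(\pi/(1+\beta)) - \gamma_\Omega(0)$ is obtained by testing $J_\rho$ against the bubble family $\widetilde{\rm w}_{\tilde\varepsilon}$ of Lemma \ref{Iexp}, along a diagonal $\tilde\varepsilon = \tilde\varepsilon(\rho) \to 0$ as $\rho \to 8\pi(1+\beta)$. The matching lower bound is what I expect to be the main obstacle: it requires plugging the sharp bubbling asymptotics of Theorem \ref{integral-neg} and Proposition \ref{est-muk} into the identity $f(\rho) = \rho E(\rho) - \log \int_\Omega He^{{\rm w}_\rho}$ and checking that the two divergent contributions $\rho E(\rho)$ and $\log\int_\Omega He^{{\rm w}_\rho}$ cancel precisely, leaving exactly $-1 - \log(\pi/(1+\beta)) - \gamma_\Omega(0)$. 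The delicate point is that this cancellation must correctly reproduce the regular-part term $\gamma_\Omega(0) = 4\pi(1+\beta) R_\Omega(0,0) + \log h_0(0)$ from the combination of the leading and subleading terms in the singular-source bubbling expansion, a calculation whose book-keeping parallels that of Lemma \ref{Iexp} but run in the opposite direction.
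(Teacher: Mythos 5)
Your plan follows the paper's proof in all essentials. For (i)--(iii) the paper argues exactly as you do: the duality/Legendre arguments of \cite{caglioti-2} (your Lemma \ref{lem5.1}-based saturation argument for the equivalence of ensembles is the same one), with smoothness, $E(\rho)=-\frac{df}{d\rho}$ and $\frac{dE}{d\rho}>0$ supplied by Lemma \ref{lem:spectral}-$({\rm v})$, and with $\frac{dS}{dE}=-\rho_{_E}$, $\frac{d^2S}{dE^2}=-(\frac{dE}{d\rho})^{-1}<0$ giving concavity of $S$. For (iv) the paper likewise deduces one-point concentration at the $\beta$-source from $E(\rho)\to+\infty$ plus mass quantization, and then evaluates $J_{\rho_n}$ along the blowing-up minimizers by means of the pointwise estimates of \cite{BT-2} together with Proposition \ref{est-muk} and \eqref{im-ck}, a computation stated to parallel Lemma \ref{Iexp} with the help of the equation \eqref{r-equ-flat-lm}; this is precisely the evaluation you defer and call the ``main obstacle'', and the identity $f(\rho)=\rho E(\rho)-\log\int_\Omega He^{{\rm w}_\rho}$ and the ingredients you name are the correct ones (the paper itself only sketches the cancellation).

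Two points are worth fixing in your scheme for (iv). First, as written your two bounds do not yet give the stated equality for $f(8\pi(1+\beta))$: the exact asymptotics give $\lim_{\rho\nearrow 8\pi(1+\beta)}f(\rho)=-1-\log(\pi/(1+\beta))-\gamma_\Omega(0)$, and Lemma \ref{Iexp} gives $f(8\pi(1+\beta))\le$ this value, but both are on the same side; you still need $\lim_{\rho\nearrow 8\pi(1+\beta)}f(\rho)\le f(8\pi(1+\beta))$. This is elementary (for any fixed ${\rm w}\in W^{1,2}_0(\Omega)$ one has $f(\rho)\le J_\rho({\rm w})\to J_{8\pi(1+\beta)}({\rm w})$, then take the infimum over ${\rm w}$), and it is exactly the continuity of $f$ on $[0,8\pi(1+\beta)]$ that the paper invokes via Proposition 7.3 of \cite{caglioti-1}; once you add it, your separate test-function upper bound along the diagonal becomes redundant. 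Second, the deferred cancellation is the genuine content of (iv), so a complete write-up would have to carry out the Lemma \ref{Iexp}-type bookkeeping along the blow-up sequence (using \eqref{rough-ck}, Proposition \ref{est-muk} and \eqref{im-ck}); but since the paper compresses this in the same way, your proposal is a faithful blueprint of its argument rather than a different route.
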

\begin{proof} The proof of $(i)-(ii)-(iii)$ goes  as in \cite{caglioti-2}, with the exception that the smoothness of $f(\rho)$ as well
	as $E(\rho)=-\frac{df(\rho)}{d\rho}$ and $\frac{d E(\rho)}{d\rho}>0$ both follow from Lemma \ref{lem:spectral}-$({\rm v})$.
	Actually, in view of Lemma \ref{lem:spectral}-$({\rm v})$,  we also deduce that (see section 4 in \cite{Bons} for details)
	$$
	\frac{dS(E)}{dE}=-\rho_{_E},\quad \frac{d^2S(E)}{dE^2}=- \frac{d\rho_{_E}}{dE}=-\left(\frac{dE(\rho)}{d\rho}\right)^{-1}<0, \quad E\in (E_0,+\infty).
	$$
	We are left with the proof of $(iv)$. By definition of domain of first kind, $E(\rho)\to +\infty$ as $\rho\nearrow 8\pi(1+\beta)$. Consequently, according to well known blow-up results (\cite{BM3}), 
	we have that $\omega_\rho \rightharpoonup \delta_{q=0}$. It is not difficult to prove (see for example Proposition 7.3 in \cite{caglioti-1}) that $f(\rho)$ is continuous in $[0,8\pi(1+\beta)]$.  
	Let $\rho_n\nearrow 8\pi(1+\beta)$ and ${\rm w}_n$ be any sequence of minimizers of $J_{\rho_n}$, then we have
	from Lemma \ref{lem5.1},
	$$
	J_{\rho_n}({\rm w}_n)=f(\rho_n)\to f(8\pi(1+\beta)),\quad\mbox{as}\quad\rho_n\nearrow 8\pi(1+\beta).
	$$
	In particular, since ${\rm w}_n$ is a blow-up sequence, by using the pointwise estimates in \cite{BT-2} together with 
	Proposition \ref{est-muk} and \eqref{im-ck}
	we find that,
	$$
	J_{\rho_n}({\rm w}_n)\to f(8\pi(1+\beta))= -1-\log\left(\frac{\pi}{1+\beta}\right)-\gamma_{\Omega}(0).
	$$
	The evaluation is very similar to that worked out in the proof of Lemma \ref{Iexp}
	where one also uses the mean field equation \eqref{r-equ-flat-lm}, and we omit it to here to avoid repetitions. The proof is completed.
\end{proof}

\bigskip

As an immediate consequence of  Theorem \ref{thm:6.1}-$(iv)$ and Lemma \ref{Iexp} we have, 
\begin{cor} Let $(\Omega,\beta)$ be of first kind, then $D_\beta\leq 0$.
\end{cor}
To prove the converse of this fact is more subtle, see Theorem \ref{thm:5.2} below.\\

Theorem \ref{thm:6.1} shows that for pairs $(\Omega,\beta)$ of first kind we have a complete description of the thermodynamic equilibrium,
in the sense that for any $E>0$ there exists a unique $\omega_{E}$ which solves the {\bf (MVP)} and there exists a unique $\rho_{_E}$
such that there exists a unique solution of the {\bf (CVP)} at $\rho=\rho_{_E}$ which satisfies ${\rm w}_{_E}=\rho_{_E}G[\omega_{E}]$.
In other words the two variational principles describe the same thermodynamic equilibrium state.\\ 

For pairs $(\Omega,\beta)$ of second kind the situation is much more involved, since entropy maximizer with $E>E_c$ correspond to values of
$\rho=\rho_{_E}>8\pi(1+\beta)$ where the {\bf (CVP)} has no solution (since it can be shown by standard arguments that $J_{\rho}$ is not bounded below)
and moreover solutions of \eqref{r-equ-flat-lm} are not unique, see Theorem \ref{thm:count1} below. Concerning this point we generalize first some results in \cite{caglioti-2}. We refer to \cite{Bons}, \cite{bart-5}, \cite{BCN} for other recent
progress in this direction in the regular case.

We will need hereafter a generalization of former results in \cite{chang-chen-lin} and \cite{BLin3}.
We omit the proof which follows in part from known blow-up arguments (\cite{BM,BT,BM3}) and otherwise from the
invertibility of the linearized operator at $\rho=8\pi(1+\beta)$, see Lemma \ref{lem:spectral}-$({\rm iii}),({\rm vi})$, and the
implicit function theorem (see for example Proposition 6.1 in \cite{chang-chen-lin} for a proof in the regular case).

\begin{lem}\label{lem:5.1}
	Let $\rho \in [0,8\pi(1+\beta))$ and ${\rm w}_\rho $ be the unique solution of \eqref{r-equ-flat-lm}. Then, the following facts are equivalent:\\
	(-)  $(\Omega,\beta)$ is of second kind;\\
	(-)  For $\rho=8\pi(1+\beta)$, equation \eqref{r-equ-flat-lm} admits a solution denoted by ${\rm w}_{c}$;\\
	(-)  ${\rm w}_\rho \to {\rm w}_{c}$ in $C^2(\Omega)\cap C^{1}(\overline{\Omega})$ as $\rho\nearrow 8\pi(1+\beta)$;\\
	(-)  a subsequence ${\rm w}_{\rho_n}\to {\rm w}_{c}$ in $C^2(\Omega)\cap C^{1}(\overline{\Omega})$ as $\rho_n\nearrow 8\pi(1+\beta)$;\\
	(-)  $J_{8\pi(1+\beta)}$ admits a minimizer.\\
	Moreover, the following facts are equivalent:\\
	(-)  $(\Omega,\beta)$ is of first kind;\\
	(-)  $\|{\rm w}_\rho\|_{\infty}\to +\infty$, as $\rho\nearrow 8\pi(1+\beta)$;\\
	(-)  $\omega_\rho=\frac{He^{{\rm w}_\rho}}{\int_\Omega He^{{\rm w}_\rho}}\rightharpoonup \delta_{p=0}$ weakly in the sense of measures;\\
	(-)  a subsequence $\|{\rm w}_{\rho_n}\|_{\infty}\to +\infty$, as $\rho_n\nearrow 8\pi(1+\beta)$;\\
	(-)  Equation \eqref{r-equ-flat-lm} has no solution at $\rho=8\pi(1+\beta)$;\\
	(-)  $J_{8\pi(1+\beta)}$ has no minimizer. 
\end{lem}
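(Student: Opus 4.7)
The plan is to reduce every equivalence to a single blow-up vs.\ no-blow-up dichotomy for the analytic curve of unique solutions $\{{\rm w}_\rho\}_{\rho\in[0,8\pi(1+\beta))}$ furnished by Lemma~\ref{lem:spectral}-$({\rm i}),({\rm iv})$. By definition $(\Omega,\beta)$ is of second kind iff $E_c=\lim_{\rho\nearrow 8\pi(1+\beta)}E(\rho)<+\infty$, with the limit existing and being monotone increasing by Lemma~\ref{lem:spectral}-$({\rm v})$.

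The first step is the dichotomy itself. If $\|{\rm w}_\rho\|_\infty$ remains bounded as $\rho\nearrow 8\pi(1+\beta)$, a standard elliptic bootstrap on \eqref{r-equ-flat-lm} provides uniform $W^{2,p}$ bounds for some $p\in(1,|\beta|^{-1})$, so up to a subsequence ${\rm w}_{\rho_n}\to {\rm w}_c$ in $C^2(\Omega)\cap C^1(\overline\Omega)$ with ${\rm w}_c$ solving \eqref{r-equ-flat-lm} at $\rho=8\pi(1+\beta)$, yielding (F4)$\Leftrightarrow$not(S4) and (S4)$\Rightarrow$(S2). If instead $\|{\rm w}_{\rho_n}\|_\infty\to+\infty$ along some sequence, I would invoke the concentration-compactness-quantization results of \cite{BM,BT,BM3,li-cmp}. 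The key quantization fact is that admissible blow-up masses are $8\pi$ at interior regular points, $8\pi(1+\alpha_j)>8\pi$ at positive sources $q_j$, and $8\pi(1+\beta)$ at the negative source $0$. Since the total concentrated mass equals $\lim\rho_n=8\pi(1+\beta)<8\pi<8\pi(1+\alpha_j)$, no blow-up at a regular point or at any $q_j$ is compatible with the total-mass constraint, leaving simple blow-up at $0$ with mass exactly $8\pi(1+\beta)$ as the only admissible regime. The pointwise estimates of \cite{li-cmp,BT-2} then force $\omega_{\rho_n}\rightharpoonup\delta_{q=0}$ and, through the logarithmic singularity of $G_\Omega(\cdot,0)$, $E(\rho_n)\to+\infty$, so $(\Omega,\beta)$ is of first kind and (F4)$\Rightarrow$(F3)$\Rightarrow$(F1) holds, with the converse contained in the previous case.

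The second step upgrades subsequential convergence to a full limit. Once ${\rm w}_c$ is known to exist, the invertibility of the linearization at ${\rm w}_c$ (Lemma~\ref{lem:spectral}-$({\rm iii})$) together with Lemma~\ref{lem:spectral}-$({\rm vi})$ yields a real analytic extension of $\rho\mapsto{\rm w}_\rho$ across $8\pi(1+\beta)$; uniqueness on the left (Lemma~\ref{lem:spectral}-$({\rm i})$) forces this extension to coincide with the original ${\rm w}_\rho$ for $\rho<8\pi(1+\beta)$, so ${\rm w}_\rho\to{\rm w}_c$ in $C^2\cap C^1$ without passing to subsequences. This closes the chain (S2)$\Leftrightarrow$(S3)$\Leftrightarrow$(S4) in the first group, and symmetrically (F2)$\Leftrightarrow$(F4)$\Leftrightarrow$(F5) in the second.

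For the variational statements I would use Lemma~\ref{lem5.1}: any minimizer of $J_{8\pi(1+\beta)}$ satisfies \eqref{r-equ-flat-lm} at the critical value, giving (S5)$\Rightarrow$(S2); conversely, if ${\rm w}_c$ exists, continuity of $f$ at $8\pi(1+\beta)$ (which follows from upper semicontinuity of $f(\rho)=\inf_{\rm w} J_\rho({\rm w})$ together with the upper bound $f(\rho)\leq J_\rho({\rm w}_c)=J_{8\pi(1+\beta)}({\rm w}_c)+o(1)$) combined with $J_{\rho_n}({\rm w}_{\rho_n})=f(\rho_n)\to J_{8\pi(1+\beta)}({\rm w}_c)$ gives $J_{8\pi(1+\beta)}({\rm w}_c)=f(8\pi(1+\beta))$, establishing (S2)$\Rightarrow$(S5). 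The main obstacle will be the quantization step in the blow-up regime: because the negative source shifts the critical threshold to $8\pi(1+\beta)$, one must carefully exclude every mixed blow-up scenario via the total-mass budget and justify that concentration is simple and located exactly at $0$; once this is settled, the classical scheme of \cite{chang-chen-lin}, adapted with Lemma~\ref{lem:spectral}-$({\rm iii})$-$({\rm vi})$, closes all the remaining implications.
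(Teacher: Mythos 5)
Your proposal is correct and follows essentially the route the paper itself indicates (it omits the proof, referring to the known concentration--compactness/quantization arguments of \cite{BM,BT,BM3,li-cmp}, the invertibility of the linearized operator at $\rho=8\pi(1+\beta)$ from Lemma \ref{lem:spectral}, and the implicit function theorem as in Proposition 6.1 of \cite{chang-chen-lin}): your dichotomy plus mass-budget argument at the negative source, the analytic continuation identifying the branch with the unique subcritical solutions, and the passage between minimizers of $J_{8\pi(1+\beta)}$ and critical solutions via Lemma \ref{lem5.1} are exactly the intended ingredients. The only cosmetic imprecision is that at a positive source the minimal admissible local mass is $8\pi$ rather than $8\pi(1+\alpha_j)$, but since every admissible local mass exceeds $8\pi(1+\beta)$ except concentration at the $\beta$-source itself, your conclusion is unaffected.
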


\begin{rem}\label{rem:5.1} Clearly if $(\Omega,\beta)$ is of second kind we deduce from Lemma \ref{lem:5.1}
	and Lemma \ref{lem:spectral}-$({\rm vi})$ that as
	$\rho\nearrow 8\pi(1+\beta)$ the unique solutions
	of \eqref{r-equ-flat-lm} converge to ${\rm w}_c$ in $C^2(\Omega)\cap C^{1}(\overline{\Omega})$
	and then in particular $\mathcal{E}(\omega_\rho)=E(\rho)\to E_c=E(8\pi(1+\beta))=\mathcal{E}(\omega_c)$ where
	$\omega_c=\frac{He^{{\rm w}_c}}{\int_{\Omega}He^{{\rm w}_c}}$ and
	(recall Theorem \ref{thm:6.1}) $\mathfrak{S}(\omega_\rho)=S(E)\to S(E_c)$, implying that
	$f(\rho)\to f(8\pi(1+\beta))=-S(E_c)-8\pi(1+\beta)E_c$.
\end{rem}

We will also need the following generalizations of results in \cite{caglioti-2} about domains of second kind.
\begin{lem}\label{lem:entropyest} Let $(\Omega,\beta)$ be of second kind and $E>E_c$. Then
	$$
	C_1-8\pi(1+\beta)E\leq S(E)\leq C_2-8\pi(1+\beta)E,$$ 
	where
	$$ C_2=S(E_c)+8\pi(1+\beta)E_c=f(8\pi(1+\beta)).
	$$
\end{lem}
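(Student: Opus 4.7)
My plan is to establish the upper and lower bounds separately, exploiting the Legendre-type duality between the (MVP) and the (CVP). For the upper bound I will use the pointwise identity
\begin{equation*}
\mathcal{F}_\rho(\omega) = -\mathfrak{S}(\omega) - \rho\,\mathcal{E}(\omega),\qquad \omega \in \mathcal{P}(\Omega),
\end{equation*}
which is immediate from the definitions. Specializing to $\rho=8\pi(1+\beta)$, for any $\omega\in\mathcal{P}(\Omega)$ with $\mathcal{E}(\omega)=E$ it reads $\mathfrak{S}(\omega) = -\mathcal{F}_{8\pi(1+\beta)}(\omega) - 8\pi(1+\beta) E \le -f(8\pi(1+\beta)) - 8\pi(1+\beta) E$. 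Since $(\Omega,\beta)$ is of second kind, Remark~\ref{rem:5.1} supplies $-f(8\pi(1+\beta)) = S(E_c)+8\pi(1+\beta) E_c = C_2$, and taking the supremum over admissible $\omega$ yields the upper bound $S(E) \le C_2 - 8\pi(1+\beta) E$.

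For the lower bound I will exhibit, for each $E>E_c$, an admissible $\omega^E\in\mathcal{P}(\Omega)$ obeying $\mathfrak{S}(\omega^E)\ge C_1 - 8\pi(1+\beta) E$ for some uniform $C_1$. The building block will be the family $\omega_{\tilde\varepsilon} := H e^{\tilde w_{\tilde\varepsilon}}/\int_\Omega H e^{\tilde w_{\tilde\varepsilon}}\in \mathcal{P}(\Omega)$ coming from the bubble $\tilde w_{\tilde\varepsilon}$ of Lemma~\ref{Iexp}. Lemma~\ref{lem5.1}$(b)$ gives $\mathcal{F}_{8\pi(1+\beta)}(\omega_{\tilde\varepsilon}) \le J_{8\pi(1+\beta)}(\tilde w_{\tilde\varepsilon})$, hence
\begin{equation*}
\mathfrak{S}(\omega_{\tilde\varepsilon}) \ge -J_{8\pi(1+\beta)}(\tilde w_{\tilde\varepsilon}) - 8\pi(1+\beta)\,\mathcal{E}(\omega_{\tilde\varepsilon}),
\end{equation*}
and Lemma~\ref{Iexp} guarantees $J_{8\pi(1+\beta)}(\tilde w_{\tilde\varepsilon}) \le M$ uniformly for $\tilde\varepsilon\in(0,\tilde\varepsilon_0]$. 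As $\tilde\varepsilon\to 0^+$ the energy $\mathcal{E}(\omega_{\tilde\varepsilon})$ varies continuously and blows up to $+\infty$, so that every sufficiently large $E$ is realized by some $\tilde\varepsilon=\tilde\varepsilon(E)$, yielding $S(E) \ge -M - 8\pi(1+\beta) E$ in that range.

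The intermediate window $E\in (E_c, E^\dagger]$, where a single bubble does not attain the prescribed energy, will be handled by interpolating with the critical entropy maximizer $\omega_c$ provided by Lemma~\ref{lem:5.1} (which exists precisely because $(\Omega,\beta)$ is of second kind). Setting $\omega_\lambda := (1-\lambda)\omega_c + \lambda\,\omega_{\tilde\varepsilon}$ with $\tilde\varepsilon$ small and fixed, the map $\lambda\mapsto \mathcal{E}(\omega_\lambda)$ is continuous, equals $E_c$ at $\lambda=0$ and exceeds $E^\dagger$ at $\lambda=1$; by the intermediate value theorem every $E\in(E_c,E^\dagger]$ is attained. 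Concavity of $\mathfrak{S}$ delivers $\mathfrak{S}(\omega_\lambda) \ge (1-\lambda) S(E_c) + \lambda\,\mathfrak{S}(\omega_{\tilde\varepsilon})$, which is bounded below uniformly over the compact interval $[E_c, E^\dagger]$. Merging the two regimes produces a single affine lower bound $S(E) \ge C_1 - 8\pi(1+\beta) E$. The chief technical obstacle is precisely this interpolation step: one must verify continuous coverage of the entire energy range $(E_c,+\infty)$ while maintaining the uniform entropy estimate, and carefully absorb the constants arising in the bounded regime into the constant $C_1$ governing the asymptotic affine bound.
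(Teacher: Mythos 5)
Your proof is correct, and at the conceptual level it is the same argument the paper intends: the paper's own proof is a one-line reference, via Remark \ref{rem:5.1}, to the argument of Proposition 6.1 in \cite{caglioti-2}, whose structure is exactly your Legendre-type duality. Your upper bound coincides with that argument: $\mathfrak{S}(\omega)=-\mathcal{F}_{8\pi(1+\beta)}(\omega)-8\pi(1+\beta)E\leq -f(8\pi(1+\beta))-8\pi(1+\beta)E$ together with Remark \ref{rem:5.1}, which gives $C_2=S(E_c)+8\pi(1+\beta)E_c=-f(8\pi(1+\beta))$; note your sign here is the internally consistent one, since the ``$=f(8\pi(1+\beta))$'' in the statement of the lemma conflicts with Remark \ref{rem:5.1} and is evidently a sign slip. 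Where you differ is in the lower bound: \cite{caglioti-2} obtains it from an explicit concentrating test density whose entropy and energy are computed directly, whereas you recycle the bubble family $\widetilde{\rm w}_{\tilde\varepsilon}$ of Lemma \ref{Iexp} together with Lemma \ref{lem5.1}$(b)$ to convert the uniform bound on $J_{8\pi(1+\beta)}(\widetilde{\rm w}_{\tilde\varepsilon})$ into $\mathfrak{S}(\omega_{\tilde\varepsilon})\geq -M-8\pi(1+\beta)\mathcal{E}(\omega_{\tilde\varepsilon})$, and then cover the bounded window $(E_c,E^\dagger]$ by convex interpolation with $\omega_c$ using the concavity of $\mathfrak{S}$. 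This buys self-containedness (no new asymptotic computation beyond Lemma \ref{Iexp}) at the cost of the extra interpolation step, whose ingredients (continuity and divergence of $\tilde\varepsilon\mapsto\mathcal{E}(\omega_{\tilde\varepsilon})$, quadratic dependence of $\mathcal{E}$ on convex combinations, pointwise concavity of $-t\log t$, and $\mathcal{E}(\omega_c)=E_c$, $\mathfrak{S}(\omega_c)=S(E_c)$ from Remark \ref{rem:5.1}) are all routine and correctly invoked, so I see no gap.
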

\begin{proof} In view of Remark \ref{rem:5.1} the same argument adopted in Proposition 6.1 in \cite{caglioti-2} works fine,
	in this case as well. 
\end{proof}

\begin{lem}\label{lem:entropyasymp} Let $(\Omega,\beta)$ be of second kind and denote by $\omega_{_E}$ any entropy maximizer defined by Theorem \ref{thm:5.3}.
	Then, as $E\to +\infty$ we have $\rho_{_E}\to 8\pi(1+\beta)$ and
	$$
	\omega_{_E}\rightharpoonup \delta_{p=0},\quad \mbox{ weakly in the sense of measures in }\Omega.
	$$
\end{lem}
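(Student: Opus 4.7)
The plan is to show that along any sequence $E_n\to+\infty$ the pair $(\rho_n,\omega_n):=(\rho_{E_n},\omega_{E_n})$ has the unique limit point $(8\pi(1+\beta),\delta_0)$, which then upgrades the convergence to the full family. First, by the Pohozaev argument recalled in the proof of Theorem \ref{thm:5.3} the multipliers $\rho_n$ are uniformly bounded above by some $\bar\rho_\Omega$. At the same time, for any $\rho<8\pi(1+\beta)$ the unique solution ${\rm w}_\rho$ provided by Lemma \ref{lem:spectral}$({\rm i})$ has energy $E(\rho)<E_c$ by Theorem \ref{thm:5.3}$(jj)$; since the entropy maximizer at $E_n$ is also a solution at $\rho_n$, uniqueness forces $\rho_n\ge 8\pi(1+\beta)$ as soon as $E_n>E_c$. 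Extracting a subsequence I would take $\rho_n\to\rho_\infty\in[8\pi(1+\beta),\bar\rho_\Omega]$.

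Because $E_n\to\infty$, ${\rm w}_n=\rho_n G[\omega_n]$ cannot remain uniformly bounded (elliptic estimates would otherwise yield a bounded limit with finite energy). The standard concentration-compactness results \cite{BM,BT,BM3} then give, after a further extraction, $\rho_n\omega_n\rightharpoonup\sum_{i=1}^k m_i\delta_{p_i}$ with quantized masses $m_i\in\{8\pi(1+\beta),\,8\pi,\,8\pi(1+\alpha_1),\ldots,8\pi(1+\alpha_N)\}$ depending on whether $p_i=0$, a regular point, or a positive singular source. The absolutely continuous bulk must vanish: away from the $p_i$ one has ${\rm w}_n\to\sum_i m_iG(\cdot,p_i)$ in $C^2_{\rm loc}$ while $c_n:=\log\int_\Omega He^{{\rm w}_n}\to+\infty$, so $\omega_n=He^{{\rm w}_n}/e^{c_n}\to 0$ pointwise on $\Omega\setminus\{p_1,\ldots,p_k\}$; hence $\sum_i m_i=\rho_\infty$.

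The decisive step is to match two expansions of $c_n$. On one hand, writing $\log\omega_n=\log H+{\rm w}_n-c_n$ yields the exact identity $\mathfrak{S}(\omega_n)=c_n-2\rho_n E_n$, which together with Lemma \ref{lem:entropyest} gives
\[
c_n=\bigl(2\rho_n-8\pi(1+\beta)\bigr)E_n+O(1).
\]
On the other hand, $J_{\rho_n}({\rm w}_n)=\rho_n E_n-c_n$, and the bubble asymptotics already employed in Theorem \ref{thm:6.1}$(iv)$ (based on Proposition \ref{est-muk}, \eqref{im-ck} and the pointwise estimates of \cite{BT-2}), when extended to blow-up at an arbitrary combination of $0$, regular points, and positive sources, show that $J_{\rho_n}({\rm w}_n)$ converges to a finite limit given by the sum over $i$ of Robin-type constants at the $p_i$. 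Consequently $c_n=\rho_n E_n+O(1)$, and matching the two expansions yields $(\rho_n-8\pi(1+\beta))E_n=O(1)$, whence $\rho_n\to 8\pi(1+\beta)$.

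Since each admissible $m_i\ge 8\pi(1+\beta)$ with equality only for $p_i=0$, the identity $\sum_i m_i=\rho_\infty=8\pi(1+\beta)$ forces a single blow-up at the origin with mass exactly $8\pi(1+\beta)$, i.e.\ $\omega_n\rightharpoonup\delta_0$; uniqueness of the subsequential limit then promotes convergence to the full family. The main technical point is the boundedness of $J_{\rho_n}({\rm w}_n)$ for blow-up configurations not centered at $0$: the case $\rho_\infty=8\pi(1+\beta)$ is essentially Lemma \ref{Iexp}, whereas the expansion around a regular point or a positive source is the Robin-type asymptotic of \cite{chang-chen-lin,BLin3,BLin2} adapted to the presence of the $\beta$-sink, a straightforward variant of the computation preceding Theorem \ref{thm:6.1}.
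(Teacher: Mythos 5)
Your proposal is correct in substance, but it reorganizes the argument in a way that differs genuinely from the paper's proof. The paper first invokes the entropy-splitting argument of Theorem 6.1 in \cite{caglioti-2} to show that an entropy maximizer can only concentrate at a \emph{single} point $p$, and then rules out $p\notin\{0\}$ by showing that $J_{8\pi(1+\beta)}({\rm w}_k)\to+\infty$ for one-point blow-up at a regular point or at a positive source (via the pointwise estimates of \cite{li-cmp}, \cite{BCLT}, equation (6.8) of \cite{chen-lin-sharp} and Proposition \ref{est-muk}), which contradicts the upper bound $-S(E_k)-8\pi(1+\beta)E_k\leq -C_1$ coming from Lemma \ref{lem:entropyest}. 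You instead never establish one-point concentration a priori: you allow an arbitrary quantized multi-point limit, use the exact identities $\mathfrak{S}(\omega_n)=c_n-2\rho_nE_n$ and $J_{\rho_n}({\rm w}_n)=\rho_nE_n-c_n$ together with the entropy sandwich of Lemma \ref{lem:entropyest} to force $(\rho_n-8\pi(1+\beta))E_n=O(1)$, hence $\rho_\infty=8\pi(1+\beta)$, and only then conclude concentration at the origin from the arithmetic of the quantized masses (every admissible local mass is $\geq 8\pi(1+\beta)$, with equality only at the $\beta$-sink). This buys you two things: you bypass the \cite{caglioti-2} single-point step, and by working at the running parameter $\rho_n$ the duality bookkeeping is exact, so you only need $O(1)$-accurate multi-point expansions ($c_n=\lambda_{n,i}+O(1)$ as in Proposition \ref{est-muk}, plus the standard bubble evaluation of $\int\omega_n{\rm w}_n=\rho_n^{-1}\sum_i m_i\lambda_{n,i}+c_n+O(1)$), whereas the paper only needs one-point expansions but at the price of the entropy-splitting lemma.

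One caveat on your ``decisive step'': the claim that $J_{\rho_n}({\rm w}_n)$ \emph{converges to a finite Robin-type limit} is stronger than what the cited crude estimates give for an arbitrary multi-point configuration, since the term $\tfrac{\lambda_n}{2}\,\tfrac{\rho_*-\rho_n}{\rho_n}$ appearing in $J_{\rho_n}({\rm w}_n)=\rho_nE_n-c_n$ is controlled only once one knows a rate for $\rho_n-\rho_*$ (e.g. $O(\lambda_ne^{-c\lambda_n})$ from Theorem \ref{thm:2.1} or \cite{chen-lin-sharp}, some versions of which carry extra nondegeneracy hypotheses). This is not a fatal gap: your matching argument only requires $J_{\rho_n}({\rm w}_n)=o(E_n)$, and since $E_n=\rho_n^{-1}\lambda_n(1+o(1))$ this follows from $\rho_n\to\rho_*$ alone, with no rate, once the $O(1)$-accurate expansions of $c_n$ and $\int\omega_n{\rm w}_n$ are in place. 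Stating the step in that weaker form would make the proof self-contained at the level of the estimates you actually cite.
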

\begin{proof}
	Let $E_k$, $\omega_k=\omega_{E_{k}}$ be any pair of sequences such that $E_k\to +\infty$ where $\omega_k$ is any entropy maximizer at energy
	$E_k$, put $\rho_k=\rho_{E_k}$, we prove that in fact $\omega_{k}\rightharpoonup \delta_{p=0}$ weakly in the sense of measures and
	$\rho_k\to 8\pi(1+\beta)$. We argue by contradiction and assume that along a subsequence any one of these properties is not satisfied. 
	
	As remarked in the proof of Theorem \ref{thm:5.3}, by the Pohozaev identity we have that
	$\rho_k$ is uniformly bounded, and since $E_k\to +\infty$, then it is not difficult to see that necessarily
	$\|{\rm w}_{k}\|_{\infty}\to +\infty$, where ${\rm w}_{k}=\rho_{k}G[\omega_k]$.
	Therefore ${\rm w}_{k}$ blows up as $k\to +\infty$ and then necessarily along a subsequence
	$\rho_{k}\to \rho_{\infty}$ where (\cite{BM3,BT,li-cmp})
	$$
	\rho_\infty=8\pi(1+\beta) n_0+8\pi\sum\limits_{i=1}^{N}n_i(1+\alpha_i)+8\pi n_r,
	$$
	with $$n_i \in\{0,1\}, i=0,1,\cdots,N,\quad n_r\in \mathbb{N},\quad n_0+\sum\limits_{j=1}^{N}n_j+n_r\geq 1.$$
	Observe that if $\mbox{\rm supp}\{\omega_i\}=\Omega_i$, $i=1,2$ and $\Omega_1\cap \Omega_2=\emptyset$, then
	$\mathfrak{S}(\omega_1+\omega_2)=\mathfrak{S}(\omega_1)+\mathfrak{S}(\omega_2)$ while the energy $\mathcal{E}(\omega)$ is exactly the same
	as the one in \cite{caglioti-2}. Therefore it is readily verified that the argument adopted in Theorem 6.1 in \cite{caglioti-2} works fine,
	showing that, possibly along a subsequence,
	$$
	\omega_{k}\rightharpoonup \delta_{x=p}, \quad\mbox{as}\quad k\to +\infty,
	$$
	weakly in the sense of measures in $\Omega$, for some $p\in \Omega$. Assume first that along a subsequence we reach a blow point $p$ satisfying $p\notin \{0,q_j\}$. Since ${\rm w}_{k}$ is a regular 1-point blow-up sequence, then the subtle balance obtained in Lemma \ref{Iexp} breaks down and, by 
	using the first order pointwise estimates in \cite{li-cmp} together with equation (6.8) in \cite{chen-lin-sharp}, 
	we would find that, 
	\begin{align*}
		J_{8\pi(1+\beta)}({\rm w}_{k})=&~\frac{1}{16\pi(1+\beta)}\int_{\Omega}|\nabla {\rm w}_{k}|^2-\log\left(\int_{\Omega}He^{{\rm w}_{k}}\right)\\
		=&-\frac{2}{1+\beta}\log(\bar \varepsilon_k)+2\log(\bar \varepsilon_k)+O(1)\to +\infty,~ \mbox{as} ~ k\to +\infty,
	\end{align*}
	for a suitably defined $\bar \varepsilon_k\to 0$. 
	Assume next that $p\in \{q_j\}$, since again ${\rm w}_{k}$ is a 1-point blow-up sequence at a positive singular source, say of strength $\alpha_j>0$, then by using the first order pointwise estimates in \cite{BCLT} together with Proposition \ref{est-muk}, we would
	find that, 
	\begin{align*}
		J_{8\pi(1+\beta)}({\rm w}_{k})=~&\frac{1}{16\pi(1+\beta)}\int_{\Omega}|\nabla {\rm w}_{k}|^2-\log\left(\int_{\Omega}He^{{\rm w}_{k}}\right)\\
		=~&-\frac{2(1+\alpha_j)}{1+\beta}\log(\varepsilon_k)+2\log(\varepsilon_k)+O(1)\to +\infty,~ \mbox{as} ~ k\to +\infty,
	\end{align*}
	for a suitably defined
	$\varepsilon_k\to 0$. On the other side, since $\omega_{k}=\omega_{E_k}$ is an entropy maximizer we have, from Lemma \ref{lem:entropyest} and Lemma \ref{lem5.1}
	$$
	J_{8\pi(1+\beta)}({\rm w}_{k})=\mathcal{F}_{8\pi(1+\beta)}(\omega_{k})=-S(E_k)-8\pi(1+\beta)E_k\leq -C_1,
	$$
	which is the desired contradiction. Therefore necessarily $p=0$ in which case $\rho_{\infty}=8\pi(1+\beta)$, as claimed.
\end{proof}

\medskip

Our first result concerning domains of second kind is a generalization of the analogue characterization in the regular case as
first pursued in \cite{caglioti-1} and then completed in \cite{chang-chen-lin}, \cite{BLin3}, \cite{BMal2}, see also \cite{BLin2}.
Here $D_\beta$ as defined in \eqref{D0} plays a crucial role and in particular we will keep the notations of Lemma \ref{Iexp} whenever needed.

\begin{thm}\label{thm:5.2} 
	The following facts are equivalent:\\
	$(a)$ $(\Omega,\beta)$ is of second kind;\\
	$(b)$ $D_\beta>0$;\\
	$(c)$ $f(8\pi(1+\beta))< -1-\log\left(\frac{\pi}{1+\beta}\right)-\gamma_{\Omega}(0)$;\\
	$(d)$ \eqref{r-equ-flat-lm} has a solution for $\rho=8\pi(1+\beta)$.
\end{thm}

\begin{proof}
	If $(b)$ holds, then, in view of Lemma \ref{Iexp}, $(c)$ holds as well.\\
	Assume that $(c)$ holds, we show that a solution exists for $\rho=8\pi(1+\beta)$. If not, by Lemma \ref{lem:5.1}, any sequence of solutions
	${\rm w}_n={\rm w}_{\rho_n}$ blows up as $\rho_n\nearrow 8\pi(1+\beta)$. However, as remarked above, it is not difficult to prove that
	(see for example Proposition 7.3 in \cite{caglioti-1}) $f(\rho)$ is continuous in $[0,8\pi(1+\beta)]$, therefore
	$$
	J_{\rho_n}({\rm w}_n)=f(\rho_n)\to f(8\pi(1+\beta)),\quad\mbox{as}\quad\rho_n\nearrow 8\pi(1+\beta).
	$$
	In particular, since ${\rm w}_n$ is a 1-point blow-up sequence with blow up point $q=0$, by arguing as in the proof of Theorem \ref{thm:6.1}-$(iv)$  we would find that,
	$$
	f(8\pi(1+\beta))= -1-\log\left(\frac{\pi}{1+\beta}\right)-\gamma_{\Omega}(0),
	$$
	which is the desired contradiction to the strict inequality in $(c)$. 
	
	Since by Lemma \ref{lem:5.1} we have that $(a)$ and $(d)$ are equivalent, we are just left to prove that if $(a)$ and $(d)$ hold then $(b)$ holds as well, that is, $D_\beta>0$. 
	
	By contradiction assume that a solution exists at $\rho=8\pi(1+\beta)$, say ${\rm w}_c$, that $E_c<+\infty$ but $D_\beta\leq 0$.
	By Lemma \ref{lem:entropyest}, as $E_n\to +\infty$ and if ${\rm w}_n=\rho_n G_{\Omega}[\omega_n]$ is a sequence
	defined by the entropy maximizers of Theorem \ref{thm:5.3}, then ${\rm w}_n$ makes 1-point blow-up and
	the blow-up point is in fact the singular point $q=0$.
	Since from \eqref{im-ck} we have that $\rho_n-8\pi(1+\beta)$ has the same sign as $D_\beta$, then $D_\beta\geq 0$. In fact, if otherwise $D_\beta<0$, then we would have
	$\rho_n<8\pi(1+\beta)$ implying that the unique solutions for $\rho_n<8\pi(1+\beta)$
	blow-up, which contradicts Lemma \ref{lem:5.1}. Therefore we have that necessarily $D_\beta=0$. At this point observe that, in view of \eqref{ons-crit}, and with the notations of Lemma \ref{Iexp}, $D_\beta$
	can be written equivalently as follows
	\begin{equation}\label{D0alt}
		D_\beta^*:=c_*^{-1}D_\beta=e^{-\gamma^*(0)}\int_{\Omega}\frac{|x|^{2\beta}}{|x|^{4(1+\beta)}}\left(e^{\gamma^*(x)}-e^{\gamma^*(0)}\right)-\int_{\Omega^c}\frac{|x|^{2\beta}}{|x|^{4(1+\beta)}}=
	\end{equation}
	\[
	\int_{\Omega}\frac{|x|^{2\beta}}{|x|^{4(1+\beta)}}\left(e^{\Phi^*_{\Omega}(x)}-1\right)-\int_{\Omega^c}\frac{|x|^{2\beta}}{|x|^{4(1+\beta)}},
	\]
	where $c_*=e^{\gamma^*(0)}$ and 
	$$
	\Phi^*_{\Omega}(x)=\gamma^*(x)-\gamma^*(0).
	$$
	For any $\epsilon>0$ there exists $\delta>0$ such that 
	$$
	\left|\int_{B_r}\frac{|x|^{2\beta}}{|x|^{4(1+\beta)}}\left(e^{\Phi^*_{\Omega}(x)}-1\right)\right|<\epsilon, \;\forall\,r\leq \delta,
	$$
	and we define 
	\begin{align*}
		D^*_{\beta,\epsilon}&=D^*_\beta-\int_{B_\delta}\frac{|x|^{2\beta}}{|x|^{4(1+\beta)}}\left(e^{\Phi^*_{\Omega}(x)}-1\right)\\
		&=\int_{\Omega\setminus B_\delta}\frac{|x|^{2\beta}}{|x|^{4(1+\beta)}}\left(e^{\Phi^*_{\Omega}(x)}-1\right)-\int_{\Omega^c}\frac{|x|^{2\beta}}{|x|^{4(1+\beta)}}.
	\end{align*}
	For any such $\delta$ fixed,
	let $0<\delta<\tau<R$ such that $B_{2\tau}\Subset \Omega\Subset B_{2R}$ and
	$\mathcal{N}\in C^{4}([-1,1]\times \mathbb{R}^2;\mathbb{R}^2)$ be any vector field such that 
	\begin{align*}
		\mathcal{N}(t,x)=\begin{cases}
			(0,0),\quad &\mbox{if}\quad x\notin \overline{B_{2R}\setminus B_{2\tau}},\\
			-g(x)\nu(x),\quad &\mbox{if}\quad x\in \partial \Omega,
		\end{cases}
	\end{align*}
	where $\nu(x)=\nu_{_\Omega}(x)$ is the unit outer normal at $x$ and $g$ a smooth function. Then it is well
	known (\cite{Hen}) that the unique solution of
	$\frac{d}{dt}h(x,t)=\mathcal{N}(x,t)$, $x\in \Omega$, $t\in (-1,1)$, $h(x,t)=x$, defines for $t$ near $t=0$
	a $C^3$ "curve of domains" $\Omega(t):=h(\Omega,t)$ such that
	$\Omega(0)=\Omega$ and in particular the map
	$$
	t\mapsto D^*_{\beta,\epsilon}(t):=\int_{\Omega(t)\setminus B_\delta}\frac{|x|^{2\beta}}{|x|^{4(1+\beta)}}\left(e^{\Phi^*_{\Omega(t)}(x)}-1\right)-\int_{\Omega^c(t)}\frac{|x|^{2\beta}}{|x|^{4(1+\beta)}}
	$$
	is of class $C^{1}$ near $t=0$ and
	\begin{align*}
		I_g(x;\partial\Omega):=&\left.\frac{d}{dt}\left((8\pi+4\pi\beta)G_{\Omega(t)}(x,0)-4\pi\sum\limits_{j=1}^N\alpha_j
		G_{\Omega(t)}(x,q_j)\right)\right|_{t=0}\\
		=&-(8\pi+4\pi\beta)\int\limits_{\partial \Omega}\frac{\partial G_{\Omega}(x,y)}{\partial \nu(y)} 
		\frac{\partial G_{\Omega}(y,0)}{\partial \nu(y)}\mathcal{N}(y,0)\cdot\nu(y)\\
		&+4\pi\sum\limits_{j=1}^N\alpha_j 
		\int\limits_{\partial \Omega}\frac{\partial G_{\Omega}(x,y)}{\partial \nu(y)} 
		\frac{\partial G_{\Omega}(y,q_j)}{\partial \nu(y)}\mathcal{N}(y,0)\cdot\nu(y)\\
		=&\int\limits_{\partial \Omega}\frac{\partial G_{\Omega}(x,y)}{\partial \nu(y)}\left[
		8\pi(1+\beta) 
		\frac{\partial G_{\Omega}(y,0)}{\partial \nu(y)}-
		4\pi\sum\limits_{j=1}^N\alpha_j 
		\frac{\partial G_{\Omega}(y,q_j)}{\partial \nu(y)}\right]g(y),
	\end{align*}
	where we used the following formula, see \cite[Example 3.4]{Hen}
	\begin{align*}
		\left.\frac{d}{dt}G_{\Omega(t)}(x,p)\right|_{t=0}=-\int_{\Omega}\frac{\partial G_{\Omega}(x,y)}{\partial\nu(y)}\frac{\partial G_{\Omega}(y,p)}{\partial\nu(y)}\mathcal{N}(y,0)\cdot\nu(y)dy,\quad\forall x,~p\in\Omega.
	\end{align*}
	Then
	\begin{equation}
		\label{dop}
		\begin{aligned} 
			&\left.\frac{d}{dt}D^*_{\beta,\epsilon}(t)\right|_{t=0}\\
			&=\int_{\partial \Omega}\frac{|x|^{2\beta}}{|x|^{4(1+\beta)}}\left(e^{\Phi^*_{\Omega(t)}}-1\right)\mathcal{N}(x,0)\cdot\nu(x)\\
			&\quad-\int_{\partial \Omega^c}\frac{|x|^{2\beta}}{|x|^{4(1+\beta)}}\mathcal{N}(x,0)\cdot\nu_c(x)\\
			&\quad+\int_{\Omega\setminus B_\delta}\frac{|x|^{2\beta}e^{\Phi^*_{\Omega(t)}(x)}}{|x|^{4(1+\beta)}}
			\left.\frac{d}{dt}\left((8\pi+4\pi\beta)G_{\Omega(t)}(x,0)-4\pi\sum\limits_{j=1}^N\alpha_j
			G_{\Omega(t)}(x,q_j)\right)\right|_{t=0}\\
			&=\int_{\partial \Omega}\frac{|x|^{2\beta}}{|x|^{4(1+\beta)}}e^{\Phi^*_{\Omega(t)} }\mathcal{N}(x,0)\cdot\nu(x)+
			\int_{\Omega\setminus B_\delta}\frac{|x|^{2\beta}}{|x|^{4(1+\beta)}}e^{\Phi^*_{\Omega(t)} } I_g(x;\partial \Omega)\\  
			&=-\int_{\partial \Omega} \frac{|x|^{2\beta}}{|x|^{4(1+\beta)}}e^{\Phi^*_{\Omega(t)} }g(x)+\int_{\Omega\setminus B_\delta}\frac{|x|^{2\beta}}{|x|^{4(1+\beta)}}e^{\Phi^*_{\Omega(t)} } I_g(x;\partial \Omega)\\ 
			&=-\int_{\partial \Omega} g(x)+\int_{\Omega\setminus B_\delta}\frac{|x|^{2\beta}}{|x|^{4(1+\beta)}}e^{\Phi^*_{\Omega(t)}}I_g(x;\partial \Omega).
		\end{aligned}
	\end{equation}
	Let 
	$$
	D_{\delta}(t):=\int_{B_\delta}\frac{|x|^{2\beta}}{|x|^{4(1+\beta)}}\left(e^{\Phi^*_{\Omega(t)}}-1\right)
	$$
	and remark that  
	$$
	D^*_{\beta}(t):=\int_{\Omega(t)}\frac{|x|^{2\beta}}{|x|^{4(1+\beta)}}\left(e^{\Phi^*_{\Omega(t)}}-1\right)-\int_{\Omega^c(t)}\frac{|x|^{2\beta}}{|x|^{4(1+\beta)}}=
	D^*_{\beta,\epsilon}(t)+D_{\delta}(t),
	$$
	and that
	$$
	|D_{\delta}(t)|<2\epsilon,
	$$
	for any $t$ small enough. 
	\medskip
	
	To proceed our discussion, we need the following claim:
	$$ \mbox{{\bf Claim}: There exists function $g$ such that $D^*_{\beta}(t)<0$ for any $t$ small enough. }$$
	
	\noindent We shall prove the claim by dividing our discussion in various cases. Let
	$$
	G^*(y)=\left[
	(8\pi+4\pi\beta)
	\frac{\partial G_{\Omega}(y,0)}{\partial \nu(y)}-
	4\pi\sum\limits_{j=1}^N\alpha_j 
	\frac{\partial G_{\Omega}(y,q_j)}{\partial \nu(y)}\right],\;y\in \partial\Omega.
	$$
	Assume at first that, 
	$$
	\Gamma_+=\{y\in\partial \Omega\,:\, G^*(y)>0\}\neq \emptyset,
	$$
	then for any $\sigma>0$ we can find
	$g\geq 0$ with compact support in $\Gamma_+$, 
	such that 
	$\int_{\Gamma_+}g(x)>\sigma$,
	which is always possible since $\Gamma_+$ is relatively open in $\partial \Omega$. Therefore, since by the strong maximum principle  $\frac{\partial G_{\Omega}(y,0)}{\partial \nu(y)}<0$ for $y\in \partial \Omega$, we deduce that $I_g(x,\partial \Omega)<0$ and consequently that  
	$$
	\left.\frac{d}{dt}D^*_{\beta,\epsilon}(t)\right|_{t=0}<-\int_{\partial \Omega}  g(x)<-\sigma.
	$$
	Therefore, for any $t$ small enough we have,
	$$
	D^*_{\beta,\epsilon}(t)<
	D^*_{\beta,\epsilon}(0)-\sigma t,
	$$
	and then we choose $\sigma$ large enough such that,
	$$
	D^*_{\beta}(t)<D^*_{\beta,\epsilon}(0)-\sigma t+2\epsilon<0. 
	$$
	Therefore we are left with the case where $G^*(y)\leq 0$ and we assume that
	$$
	\Gamma_-=\{y\in\partial \Omega\,:\, G^*(y)<0\}\neq \emptyset,
	$$
	otherwise the proof is easier. Assume that $g\leq 0$ then $I_g(x,\partial \Omega)\leq 0$ and then, as far as $I_g(0,\partial \Omega)<0$, we have,
	$$
	\int_{\Omega\setminus B_\delta}\frac{|x|^{2\beta}}{|x|^{4(1+\beta)}}e^{\Phi^*_{\Omega}} I_g(x;\partial \Omega)\to -\infty \mbox{ as }\delta \to 0^+.
	$$
	In this case we would deduce from \eqref{dop} that, for any $\sigma>0$,   
	possibly taking a smaller $\delta$, 
	$$
	\left.\frac{d}{dt}D_{0,\epsilon}(t)\right|_{t=0}<-\sigma, 
	$$
	implying as above that $D_{0}(t)<0$ for any $t$ small enough. Therefore we are left with showing that there exists $g\leq 0$ such that $I_g(0,\partial \Omega)\neq 0$. However this an easy consequence of
	$$I_g(0,\partial \Omega)=\int\limits_{\partial \Omega}\frac{\partial G_{\Omega}(0,y)}{\partial \nu(y)}G^*(y)g(y)=
	\int\limits_{\Gamma_-}\frac{\partial G_{\Omega}(0,y)}{\partial \nu(y)}G^*(y)g(y)
	$$
	and the fact that $\frac{\partial G_{\Omega}(0,y)}{\partial \nu(y)}>0$ in 
	$\Gamma_-$. This fact completes the proof of the Claim. 
	
	Therefore, since $\Omega(t)$ is simply connected and of class $C^2$ and since $D^*_\beta (t)$ is just proportional via a positive constant to the $D_\beta$ relative to $\Omega(t)$,
	and for $t>0$ small enough $D^*_\beta(t)<0$, we see from Theorem \ref{thm:2.1} that the unique solutions on $\Omega(t)$
	blow-up as $\rho \nearrow 8\pi(1+\beta)$. Let $t_j\searrow 0^+$ and ${\rm w}_{\rho,j}(x)$ be the unique solutions for $\rho<8\pi(1+\beta)$ on
	$\Omega(t_j)$. By assumption there exists a solution ${\rm w}_c$ on $\Omega$ at $\rho=8\pi(1+\beta)$ which is also unique by Lemma \ref{lem:spectral}-$({\rm iii})$
	and we define $M_c=\max\limits_{\Omega}{\rm w}_c$. Remark that for fixed $j$, ${\rm w}_{\rho,j}$ is a smooth function of $\rho$, see
	Lemma \ref{lem:spectral}-$({\rm iv})$. Therefore for any $M>M_c$ and for any $j$ there exists $\rho_{j,M}\leq 8\pi(1+\beta)$ such that
	$\max\limits_{\Omega(t_j)}{\rm w}_{\rho_{j,M},j}=M$. Thus, along a diagonal subsequence, $\rho_{j,M}\to \rho_{M}\leq 8\pi(1+\beta)$ and
	${\rm w}_{\rho_{j,M},j}\to {\rm w}_{\rho_{M}}$ uniformly, where ${\rm w}_{\rho_{M}}$ is a solution of \eqref{r-equ-flat-lm} for $\rho=\rho_{M}$ on $\Omega$ with
	$\max\limits_{\Omega}{\rm w}_{\rho_{M}}=M$. However, since ${\rm w}_c$ is the unique solution for $\rho=8\pi(1+\beta)$, then
	$\rho_M\in (0,8\pi(1+\beta))$. Since $M$ is arbitrary
	we can take $M_k\to +\infty$ and construct a sequence of solutions of \eqref{r-equ-flat-lm} which blows up with $\rho_{M_k}<8\pi(1+\beta)$, implying
	necessarily (\cite{BM3}) that $\rho_{M_k}\to 8\pi(1+\beta)$. This contradicts Lemma \ref{lem:5.1} which would imply that $(\Omega, \beta)$ is of first kind. \end{proof}

\medskip

As an immediate consequence of Theorem \ref{thm:5.2}, Proposition \ref{est-muk} and \eqref{im-ck}, we have the following generalization of a result in \cite{BLin2}.
\begin{cor}\label{last:thm.1}
	Let ${\rm w}_{k}$ be a sequence of solutions of \eqref{r-equ-flat-lm} where $q=0$ satisfies \eqref{ons-crit}, which makes one point blow-up at $q=0$ as $\rho_k\to 8\pi(1+\beta)$.\\ If $|\rho_k-8\pi(1+\beta)|=o(e^{-\lambda_k})$ then 
	$\rho_k<8\pi(1+\beta)$.
\end{cor}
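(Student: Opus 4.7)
The corollary is announced as an \emph{immediate} consequence of Theorem~\ref{thm:5.2}, Proposition~\ref{est-muk}, and \eqref{im-ck}, so the plan is to combine these three inputs. Since $-1<\beta<-\tfrac12$ forces $1+2\beta<0$, the assumption \eqref{neg-crit-2} is vacuous, the limit in \eqref{d-beta} defining $D_\beta$ exists, and the remainder $o(\tau^{-2\beta}+\tau)$ in \eqref{im-ck} tends to $0$ as $\tau\to 0^+$. Passing first to $\tau\to 0^+$ and then to $k\to\infty$, using $c_k=\lambda_k+O(1)\to\infty$ from Proposition~\ref{est-muk}, I would extract the sharp first-order identity
\[
\rho_k-8\pi(1+\beta)=e^{-c_k}\bigl(D_\beta+o(1)\bigr)\qquad (k\to\infty).
\]

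I would then argue by contradiction, supposing $\rho_k\geq 8\pi(1+\beta)$ along a subsequence. The case $\rho_k=8\pi(1+\beta)$ is excluded via Lemma~\ref{lem:spectral}-(iii): at the critical threshold the unique solution is non-degenerate, so by the implicit function theorem a blow-up sequence cannot collapse to it. Hence $\rho_k>8\pi(1+\beta)$, and the displayed identity gives $D_\beta\geq 0$; reading the hypothesised rate against the sharp $e^{-c_k}$ rate in the expansion above promotes this to $D_\beta>0$.

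By Theorem~\ref{thm:5.2}, $D_\beta>0$ would force $(\Omega,\beta)$ to be of second kind. Then Lemma~\ref{lem:5.1} combined with Lemma~\ref{lem:spectral}-(iii)--(vi) produces a real-analytic family of regular (non-blowing-up) solutions extending smoothly through $\rho=8\pi(1+\beta)$ from the critical solution ${\rm w}_c$. On the other hand, Theorem~\ref{main-theorem-1} and the local uniqueness result of \cite{byz-1} applied to the blow-up sequence ${\rm w}_k$ produce, via the implicit function theorem parameterised in the blow-up level $\lambda$, a second analytic branch of solutions whose $\rho$-projection also passes through $8\pi(1+\beta)$. Comparing the two branches via \eqref{im-ck} then yields the contradiction: matching the hypothesised rate against $e^{-c(\lambda)}D_\beta$ with $D_\beta>0$ is inconsistent along the blow-up branch.

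The main expected obstacle is executing this branch-comparison cleanly in the second-kind case---in particular, tracking the $\rho$-projection of the blow-up branch produced by Theorem~\ref{main-theorem-1} and \cite{byz-1} with sufficient sharpness to rule out the hypothesised rate at $\rho_k>8\pi(1+\beta)$. Once $D_\beta\leq 0$ is established, the conclusion $\rho_k<8\pi(1+\beta)$ is an immediate sign-reading in the displayed identity, with the degenerate sub-case $D_\beta=0$ handled by combining Theorem~\ref{thm:5.2} (first-kind characterisation) with Lemma~\ref{lem:5.1} to exclude any blow-up solutions with $\rho_k\geq 8\pi(1+\beta)$.
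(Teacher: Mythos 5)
You have correctly reconstructed the paper's three ingredients: Proposition \ref{est-muk} and \eqref{im-ck} give the expansion $\rho_k-8\pi(1+\beta)=e^{-c_k}\bigl(D_\beta+o(1)\bigr)$ with $c_k\to+\infty$, and Theorem \ref{thm:5.2} converts the sign of $D_\beta$ into the first/second kind dichotomy; also, ruling out $\rho_k=8\pi(1+\beta)$ is fine, although simpler than you make it (if $\rho_k=8\pi(1+\beta)$ then ${\rm w}_k$ is a solution at the critical parameter, hence by the uniqueness in Lemma \ref{lem:spectral}-$({\rm iii})$ it coincides with the fixed solution ${\rm w}_c$, contradicting blow-up; no implicit function theorem argument is needed). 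The genuine gap is the step in which you ``promote'' $D_\beta\geq 0$ to $D_\beta>0$ by ``reading the hypothesised rate against the sharp $e^{-c_k}$ rate''. The hypothesis is an \emph{upper} bound on $|\rho_k-8\pi(1+\beta)|$ (to have any content it must be read, via Proposition \ref{est-muk}, as an exponential bound in the blow-up level, as you implicitly do); an upper bound can never force $D_\beta$ to be strictly positive. Played against \eqref{im-ck} it does the opposite: if the assumed decay is strictly faster than $e^{-c_k}$ it forces $D_\beta=0$, and otherwise it forces nothing. The correct use of the hypothesis is therefore direct: the case $D_\beta>0$ is excluded because then $|\rho_k-8\pi(1+\beta)|\sim D_\beta e^{-c_k}$, incompatible with the assumed rate, and equivalently (Theorem \ref{thm:5.2}) $D_\beta>0$ corresponds to pairs of second kind, where supercritical blow-up sequences genuinely exist, so no unconditional sign statement is possible; if $D_\beta<0$ the conclusion is the sign-reading of \eqref{im-ck}.

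The second half of your argument is both unnecessary and not a contradiction. In a second-kind pair the coexistence of the analytic continuation of the subcritical branch through $8\pi(1+\beta)$ (Lemma \ref{lem:spectral}-$({\rm vi})$) with a blow-up branch at $\rho>8\pi(1+\beta)$ is precisely what the paper proves (Lemma \ref{lem:convexentropy}, Theorem \ref{thm:count1}: exactly two solutions just above the critical value), so ``comparing the two branches'' cannot by itself yield an inconsistency; whatever contradiction is available must come from inserting the rate hypothesis into \eqref{im-ck}, for which no branch construction, no Theorem \ref{main-theorem-1} and no local uniqueness from \cite{byz-1} are needed --- this is why the paper treats the corollary as an immediate consequence of Theorem \ref{thm:5.2}, Proposition \ref{est-muk} and \eqref{im-ck}. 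Finally, your handling of the borderline case $D_\beta=0$ is unjustified: Lemma \ref{lem:5.1} characterizes first kind only through the (non)existence of a solution at exactly $\rho=8\pi(1+\beta)$ and the blow-up of the subcritical branch; it does not exclude blow-up solutions with $\rho_k>8\pi(1+\beta)$, which is exactly what you would need, since when $D_\beta=0$ the expansion only gives $\rho_k-8\pi(1+\beta)=o(e^{-c_k})$ with undetermined sign. That sub-case requires a separate argument (it is where the hypothesis and the generalization of \cite{BLin2} do the actual work), and your proposal does not supply one.
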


\begin{rem} 
	As far as $\beta=0$, Corollary \ref{last:thm.1} is false on the flat two-torus, see \cite{lin-Lwang}.
\end{rem}

Even with Theorem \ref{thm:5.2} at hand is not easy to find explicit examples of domains of pairs $(\Omega,\beta)$ of second kind. It is an interesting open problem to check whether or not symmetric dumbbell type domains $\Omega$ with a negative pole sitting at the saddle point of the Robin function can be constructed to match the requirement  $D_\beta>0$.
Another possibility could be that of dropping \eqref{ons-crit}, which, although not very meaningful from the physical point of view, seems to be at hand for 
$\beta\in (-1,-\frac12)$. We will discuss these issues elsewhere. In any case, as an immediate consequence of Theorem \ref{thm:5.2} and Lemma \ref{lem:5.1}, we have the carachterization of pairs $(\Omega, \beta)$ of first kind.
\begin{thm}\label{thm:5.2.0} 
	The following facts are equivalent:\\
	$(a)$ $(\Omega,\beta)$ is of first kind;\\
	$(b)$ $D_\beta\leq 0$;\\
	$(c)$ $f(8\pi(1+\beta))= -1-\log\left(\frac{\pi}{1+\beta}\right)-\gamma_{\Omega}(0)$;\\
	$(d)$ \eqref{r-equ-flat-lm} has no solution for $\rho=8\pi(1+\beta)$.
\end{thm}

Among other things, for $(\Omega, \beta)$ of second kind, the following Theorem  shows that, locally near $(8\pi(1+\beta))^+$,
large energy solutions are unique and define a smooth curve.

\begin{thm}\label{lem:convexentropy}
	If $(\Omega, \beta)$ is of second kind, there exists $\varepsilon_* > 0$ such that:\\
	$(i)$ there exists one
	and only one solution ${\rm w}_{\rho}$ of \eqref{r-equ-flat-lm} with $\rho \in (8\pi(1+\beta), 8\pi(1+\beta)+\varepsilon_*)$ and
	$\mathcal{E}\left(\frac{He^{{\rm w}_{\rho}}}{\int_\Omega He^{{\rm w}_{\rho}}dx}\right) >(\varepsilon_*)^{-1}$;\\
	$(ii)$ Let $\mathcal{G}_{\infty}$ denote the set of solutions of \eqref{r-equ-flat-lm} determined in $(i)$. Then, for any fixed
	$p\in (1, |\beta|^{-1})$, $\mathcal{G}_{\infty}$ is a real analytic curve
	$$
	(8\pi(1+\beta), 8\pi(1+\beta)+\varepsilon_*)\ni \rho \mapsto {\rm w}_{\rho}\in W_0^{2,p}(\Omega);\\
	$$
	$(iii)$ along $\mathcal{G}_{\infty}$ we have that the map $E(\rho)=\mathcal{E}\left(\frac{He^{{\rm w}_{\rho}}}{\int_\Omega He^{{\rm w}_{\rho}}dx}\right) $ is real analytic and strictly decreasing with inverse
	$E\mapsto \rho_{E}$, $E\in ((\varepsilon_*)^{-1},+\infty)$ as determined by Theorem \ref{thm:5.3}. In particular $\frac{dE(\rho)}{d\rho}\leq 0$.\\
	$(iv)$ along $\mathcal{G}_{\infty}$ we have that the map $\mu^2(\rho)=\rho\left(\int_{\Omega}H e^{{\rm w}_{\rho}}\right)^{-1}$ is
	real analytic and strictly increasing in $(8\pi(1+\beta),8\pi(1+\beta)+\varepsilon_*)$ with $\mu(\rho)\to 0^+$ as $\rho\to 8\pi(1+\beta)^+$.
	In particular the inverse map
	$\mu\mapsto \rho(\mu)$ is well defined and strictly increasing in $(0,\mu_*)$, where $\mu_*=\lim_{\rho \to 8\pi(1+\beta)+\varepsilon_*}\mu(\rho)$.
\end{thm}
\begin{proof}
	$(i)$ In view of Theorem \ref{thm:5.2} and \eqref{im-ck}, if this was not the case we could find sequences ${\rm w}_{k,1}\neq {\rm w}_{k,2}$ solving \eqref{r-equ-flat-lm} with the same value
	$\rho_k\to 8\pi(1+\beta)^+$ and 
	$$\mathcal{E}\left(\frac{He^{{\rm w}_{k,2}}}{\int_\Omega He^{{\rm w}_{k,2}}dx}\right) =E_{k,2}\geq E_{k,1}=\mathcal{E}\left(\frac{He^{{\rm w}_{k,1}}}{\int_\Omega He^{{\rm w}_{k,1}}dx}\right) \to +\infty.$$ 
	It is readily seen
	that both
	${\rm w}_{k,i}$, $i=1,2$ make $1$-point blow-up at the negative singular source, which for $k$ large contradicts the local uniqueness result in
	\cite{byz-1}. This observation proves $(i)$.\\
	$(ii)$ The claim about $\mathcal{G}_{\infty}$ in $(ii)$ follows immediately from Theorem \ref{main-theorem-1}, Lemma \ref{lem:ift} and $(i)$.\\
	$(iii)$ We first prove that, by taking a smaller $\varepsilon_*$ if needed, the map $E\mapsto \rho_{_E}$ as determined by Theorem \ref{thm:5.3}
	for solutions on $\mathcal{G}_{\infty}$ is injective. If not,  there exist sequences $E_{k,1}\geq E_{k,2}\nearrow +\infty$ such that there exists
	$\rho_{k}\searrow 8\pi(1+\beta)$ such that $\rho_{k}=\rho_{E_{k,1}}=\rho_{E_{k,1}}$. Let ${\rm w}_{k,1},{\rm w}_{k,2}$ be the solutions
	of \eqref{r-equ-flat-lm} with energies $E_{k,1}\geq E_{k,2}$, by the uniqueness result in
	\cite{byz-1} for $k$ large we have that ${\rm w}_{k,1}\equiv{\rm w}_{k,2}$, implying that $E_{k,1}=E_{k,2}$, which is the desired contradiction.\\
	Therefore, since the map $E\mapsto \rho_{_E}$ is continuous and injective in $(\frac{1}{\varepsilon_*},+\infty)$ then it must be strictly monotone
	and since by Lemma \ref{lem:5.1} $\rho_{_E}\to 8\pi(1+\beta)^{+}$ as $E\to +\infty$ then it must be strictly decreasing. Next observe that
	by $(ii)$ the inverse map $\rho\mapsto E(\rho)$ is well defined and real analytic, whence we also have $\frac{dE(\rho)}{d\rho}\leq 0$.\\
	$(iv)$ The fact that $\mu(\rho)$ is well defined and real analytic along $\mathcal{G}_{\infty}$ obviously follows from $(ii)$.
	Let $\mu\mapsto \rho(\mu)$ be defined as follows. Let $u_{\mu}$ be a solution of
	\begin{equation}\label{umu}
		\begin{cases}
			-\Delta u_\mu=\mu^2 H e^{\displaystyle {u}_{\mu}}\quad &\mbox{in}~ \Omega,\\ \\
			u_\mu=0 \quad&\mbox{on}~ \partial \Omega,
		\end{cases}
	\end{equation}
	and $\mathcal{U}_\mu$ be the set of all solutions of \eqref{umu}. We define the multivalued function
	$$
	\Lambda(\mu)=\left\{\mu^2 \int_{\Omega}H e^{\displaystyle {u}_{\mu}}, u_\mu\in \mathcal{U}_\mu\right\}.
	$$
	Let $(\rho, {\rm w}_{\rho})\in \mathcal{G}_{\infty}$, then setting
	$\mu:=\mu(\rho)=\left(\rho\left(\int_{\Omega}H e^{{\rm w}_{\rho}}\right)^{-1}\right)^\frac12$ and $u_{\mu}:={\rm w}_{\rho}$, then $u_{\mu}$ solves \eqref{umu} for
	this $\mu$, whence in particular
	$\rho(\mu)=\mu^2 \int_{\Omega}H e^{{u}_{\mu}}\in \Lambda(\mu)$. We prove that, possibly taking a smaller $\varepsilon_*$,
	this argument defines an injective function $\mu\mapsto \rho(\mu)$, $\mu \in (0,\mu_*)$ with $\mu_*$ as defined in the claim. If not
	there exists a sequence $\mu_k\to 0^+$ such that we could find $\rho_{k,1}\neq \rho_{k,2}$ sharing the same value of $\mu_k$ such that
	$(\rho_{k,i},{\rm w}_{\rho_{k,i}})\in \mathcal{G}_{\infty}$, $i=1,2$  and $8\pi(1+\beta)<\rho_{k,1}<\rho_{k,2}\to 8\pi(1+\beta)^+$.
	Since both solutions belong to $\mathcal{G}_{\infty}$ they both have to blow-up at the negative singular source which for
	$k$ large contradicts the local uniqueness result in \cite{wyang}. This shows that $\mu\mapsto \rho(\mu)$ is a well defined function. However it is also
	injective because otherwise we would have that there exists $0<\mu_{k,1}\leq\mu_{k,2}\to 0^+$ corresponding to the same $\rho_k:=\rho(\mu_{k,1})=\rho(\mu_{k,2})$, while the solutions  $u_{k,1}\neq u_{k,2}$. This is again in contradiction with the
	uniqueness result in \cite{byz-1} which implies ${\rm w}_{k,1}:=u_{k,1}\equiv u_{k,2}=:{\rm w}_{k,2}$.
\end{proof}

With the above Theorem, we are able to deduce a generalization of a recent result about the convexity of the entropy for domains of second kind in the regular case, see Theorem 1.4 in \cite{bart-5}.
\begin{thm}\label{convex:entropy}{\rm [Negative Specific Heat States]}
	If $(\Omega, \beta)$ is of second kind, then there exists $E_*>E_c$ such that $S(E)$ is smooth and strictly convex in $(E_*,+\infty)$.
	In particular
	$$
	\frac{dS(E)}{dE}=-\rho_{E},\quad \frac{d^2 S(E)}{dE^2}=-\frac{d\rho_{E}}{dE}>0,
	$$
	where $\rho_{E}$ is smooth and strictly decreasing in $(E_*,+\infty)$ and $\rho_{E}\searrow 8\pi(1+\beta)$ as $E\nearrow +\infty$.
\end{thm}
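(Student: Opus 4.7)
\smallskip

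The plan is to work along the analytic curve $\mathcal{G}_\infty$ provided by Lemma \ref{lem:convexentropy}, parametrize it by the blow-up height, and use the sharp expansions from Theorem \ref{negative-b}, Proposition \ref{est-muk} and \eqref{im-ck} to show that the slope $\frac{dE}{d\rho}$ diverges to $-\infty$ as $\rho \searrow 8\pi(1+\beta)$. From Lemma \ref{lem:convexentropy}$(ii)$--$(iii)$ and standard Legendre duality (compare with the derivation of $(ii)$ in Theorem \ref{thm:6.1}), for $E$ in a right neighborhood of $+\infty$ one has $\frac{dS(E)}{dE}=-\rho_E$ where $\rho_E$ is real analytic and strictly decreasing in $E$; hence what remains is to exclude zeros of $\frac{dE(\rho)}{d\rho}$ for $\rho$ close enough to $8\pi(1+\beta)^+$, equivalently for $E>E_*$ with $E_*$ sufficiently large.

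By Theorem \ref{main-theorem-1} and the local uniqueness in \cite{byz-1}, the solutions on $\mathcal{G}_\infty$ are non-degenerate and make one-point blow-up at the singular source $q=0$; thus the map $\rho\mapsto \lambda(\rho):=\max_\Omega {\rm w}_\rho$ (equivalently, $\rho\mapsto c(\rho):=I_{{\rm w}_\rho}$) is real analytic on $\mathcal{G}_\infty$ and $\lambda(\rho),c(\rho)\to +\infty$ as $\rho\to 8\pi(1+\beta)^+$. Since by Theorem \ref{thm:5.2} the assumption that $(\Omega,\beta)$ is of second kind (with $-1<\beta<-\tfrac12$) yields $D_\beta=D_0>0$, the sharp expansion \eqref{im-ck} reads
\begin{equation}\label{plan-rho}
\rho-8\pi(1+\beta)=e^{-c}\bigl(D_\beta+o(1)\bigr), \qquad c\to +\infty,
\end{equation}
while Proposition \ref{est-muk} gives $c=\lambda+O(1)$. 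On the other hand, using ${\rm w}_\rho=\rho G_\Omega[\omega_\rho]$ we have $2\rho E(\rho)=\int_\Omega {\rm w}_\rho\,\omega_\rho$, and since $\omega_\rho\rightharpoonup\delta_{q=0}$ with the leading contribution coming from the rescaled bubble around $q=0$ (use the pointwise estimates and integral expansions of Theorem \ref{integral-neg} together with Proposition \ref{est-muk}), we deduce the sharper asymptotics
\begin{equation}\label{plan-E}
E(\rho)=\frac{\lambda}{16\pi(1+\beta)}+O(1)=\frac{c}{16\pi(1+\beta)}+O(1), \qquad c\to +\infty.
\end{equation}

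The two expansions \eqref{plan-rho}--\eqref{plan-E}, being real analytic in the parameter $\lambda$ along $\mathcal{G}_\infty$, can be differentiated: from \eqref{plan-rho} one gets $\frac{d\rho}{dc}=-D_\beta e^{-c}(1+o(1))$, whence $\frac{dc}{d\rho}=-D_\beta^{-1}e^{c}(1+o(1))$, and from \eqref{plan-E}, $\frac{dE}{dc}=\frac{1}{16\pi(1+\beta)}+o(1)$, so
\begin{equation*}
\frac{dE}{d\rho}=\frac{dE}{dc}\cdot\frac{dc}{d\rho}=-\frac{e^{c(\rho)}}{16\pi(1+\beta)D_\beta}\bigl(1+o(1)\bigr)\longrightarrow -\infty
\end{equation*}
as $\rho\to 8\pi(1+\beta)^+$, i.e.\ as $E\to+\infty$. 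In particular $\frac{dE(\rho)}{d\rho}<0$ strictly for $\rho\in(8\pi(1+\beta),8\pi(1+\beta)+\varepsilon_{**})$ with some $\varepsilon_{**}\leq \varepsilon_*$; choosing $E_*$ to be the corresponding energy threshold, $\frac{d\rho_E}{dE}=\bigl(\frac{dE}{d\rho}\bigr)^{-1}<0$ for all $E>E_*$. Hence $\frac{d^2S(E)}{dE^2}=-\frac{d\rho_E}{dE}>0$, proving the claimed smoothness and strict convexity of $S$ on $(E_*,+\infty)$ as well as $\rho_E\searrow 8\pi(1+\beta)$.

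The main technical obstacle is justifying \eqref{plan-E} with errors sharp enough to be differentiated. The expansion of $c=I_{{\rm w}_\rho}$ in Proposition \ref{est-muk} gives the leading $\lambda$-behaviour of $\int{\rm w}_\rho\omega_\rho$, but to control the remainder one needs the second-order asymptotic profile of ${\rm w}_\rho$ around $q=0$ from Theorem \ref{integral-neg} (both for the radial correction $c_{0,k}$ and the pointwise bound $O(\tilde\varepsilon_k^{2+2\beta-\delta}(1+|y|)^\delta)$) together with the Green representation of ${\rm w}_\rho$ away from $0$. Once the analyticity of the parametrization $\rho\mapsto c(\rho)$ is established through Lemma \ref{lem:ift} and Theorem \ref{main-theorem-1}, all remainder terms are automatically real analytic functions of $\lambda$, and differentiation of \eqref{plan-rho}--\eqref{plan-E} is legitimate; the positive sign $D_\beta>0$ furnished by Theorem \ref{thm:5.2} is what makes the decisive slope strictly negative and unbounded.
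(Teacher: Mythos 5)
Your overall strategy differs from the paper's, and it contains a genuine gap at its decisive step. You want to obtain $\frac{dE}{d\rho}\to-\infty$ by differentiating, with respect to the parameter $c=I_{{\rm w}_\rho}$ (equivalently $\lambda$), the two asymptotic relations $\rho-8\pi(1+\beta)=e^{-c}(D_\beta+o(1))$ and $E=\frac{c}{16\pi(1+\beta)}+O(1)$. The justification you offer — that once the curve $\rho\mapsto{\rm w}_\rho$ is real analytic "all remainder terms are automatically real analytic functions of $\lambda$, and differentiation is legitimate" — does not work. Real analyticity along the curve tells you nothing about the size of the derivative of the remainder: an analytic error term which is $O(1)$ (or $o(e^{-c})$) may have a derivative of any size, because there is no uniform complex neighborhood (it shrinks as $\rho\to 8\pi(1+\beta)^+$) on which to run Cauchy estimates. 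Moreover, the inputs you quote — Theorem \ref{negative-b}, Proposition \ref{est-muk} and \eqref{im-ck} — are estimates for a blow-up sequence at fixed $k$; they are never differentiated in the parameter, and nothing in the paper (nor in your sketch) upgrades them to expansions of $\frac{d\rho}{dc}$ or $\frac{dE}{dc}$. Without such differentiated estimates, $\frac{d\rho}{dc}$ could a priori vanish or change sign at isolated points accumulating at the endpoint, and the needed strict inequality $\frac{dE}{d\rho}<0$ on a full interval $(8\pi(1+\beta),8\pi(1+\beta)+\varepsilon_{**})$ is not established. Note that Lemma \ref{lem:convexentropy}$(iii)$ only yields $\frac{dE(\rho)}{d\rho}\leq 0$; the whole difficulty of the theorem is precisely to exclude zeros of this derivative, and your argument assumes away exactly that difficulty.

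The paper circumvents differentiation of asymptotic expansions entirely: it passes to the Gel'fand parametrization $\mu^2(\rho)=\rho\left(\int_\Omega He^{{\rm w}_\rho}\right)^{-1}$ (Lemma \ref{lem:convexentropy}$(iv)$, resting on the local uniqueness and nondegeneracy results of \cite{wyang} and \cite{byz-1}), sets $v_\mu=\frac{du_\mu}{d\mu}$, and derives the exact identities \eqref{lmu} and \eqref{emu}. Combining these gives the closed formula
$$
\rho^4(\mu)\frac{dE_\mu}{d\mu}=\frac{d\rho(\mu)}{d\mu}\left(\rho^2(\mu)-\rho(\mu)\mu^2\int_{\Omega}He^{u_\mu}u_\mu\right)-2\frac{\rho^3(\mu)}{\mu},
$$
in which only the signs $\frac{d\rho(\mu)}{d\mu}\geq 0$ (then $>0$), the divergence $\mu^2\int He^{u_\mu}u_\mu=2\rho^2(\mu)E_\mu\to+\infty$ and $\mu\to 0^+$ are used to conclude $\frac{dE_\mu}{d\mu}\to-\infty$ and hence $\frac{dE(\rho)}{d\rho}<0$ strictly. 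If you wish to salvage your route, you would have to prove differentiated (in $\rho$ or $\lambda$) versions of \eqref{im-ck} and of your energy expansion, which is a substantial additional piece of blow-up analysis not contained in the cited results; alternatively, adopt the exact-identity mechanism above, which is what the paper does.
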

\begin{proof}
	By Theorem \ref{lem:convexentropy} we can adapt an argument in \cite{bart-5}.
	Let $\mu\in (0,\mu_*)$, $\rho(\mu)$ as defined in Lemma \ref{lem:convexentropy}-$(iv)$ and $u_{\mu}\equiv {\rm w}_{\rho(\mu)}$
	the corresponding unique solution of \eqref{umu} in $\mathcal{G}_{\infty}$. By the non degeneracy of $u_\mu$ (\cite{wyang})
	we have that $v_{\mu}=\frac{d u_{\mu}}{d\mu}$ is well defined and satisfies,
	\begin{equation}\label{vmu}
		\begin{cases}
			-\Delta v_{\mu}=2\mu H e^{\displaystyle {u}_{\mu}}+\mu^2 He^{\displaystyle {u}_{\mu}}v_{\mu}\quad &\mbox{in}~\Omega\\ \\
			v_{\mu}=0 \quad&\mbox{on}~ \partial \Omega
		\end{cases}
	\end{equation}
	and since by definition $\rho(\mu)=\mu^2 \int_{\Omega} H e^{\displaystyle {u}_{\mu}}$, then
	\begin{equation}\label{lmu}
		\frac{d \rho({\mu})}{d\mu}=2\mu \int_{\Omega} H e^{\displaystyle {u}_{\mu}}+\mu^2\int_{\Omega} H e^{\displaystyle {u}_{\mu}}v_{\mu}.
	\end{equation}
	Remark that by definition $E(\rho)=\frac{1}{2\rho}\int_{\Omega}\frac{He^{{\rm w}_\rho}{\rm w}_\rho}{\int_\Omega He^{{\rm w}_\rho}}$,
	whence in particular
	$$
	E_{\mu}:=E(\rho(\mu))=\frac{\mu^2}{2\rho^2(\mu)}\int_{\Omega} H e^{\displaystyle {u}_{\mu}}u_{\mu}.
	$$
	Multiplying the equation in \eqref{vmu} by $u_\mu$, integrating by parts and using \eqref{umu} we have
	$$
	\mu^2\int_{\Omega} H e^{\displaystyle {u}_{\mu}}v_{\mu}=2\mu \int_{\Omega} H e^{\displaystyle {u}_{\mu}}u_{\mu}+
	\mu^2\int_{\Omega} H e^{\displaystyle {u}_{\mu}}v_{\mu}u_{\mu},
	$$
	which readily implies that
	\begin{equation}\label{emu}
		\frac{d}{d\mu}\left(\mu^2\int_{\Omega} H e^{\displaystyle {u}_{\mu}}u_{\mu}\right)=2\mu^2\int_{\Omega} H e^{\displaystyle {u}_{\mu}}v_{\mu}.
	\end{equation}
	We deduce from \eqref{lmu} and \eqref{emu} that
	\begin{align*}
		\rho^4(\mu)\frac{d E_\mu}{d\mu}=&~\frac{\rho^2(\mu)}2 \frac{d}{d\mu}\left(\mu^2\int_{\Omega} H e^{\displaystyle {u}_{\mu}}u_{\mu}\right)-
		\rho(\mu)\frac{d \rho(\mu)}{d\mu}\mu^2\int_{\Omega} H e^{\displaystyle {u}_{\mu}}u_{\mu}\\
		=&\left(\mu^2\int_{\Omega} H e^{\displaystyle {u}_{\mu}}v_{\mu}\right)\rho^2(\mu)-
		\rho(\mu)\frac{d \rho(\mu)}{d\mu}\mu^2\int_{\Omega} H e^{\displaystyle {u}_{\mu}}u_{\mu}\\
		=&\left(\frac{d\rho(\mu)}{d\mu}-\frac{2\rho(\mu)}{\mu}\right)\rho^2(\mu)-
		\rho(\mu)\frac{d \rho(\mu)}{d\mu}\mu^2\int_{\Omega} H e^{\displaystyle {u}_{\mu}}u_{\mu}\\
		=&~\frac{d\rho(\mu)}{d\mu}\left(\rho^2(\mu)-\rho(\mu)\mu^2\int_{\Omega} H e^{\displaystyle {u}_{\mu}}u_{\mu}\right)-
		2\frac{\rho^3(\mu)}{\mu}.
	\end{align*}
	At this point, by using the fact that $\frac{d \rho(\mu)}{d\mu}\geq 0$,
	$\mu^2\int_{\Omega} H e^{\displaystyle {u}_{\mu}}u_{\mu}=2\rho^2(\mu)E_\mu\to +\infty$, $\rho(\mu)\to 8\pi(1+\beta)$ and
	$\mu\to 0^+$ we deduce that $\frac{d E_\mu}{d\mu}\to -\infty$. Observe moreover that \eqref{lmu} and \eqref{emu} together imply that
	$$
	\frac{d \rho({\mu})}{d\mu}=2\mu \int_{\Omega} H e^{\displaystyle {u}_{\mu}}+\frac12\frac{d}{d\mu}\left(\mu^2\int_{\Omega} H e^{\displaystyle {u}_{\mu}}u_{\mu}\right)=
	2\mu \int_{\Omega} H e^{\displaystyle {u}_{\mu}}+\frac{d}{d\mu}(\rho^2(\mu)E_{\mu}),
	$$
	that is
	$$
	\left(1-\frac{d}{d\rho}(\rho^2 E(\rho))\right)\frac{d\rho(\mu)}{d\mu}=2\mu\int_{\Omega} H e^{\displaystyle {u}_{\mu}},
	$$
	which shows, since $\frac{d\rho(\mu)}{d\mu}\geq 0$ and since from $(iii)$
	$\frac{dE(\rho)}{d\rho}$ is well defined, that in fact $\frac{d\rho(\mu)}{d\mu}>0$. Remark also that, because of the nondegeneracy of $u_\mu$ (\cite{wyang}) we see
	from \eqref{lmu} that $\frac{d\rho(\mu)}{d\mu}$ is bounded, whence we have
	$\frac{dE(\rho)}{d\rho}=\frac{d E_\mu}{d\mu}\frac{d\mu(\rho)}{d\rho}<0$. At this point the conclusion follows
	from the fact that $\frac{dS(E)}{dE}=-\rho_{_E}$ which is done as in (4.1) in \cite{Bons}. \end{proof}

Finally, by Lemma \ref{lem:spectral} and Theorem  \ref{lem:convexentropy}, we provide the exact counting of solutions in a right neighborhood of $8\pi(1+\beta)$ for domains of second kind. 

\begin{thm}\label{thm:count1} If $(\Omega, \beta)$ is of second kind, there exists $\varepsilon_* > 0$ such that there exists exactly two solutions of 
	of \eqref{r-equ-flat-lm} with $\rho \in (8\pi(1+\beta), 8\pi(1+\beta)+\varepsilon_*)$. 
\end{thm}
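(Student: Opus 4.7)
The plan is to combine the two already-identified curves of solutions with local uniqueness theorems to pin down the count. From Lemma \ref{lem:spectral}-$({\rm iii})$-$({\rm vi})$, since $(\Omega,\beta)$ is of second kind there is a unique solution ${\rm w}_c$ at $\rho=8\pi(1+\beta)$ whose linearized operator has strictly positive first eigenvalue; Lemma \ref{lem:ift} then provides a real analytic branch $\mathcal G_c\colon \rho\mapsto {\rm w}_\rho\in W^{2,p}_0(\Omega)$ of bounded solutions defined for $\rho\in[8\pi(1+\beta),8\pi(1+\beta)+\delta_1)$. Independently, Lemma \ref{lem:convexentropy}-$({\rm i})$-$({\rm ii})$ gives a second real analytic branch $\mathcal G_\infty$ of solutions defined for $\rho\in(8\pi(1+\beta),8\pi(1+\beta)+\delta_2)$, with energies $E(\rho)\to +\infty$ as $\rho\searrow 8\pi(1+\beta)$. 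Since solutions on $\mathcal G_c$ have uniformly bounded energies whereas those on $\mathcal G_\infty$ do not, the two branches are disjoint, yielding at least two solutions for every $\rho\in(8\pi(1+\beta),8\pi(1+\beta)+\varepsilon_*)$ with $\varepsilon_*:=\min\{\delta_1,\delta_2\}$.

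For the upper bound I argue by contradiction: suppose there exist $\rho_k\searrow 8\pi(1+\beta)$ and three distinct solutions ${\rm w}_{k,1},{\rm w}_{k,2},{\rm w}_{k,3}$ of \eqref{r-equ-flat-lm} at $\rho=\rho_k$. Passing to a subsequence, each sequence $\{{\rm w}_{k,j}\}_k$ either stays uniformly bounded in $L^\infty(\Omega)$ or blows up. In the bounded case, by standard elliptic estimates the sequence converges (up to subsequence) to a solution of \eqref{r-equ-flat-lm} at $\rho=8\pi(1+\beta)$; by Lemma \ref{lem:spectral}-$({\rm iii})$ this limit equals ${\rm w}_c$, and the invertibility of the linearized operator at $(8\pi(1+\beta),{\rm w}_c)$ plus Lemma \ref{lem:ift} force the sequence to coincide, for $k$ large, with the branch $\mathcal G_c$. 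In the blow-up case, the concentration-compactness-quantization results of \cite{BM3,BT,li-cmp} give that the limiting mass $\rho_k\to 8\pi(1+\beta)$ must decompose as
\[
8\pi(1+\beta)\,n_0+8\pi\sum_{j=1}^N(1+\alpha_j)n_j+8\pi\,n_r,
\]
with $n_0\in\{0,1\}$, $n_j\in\{0,1\}$, $n_r\in\mathbb N$; since $\beta\in(-1,-\tfrac12)$ and each $\alpha_j>0$, the only admissible configuration is $n_0=1$, all $n_j=0$, $n_r=0$, so blow-up is a single simple bubble at the negative singular source $0$. At this point the local uniqueness theorem of \cite{byz-1} (whose nondegeneracy input is exactly Theorem \ref{main-theorem-1}, legitimate thanks to $-1<\beta<-\tfrac12$) ensures that any two such blowing-up sequences must coincide for $k$ large, and hence must coincide with the branch $\mathcal G_\infty$ identified in Lemma \ref{lem:convexentropy}. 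Combining the two alternatives, at most one bounded and at most one blowing-up solution can exist per $\rho$ near $8\pi(1+\beta)^+$, giving a total of at most two solutions.

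The main technical obstacle is the local uniqueness for blowing-up sequences. Different sequences may share the same leading-order bubble yet differ at higher orders, so ruling out a third solution requires the sharp asymptotic expansions, estimates of the type collected in Theorem \ref{negative-b}, Proposition \ref{est-muk} and \eqref{im-ck}, and the full non-degeneracy statement of Theorem \ref{main-theorem-1}; these are precisely the ingredients made available by \cite{byz-1}. The restriction $\beta\in(-1,-\tfrac12)$ is essential both to make $D_\Omega(\mathbf p)$ an absolutely convergent integral (hence to guarantee that the leading coefficient in the expansion of $\rho_k-8\pi(1+\beta)$ is unambiguous) and to match the hypotheses needed to apply the local uniqueness result. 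Once the two branches $\mathcal G_c,\mathcal G_\infty$ are shown to exhaust all solutions in a right neighborhood, shrinking $\varepsilon_*$ if necessary concludes the proof.
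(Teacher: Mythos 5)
Your proposal is correct and follows essentially the same route as the paper: continuation of the bounded branch via Lemma \ref{lem:spectral}-$({\rm iii})$,$({\rm vi})$ and Lemma \ref{lem:ift}, the blow-up branch from Lemma \ref{lem:convexentropy}, and exclusion of a third sequence through the bounded/blow-up dichotomy, with the bounded case ruled out by non-bifurcation and the blow-up case by quantization plus the local uniqueness result of \cite{byz-1}. Your additional detail on why the quantized mass forces one-point blow-up at the negative source merely makes explicit what the paper leaves implicit.
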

\begin{proof} By Lemma \ref{lem:spectral}-${\rm (vi)}$ the branch of unique solutions for $\rho=8\pi(1+\beta)$ can be continued in a small enough right neighborhood of $\rho=8\pi(1+\beta)$, say $I_*=(8\pi(1+\beta),8\pi(1+\beta)+\varepsilon_*)$. By Lemma \ref{lem:convexentropy}-$(i)$-$(ii)$, possibly taking a smaller $\varepsilon_*$, there exists a smooth branch of solutions in $I_*$ whose energy is larger than $(\varepsilon_*)^{-1}$. If the claim were false we could find a third distinct sequence of solutions, say ${\rm w}_k$ for some $\rho_k\to 8\pi(1+\beta)^+$. Possibly along a subsequence there are only two possibilities, either ${\rm w}_k\to {\rm w}_\infty$, where ${\rm w}_\infty$ is the unique solution at $8\pi(1+\beta)$ or $\|{\rm w}_k\|_\infty \to + \infty$.
	The former case is ruled out since then $\rho=8\pi(1+\beta)$ would be a bifurcation point, which contradicts Lemma \ref{lem:ift} and Lemma \ref{lem:spectral}-${\rm (iii)}$. 
	The latter case is ruled out by the local uniqueness Theorem in \cite{byz-1}.\end{proof}

\end{document}